\newtheorem{theorem}{Theorem}[section]
\newtheorem{proposition}[theorem]{Proposition}
\newtheorem{lemma}[theorem]{Lemma}
\newtheorem{corollary}[theorem]{Corollary}
\newtheorem{question}[theorem]{Question}
\theoremstyle{definition}
\newtheorem{definition}[theorem]{Definition}
\newtheorem{example}[theorem]{Example}
\newtheorem{remark}[theorem]{Remark}
\newcommand{\pushright}[1]{\ifmeasuring@#1\else\omit\hfill$\displaystyle#1$\fi\ignorespaces}
\newcommand{\pushleft}[1]{\ifmeasuring@#1\else\omit$\displaystyle#1$\hfill\fi\ignorespaces}
\newcommand{\ie}{\text{\it{i.e.,}}\,\,}
\newcommand{\cf}{\text{\it{cf.}}\,\,}
\newcommand{\eg}{\text{\it{e.g.,}}\,\,}
\newcommand{\CC}{\mathrm{CC}}
\newcommand{\DFA}{A^\ac}
\newcommand{\quan}{\mathfrak{A}}
\newcommand{\quanq}{\mathfrak{B}}
\newcommand{\SLH}{\mathrm{SLH}}
\newcommand{\Sym}{\Lambda}
\newcommand{\CE}{\mathrm{CE}}
\newcommand{\gl}{\mathfrak{gl}}
\newcommand{\GL}{\mathrm{GL}}
\newcommand{\SL}{\mathrm{SL}}
\newcommand{\End}{\mathrm{End}}
\newcommand{\Aut}{\mathrm{Aut}}
\newcommand{\id}{\mathrm{id}}
\newcommand{\sgn}{\mathrm{sgn}}
\newcommand*{\ihat}{\hat{\imath}}
\newcommand*{\jhat}{\hat{\jmath}}
\newcommand{\ac}{\textup{!`}}
\begin{document}

\title{Deformation and quantization of the Loday-Quillen-Tsygan isomorphism for Calabi-Yau categories 
\footnotetext{Email: xjchen@scu.edu.cn, f.eshmatov@newuu.uz, m.huang@newuu.uz}}

\author[1,2]{Xiaojun Chen}
\author[1]{Farkhod Eshmatov}
\author[1]{Maozhou Huang}

\renewcommand\Affilfont{\small}

\affil[1]{Department of Mathematics, New Uzbekistan University,
Tashkent, 100007 Uzbekistan}

\affil[2]{School of Mathematics, Sichuan University, Chengdu, Sichuan Province, 
610064 P.R. China}

\date{}

\maketitle

\begin{abstract} 
For an associative algebra $A$,
the famous theorem of Loday, Quillen and Tsygan
says that there is an isomorphism
between the graded symmetric product of the cyclic homology of $A$
and the Lie algebra homology of the infinite matrices $\mathfrak{gl}(A)$,
as commutative and cocommutative Hopf
algebras. This paper aims to study a deformation
and quantization of this isomorphism. 
We show that if $A$ is a Koszul 
Calabi-Yau algebra,
then the primitive
part
of the
Lie algebra homology
$\mathrm{H}_\bullet
(\mathfrak{gl}(A))$
has a Lie bialgebra
structure which is induced from
the Poincar\'e duality of $A$ and
deforms $\mathrm{H}_\bullet
(\mathfrak{gl}(A))$
to a co-Poisson bialgebra.
Moreover, there
is a Hopf algebra which 
quantizes
such a co-Poisson
bialgebra,
and the Loday-Quillen-Tsygan
isomorphism lifts to the quantum
level, which can be interpreted
as a quantization of the tangent
map from the tangent complex
of $\mathrm{BGL}$ to the tangent
complex of K-theory. 

\noindent{\bf Keywords:} 
Loday-Quillen-Tsygan, cyclic homology,
quantization, Calabi-Yau

\noindent{\bf MSC2020:} 16E40, 19D55
\end{abstract}

\setcounter{tocdepth}{2} 
\tableofcontents

\section{Introduction}

Let $A$ be an associative algebra over a field $k$
of characteristic zero. Let $\mathfrak{gl}(A)$ be the Lie algebra
of the infinite matrices
with entries in $A$. Denote by 
$\mathrm{H}_\bullet(\mathfrak{gl}(A))$
and
$\mathrm{HC}_\bullet(A)$
the Lie algebra homology of $\mathfrak{gl}(A)$
and
the cyclic
homology of $A$ respectively. 
The famous result of Loday and Quillen
\cite{LoQu} and Tsygan \cite{Tsygan} 
says that there is a natural
isomorphism of commutative and cocommutative graded Hopf algebras
\begin{equation}\label{iso:LQT}
\mathrm H_\bullet(\mathfrak{gl}(A))\cong
\mathbf{\Lambda}^\bullet \big(\mathrm{HC}_\bullet(A)[1]\big),
\end{equation}
where $\mathbf{\Lambda}^\bullet$ is the graded symmetric tensor product.

In the literature,
a commutative and cocommutative Hopf 
algebra is called an abelian Hopf algebra.
A natural question that arises
from the above isomorphism is:

\begin{question}\label{q:question}
Is it possible to deform the abelian
Hopf algebra
structure, in a meaningful way, 
so that the isomorphism
\eqref{iso:LQT} remains to be an isomorphism
after the deformation? If the answer is yes
(possibly for some specific algebras), 
then is it possible
to further quantize the isomorphism?
\end{question}

Before answering this question, let us
further recall the following results,
which are the main motivations for 
this paper:

\begin{enumerate}
\item[$-$] In
the paper \cite{PTVV} Pantev, To\"en,
Vaqui\'e and Vizzosi
showed that
for a (real or complex)
Calabi-Yau manifold $X$,
there is a {\it shifted symplectic
structure} on the derived
moduli stack of perfect
complexes on $X$.
Here the symplectic
structure comes from the
nowhere vanishing (real or holomorphic) volume
form, or equivalently,
the Poincar\'e duality,
on $X$, which
induces an isomorphism
from the tangent complex
to the cotangent complex
on the moduli stack, up
to a degree shifting.
Later, the above authors 
together with Calaque
continued to construct
in \cite{CPTVV}
a {\it shifted Poisson structure}
on the derived stack, and even
its quantization.
As a corollary, the functions
on the corresponding moduli stack
have a shifted Lie bracket, induced
from the shifted Poisson structure.

\item[$-$] There is an analogue
of the above results
in derived non-commutative algebraic
geometry. 
As is shown by Yeung \cite{Yeung},
Pridham \cite{Pridham2} and the
current two author's work \cite{ChEs},
for a Calabi-Yau
algebra, which is viewed
as a ``non-commutative" affine Calabi-Yau
variety, there is a ``non-commutative"
shifted
symplectic and hence Poisson 
structure on it.
These works were again motivated
by the works of Crawley-Boevey et. al. 
\cite{CBEG} and
Van den Bergh \cite{VdB}, where
a non-commutative version of
symplectic
and Poisson structures, called the
bi-symplectic and double Poisson
structures respectively,
was introduced.
Thus as a corollary,
the ``derived" functions
on a Calabi-Yau algebra $A$,
which are the cyclic
homology of $A$ by \cite{BKR},
has a shifted Lie algebra structure.
(see also \cite{ChEs}).

\item[$-$] Our third motivation
comes from algebraic K-theory.
For a long time, we have been wondering
what kind of role the Poincar\'e duality
plays in the K-theory of a Calabi-Yau
algebra, in the light of
the rich structures on its Hochschild
and cyclic homologies, as mentioned above.
In a recent work \cite{Hennion},
Hennion showed that the
tangent space to the K-theory
of a DG algebra is isomorphic
to its cyclic homology, and
the Loday-Quillen-Tsygan 
map is homotopic to the tangent
map from the tangent complex
of $\mathrm{BGL}$ to
the tangent complex of K-theory.
\end{enumerate}
Combining the above results, it is
quite natural to expect that
the answer to
Question \ref{q:question}
is affirmative, at least for 
Calabi-Yau algebras.
Our main result is:

\begin{theorem}\label{mainthm}
Let $A$ be a Koszul Calabi-Yau algebra. 
Then we have the following:
\begin{enumerate}
\item[$(1)$]\textup{(Theorem 
\ref{cor:isoHmlgyofLiecylicdeformed})}
There is a deformation of
$\mathrm{H}_\bullet(\mathfrak{gl}(A))$
to a co-Poisson bialgebra, which,
via the Loday-Quillen-Tsygan isomorphism
and Koszul duality,
is induced from the Lie bialgebra on
the cyclic homology
$\mathrm{HC}_\bullet(A^{\ac})[1]$
of the Koszul dual coalgebra $A^{\ac}$ of $A$. 

\item[$(2)$]
\textup{(Theorem \ref{cor:quantization})}
Moreover, there is a 
quantization of the above co-Poisson
bialgebra of 
$\mathrm{H}_\bullet(\mathfrak{gl}(A))$,
which therefore gives a quantization
of Hennion's tangent map to K-theory.
\end{enumerate}
\end{theorem}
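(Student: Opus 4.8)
The plan is to transport the whole problem, through the Loday--Quillen--Tsygan isomorphism \eqref{iso:LQT} and Koszul duality, to the cyclic homology of the Koszul dual coalgebra $A^{\ac}$, where the Calabi--Yau structure of $A$ becomes a cyclic (shifted symplectic) structure and the coalgebra structure supplies the dual datum. The Koszul hypothesis enters first: because $A$ is Koszul, the (mixed, hence cyclic) chain complex of $A$ is quasi-isomorphic to that of $A^{\ac}$ up to a degree shift dictated by the Calabi--Yau dimension $d$. Combined with \eqref{iso:LQT} this gives an isomorphism of abelian Hopf algebras $\mathrm{H}_\bullet(\mathfrak{gl}(A)) \cong \mathbf{\Lambda}^\bullet\!\big(\mathrm{HC}_\bullet(A^{\ac})[1]\big)$, identifying the space of primitives with $\mathrm{HC}_\bullet(A^{\ac})[1]$.

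\textbf{The Lie bialgebra on $\mathrm{HC}_\bullet(A^{\ac})[1]$ (part (1)).} Next I would put a graded Lie bialgebra structure on $\mathfrak g := \mathrm{HC}_\bullet(A^{\ac})[1]$. The Calabi--Yau structure on $A$ endows $A^{\ac}$ with a non-degenerate graded-symmetric pairing, i.e.\ makes it a cyclic $A_\infty$-coalgebra. From the pairing one obtains the ``necklace'' bracket on reduced cyclic homology --- the non-commutative Poisson bracket of Ginzburg, Bocklandt--Le Bruyn and Van den Bergh, in its $A_\infty$ and shifted form --- while the comultiplication of $A^{\ac}$, which splits a cyclic word at a letter, gives a cobracket in the spirit of the Turaev cobracket. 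One must check that these descend to cyclic homology, are independent of the choice of minimal model, and satisfy the graded co-Jacobi identity together with the cocycle compatibility between bracket and cobracket; the Calabi--Yau/Poincar\'e duality is exactly what forces the bracket and cobracket to be ``adjoint'' under the pairing and hence to assemble into a Lie bialgebra. This is the technical core of the argument.

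\textbf{The co-Poisson bialgebra and conclusion of (1).} For any graded Lie bialgebra $\mathfrak g$, its graded symmetric algebra $\mathbf{\Lambda}^\bullet(\mathfrak g)$ (equivalently $U(\mathfrak g)$ via Poincar\'e--Birkhoff--Witt) carries a canonical co-Poisson bialgebra structure: the coproduct and counit are the standard ones, the product is determined by the bracket, and the co-Poisson cobracket is the unique biderivation extending the Lie cobracket of $\mathfrak g$. Applying this to $\mathfrak g = \mathrm{HC}_\bullet(A^{\ac})[1]$ and transporting along the identification above equips $\mathrm{H}_\bullet(\mathfrak{gl}(A))$ with a co-Poisson bialgebra structure of which it is the semiclassical (abelian Hopf algebra) limit; since by Koszul duality $\mathrm{HC}_\bullet(A^{\ac})[1]\cong \mathrm{HC}_\bullet(A)[1]$, this is the deformation asserted in Theorem \ref{cor:isoHmlgyofLiecylicdeformed}, and by construction \eqref{iso:LQT} upgrades to an isomorphism of co-Poisson bialgebras.

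\textbf{Quantization (part (2)) and main obstacle.} For part (2) it suffices to quantize the Lie bialgebra $\mathfrak g$: one wants a topological Hopf algebra $U_\hbar(\mathfrak g)$, flat over $k[[\hbar]]$, whose semiclassical limit is the co-Poisson bialgebra above. Existence holds abstractly by the Etingof--Kazhdan (or Drinfeld) quantization of Lie bialgebras, but I would prefer the explicit construction in which the parameter $\hbar$ records genus: the cyclic $A_\infty$-coalgebra $A^{\ac}$ is deformed to a solution of the quantum master equation, and the resulting ``quantum necklace'' Hopf algebra quantizes the bracket and the cobracket simultaneously. Transporting this Hopf algebra back along \eqref{iso:LQT} gives a Hopf algebra over $k[[\hbar]]$ reducing mod $\hbar$ to $\mathrm{H}_\bullet(\mathfrak{gl}(A))$ --- the quantum Loday--Quillen--Tsygan isomorphism of Theorem \ref{cor:quantization}. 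Finally, since Hennion \cite{Hennion} identifies $\mathrm{HC}_\bullet(A)[1]$ with the tangent complex of K-theory and \eqref{iso:LQT} with the tangent map of $\mathrm{BGL}\to K$, and all constructions above are functorial in $A$, restricting to primitives exhibits the quantum isomorphism as a quantization of Hennion's tangent map. The main obstacle is the second step: defining the cobracket at the chain level, where it is only a chain map up to homotopy, showing it is well defined and independent of choices on $\mathrm{HC}_\bullet(A^{\ac})$, and verifying the Lie bialgebra axioms with all the $A_\infty$/cyclic signs --- this is where the Calabi--Yau hypothesis does its essential work; a secondary difficulty is making the genus-graded quantization explicit enough that the K-theoretic reading is more than formal.
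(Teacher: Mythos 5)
Your overall strategy agrees with the paper's: transport via LQT and Koszul duality to $\mathrm{HC}_\bullet(A^{\ac})[1]$, construct a Lie bialgebra there from the non-degenerate pairing, extend to a co-Poisson bialgebra, and then quantize. Two remarks on the execution. First, the chain-level ``up to homotopy'' difficulty you flag for the cobracket is largely dissolved by the Koszul hypothesis itself: since $A$ is Koszul Calabi--Yau, its Koszul dual $A^!$ is an honest graded Frobenius algebra (Proposition \ref{prop:PDofCY}), so $A^{\ac}$ is strictly co-Frobenius and the bracket \eqref{eq:bracket} and cobracket \eqref{eq:cobracket} are genuine chain maps on the Connes cyclic complex $\mathrm{CC}_\bullet^\lambda(A^{\ac})[1]$ -- no $A_\infty$ homotopy bookkeeping is required (Proposition \ref{prop:Liebialg}). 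Second, the co-Poisson bialgebra is not just $\mathbf{\Lambda}^\bullet(\mathfrak g)$ with a biderivation cobracket: the paper needs Turaev's construction $V_{h\hbar}(\mathfrak g)$ with \emph{two} separate formal parameters $h$ and $\hbar$ (Theorem \ref{thm:coPoissonbialg}), one tracking the bracket direction and one the cobracket direction, precisely so that both get deformed independently and both appear to first order in the quantization (Definition \ref{d31}).

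The genuine divergence is in part (2). You invoke Etingof--Kazhdan for abstract existence, which is a valid but non-constructive fallback, and for an explicit model you suggest a ``quantum master equation/genus-graded'' construction. That is essentially the BV-quantization route of Ginot--Gwilliam--Hamilton--Zeinalian \cite{GGHZ}; as the paper notes, that approach only deforms in the direction of the \emph{Lie bracket} (a Poisson, not co-Poisson, deformation in the present terminology), so it would not directly yield a Hopf algebra quantizing the \emph{cobracket} as well, and hence would not establish Theorem \ref{cor:quantization} as stated. The paper instead builds the Hopf algebra $\mathfrak A$ by a purely combinatorial construction $\grave{\text{a}}$ la Schedler: cyclic words decorated with integer heights, a coproduct defined by $m$-colorings (Definition \ref{d32}, equation \eqref{f33}), and a Diamond-lemma/PBW argument (Proposition \ref{p31}, Lemma \ref{l31}) to verify flatness over $k[h,\hbar]$. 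The final step -- reading the quantized LQT isomorphism as a quantization of Hennion's tangent map -- is, as you say, a formal consequence of Theorem \ref{thm:Hennion} and functoriality, and there the two accounts agree.
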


The notion of co-Poisson bialgebra
was introduced by Turaev 
(see \cite{Turaev} and 
Definition \ref{def:coPoissonbialg} below),
to describe the structure
of the algebra of
loops on a Riemann surface.
In the above theorem,
$\mathrm{H}_\bullet(\mathfrak{gl}(A))$
is first identified with
$\mathbf{\Lambda}^\bullet
(\mathrm{HC}_\bullet(A)[1])$
via the Loday-Quillen-Tsygan isomorphism,
and then is identified with
$\mathbf{\Lambda}^\bullet
(\mathrm{HC}_\bullet(A^{\ac})[1])$
via Koszul duality.
Then we deform the abelian Hopf algebra $\mathbf{\Lambda}^\bullet
(\mathrm{HC}_\bullet(A^{\ac})[1])$
in the direction
of the Lie bracket
and cobracket that naturally
appear on $\mathrm{HC}_\bullet
(A^{\ac})[1]$ (see Proposition \ref{prop:Liebialg} for the construction).

The quantization of the 
co-Poisson bialgebra is inspired
by the work
\cite{Schedler} of Schedler,
where the quantization
of the necklace Lie bialgebra
of the closed paths
on a doubled quiver is constructed,
which, again, is inspired
by the work of Turaev.

\begin{remark}[Why Calabi-Yau?]
Our main construction
of the deformation
comes from the Lie bialgebra
on the {\it cyclic cohomology}
of Frobenius algebras,
or equivalently, on the 
{\it cyclic homology} of their
linear dual, co-Frobenius
coalgebras. However,
both the Loday-Quillen-Tsygan
isomorphism and Hennion's 
tangent complex are expressed
in terms of {\it cyclic homology}
of algebras.
Koszul duality solves this
discrepancy. Namely,
if a Frobenius algebra
is Koszul, then on the one hand,
its Koszul
dual algebra is a Calabi-Yau algebra,
and on the other hand, 
its cyclic cohomology (or equivalently,
the cyclic homology of its
dual co-Frobenius coalgebra) is
isomorphic to
the cyclic homology of its Koszul
dual algebra. 
This means,
the deformation of the cyclic homology 
of an algebra is induced via Koszul
duality from a deformation
of the cyclic cohomology of its Koszul
dual algebra (or equivalently
the cyclic homology of its Koszul
dual coalgebra); in the case of Calabi-Yau
algebras, such a deformation is in
general non-trivial, since it comes
from the Poncar\'e duality of the
alegbra, or equivalenlty, from its shifted bisymplectic/double Poisson
structure.

From this perspective, we also 
see that, although
the above Theorem \ref{mainthm}
is stated
for Koszul Calabi-Yau algebras,
it holds in a much broader sense,
whenever the algebra or even
the category
under study has a DG model
whose Koszul dual has 
a non-degenerate pairing
(such as cyclic $A_\infty$-algebras
or 
proper Calabi-Yau 
$A_\infty$-categories).
See \S\ref{subsect:gen}
and \S\ref{sect:application}
for more details.
We choose to work on
Koszul Calabi-Yau algebras,
just because otherwise the presentation
will be un-necessarily complicated,
which however does not improve the main
result much.
\end{remark}

\begin{remark}[Some relevant works]
This work has some overlaps with 
some previous
works. 
In \cite{CEG} 
we studied
the quantization of the Lie bialgebra
of cyclic homology of Frobenius
algebras. Such a quantization
is used in the present paper, too,
with a slight difference.
The novelty of the present paper is 
the study of the deformation and quantization
of $\mathfrak{gl}(A)$.
Although in retrospect this 
seems to be straightforward, 
we were not able to do this before.

On the other hand, in \cite{GGHZ}
Ginot, Gwilliam, Hamilton and Zeinalian
studied, in a way pretty the same to ours,
the BV quantization of the 
Loday-Quillen-Tsygan
isomorphism for 
cyclic $A_\infty$ algebras. However, their quantization
is a deformation of the Poisson algebra
in the terminology
of the current paper;
that is, they only used
the Lie algebra structure on the cyclic 
homology. Of course, they have
also obtained some 
other interesting results in the paper.
\end{remark}

The rest of this paper is devoted to the proof of the above theorem.
It is organized as follows.

In \S\ref{sect:Koszul}
we collect several necessary 
notions about Koszul algebras,
Koszul Calabi-Yau algebras,
and then recall the isomorphism
between the cyclic homology of
a Koszul algebra and
the cyclic homology of
its Koszul dual coalgebra.

In \S\ref{sect:LQT}
we first briefly go over
the Loday-Quillen-Tsygan
isomorphism for algebras,
and then go to the isomorphism
for coalgebras; the latter
is due to Kaygun \cite{Kay}.
The main result is
the isomorphism between
the Lie homology of 
$\mathfrak{gl}(A)$
and the co-Lie algebra
homology of $\mathfrak{gl}(A^{\ac})$,
which is an analogue of the 
isomorphism in the cyclic homology 
case.

In \S\ref{sect:Ktheory}
we study the algebraic K-theory
of a Koszul algebra and its Koszul dual
coalgebra; again we prove an equivalence
between them. After that, we
briefly describe Hennion's 
tangent map from $\mathrm{BGL}$
to K-theory. Here $A$ is a Koszul
algebra and $A^{\ac}$ is its Koszul
dual coalgebra.

In \S\ref{sect:deformation}
we study the deformations
of the Poisson bracket and co-Poisson
cobracket on the homology of
$\mathfrak{gl}(A)$, in the
case $A$ is a Koszul Calabi-Yau algebra.
Both deformations are carried
out via the isomorphisms 
given in \S\ref{sect:Koszul}
and \S\ref{sect:LQT}.

In \S\ref{sect:quantization}
we construct the quantization
of the Loday-Quillen-Tsygan isomorphism.
The quantization is purely
combinatorial and quite explicit,
but a bit technical.

In the last section,
\S\ref{sect:application},
we give some 
applications of our result to
geometry and topology. We show
that our main theorem is applicable
to preprojective algebra and Fukaya categories,
both have played an important role
in representation theory and mathematical
physics.

\subsection*{Acknowledgements}
This work is inspired by our study of the paper \cite{BerRam},
where some other applications
of Koszul duality and the
Loday-Quillen-Tsygan isomorphism
was explored, and
we would like to
thank Yuri Berest for
helpful communications. We
also thank Leilei Liu
for his help with the references.
This work is supported
by NSFC Nos. 12271377 and 
12261131498.

\section{Koszul Calabi-Yau
algebras}\label{sect:Koszul}

Throughout this paper,
$k$ is a field of characteristic
zero, all algebras are unital
over $k$, and all
coalgebras are co-unital over $k$
which are also 
assumed to be
co-nilpotent.

In this section, we recall some basic facts about Koszul Calabi-Yau algebras. 
In \S\ref{subsect:defKoszulalg} we recall the definition of Koszul algebras; in \S\ref{subsect:Hochs} and \S\ref{Connescycliccomplex} we study the isomorphism of the cyclic homology of an algebra with the cyclic homology of its Koszul dual coalgebra. 
In \S\ref{subsect:KoszulCY}, we focus on Koszul Calabi-Yau algebras; in particular, the Poincar\'e duality of a Calabi-Yau algebra and the non-degenerate pairing (the Frobenius algebra structure) of its Koszul dual is studied, and we give some ideas about the generalizations of these algebraic structures, which will then give more examples and applications.

\subsection{Definition of
Koszul algebra}
\label{subsect:defKoszulalg}

Suppose $V$ is a graded vector space over $k$. 
Let $R$ be a subspace of $V^{\otimes 2}$. 
Let $\langle R\rangle$ be the two sided ideal generated by $R$
in the free tensor algebra $T(V).$ 
Put $A:=T(V)/\langle R\rangle$, which is usually 
called a {\it quadratic algebra}.

Let $sV = V[-1]$ be the suspension of $V$ and put 
$A^{\ac}:=\bigoplus_n A^{\ac}_n$ with
$$
A^{\ac}_n:=\bigcap_{i+2+j=n}
(sV)^{\otimes i}\otimes(s^2R)\otimes(sV)^{\otimes j}\subseteq
(sV)^{\otimes n}.
$$
Then $A^{\ac}$ is a graded coalgebra 
whose coproduct is
induced from the de-concatenation of $T(sV)$.
Choose a basis $\{e_i\}$ of $V$
and let $\{e_i^*\}$ be the dual basis of $V^*$. Then
$d:=\sum_i e_i\otimes s^{-1}e_i^*$ acts on
$A\otimes_k A^{\ac}$ by
$$
d(a\otimes f):=\sum_i 
ae_i\otimes s^{-1}e_i^*\cdot f,$$
which is independent of the choice of the basis $\{e_i\}.$
The equation $d^2=0$ holds as elements in $\langle R \rangle$ vanish in $A$.
We thus get a chain complex, called the
{\it Koszul complex} of $A$:
\begin{equation}\label{eq:Koszulcomplex}
\cdots\stackrel{d}\longrightarrow
A\otimes_k A^{\ac}_n\stackrel{d}\longrightarrow
A\otimes_k A^{\ac}_{n-1}\stackrel{d}\longrightarrow\cdots
\stackrel{d}\longrightarrow
A\otimes_k A^{\ac}_0\stackrel{\varepsilon}\longrightarrow k,
\end{equation}
where $A^{\ac}_0=k$ and $\varepsilon$ is the augmentation of $A$.

\begin{definition}
\label{def:Koszulalg}
The algebra $A$ is called a {\it Koszul algebra} if the Koszul complex
\eqref{eq:Koszulcomplex}
is acyclic.
In this case, $A^{\ac}$ is called the {\it Koszul dual coalgebra}
of $A$, and its linear dual, $\mathrm{Hom}_k(A^{\ac}, k)$, denoted
by $A^!$, is called the {\it Koszul dual algebra} of $A$.
\end{definition}

\subsection{Hochschild and 
cyclic homology}
\label{subsect:Hochs}

In this subsection we study
the Hochschild and cyclic homology of Koszul algebras.
In the rest of the paper, the bar construction of an algebra $A$ (resp. the cobar construction of a coalgebra $C$) is denoted $\mathrm{B}(A)$ (resp. $\Omega(C)$). 
If $A$ is a Koszul algebra, then 
by \eqref{eq:Koszulcomplex} we have
$\mathrm{Ext}_A^\bullet(k,k)\cong A^!$
as a graded algebra.
As a corollary, we have the following 
(see \cite{LoVa}):

\begin{proposition}\label{prop:qisofcobarresl}
Let $A$ be a Koszul algebra and $A^{\ac}$ its Koszul dual coalgebra. 
Then there
is a natural DG algebra 
quasi-isomorphism
$p: \Omega(A^{\ac})\to A$,
which gives a cofibrant (that is, quasi-free) resolution of $A$.
\end{proposition}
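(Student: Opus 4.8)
The plan is to build the quasi-isomorphism $p:\Omega(A^{\ac})\to A$ directly from the Koszul complex and then verify it is a quasi-iso by a standard comparison of resolutions. First I would recall that for any conilpotent coalgebra $C$ with coaugmentation, the cobar construction $\Omega(C)=T(s^{-1}\bar C)$ carries a differential coming from the coproduct of $C$; applied to $C=A^{\ac}$, whose underlying graded coalgebra sits inside $T(sV)$ with coproduct the deconcatenation, one gets $\Omega(A^{\ac})=T(s^{-1}\overline{A^{\ac}})$ with differential built from the internal ``comultiplication'' $s^2R\to sV\otimes sV$. Since $A^{\ac}_1=sV$ and $A^{\ac}_2=s^2R$, there is a canonical map of graded vector spaces $s^{-1}\overline{A^{\ac}}\to A$: send the degree-zero generators $s^{-1}(sV)=V$ to $V\subseteq A$ by the identity, and send everything in higher weight (the $A^{\ac}_n$ for $n\ge 2$) to zero. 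Because $A=T(V)/\langle R\rangle$ is free as an algebra modulo the ideal $\langle R\rangle$, and the cobar differential on the weight-two generators $s^{-1}(s^2R)$ lands precisely in $R\subseteq V\otimes V$ (modulo signs from the suspensions), this map extends multiplicatively to a DG algebra morphism $p:\Omega(A^{\ac})\to A$; the relations $R$ are killed exactly because $d$ maps the corresponding generators to elements of $\langle R\rangle$.

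Next I would prove $p$ is a quasi-isomorphism. The cleanest route is to exhibit $\Omega(A^{\ac})$, with the augmentation induced by $p$, as a free resolution of $k$ as a one-sided (say left) $\Omega(A^{\ac})$-module and then compare with the Koszul complex; equivalently, use the standard fact (e.g.\ \cite{LoVa}) that $p$ is a quasi-isomorphism if and only if the Koszul complex $\Omega(A^{\ac})\otimes A^{\ac}$ — i.e.\ the twisted tensor product associated to the universal twisting cochain $A^{\ac}\to\Omega(A^{\ac})$ composed with $p$ — is acyclic. But that twisted tensor product, after applying $p$, is precisely the complex \eqref{eq:Koszulcomplex}: its underlying space in weight $n$ is $A\otimes_k A^{\ac}_n$ and its differential is the map $d=\sum_i e_i\otimes s^{-1}e_i^*$ described in \S\ref{subsect:defKoszulalg}. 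Since $A$ is assumed Koszul, Definition \ref{def:Koszulalg} says exactly that this complex is acyclic, hence $p$ is a quasi-isomorphism. Finally, cofibrancy is automatic: $\Omega(A^{\ac})=T(s^{-1}\overline{A^{\ac}})$ is free as a graded associative algebra, so it is a quasi-free (cofibrant) DG algebra, giving the desired cofibrant resolution of $A$.

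The main obstacle — really the only subtle point — is the bookkeeping that identifies the image of the cobar differential on the weight-two generators with the relation space $R$ under the correct signs coming from the double suspension $s^2R$, and more generally the verification that the twisted tensor product $(\Omega(A^{\ac})\otimes A^{\ac},\,\partial)$ pushed forward along $p$ coincides on the nose (not just up to iso) with the Koszul complex \eqref{eq:Koszulcomplex}, including the claim that $d$ there is the image of the bar/cobar twisting differential. I would handle this by working with an explicit basis $\{e_i\}$ of $V$ and its dual $\{e_i^*\}$, exactly as set up before \eqref{eq:Koszulcomplex}, tracking the suspension signs through the isomorphism $s^{-1}A^{\ac}_n\cong$ (weight-$n$ part of the cobar algebra) and through the coproduct on $A^{\ac}$; once the signs match, acyclicity is quoted verbatim from the Koszulness hypothesis. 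Everything else is the standard Koszul-duality machinery, so I would state it as a known consequence and cite \cite{LoVa}.
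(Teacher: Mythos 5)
Your proposal is correct and follows essentially the same route as the paper, which states the result without proof and cites \cite{LoVa}: you construct $p$ from the canonical projection $s^{-1}\overline{A^{\ac}}\twoheadrightarrow V\subseteq A$ and then invoke the fundamental theorem of Koszul duality identifying acyclicity of the twisted tensor product (which is exactly the Koszul complex \eqref{eq:Koszulcomplex}) with $\Omega(A^{\ac})\to A$ being a quasi-isomorphism. One small imprecision worth tightening if you wrote this out: you don't literally obtain \eqref{eq:Koszulcomplex} by ``applying $p$'' to $\Omega(A^{\ac})\otimes_\iota A^{\ac}$; rather, the twisting morphism $\kappa=p\circ\iota:A^{\ac}\to A$ has associated Koszul complex $A\otimes_\kappa A^{\ac}=$ \eqref{eq:Koszulcomplex}, and the comparison theorem for twisted tensor products (a filtration/spectral-sequence argument in \cite{LoVa}) is what upgrades acyclicity of that complex to $p$ being a quasi-isomorphism — but this is exactly the content of the cited theorem, so the argument stands.
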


\begin{definition}
\label{def:Hochschild}
(1) For a (graded) algebra $A$, its 
\emph{cyclic complex} $\mathrm{CC}_\bullet(A)$ 
is the total complex of the following (reduced)
mixed Hochschild complex 
$(\mathrm{CH}_\bullet(A),b, B)$, where 
$\mathrm{CH}_\bullet(A)=A\otimes\mathrm B(A)$ 
and
\begin{eqnarray*}
b(a_0\otimes[a_1,\cdots,a_n])
&=&(-1)^{|a_0|}a_0a_1
\otimes[a_2,\cdots,a_n]\\
&&
+\sum_{i=1}^{n-1}
(-1)^{|a_0|+
\cdots+|a_{i}|+i}a_0\otimes[a_1,\cdots,
a_{i}a_{i+1},\cdots, a_n]\\
&&-(-1)^{(|a_n|+1)(|a_0|+\cdots+|a_{n-1}|+n-1)}a_na_0\otimes[a_1,\cdots,a_{n-1}],\\
B(a_0\otimes[a_1,\cdots,a_n])&=&
\sum_{i=0}^n(-1)^{\sigma}\cdot
1\otimes[a_i,\cdots,a_n, a_0,\cdots, a_{i-1}],
\end{eqnarray*}
where
$\sigma =
(|a_0|+\cdots+|a_{i-1}|+i)
(|a_i|+\cdots+|a_n|+n-i+1)$.

(2) For a (graded) coalgebra $C$, its \emph{cyclic complex} $\mathrm{CC}_\bullet(C)$ is the total complex of the following mixed Hochschild complex $(\mathrm{CH}_\bullet(C),b, B)$, where $\mathrm{CH}_\bullet(C)=C\otimes\Omega(C)$ and 
\begin{eqnarray}
b(c_0\otimes[c_1,\cdots,c_n])
&=&\sum_{(c_0)}(-1)^{|a_0'|}
c_0'\otimes[c_0'', c_1,\cdots,
c_n]\nonumber\\
&&+\sum_{i=1}^n\sum_{(c_i)}
(-1)^{|c_0|+\cdots+|c_{i-1}|
+|c_i'|-i}c_0\otimes[c_1,\cdots,
c_i', c_i'',\cdots, c_n]\nonumber\\
&&-\sum_{(c_0)}
(-1)^{(|c_0'|-1)(|c_0''|+|c_1|+
\cdots+|c_n|-n)}
c_0''\otimes[c_1,\cdots, c_n, c_0'],
\nonumber\\
B(c_0\otimes[c_1,\cdots,c_n])
&=&
\sum_{i=1}^n(-1)^\nu\varepsilon(c_0)
\cdot
c_i\otimes[c_{i+1},\cdots, 
c_n,c_1,\cdots, c_{i-1}],\nonumber
\end{eqnarray}
where 
$\nu=(|c_1|+\cdots+|c_{i-1}|-(i-1))
(|c_i|+\cdots+|c_n|-(n-i+1))$
and $\varepsilon:C\to k$ is the co-unit.
Here the sum $\sum_{(c)}$ extends over summands in the coproduct of $c$.

(3) The associated homologies of $\CC_\bullet(A)$ and $\CC_\bullet(C)$ are called the {\it cylic homologies} of $A$ and $C$, and are denoted by $\mathrm{HC}_\bullet(A)$ and $\mathrm{HC}_\bullet(C)$ respectively.
\end{definition}

The signs in the above definition 
are given by the Koszul
sign rule; that is, whenever
two graded objects (such
as elements in a graded space
or graded operators or a mixture
of them) exchange, there arises
a sign which is the product
of their parities. In other words,
for two graded space $V$ and $W$,
the canonical 
isomorphism $V\otimes W\to
W \otimes V$ is given by
$v\otimes w\mapsto(-1)^{|v|\cdot|w|}w\otimes v$. In what follows, we sometimes omit
the precise value of the sign, and just write $\pm$ instead.

The following proposition relates
the cyclic complexes of $A$
and $A^{\ac}$:

\begin{proposition}
\label{prop:qisforKoszul}
Let $A$ be a Koszul algebra and $A^{\ac}$ its Koszul dual coalgebra.
Then we have an quasi-isomorphism of mixed complexes
$$(\mathrm{CH}_\bullet(A),b, B)
\simeq(\mathrm{CH}_\bullet(A^{\ac}),b,B).$$
\end{proposition}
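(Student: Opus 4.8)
The plan is to build the quasi-isomorphism from the cofibrant resolution $p\colon\Omega(A^{\ac})\to A$ of Proposition \ref{prop:qisofcobarresl}, exploiting the fact that the mixed Hochschild complex computing cyclic homology is a derived invariant that depends only on the quasi-isomorphism type of the (DG) algebra. Concretely, the cyclic complex of $A$ and the cyclic complex of the DG algebra $\Omega(A^{\ac})$ are quasi-isomorphic as mixed complexes, since $p$ is a DG algebra quasi-isomorphism and the assignment $A'\mapsto(\mathrm{CH}_\bullet(A'),b,B)$ sends quasi-isomorphisms of (cofibrant, or arbitrary, by a standard spectral sequence argument on the bar filtration) DG algebras to quasi-isomorphisms of mixed complexes; this is the classical invariance of cyclic homology under quasi-isomorphism, applied in the mixed-complex (i.e. strong $S^1$-equivariant) setting. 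So the first step reduces the claim to comparing $(\mathrm{CH}_\bullet(\Omega(A^{\ac})),b,B)$ with $(\mathrm{CH}_\bullet(A^{\ac}),b,B)$.

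The second step is the heart of the matter: producing a natural quasi-isomorphism of mixed complexes between the Hochschild mixed complex of a cobar construction $\Omega(C)$ and the coalgebra-theoretic Hochschild mixed complex $C\otimes\Omega(C)$ of $C$ itself. This is a bar–cobar duality statement at the level of cyclic homology. The idea is to use the canonical twisting cochain $\tau\colon C\to\Omega(C)$ (the universal twisting morphism) and the associated ``tensor over $\tau$'' construction to write down explicit chain maps in both directions between $\Omega(C)\otimes\mathrm B(\Omega(C))$ and $C\otimes\Omega(C)$, compatible with both the Hochschild differential $b$ and the Connes operator $B$. The forward map contracts $\mathrm B(\Omega(C))$ against $C$ using the counit and the twisting cochain; the backward map inserts the generator of $C$ in the cobar. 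One then checks: (i) these are chain maps for $b$ — this is essentially the statement that Hochschild homology of $\Omega(C)$ agrees with the coHochschild (Hochschild) homology of $C$, a known bar–cobar result; (ii) they are compatible with $B$ up to the usual homotopies, so that they descend to maps of the totalizations $\mathrm{CC}_\bullet$; and (iii) they are mutually inverse quasi-isomorphisms, which follows by a filtration/spectral-sequence comparison (filter by the number of bar/cobar slots, or by the weight grading coming from $V$ in the Koszul case, where everything is finite-dimensional in each weight so convergence is automatic).

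The main obstacle is step (ii): checking $B$-compatibility of the comparison maps. The Connes operator involves cyclic permutations, and when one transfers it across bar–cobar duality the cyclic symmetry of $\Omega(C)$-words has to be matched against the cyclic symmetry of $C\otimes\Omega(C)$; the signs (governed by the Koszul rule as stated in the excerpt) are delicate, and one typically does not get strict commutation with $B$ on the nose but only up to an explicit homotopy, forcing one to assemble a genuine morphism of mixed complexes (a sequence of maps $\mathrm{CH}_\bullet\to\mathrm{CH}_\bullet$ lowering the $u$-degree appropriately) rather than a single chain map. A clean way to organize this is to invoke the homotopy-invariance of the \emph{negative cyclic} / mixed complex under quasi-isomorphism abstractly — i.e. present the comparison as an instance of the general principle that the functor sending a (co)algebra to its mixed complex is homotopy invariant and sends bar–cobar adjunction unit/counit to equivalences — and then only spell out the weight-graded finiteness that guarantees the relevant spectral sequences converge. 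In the Koszul setting this finiteness is immediate because $A$ and $A^{\ac}$ are finitely generated in each weight, so the bar filtration is exhaustive and bounded in each total degree, and the comparison is a quasi-isomorphism by the five lemma on the associated graded.
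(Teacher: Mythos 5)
Your proposal matches the paper's proof, which produces the same composite $\mathrm{CH}_\bullet(A^{\ac})\xrightarrow{\;\id\otimes\iota\;}\mathrm{CH}_\bullet(\Omega(A^{\ac}))\xrightarrow{\;\mathrm{CH}(p)\;}\mathrm{CH}_\bullet(A)$: the second arrow is handled by the bar-filtration spectral sequence you describe, and the first is exactly your twisting-cochain bar--cobar comparison, taken from Jones--McCleary \cite{JoMc} (see also \cite{CYZ}). One small remark: the Jones--McCleary map $\id\otimes\iota$ commutes strictly with both $b$ and $B$, so the higher-homotopy corrections you anticipate in step (ii) are not actually needed.
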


\begin{proof}[Sketch of proof] 
See \cite{CYZ} for a detailed proof. 
We show that the quasi-isomorphism
is given by the following quasi-isomorphisms
of mixed complexes:
\begin{equation}\label{qis:twoqis}
\begin{split}\xymatrix{
&(\mathrm{CH}_\bullet
(\Omega(A^{\ac})), b, B)\ar[rd]^{\mathrm{CH}(p)}
&\\
(\mathrm{CH}_\bullet(A^{\ac}), b, B)
\ar[ur]^{\id\otimes\iota}
&&
(\mathrm{CH}_\bullet(A), b, B)
}
\end{split}
\end{equation}
For the left arrow, let $\overline{\Delta}: A^{\ac}\to A^{\ac}\otimes
A^{\ac}$ be the {\it reduced} 
coproduct of $A^{\ac}$,
and let
$$\overline{\Delta}^n=(\overline{\Delta}\otimes \id^{\otimes n-1})\circ(\overline{\Delta}\otimes \id^{\otimes n-2})\circ\cdots\circ\overline{\Delta}:
A^{\ac}\to (A^{\ac})^{\otimes n+1},\quad n\ge 1.$$
Let $\iota: A^{\ac}
\to \mathrm B\Omega(A^{\ac})$
be the coalgebra extension of
the natural map
$A^{\ac}\twoheadrightarrow \overline{A}^{\ac}\subset
\Omega(A^{\ac})[1]$ (here
$\overline{A}^{\ac}$ is the kernel of
the co-unit: $\varepsilon: A^{\ac}\to k$); 
that is,
for any $c\in A^{\ac}$,
$$\iota(c):=c+\overline{\Delta}(c)
+\overline{\Delta}^2(c)+\overline{\Delta}^3(c)+\cdots
\in \mathrm B\Omega(A^{\ac}).$$
Then it is shown by
Jones-McCleary \cite[\S6]{JoMc}
(see also \cite[Theorem 15]{CYZ})
that
\begin{equation}\label{qis:JoMc}
\id\otimes\iota:
(\mathrm{CH}_\bullet(A^{\ac}), b, B)\to
(\mathrm{CH}_\bullet
(\Omega(A^{\ac})),b,B)
\end{equation}
is a quasi-isomorphism of mixed complexes.

For the right arrow, since 
$p:\Omega(A^{\ac})\to A$
is a quasi-isomorphism (by Proposition
\ref{prop:qisofcobarresl}), the induced map
on the Hochschild chain complexes
\begin{equation}\label{qis:GJP}\mathrm{CH}(p):
(\mathrm{CH}_\bullet
(\Omega(A^{\ac})),b,B)
\to
(\mathrm{CH}_\bullet(A),b,B)
\end{equation}
is again a quasi-isomorphism of mixed complexes,
by an easy argument of spectral sequences.

Combining the above two 
quasi-isomorphisms
\eqref{qis:JoMc} and \eqref{qis:GJP},
we get \eqref{qis:twoqis}.
\end{proof}

\subsection{The Connes
cyclic complex}\label{Connescycliccomplex}

In what follows 
we shall also use the Connes
cyclic complex of an algebra
and a coalgebra. Let
us briefly recall their construction.

Let $A$ be a DG algebra
and $C$ be a DG coalgebra.
For any $n\ge 0$, let
$t: A^{\otimes n+1}\to
A^{\otimes n+1}$
be the cyclic permutation
$$t(a_0,a_1,a_2,\cdots, a_n)
:=(-1)^{(|a_n|+1)(|a_0|+\cdots+|a_{n-1}|+n)}
(a_n, a_0, \cdots, a_{n-1}),$$
and similarly, let
$t: C^{\otimes n+1}\to
C^{\otimes n+1}$
be the cyclic permutation
$$t(c_0,c_1,c_2,\cdots, c_n)
:=(-1)^{(|c_0|-1)(|c_1|+\cdots+|c_{n}|-n)}
(c_1, \cdots, c_{n}, c_0).$$
Let $N:=1+t+\cdots+t^n$ in both cases.
It is easy to see that
$(1-t)N=N(1-t)=0$.
Now 
consider the following two
(usual) cyclic complexes
\begin{equation}
\label{def:totalcomplex}
\begin{split}
\xymatrix{
\ar[d]&\ar[d]&\ar[d]&\\
A^{\otimes 3}
\ar[d]_b&\ar[l]_-{1-t}
A^{\otimes 3}\ar[d]_{b'}
&\ar[l]_-N
A^{\otimes 3}\ar[d]_b&\ar[l]_-{1-t}\\
A^{\otimes 2}
\ar[d]_b&A^{\otimes 2}
\ar[d]_{b'}\ar[l]_-{1-t}
&A^{\otimes 2}\ar[d]_b
\ar[l]_-N
&\ar[l]_-{1-t}\\
A&
\ar[l]_-{1-t}
A&
\ar[l]_-N
A&\ar[l]_-{1-t}
}
\quad\quad
\xymatrix{
&&&\\
C^{\otimes 3}
\ar[u]\ar[r]^-{1-t}&
C^{\otimes 3}\ar[u]
\ar[r]^-N&
C^{\otimes 3}\ar[u]\ar[r]^-{1-t}&\\
\ar[u]^{b}\ar[r]^-{1-t}
C^{\otimes 2}
&
\ar[r]^-N\ar[u]^{b'}
C^{\otimes 2}
&C^{\otimes 2}\ar[u]^{b}
\ar[r]^-{1-t}&\\
C\ar[r]^-{1-t}\ar[u]^{b}
&
C\ar[r]^-{N}\ar[u]^{b'}
&
C\ar[u]^{b}
\ar[r]^-{1-t}&
}
\end{split}
\end{equation}
Here in both complexes, 
the boundaries $b$ are the
same $b$ as in Definition
\ref{def:Hochschild},
but $b'$ 
only contain the first
two summands of $b$.
Then the {\it Connes cyclic
complex} of $A$, denoted by
$\mathrm{CC}_\bullet^{\lambda}(A)$,
is the complex
$$
\mathrm{CC}_\bullet^{\lambda}(A)
:=\bigoplus_{n\ge 0}
\mathrm{CC}_n^{\lambda}(A)
=\bigoplus_{n\ge 0}
A^{\otimes n+1}/1-t,
$$
and the {\it Connes cyclic complex}
of $C$, denoted by
$\mathrm{CC}_\bullet^\lambda(C)$,
is the complex below with boundary map $b$.
$$
\mathrm{CC}_\bullet^{\lambda}(C)
:=\bigoplus_{n\ge 0}
\mathrm{CC}_n^{\lambda}(C)
=\bigoplus_{n\ge 0}N(C^{\otimes n+1}).
$$

It is proved in
Loday \cite[\S2.1.11]{Loday}
that for any unital algebra,
the total complex in
Definition \ref{def:Hochschild}
is quasi-isomorphic to
the total complex \eqref{def:totalcomplex}
and also to the Connes cyclic complex.
It is also true for co-unital
coalgebras by the same argument.
Thus combining these
quasi-isomorphisms with
Proposition 
\ref{prop:qisforKoszul}
we obtain the following.

\begin{proposition}
\label{prop:qisinKoszulCY}
Let $A$ be a Koszul algebra and $A^{\ac}$ its Koszul dual coalgebra. 
Then we have quasi-isomorphisms
\begin{equation}\label{qis:Koszuldualityforalg}
\begin{split}
\xymatrixcolsep{4pc}
\xymatrix{
&\mathrm{CC}_\bullet^\lambda
(\Omega(A^{\ac}))\ar[rd]^{\simeq}
&\\
\mathrm{CC}^\lambda_\bullet(A^{\ac})
\ar[ur]^{\simeq}
&&
\mathrm{CC}^\lambda_\bullet(A).
}
\end{split}
\end{equation}
\end{proposition}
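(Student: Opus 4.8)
The plan is to deduce this from the mixed-complex comparison already established in Proposition \ref{prop:qisforKoszul}, combined with the standard fact---valid because $k$ has characteristic zero---that the Connes cyclic complex computes the same homology as the $(b,B)$-total complex. Recall that Proposition \ref{prop:qisforKoszul} furnishes the zigzag of mixed-complex quasi-isomorphisms
$$
(\mathrm{CH}_\bullet(A^{\ac}),b,B)\xrightarrow{\ \id\otimes\iota\ }(\mathrm{CH}_\bullet(\Omega(A^{\ac})),b,B)\xrightarrow{\ \mathrm{CH}(p)\ }(\mathrm{CH}_\bullet(A),b,B),
$$
where $\mathrm{CH}(p)$ is induced by the DG algebra quasi-isomorphism $p\colon\Omega(A^{\ac})\to A$ of Proposition \ref{prop:qisofcobarresl} and $\id\otimes\iota$ is the Jones--McCleary map attached to the coalgebra quasi-isomorphism $\iota\colon A^{\ac}\to\mathrm{B}\Omega(A^{\ac})$. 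Passing to total complexes already yields $\mathrm{CC}_\bullet(A^{\ac})\xrightarrow{\simeq}\mathrm{CC}_\bullet(\Omega(A^{\ac}))\xrightarrow{\simeq}\mathrm{CC}_\bullet(A)$, so what remains is to replace each $\mathrm{CC}_\bullet$ by $\mathrm{CC}_\bullet^\lambda$.

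For this I would invoke, for each of $A$, $\Omega(A^{\ac})$ and $A^{\ac}$, the chain of natural comparison maps relating the $(b,B)$-total complex, the cyclic bicomplex \eqref{def:totalcomplex}, and the Connes cyclic complex. For unital DG algebras this is Loday \cite[\S2.1.11]{Loday}: projection of the cyclic bicomplex onto its $0$-th column is a quasi-isomorphism onto $\mathrm{CC}_\bullet^\lambda$ because the auxiliary $b'$-columns are acyclic---here characteristic zero enters, to contract the norm complexes $\cdots\xrightarrow{1-t}(-)^{\otimes n+1}\xrightarrow{N}(-)^{\otimes n+1}\xrightarrow{1-t}\cdots$---and the cyclic bicomplex is in turn quasi-isomorphic to the $(b,B)$-bicomplex. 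The same argument, applied to the dual cyclic bicomplex on the right of \eqref{def:totalcomplex}, treats the co-unital co-nilpotent coalgebra $A^{\ac}$. Since every map in these chains is natural in the (DG) algebra, respectively coalgebra, and $\iota$ is a genuine coalgebra morphism, the zigzag above fits into a commuting ladder whose rungs are these comparison quasi-isomorphisms; two-out-of-three then produces quasi-isomorphisms $\mathrm{CC}_\bullet^\lambda(A^{\ac})\xrightarrow{\simeq}\mathrm{CC}_\bullet^\lambda(\Omega(A^{\ac}))\xrightarrow{\simeq}\mathrm{CC}_\bullet^\lambda(A)$, the second of which is simply $\mathrm{CC}^\lambda(p)$, the functoriality of the Connes complex applied to $p$. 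This is exactly \eqref{qis:Koszuldualityforalg}.

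The step needing the most care is checking that Loday's comparison really goes through on the coalgebra side. The complexes involved there are unbounded---$\Omega(A^{\ac})$ is an infinite direct sum and the bicomplexes extend infinitely---so the acyclicity of the $b'$-columns and the cyclic-bicomplex-versus-$(b,B)$-bicomplex comparison must be organized through a convergent spectral sequence or filtration argument rather than a finite diagram chase. Co-nilpotence of $A^{\ac}$ together with its internal weight grading ensures that the relevant filtrations are exhaustive and, in each fixed total degree, bounded, so convergence is not an issue; compatibility of the comparison maps with $\id\otimes\iota$ is then automatic. A fully detailed account of this coalgebra bookkeeping is in \cite{CYZ}; what we need here is only the naturality of the by-now standard comparisons.
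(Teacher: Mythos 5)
Your proposal is correct and takes essentially the same route as the paper: both combine the mixed-complex quasi-isomorphisms of Proposition \ref{prop:qisforKoszul} with Loday's \S2.1.11 comparison of the $(b,B)$-total complex, the cyclic bicomplex \eqref{def:totalcomplex}, and the Connes cyclic complex, noting that the same argument applies to co-unital coalgebras, and then transporting the zigzag along these comparisons. Your extra remarks about co-nilpotence and convergence of the filtration on the coalgebra side supply detail the paper leaves implicit, but the underlying argument is identical.
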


For an algebra $A$ or a coalgebra $C$,
one can also consider 
their cyclic cochain complexes,
which are the linear dual complexes
of the cyclic complexes.
We denote them by $\mathrm{CC}^\bullet_\lambda(A)$
and $\mathrm{CC}^\bullet_\lambda(C)$
respectively.
In particular,
if $A^{!}$ is 
a finite dimensional algebra
and $A^{\ac}$ is its linear dual
coalgebra,
then 
$\mathrm{CC}_\bullet^\lambda(A^{\ac})
\cong
\mathrm{CC}_\lambda^\bullet(A^!)$
canonically. In particular,
if furthermore $A^!$ is the Koszul dual
algebra of an algebra $A$, 
then combining
it with \eqref{qis:Koszuldualityforalg}
we get 
quasi-isomorphisms
\begin{equation}
\mathrm{CC}^\lambda_\bullet(A)
\stackrel{\eqref{qis:Koszuldualityforalg}}
\simeq
\mathrm{CC}_\bullet
^\lambda(A^{\ac})
\cong
\mathrm{CC}_\lambda^\bullet(A^!).
\end{equation}

\begin{remark}[Periodic and negative cyclic
homology]\label{rem:negativecyclic}
We may extend the cyclic complex of $A$ given in
\eqref{def:totalcomplex}
to be a complex
in the full first and second quadrant,
where the left-most column is placed
on the $y$-axis.
The associated homology is called
the {\it periodic cyclic homology} of $A$.
The sub-chain complex
in the second quadrant (including
those terms on the $y$-axis), denoted
by $\mathrm{CC}_\bullet^-(A)$,
is called
the {\it negative cyclic complex} of $A$,
and its homology is called the
{\it negative cyclic homology} of $A$. 
\end{remark}

\subsection{Koszul Calabi-Yau algebras}
\label{subsect:KoszulCY}

In this subsection, we study
with some details a special class
of Koszul algebras, the Koszul Calabi-Yau algebras.
The notion of Calabi-Yau algebra is introduced
by V. Ginzburg in \cite{Ginzburg2}. It has been
widely studied in recent years.

\begin{definition}[\text{\cite[\S3.2.3]{Ginzburg2}}]\label{def:CYalg}
For an associative $k$-algebra $A$, it is called an \emph{$n$-Calabi-Yau algebra}
(or \emph{Calabi-Yau algebra of dimension $n$})
if it is homologically smooth
and
$$
\mathrm{RHom}_{A\otimes A^{op}}(A, A\otimes A)\cong
A[n]
$$
in the derived category 
$D(A\otimes A^{op})$ of $A$-bimodules.
\end{definition}

\subsubsection{Non-commutative 
Poincar\'e duality}
\label{subsubsect:PD}

For a Calabi-Yau algebra $A$ of dimension $n$, there is a class $\omega\in\mathrm{HH}_n(A)$ with
\begin{equation}\label{eq:VdBduality}\mathrm{HH}^\bullet(A)\stackrel{\cap\omega} \longrightarrow\mathrm{HH}_{n-\bullet}(A)
\end{equation} 
is an isomorphism, where $\cap\omega$ means
the cap product.
It is called the non-commutative Poincar\'e
duality of $A$, due to Van den Bergh.
In the case of Koszul Calabi-Yau algebras,
one can write down the volume class $\omega$ explicitly. 

Let us start with the following proposition,
which is proved in \cite{Ginzburg2}:

\begin{proposition}\label{prop:PDofCY}
If $A$ is a Koszul Calabi-Yau algebra of dimension $n$, then its Koszul dual algebra
$A^!$ is a graded Frobenius algebra of degree $n$.
\end{proposition}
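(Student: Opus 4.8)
The plan is to show that the graded algebra $A^! = \mathrm{Ext}_A^\bullet(k,k)$ carries a non-degenerate, graded-symmetric (up to the sign dictated by degree $n$) bilinear pairing, and that this pairing is compatible with the multiplication in the Frobenius sense. The starting point is the identification $A^! \cong \mathrm{Ext}_A^\bullet(k,k)$ together with the Koszul complex \eqref{eq:Koszulcomplex}, which exhibits $A \otimes_k A^{\ac}$ as a minimal free resolution of $k$; dualizing, one gets a minimal free resolution of $k$ as a right $A$-module. First I would use the Calabi-Yau condition $\mathrm{RHom}_{A\otimes A^{op}}(A, A\otimes A) \cong A[n]$ to produce, for the trivial module $k$, an isomorphism $\mathrm{RHom}_A(k, A) \cong k^{\vee}[-n]$ in the derived category of $A$-modules (here one uses homological smoothness to move from the bimodule statement to the one-sided statement, tensoring the bimodule resolution with $k$). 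Passing to $\mathrm{Ext}$ and using that $A$ is non-negatively graded connected, this yields $\mathrm{Ext}_A^i(k,k) \cong \mathrm{Ext}_A^{n-i}(k,k)^{\vee}$ as graded vector spaces, i.e. $A^!_i \cong (A^!_{n-i})^{\vee}$, so in particular $A^!$ is finite-dimensional and concentrated in degrees $0,\dots,n$ with $A^!_n \cong k$.

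The second step is to promote this vector-space duality to a Frobenius structure, i.e. to exhibit a linear functional $\lambda: A^! \to k$, supported in degree $n$, such that the pairing $(x,y) \mapsto \lambda(xy)$ is non-degenerate. The natural candidate for $\lambda$ is the composite of the projection $A^! \twoheadrightarrow A^!_n$ with the chosen isomorphism $A^!_n \cong k$. Non-degeneracy of $\langle x, y\rangle := \lambda(xy)$ follows from the fact that the derived-category isomorphism above is one of \emph{right} $A$-modules, hence the Yoneda/composition product on $\mathrm{Ext}_A^\bullet(k,k)$ is intertwined with the duality pairing; concretely, for $0 \neq x \in A^!_i$ there is $y \in A^!_{n-i}$ with $xy \neq 0$ in $A^!_n$ because otherwise $x$ would be annihilated by the entire positive-degree part, contradicting that the duality pairing $A^!_i \otimes A^!_{n-i} \to A^!_n \cong k$ (which \emph{is} the multiplication by the Koszulity of $A$ — the Koszul dual algebra has its Ext-pairing computed directly from \eqref{eq:Koszulcomplex}, where $A^{\ac}_n = \bigcap (sV)^{\otimes i}\otimes (s^2R)\otimes (sV)^{\otimes j}$ and the comultiplication of $A^{\ac}$ dualizes to the multiplication of $A^!$) is non-degenerate. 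Graded-symmetry of the pairing, $\langle x,y\rangle = (-1)^{|x||y|}\langle y,x\rangle$, is then read off from the fact that the Calabi-Yau duality isomorphism can be chosen symmetric (Van den Bergh's $\omega \in \mathrm{HH}_n(A)$ is invariant under the canonical involution), which is exactly what "Frobenius algebra of degree $n$" requires.

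I expect the main obstacle to be the bookkeeping in the second step: carefully tracking how the Calabi-Yau isomorphism of \emph{bi}modules descends to a \emph{graded-symmetric} pairing on $\mathrm{Ext}_A^\bullet(k,k)$ compatible with the \emph{algebra} (Yoneda) structure, rather than merely a vector-space isomorphism $A^!_i \cong (A^!_{n-i})^\vee$. The cleanest route is probably to observe that, for a Koszul algebra, $\mathrm{Ext}_A^\bullet(k,k)$ with its Yoneda product is literally $A^{\ac\,\vee}$ with the product dual to the deconcatenation coproduct, so that the duality $A^!_i \cong (A^!_{n-i})^\vee$ is automatically realized by a pairing of the form $x\otimes y \mapsto \lambda(xy)$ once one knows $A^!_n \cong k$ and that multiplication $A^!_i \otimes A^!_{n-i} \to A^!_n$ is a perfect pairing — and this last perfectness is precisely the statement dual to: the Koszul dual \emph{coalgebra} $A^{\ac}$ has $A^{\ac}_n$ one-dimensional with the iterated reduced coproduct $A^{\ac}_n \to A^{\ac}_i \otimes A^{\ac}_{n-i}$ non-degenerate, which in turn is equivalent to the Calabi-Yau property of $A$ via the explicit volume form $\omega$ built from a generator of $A^{\ac}_n$. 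I would cite \cite{Ginzburg2} for the equivalence of the Calabi-Yau condition with this one-dimensionality and self-duality of $A^{\ac}_n$, and spell out only the passage from there to the Frobenius axioms.
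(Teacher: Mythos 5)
Your proposal follows essentially the same route as the paper's own (sketch) proof: both use the Koszul bimodule resolution $A\otimes A^{\ac}\otimes A$, apply the Calabi--Yau isomorphism $\mathrm{RHom}_{A\otimes A^{op}}(A,A\otimes A)\cong A[n]$ to obtain $A^{\ac}_i\cong A^!_{n-i}$ (equivalently $A^!_i\cong (A^!_{n-i})^{\vee}$ with $A^!_n\cong k$), and then define the Frobenius pairing as multiplication followed by evaluation against the top-degree functional $\omega^{\ac}$. The extra detail you supply on non-degeneracy and graded symmetry only elaborates points the paper's sketch leaves implicit, so there is no substantive difference in approach.
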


Recall that an algebra $A^!$ is called a {\it Frobenius algebra}
if there is a bilinear, symmetric
non-degenerate pairing $\langle-,-\rangle: A^!\times A^!\to k$
such that
\begin{equation}
\langle a, b\cdot c\rangle=\langle a\cdot b, c\rangle,\quad\mbox{for all}\,
a,b\in A^{\ac}.\label{FrobA}
\end{equation}
A graded Frobenius algebra of degree $n$ means
a graded Frobenius algebra with the pairing of degree $n$, \ie the degree of the output is lowered by $n$ compared with the inputs.

\begin{proof}[Sketch proof
of Proposition~\text{\rm \ref{prop:PDofCY}}]
Since $A$
is Koszul, it is straightforward that
$$
A\otimes A^{\ac}_n
\otimes A\longrightarrow
\cdots\longrightarrow
A\otimes A^{\ac}_0\otimes A
\stackrel{\mu}\longrightarrow A
$$
is a resolution of $A$
as $A$-bimodules. Here
$A^{\ac}_0\cong k$
and $\mu: A\otimes
A^{\ac}_0\otimes A\cong
A\otimes A\to A$
is the product map.
Now since $\mathrm{RHom}_{A\otimes A^{op}}(A,
A\otimes A)$
is isomorphic to
$$
A\otimes A^!_0
\otimes A\longrightarrow
\cdots\longrightarrow
A\otimes A^!_n,
$$
the isomorphism
$\mathrm{RHom}_{A\otimes A^{op}}(A,
A\otimes A)\cong A[n]$
implies that
$A^{\ac}_i\cong A^!_{n-i}$. 
Moreover, the preimage of $1\in A_0^!$, denoted by $\omega^{\ac}\in A^{\ac}_n$, gives inner product on $A^!$: $$(u,v)\mapsto \omega^{\ac}(u\cup v),\quad \mbox{for all}\, u, v\in A^!.$$ This gives the Frobenius algebra structure on $A^!$.
\end{proof}

Moreover,
since $\mathrm{HH}_\bullet(A)\cong
\mathrm{HH}_\bullet(A^{\ac})$,
the preimage of $\omega^{\ac}$
under this isomorphism
is exactly the volume class
for the Poincar\'e duality 
\eqref{eq:VdBduality}
of $A$.

Now if $A^!$ is 
a graded Frobenius algebra of degree $n$, 
the its linear dual,
denoted by $A^{\ac}$, 
is a graded coalgebra, and moreover 
$A^{\ac}$
has an induced product from the product on
$A^!$ via the non-degenerate pairing. 
Let $\bullet$ and $\Delta$ denote
the product and coproduct of $A^{\ac}$ 
respectively.
Then by \eqref{FrobA}, one obtains
\begin{equation}\label{eq:coproduct}
\Delta(a\bullet b)=a\bullet\Delta(b)=\Delta(a)\bullet b,\quad\mbox{for all}\,
a,b\in A^{\ac}.
\end{equation}
In the following, 
we call $A^{\ac}$
a {\it co-Frobenius coalgebra}
for convenience. 

\subsubsection{Some 
generalizations}\label{subsect:gen}

Calabi-Yau algebras and Frobenius
algebras are closed related
to the so-called Calabi-Yau categories.
The notion of Calabi-Yau categories
were probably first introduced by 
Kontsevich (\cite{Kon},
see also \cite{Cos}).
Originally by a Calabi-Yau category,
or more generally, a Calabi-Yau
$A_\infty$-category,
people mean a category
with a cyclic non-degenerate pairing
on its morphism space.
Later various new
Calabi-Yau structures have been
introduced; see, for example,
\cite{BraDyc1,Gan,KoTaVl}.
In the following, we 
adopt the notion which is most
relevant to us.

\begin{definition}\label{def_Ainfty}
An \emph{$A_\infty$ category} $\mathcal A$ over $k$ consists of a set of objects $\mathrm{Ob}(\mathcal A)$,
a graded $k$-vector space
$\mathrm{Hom}(A_1,A_2)$ for each pair of objects $A_1,A_2\in\mathrm{Ob}(\mathcal A)$, and a sequence of
multilinear maps for $n=1,2,\ldots$
$$m_n: \mathrm{Hom}(A_{n},A_{n+1})
\otimes\cdots\otimes 
\mathrm{Hom}(A_2, A_3)\otimes \mathrm{Hom}
(A_1,A_2)\to \mathrm{Hom}(A_1,A_{n+1}), $$
with degree $|m_n|=2-n$ and satisfying the following $A_\infty$ relations:
\begin{equation}\label{higher_htpy}
\sum_{p=1}^{n}\sum_{k=1}^{n-p+1}(-1)^{\mu_{p}}
m_{n-k+1}(a_n,\cdots,a_{p+k},
m_{k}(a_{p+k-1},\cdots,a_{p}),
a_{p-1},\cdots,a_1)=0,
\end{equation}
where $a_i\in\mathrm{Hom}(A_{i},A_{i+1})$, 
for $i=1,2,\ldots,n$, and
$\mu_{p}=|a_1|+\cdots+|a_{p-1}| - (p-1).$
\end{definition}

In the above definition,
if all $m_n$ ($n\ge 3$) vanish,
then $\mathcal A$ is the usual
DG category; if furthermore,
the category has only one object,
then it is nothing but a DG algebra.

For an $A_\infty$ category, we can
define its Hochschild homology
and cyclic homology similarly
to the case of algebras,
by encapsulating those
higher homotopy operators $m_n$.
The interested reader may refer
to the above cited papers for more
details.

An $A_\infty$ category $\mathcal A$
is called {\it proper} if for any two
objects $X$ and $Y$, $\mathrm{H}^\bullet(
\mathrm{Hom}_{\mathcal A}(X,Y))$
is finite dimensional.
It is called {\it homologically smooth}
if the diagonal $\mathcal{A}$-bimodule
$\mathcal A_\Delta$,
which assigns to any two objects
$X$ and $Y$ the chain complex
$\mathcal A_\Delta(X,Y):=\mathrm{Hom}_{\mathcal A}(Y, X)$,
is a perfect $\mathcal A$-bimodule.

\begin{definition}[{Proper Calabi-Yau category, see \cite[Definitions 6.1 \& 6.3]{Gan}}]
(1) Let $\mathcal A$ be a proper
$A_\infty$-category.
It is called {\it weak proper $n$-Calabi-Yau} if
there is a chain map
$\mathrm{tr}:\mathrm{CH}_n(\mathcal A)
\to k$
such that for any two objects $X$ and $Y$
in $\mathcal A$,
the composition
$$
\mathrm{H}^\bullet(
\mathrm{Hom}_{\mathcal A}(X,Y))
\otimes
\mathrm{H}^{n-\bullet}(
\mathrm{Hom}_{\mathcal A}(Y,X))
\stackrel{[m_2]}\longrightarrow
\mathrm{H}^n(
\mathrm{Hom}_{\mathcal A}(Y,Y))
\stackrel{[i]}\longrightarrow
\mathrm{H}^\bullet(\mathcal A)
\stackrel{[tr]}\longrightarrow k
$$
is a perfect pairing,
where $[i]$
is induced from the natural inclusion
$i:\mathrm{Hom}_{\mathcal A}(Y,Y)\to
\mathrm{CH}_\bullet(\mathcal A)$.

(2) If $\mathcal A$ is a weak proper Calabi-Yau 
category, then it is called {\it (strong) proper Calabi-Yau}
if there is a chain map
$\widetilde{\mathrm{tr}}: \mathrm{CC}_n(\mathcal A)\to k$
such that $\mathrm{tr}=\widetilde{\mathrm {tr}}\circ\mathrm{pr}$, where
$\mathrm{pr}:\mathrm{CH}_\bullet(\mathcal A)
\to\mathrm{CC}_\bullet(\mathcal A)$ is the natural projection (see 
\eqref{def:totalcomplex}).
\end{definition}

\begin{definition}[Smooth Calabi-Yau category]
(\cite[Definition 6.5]{Gan})
(1) Let $\mathcal A$ be a homologically smooth
$A_\infty$-category.
A cycle $\omega\in\mathrm{CH}_{n}(\mathcal A)$
is said to be a {\it weak smooth Calabi–Yau structure} if the cap product
with $\omega$
induces an isomorphism on homology level:
$$[\cap\omega]:
\mathrm{HH}^\bullet(\mathcal A)
\cong\mathrm{HH}_{n-\bullet}(\mathcal A)
$$

(2) If $\mathcal A$ is a weak smooth Calabi-Yau
category, then it is called {\it (strong) smooth Calabi-Yau} if there is a cycle $\widetilde{\omega}
\in\mathrm{CC}_n^{-}(\mathcal A)$
such that $\omega=\iota(\widetilde{\omega})$,
where $\iota:\mathrm{CC}_\bullet^{-}(\mathcal A)
\to \mathrm{CH}_\bullet(\mathcal A)$
maps a negative cyclic chain
to its right-most column as described in
Remark \ref{rem:negativecyclic}.
\end{definition}

It is quite straightforward
to check that a Frobenius
algebra is a proper Calabi-Yau
category
with one object, while
a Koszul Calabi-Yau algebra
is a smooth Calabi-Yau
category with one object.

Holstein-Lazarev introduced
in \cite{HolLaz}
the notion of
Koszul duality
for categories.
Given a (small) category $\mathcal A$,
its Koszul dual is a coalgebra.
Roughly speaking,
the Koszul dual coalgebra
$\mathcal A^{\ac}$
is the bar construction
of the Hom-space
of $\mathcal A$ over
$k[\mathrm{Ob}(\mathcal A)]$,
which is denoted by 
$\mathrm{B}\mathcal A$;
moreover, they showed
that the functor
$\mathcal A\to\mathcal A^{\ac}$
passes to homotopy categories,
which is an equivalence.
Thus we may view any
coalgebra which is quasi-isomorphic
to $\mathcal A^{\ac}$
as the Koszul dual of $\mathcal A$.
The following result then generalizes Proposition 
\ref{prop:qisinKoszulCY}.

\begin{proposition}
Suppose $\mathcal A$ is 
small DG category,
and denote is Koszul dual 
coalgebra by
$\mathcal A^{\ac}$. 
Then there is 
a quasi-isomorphism
$$\mathrm{CC}_\bullet^\lambda(\mathcal A^{\ac})\stackrel{\simeq}\longrightarrow
\mathrm{CC}_\bullet^\lambda(\mathcal A).$$
\end{proposition}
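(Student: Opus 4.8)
The plan is to reduce the categorical statement to the algebra-versus-coalgebra comparison already established in Proposition \ref{prop:qisinKoszulCY}, by replacing $\mathcal A$ with a single connected DG algebra. Recall from the discussion before the proposition that the Koszul dual coalgebra $\mathcal A^{\ac}$ is defined as the bar construction $\mathrm B\mathcal A$ of the Hom-complex of $\mathcal A$ over the semisimple ring $R:=k[\mathrm{Ob}(\mathcal A)]$, and dually that $\mathcal A$ is recovered (up to quasi-isomorphism) as the cobar construction $\Omega(\mathcal A^{\ac})$ relative to $R$. So the first step is to assemble the obvious analogue of the diagram \eqref{qis:twoqis}: one leg is the $R$-linear Jones--McCleary comparison $\id\otimes\iota:(\mathrm{CH}_\bullet(\mathcal A^{\ac}),b,B)\to(\mathrm{CH}_\bullet(\Omega(\mathcal A^{\ac})),b,B)$, where $\iota:\mathcal A^{\ac}\to\mathrm B\Omega(\mathcal A^{\ac})$ is the coalgebra extension of the universal twisting map, and the other leg is $\mathrm{CH}(p)$ induced by the counit quasi-isomorphism $p:\Omega(\mathcal A^{\ac})=\Omega(\mathrm B\mathcal A)\to\mathcal A$. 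Both statements go through verbatim over the ground ring $R$ instead of $k$: the Jones--McCleary argument only uses that $\iota$ is a filtered quasi-isomorphism of mixed complexes, and $\mathrm{CH}(p)$ is a quasi-isomorphism on mixed complexes by the same spectral-sequence argument cited in the proof of Proposition \ref{prop:qisforKoszul} (the relevant filtration on the Hochschild complex is $R$-linear and the associated graded compares the homologies of $\Omega(\mathcal A^{\ac})$ and $\mathcal A$ in each Hom-component).

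Next I would pass from the mixed-complex quasi-isomorphism to the Connes cyclic complexes. As recorded after \eqref{def:totalcomplex}, Loday's comparison between the $(b,B)$-total complex, the cyclic bicomplex \eqref{def:totalcomplex}, and the Connes complex $\mathrm{CC}_\bullet^\lambda$ holds for any unital algebra, and by the same argument for any co-unital coalgebra; the only input is the existence of a unit (resp. the vanishing of the relevant norm-map cohomology), which in the $R$-linear setting just becomes the statement that each $\mathrm{Hom}_{\mathcal A}(X,Y)$ is unital over the idempotents $e_X$. Applying this on both ends of the zig-zag and composing, one obtains a quasi-isomorphism $\mathrm{CC}_\bullet^\lambda(\mathcal A^{\ac})\xrightarrow{\simeq}\mathrm{CC}_\bullet^\lambda(\Omega(\mathcal A^{\ac}))\xrightarrow{\simeq}\mathrm{CC}_\bullet^\lambda(\mathcal A)$, which is exactly the claimed map. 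Alternatively, and perhaps more cleanly, I would invoke that cyclic homology is an invariant of the DG category up to Morita/derived equivalence: by Holstein--Lazarev \cite{HolLaz} the cobar-bar unit $\mathcal A\to\Omega\mathrm B\mathcal A=\Omega(\mathcal A^{\ac})$ is a quasi-equivalence, hence induces an isomorphism on (Connes) cyclic homology, and the remaining identification $\mathrm{CC}_\bullet^\lambda(\mathcal A^{\ac})\simeq\mathrm{CC}_\bullet^\lambda(\Omega(\mathcal A^{\ac}))$ is the coalgebra-side Jones--McCleary comparison again.

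The main technical obstacle is bookkeeping the ground ring $R=k[\mathrm{Ob}(\mathcal A)]$ throughout: the bar and cobar constructions, the Hochschild and cyclic complexes, and the twisting cochain $\iota$ all have to be taken relative to $R$, and one must check that tensor products are over $R$ in the right places so that the reduced coproduct $\overline\Delta$ and the cyclic/norm operators $1-t$, $N$ are well defined on $\bigoplus_n (\mathcal A^{\ac})^{\otimes_R n+1}$. This is entirely routine when $\mathrm{Ob}(\mathcal A)$ is finite (or when one works with the complete/pro version for infinite object sets, as Holstein--Lazarev do), since $R$ is then a finite product of copies of $k$ and every $R$-module is projective; the Koszul-sign conventions of Definition \ref{def:Hochschild} are unaffected. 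Thus the proof amounts to running the proof of Proposition \ref{prop:qisforKoszul} and the subsequent comparison with the Connes complex word for word over $R$, together with the quasi-equivalence $\mathcal A\simeq\Omega(\mathcal A^{\ac})$ supplied by Holstein--Lazarev; I would write it as a short ``same argument as Proposition \ref{prop:qisinKoszulCY}, working over $R=k[\mathrm{Ob}(\mathcal A)]$'' proof rather than repeating the diagrams.
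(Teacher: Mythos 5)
Your proposal is correct and follows essentially the same route as the paper: cite Holstein--Lazarev for the quasi-equivalence $\Omega\mathrm B\mathcal A\simeq\mathcal A$, then rerun the zig-zag of Proposition~\ref{prop:qisforKoszul} with $A^{\ac}$ replaced by $\mathrm B\mathcal A$ and $\Omega(A^{\ac})$ by $\Omega\mathrm B\mathcal A$. You are more explicit than the paper about working relative to $R=k[\mathrm{Ob}(\mathcal A)]$ and about the passage from mixed complexes to Connes complexes, but these are elaborations of the same argument, not a different one.
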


\begin{proof}
In \cite[Lemma 3.39]{HolLaz}, 
the authors showed that
there is a quasi-equivalence
$\Omega\mathrm B\mathcal A\simeq\mathrm A$.
In the proof of Proposition
\ref{prop:qisforKoszul},
replacing
$A^{\ac}$ by $\mathrm B\mathcal A$, 
and replacing 
$\Omega(A^{\ac})$ by
$\Omega\mathrm B\mathcal A$, 
we get the desired quasi-isomorphism.
\end{proof}

There are many examples of
Calabi-Yau algebras which
are Koszul in the sense
of Definition 
\ref{def:Koszulalg},
such as the 3- and 4-dimensional
Sklyanian algebras.
In what follows, we give two
examples of Koszul Calabi-Yau algebras
in the above broader sense.
In \S\ref{sect:application}
we shall give more examples
of Calabi-Yau algebras/categories
and their applications.

\begin{example}[Universal enveloping
algebras]
Suppose $\mathfrak g$ is a Lie algebra, then the universal enveloping algebra $U(\mathfrak g)$ is Koszul in the above sense (strictly speaking it is {\it linear-quadratic Koszul}; see \cite{LoVa}), and its Koszul dual differential graded coalgebra is exactly the Chevalley-Eilenberg complex $(\mathrm{CE}_\bullet(\mathfrak g), d)$. 
The top chain of the Chevalley-Eilenberg complex is a cycle if and only if $\mathrm{Tr}(\mathrm{ad}(g))=0$ for all $g\in\mathfrak g$. 
A Lie algebra satisfying this property is called {\it unimodular}, such as abelian Lie algebras, semi-simple Lie algebras, Heisenberg Lie algebras, and the Lie algebra of compact groups. 
He, Van Oystaeyen, and Zhang proved in (\cite[Theorem 5.3]{HVZ}) that for an $n$-dimensional Lie algebra $\mathfrak{g}$, its universal enveloping algebra $U(\mathfrak g)$ is $n$-Calabi-Yau if and only if $\mathfrak g$ is unimodular.
\end{example}

\begin{example}[Yang-Mills algebras]
Yang-Mills algebras were introduced by Connes and Dubois-Violette (\cite{CDV}).
An algebra is called a \emph{Yang-Mills algebra} of $n$ generators and denoted by
$\mathrm{YM}(n)$, if it is generated by
the elements $x_i$ ($i\in\{ 1,\cdots,n\}$) with the following relations:
$$
g^{ij}[x_i,[x_j,x_l] ] =0, \, l\in \{ 1,2,\cdots,n\},
$$
where $(g^{ij})$ is a symmetric invertible $n\times n$-matrix.
In loc. cit. the authors also showed
that for all $n$, 
$\mathrm{YM}(n)$ is Koszul
in the above sense, and its Koszul
dual algebra only has 
$m_2$ and $m_3$, and all other $m_n$'s
vanish. Later,
Berger-Taillefer 
(\cite[Proposition 4.5]{BT}) proved 
$\mathrm{YM}(n)$ is $3$-Calabi-Yau.

We next give a description of the 
proper
$A_\infty$ structure on $\mathrm{YM}(n)^!$.
For simplicity, we take $(g^{ij})$ to be 
the identity matrix, and
denote $V=\mathrm{Span}_k\{x_1,x_2,\cdots,x_n\}$.
Let
$$
\begin{array}{lll}
\mathrm{YM}(n)^{\vee}_0=k1,& \mathrm{YM}(n)^{\vee}_1=V^*,&\mathrm{YM}(n)^{\vee}_2=\bigoplus_{i,j=1}^n kx_i^*x_j^*,\\
\mathrm{YM}(n)^{\vee}_3=\bigoplus_{i=1}^n kx_i^*z,
& \mathrm{YM}(n)^{\vee}_4=k z^2,&\mathrm{YM}(n)^{\vee}_i=0 \quad(i>4),
\end{array}
$$
where
$z=\sum_{i=1}^n (x_i^*)^2$.
Then the Koszul dual algebra of $\mathrm{YM}(n)$ is given as follows (see \cite[Proposition 1]{CDV}):
\begin{eqnarray*}
\mathrm{YM}(n)^{!}_0=\mathrm{YM}(n)^{\vee}_0,&&
\mathrm{YM}(n)^{!}_1=\mathrm{YM}(n)^{\vee}_1,\\
\mathrm{YM}(n)^{!}_2=\mathrm{YM}(n)^{\vee}_3,&&
\mathrm{YM}(n)^{!}_3=\mathrm{YM}(n)^{\vee}_4,
\end{eqnarray*}
The $A_\infty$ operators $m_2$ and $m_3$ are given as follows:
denoting
the above identification by $\phi:
\mathrm{YM}(n)^{!}_i\stackrel{\cong}{\to} \mathrm{YM}(n)^{\vee}_j,$
then
for $f_1, f_2, f_3\in\mathrm{YM}(n)^!$,
\begin{equation*}
m_2(f_1,f_2)=
\left\{
\begin{array}{cl}
\phi^{-1}(\phi(f_1)\cdot\phi(f_2)),
&\mbox{if}\; \phi(f_1)\cdot\phi(f_2)\notin \mathrm{YM}(n)_2^{\vee},\\
0,&\mbox{otherwise,}
\end{array}
\right.
\end{equation*}
and
\begin{equation*}
m_3(f_1,f_2,f_3)=
\left\{
\begin{array}{cl}
\phi^{-1}(\phi(f_1)\cdot\phi(f_2)\cdot\phi(f_3)),&\mbox{if}\;  \phi(f_1)\cdot\phi(f_2)\cdot\phi(f_3)\notin \mathrm{YM}(n)_2^{\vee},\\
0,&\mbox{otherwise.}
\end{array}
\right.
\end{equation*}
The non-degenerate pairing on $\mathrm{YM}(n)^{!}$ is
the coefficient of $z^2$
in the corresponding product.
\end{example}

\section{The Loday-Quillen-Tsygan
isomorphism}\label{sect:LQT}

In this section, we recall Loday-Quillen-Tsygan's isomorphism for DG algebras, and then propose its coalgebra analogue (the latter isomorphism has also been obtained by Kaygun \cite{Kay}). Our main result is Theorem~\ref{cor:isoHmlgyofLiecylic} and the main reference is Loday's book \cite{Loday}.

\subsection{The isomorphism for algebras}

Let $A$ be a (DG) associative algebra. Let $M_r(A)$ be the set of $r\times r$ matrices with entries in $A$. $M_r(A)$ has a natural Lie algebra structure given by the commutators, and moreover, $M_r(A)$ embeds in $M_{r+1}(A)$ in the upper-left diagonal as Lie algebras. Let $$\mathfrak{gl}(A):={\lim_{\longrightarrow}}_rM_r(A).$$ 
It has a sub-Lie algebra $\gl(k)$.

In the following, for a Lie algebra $\mathfrak{g}$, we write $\mathrm{CE}_\bullet(\mathfrak{g})$ to be its Chevalley-Eilenberg chain complex. By \cite[10.1.8 \& 10.2.9]{Loday}, there exists a decomposition of chain complex
\begin{equation}\label{iso:decompCE1}
\mathrm{CE}_\bullet
(\mathfrak{gl}(A))
\cong
(\mathrm{CE}_\bullet
(\mathfrak{gl}(A)))_{\mathfrak{gl}(k)}\bigoplus
L_*,
\end{equation}
where $L_*$ is an acyclic complex.
Furthermore,
\begin{equation}\label{iso:decompCE2}
(\mathrm{CE}_\bullet
(\mathfrak{gl}(A)))_{\mathfrak{gl}(k)}
\cong\bigoplus_n (k[S_n]\otimes
A^{\otimes n})_{S_n}.
\end{equation}
Let $U_n\subset S_n$ be the conjugacy
class of the cycle $\tau_n = (12\cdots n).$ 
Then we have:

\begin{proposition}[{\cite[Proposition 10.2.16]{Loday}}]
\label{prop:primitive}
The above $\bigoplus_n (k[S_n]\otimes A^{\otimes n})_{S_n}$ has a DG commutative and cocommutative Hopf algebra structure. The set of primitive elements is $$\bigoplus_n (k[U_n]\otimes A^{\otimes n})_{S_n},$$ which is further isomorphic to the cyclic chain complex $\mathrm{CC}_\bullet^\lambda(A)$.
\end{proposition}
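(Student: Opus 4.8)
The plan is to make the decompositions \eqref{iso:decompCE1}--\eqref{iso:decompCE2} completely explicit, to recognise the resulting space as a free graded-commutative algebra on the Connes cyclic complex, and then to use the fact that such an algebra over a field of characteristic zero is automatically an abelian Hopf algebra whose primitives are exactly its generators. Concretely I would carry this out in three steps: a combinatorial identification by cycle decomposition; an identification of the space of algebra generators with $\CC_\bullet^\lambda(A)$; and a verification that the Chevalley--Eilenberg differential is the derivation extension of the Connes boundary~$b$.

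\emph{The cycle-decomposition isomorphism.} Decomposing $\sigma\in S_n$ into disjoint cycles exhibits $k[S_n]$, as an $S_n$-set under conjugation, in terms of cyclic words on $\{1,\dots,n\}$; tensoring with $A^{\otimes n}$ and passing to $S_n$-coinvariants, where the group permutes the tensor factors with the Koszul sign, turns a decorated permutation into an unordered collection of cyclic $A$-words. I would record this as a grading-preserving isomorphism
\[
\bigoplus_n\big(k[S_n]\otimes A^{\otimes n}\big)_{S_n}\;\cong\;\mathbf{\Lambda}^\bullet\Big(\bigoplus_{\ell\ge1}\big(k[U_\ell]\otimes A^{\otimes\ell}\big)_{S_\ell}\Big),
\]
since a multiset of cyclic words is precisely an element of the free graded-symmetric algebra on the space spanned by a single cyclic word, and the Koszul signs in the $S_n$-action are exactly what makes this product graded-commutative. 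Writing $V$ for the generating space, the right-hand side carries its tautological abelian Hopf structure: the product is concatenation of multisets --- equivalently, juxtaposition $S_\ell\times S_m\to S_{\ell+m}$ of decorated permutations, which is also the block-sum map $\mathfrak{gl}(A)\oplus\mathfrak{gl}(A)\to\mathfrak{gl}(A)$ read on $\mathfrak{gl}(k)$-coinvariants; the coproduct is the shuffle coproduct that splits a multiset of cycles in all ways --- equivalently, the map induced by the diagonal Lie map $\mathfrak{gl}(A)\to\mathfrak{gl}(A)\times\mathfrak{gl}(A)$; and the unit, counit and antipode are forced by connectedness. That the primitives are exactly $V$ is the elementary fact that, for the shuffle coproduct on a free graded-commutative algebra over a field of characteristic zero, an element is primitive iff it has word-length one.

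\emph{Identifying the generators.} The centraliser of the standard $\ell$-cycle $\tau_\ell$ in $S_\ell$ is $\langle\tau_\ell\rangle$, so $U_\ell\cong S_\ell/\langle\tau_\ell\rangle$ as $S_\ell$-sets and hence $(k[U_\ell]\otimes A^{\otimes\ell})_{S_\ell}\cong(A^{\otimes\ell})_{\langle\tau_\ell\rangle}=A^{\otimes\ell}/(1-t)$, where $t$ is the cyclic operator of \S\ref{Connescycliccomplex} (with the same Koszul sign). Summing over $\ell\ge1$ identifies $V$, as a graded vector space, with the Connes cyclic complex $\CC_\bullet^\lambda(A)$, up to the degree shift coming from $\mathfrak{gl}(A)$ sitting in Chevalley--Eilenberg degree one --- the shift already visible in~\eqref{iso:LQT}.

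\emph{The differential, and the main obstacle.} It remains to trace the Chevalley--Eilenberg differential of $\mathfrak{gl}(A)$ (together with the internal differential of $A$) through \eqref{iso:decompCE2} and the isomorphism above. Representing a class in $\big(k[S_n]\otimes A^{\otimes n}\big)_{S_n}$ by a wedge of elementary matrices $E_{i_1i_2}(a_1)\wedge E_{i_2i_3}(a_2)\wedge\cdots$, one factor per tensor slot, with the index sets of distinct cycles of $\sigma$ chosen disjoint, and using $[E_{ij}(a),E_{kl}(b)]=\delta_{jk}E_{il}(ab)-\delta_{li}E_{kj}(ba)$, one checks that a bracket of two factors in different cycles vanishes (their index sets are disjoint) and that a bracket of two factors in the same $\ell$-cycle survives only for cyclically adjacent slots (the $\ell$ indices of a cycle are pairwise distinct), collapsing $a_r a_{r+1}$ into a single slot. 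Hence the differential does not mix cycles, so it is a derivation for the product, and its restriction to the single-cycle part $V$ becomes, after passing to $\langle\tau_\ell\rangle$-coinvariants, precisely Connes' boundary on $\CC_\bullet^\lambda(A)$. Granting this, $\bigoplus_n\big(k[S_n]\otimes A^{\otimes n}\big)_{S_n}$ is $\mathbf{\Lambda}^\bullet(\CC_\bullet^\lambda(A))$ (with the shift) equipped with the derivation extension of its differential, which is automatically a DG abelian Hopf algebra whose primitives are $\CC_\bullet^\lambda(A)$, as required. The only genuine work --- and where I expect all the difficulty to be concentrated --- is the sign bookkeeping: fixing the Koszul signs in the $S_n$-action so that the multiset product is graded-commutative, and following signs through the Chevalley--Eilenberg differential so that, cycle by cycle, one lands exactly on $b$ rather than on a rescaling; these verifications are spelled out in \cite{Loday}.
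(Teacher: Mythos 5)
Your proposal is correct and follows essentially the same route as Loday's proof of Proposition 10.2.16 in \cite{Loday}, which is the source the paper cites for this statement without reproducing the argument. The three steps you outline — decomposing $k[S_n]$ by cycle type to exhibit the coinvariant space as $\mathbf{\Lambda}^\bullet$ of the single-cycle part, identifying that part with $(A^{\otimes\ell})_{\mathbb Z_\ell}$ via $U_\ell\cong S_\ell/\langle\tau_\ell\rangle$ (which is exactly the chain of isomorphisms displayed after the proposition in the paper), and then tracing the Chevalley--Eilenberg differential through the identification — are precisely the ingredients of Loday's argument, and your acknowledgement that the remaining content is sign bookkeeping deferred to \cite{Loday} is accurate.
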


In fact, the second isomorphism is given by
\begin{equation}\label{eq:maptocycliccpx}
(k[U_n]\otimes A^{\otimes n})_{S_n}
\stackrel{\cong}{\longrightarrow}
k\otimes_{k[S_n]}k[S_n/\mathbb Z_n]\otimes
A^{\otimes n}
\stackrel{\cong}{\longrightarrow}
k\otimes_{k[\mathbb Z_n]}(k\otimes A^{\otimes n})
\stackrel{\cong}{\longrightarrow}
(A^{\otimes n})_{\mathbb Z_n},
\end{equation}
where $U_n$ is isomorphic to $S_n/\mathbb Z_n$ as $S_n$-sets. This map is compatible with the usual trace map, which we now recall.

\begin{definition}
Let $\theta:
\mathrm{CE}_\bullet(\mathfrak{gl}(A))\to
\mathrm{CC}_{\bullet-1}^\lambda(M(A))$
be
$$\theta(a_0\wedge \cdots\wedge a_n)
=\sum_{\sigma\in S_n}
\mathrm{sgn}(\sigma)
(a_0,a_{\sigma(1)},\cdots,
a_{\sigma(n)})$$
and let
$\mathrm{Tr}:\mathrm{CC}_\bullet^\lambda(M(A))\to\mathrm{CC}_\bullet^\lambda(A)$
be
$$\mathrm{Tr}(a_0,a_1,
\cdots,a_n)
=\sum_{i_0,\cdots, i_n}((a_0)_{i_0i_1},
(a_1)_{i_1i_2},\cdots, (a_n)_{i_ni_0}).
$$
\end{definition}

\begin{theorem}[{Loday-Quillen-Tsygan; see
\cite[Theorem 10.2.4]{Loday}}]
Let $A$ be an associative $k$-algebra. Then $$\mathrm{Tr}\circ\theta: \mathrm{CE}_{\bullet}(\mathfrak{gl}(A)) \to \mathrm{CC}_\bullet^\lambda(A) $$ is a chain homomorphism, which, when restricting on the primitive elements, is an isomorphism, and therefore, it induces an isomorphism $$\mathrm{H}_\bullet( \mathfrak{gl}(A))\stackrel{\cong} \to \mathbf{\Lambda}^\bullet (\mathrm{HC}_\bullet(A)[1]).$$
\end{theorem}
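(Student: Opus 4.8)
\emph{Overview.} The plan is to follow Loday's argument \cite[\S10.2]{Loday}, feeding in the results already recalled above: first establish that $\mathrm{Tr}\circ\theta$ is a morphism of complexes, then identify it on the space of primitives with the canonical map \eqref{eq:maptocycliccpx} after discarding the acyclic summand of \eqref{iso:decompCE1}, and finally invoke the Milnor--Moore structure theorem together with exactness of the graded symmetric power in characteristic zero. For the chain-map claim I would compute directly that $\theta$ intertwines the Chevalley--Eilenberg differential on $\mathrm{CE}_\bullet(\mathfrak{gl}(A))$ with the Hochschild boundary $b$ on $\mathrm{CC}_{\bullet-1}^\lambda(M(A))$: each bracket term $[a_i,a_j]$ occurring in the CE differential, once $\theta$ is applied and the sum over $S_n$ is reorganized, reassembles exactly into the face maps of $b$ on the cyclic complex of the matrix algebra, with signs matching by the Koszul rule. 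Then $\mathrm{Tr}$ is visibly a chain map $\mathrm{CC}_\bullet^\lambda(M(A))\to\mathrm{CC}_\bullet^\lambda(A)$, since it merely contracts matrix indices and the faces of $b$ are compatible with matrix multiplication, while the cyclic relation $1-t$ is preserved because the trace of a cyclically rotated word is unchanged; composing gives the stated chain homomorphism.

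\emph{Reduction to primitives.} Because $\mathrm{Tr}$ is conjugation-invariant, $\mathrm{Tr}\circ\theta$ is $\mathfrak{gl}(k)$-invariant, hence factors through the summand $(\mathrm{CE}_\bullet(\mathfrak{gl}(A)))_{\mathfrak{gl}(k)}$ of \eqref{iso:decompCE1} and annihilates the acyclic complement $L_*$. Via \eqref{iso:decompCE2} this summand is $\bigoplus_n(k[S_n]\otimes A^{\otimes n})_{S_n}$, which by Proposition~\ref{prop:primitive} is a connected graded commutative and cocommutative Hopf algebra whose primitives are $\bigoplus_n(k[U_n]\otimes A^{\otimes n})_{S_n}\cong\mathrm{CC}_\bullet^\lambda(A)$ via \eqref{eq:maptocycliccpx}. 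I would then check that the restriction of $\mathrm{Tr}\circ\theta$ to this space of primitives coincides, up to an invertible scalar, with the identification \eqref{eq:maptocycliccpx}: the trace of a product of elementary matrices produces a cyclic word, and $\theta$ sums over all orderings of $a_1,\dots,a_n$, which modulo the cyclic group $\mathbb{Z}_n$ of automorphisms of a cyclic word collapses to a nonzero multiple of the canonical class; hence this restriction is an isomorphism onto $\mathrm{CC}_\bullet^\lambda(A)$, the degree shift $[1]$ coming from $\theta$ landing in $\mathrm{CC}_{\bullet-1}^\lambda$.

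\emph{Passage to homology.} Since $k$ has characteristic zero, a morphism of connected graded commutative and cocommutative Hopf algebras is an isomorphism as soon as it is an isomorphism on primitives, both sides being free graded-commutative algebras on their primitive parts by the Milnor--Moore structure theorem and its commutative counterpart. Applying this to $\mathrm{Tr}\circ\theta$ restricted to $(\mathrm{CE}_\bullet(\mathfrak{gl}(A)))_{\mathfrak{gl}(k)}$, whose target $\mathbf{\Lambda}^\bullet(\mathrm{CC}_\bullet^\lambda(A)[1])$ is exactly the free graded-commutative algebra on $\mathrm{CC}_\bullet^\lambda(A)[1]$, yields a quasi-isomorphism $\mathrm{CE}_\bullet(\mathfrak{gl}(A))\xrightarrow{\ \simeq\ }\mathbf{\Lambda}^\bullet(\mathrm{CC}_\bullet^\lambda(A)[1])$ (the acyclic $L_*$ being irrelevant on homology). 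Finally, forming $S_n$-invariants is exact over a characteristic-zero field, so $\mathbf{\Lambda}^\bullet$ commutes with passage to homology by a Künneth argument, and combined with $\mathrm{H}_\bullet(\mathrm{CC}_\bullet^\lambda(A))=\mathrm{HC}_\bullet(A)$ this gives $\mathrm{H}_\bullet(\mathfrak{gl}(A))\cong\mathbf{\Lambda}^\bullet(\mathrm{HC}_\bullet(A)[1])$.

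\emph{Main obstacle.} The conceptual core is the stable invariant-theory input behind \eqref{iso:decompCE1}--\eqref{iso:decompCE2} (that the $\mathfrak{gl}(k)$-coinvariants of $\mathrm{CE}_\bullet(\mathfrak{gl}(A))$ are governed by the symmetric groups, via the first fundamental theorem for $\mathfrak{gl}$ in the stable range), but this is quoted. The genuinely delicate point in the argument above is twofold: the exact sign matching in the chain-map verification, and the check that $\mathrm{Tr}\circ\theta$ realizes precisely the canonical isomorphism \eqref{eq:maptocycliccpx} on primitives rather than merely some isomorphism --- this last normalization is what later guarantees the compatibility needed for the Hopf-algebra-level statements used in the deformation and quantization.
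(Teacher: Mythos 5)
Your argument is correct and reproduces Loday's proof (\cite[\S 10.2]{Loday}), which the paper cites rather than re-derives; it uses exactly the ingredients the surrounding text supplies --- the decomposition \eqref{iso:decompCE1}--\eqref{iso:decompCE2}, Proposition~\ref{prop:primitive}, the identification \eqref{eq:maptocycliccpx}, and the Milnor--Moore structure theorem over a field of characteristic zero --- so the approach coincides with the one the paper intends.
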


\subsection{The isomorphism for coalgebras}
We study the Loday-Quillen-Tsygan isomorphism for coalgebras. Our treatment follows the idea of Loday \cite[Theorem 10.2.7]{Loday} and is slightly different from Kaygun \cite{Kay}.

\subsubsection{Homology of Lie coalgebras}

For a graded vector space $L$ together with a coalgebra structure $\Delta$, if this coalgebra is not cocommutative, one may put $\delta\coloneqq \Delta - \Delta^{op}$ where $\Delta^{op}$ produce a Koszul sign.
Here $\Delta^{op}$ produces a Koszul sign. 
With $\delta$, the vector space $L$ turns out to be a Lie coalgebra, \ie $\delta$ is (graded) anti-symmetric and satisfies the co-Jacobi identity.
Since $\delta$ is anti-symmetric, we write
the image of $\delta$ as in $L\wedge L$, and
then $\delta$ 
induces a well-defined map
$$d:\mathbf\Lambda^\bullet(L)\to 
\mathbf\Lambda^\bullet(L),
l_1\wedge l_2\wedge\cdots\wedge 
l_n\mapsto\sum_{i=1}^{n}(-1)^{|l_1|+\cdots+|l_{i-1}|+|l_{i}'|-(i-1)}
l_1\wedge\cdots\wedge \delta(l_i)\wedge\cdots\wedge l_n
$$
where $l_{i}'$ is obtained from writing $\delta(l_{i}) = \sum l_{i}' \wedge l_{i}''.$
Since $\delta$ satisfies the
co-Jacobi identity, $b^2=0$.
In the following, we call such a complex
the Chevalley-Eilenberg complex for
the Lie coalgebra $L$, and denote
it by
$\mathrm{CE}_\bullet(L)$.
The associated homology is denoted by
$\mathrm H_\bullet(L)$.

\begin{example}[Matrices in a finite-dimensional coalgebra]\label{ex:MatrixcoLie}
Let $C$ be a finite dimensional coalgebra. Let $M_r(C)$ be the set of $r\times r$ matrices with entries in $C$. Let $$ \begin{array}{cccl} \Delta:&M_r(C)&\longrightarrow &M_r(C)\otimes M_r(C)\\[1mm] &E_{ij}^{a}&\longmapsto&\sum_{k}\sum_{(a)} E_{ik}^{a'}\otimes E_{kj}^{a''}, \end{array} $$ where we write the coproduct of $a$ as $\sum_{(a)}a'\otimes a''$. 
Now let $$\begin{array}{cccl} \nu: &M_r(C)&\longrightarrow &M_r(C)\otimes M_r(C)\\[1mm] &E_{ij}^a&\longmapsto& \Delta(E_{ij}^a)-\Delta^{op}(E_{ij}^a). \end{array} $$ 
Then $(M_r(C),\nu)$ is a Lie coalgebra. If $A$ is the linear dual algebra of $C$, then $M_r(A)$ together with the dual operator $\nu^*$ is exactly the matrix Lie algebra of $A$. 
In the following, we write $(M_r(C),\nu)$ as $\mathfrak{gl}_r^c(C)$.
\end{example}

\subsubsection{The isomorphism}

Let $C$ be a coalgebra and let $\mathfrak{gl}^c_r(C)$ be the Lie coalgebra given in Example \ref{ex:MatrixcoLie}. 
Let $A$ denote the dual algebra of $C$. Then $\mathfrak{gl}^c_r(C)$ is a Lie module over $\mathfrak{gl}_r(k)$, where the Lie action is given by 
\begin{equation}\label{eq:dualLieaction} (h, f)\mapsto [h,f]:=f\cdot h^T-h^T\cdot f, \quad\mbox{for all}\quad h\in \mathfrak{gl}_r(k), f\in\mathfrak{gl}_r^c(C), \end{equation} 
where $h^T$ means the transpose of $h$.

\begin{remark}
The advantage of giving
the Lie action as in \eqref{eq:dualLieaction}
is as follows:
If we view $\mathfrak{gl}_r^c(C)$
as the linear dual of $\mathfrak{gl}_r(A)$,
then \eqref{eq:dualLieaction}
is exactly the induced action
dual to the Lie action
of $\mathfrak{gl}_r(k)$
on $\mathfrak{gl}_r(A)$.
\end{remark}

There is a surjection
$\mathfrak{gl}_{r+1}^c(C)
\to \mathfrak{gl}_{r}^c(C)$
which forgets the last row and column
of the matrices.
Hence $\{\mathfrak{gl}_{r+1}^c(C)\}$
forms an inverse system.
Let $\mathfrak{gl}^c(C)
=\lim\limits_{\longleftarrow}
\mathfrak{gl}^c_r(C)$. We have
$\mathrm{CE}_\bullet(
\mathfrak{gl}^c(C))
=\lim\limits_{\longleftarrow}\mathrm{CE}_\bullet(\mathfrak{gl}_r^c(C))$.
The following lemma is parallel to the
algebra case:

\begin{lemma}
$\mathrm{CE}_\bullet(\mathfrak{gl}^c(C))
\cong \mathrm{CE}_\bullet(
\mathfrak{gl}^c(C))^{\mathfrak{gl}}\oplus 
L_*$, where $L_*$ is an
acyclic complex. 
\end{lemma}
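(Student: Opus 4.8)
The statement is the co-Lie-algebra analogue of the decomposition \eqref{iso:decompCE1}, and I would prove it by dualizing the argument Loday gives in the algebra case, using the fact that everything in sight is an inverse limit of the \emph{finite-dimensional} pieces $\mathrm{CE}_\bullet(\mathfrak{gl}_r^c(C))$. The plan is as follows. First, fix $r$ and work with the Lie module $\mathfrak{gl}_r^c(C)$ over $\mathfrak{gl}_r(k)\cong\mathfrak{gl}(k)$, with the action \eqref{eq:dualLieaction}. Since $\mathrm{char}(k)=0$ and $\mathfrak{gl}_r(k)$ is reductive, the category of $\mathfrak{gl}_r(k)$-modules appearing here (built out of $\mathfrak{gl}_r^c(C)$ by the exterior-power and tensor constructions) is semisimple, so the Chevalley--Eilenberg complex $\mathrm{CE}_\bullet(\mathfrak{gl}_r^c(C))$ splits, as a complex of $\mathfrak{gl}_r(k)$-modules, into its $\mathfrak{gl}_r(k)$-invariant part and a complementary summand $L_*^{(r)}$ spanned by the isotypic components of nontrivial irreducibles. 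The invariant part is $\mathrm{CE}_\bullet(\mathfrak{gl}_r^c(C))^{\mathfrak{gl}_r(k)}$; this is the exact dual of the coinvariants statement \eqref{iso:decompCE1}, because the differential $d$ on $\mathrm{CE}_\bullet(\mathfrak{gl}_r^c(C))$ is, by Example~\ref{ex:MatrixcoLie}, the linear transpose of the Chevalley--Eilenberg differential on $\mathrm{CE}_\bullet(\mathfrak{gl}_r(A))$ where $A$ is the dual algebra.

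Second, I would show that the complementary summand $L_*^{(r)}$ is acyclic. The cleanest route is again by duality: $L_*^{(r)}$ is (up to the usual finite-dimensional linear-dual identification) the linear dual of the complex $L_*$ that appears in \eqref{iso:decompCE1} for $\mathfrak{gl}_r(A)$, and since taking linear duals over a field is exact, $L_*^{(r)}$ is acyclic because $L_*$ is. Concretely: the acyclicity of $L_*$ in the algebra case is proved by an explicit contracting homotopy (Loday's argument via the extra $\mathfrak{gl}(k)$-action, \cite[10.2.3--10.2.4]{Loday}); transposing that homotopy gives a contracting homotopy on $L_*^{(r)}$ for each finite $r$.

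Third, I would pass to the inverse limit. We have $\mathrm{CE}_\bullet(\mathfrak{gl}^c(C))=\varprojlim_r\mathrm{CE}_\bullet(\mathfrak{gl}_r^c(C))$ by construction, and the splitting into invariants plus $L_*^{(r)}$ is compatible with the structure maps $\mathfrak{gl}_{r+1}^c(C)\to\mathfrak{gl}_r^c(C)$ (these are $\mathfrak{gl}_r(k)$-equivariant for the block-embedding $\mathfrak{gl}_r(k)\hookrightarrow\mathfrak{gl}_{r+1}(k)$, and send nontrivial-isotypic pieces to nontrivial-isotypic pieces plus, at worst, trivial pieces — one checks this is handled exactly as in Loday's stabilization argument). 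Taking $\varprojlim_r$ gives $\mathrm{CE}_\bullet(\mathfrak{gl}^c(C))\cong\mathrm{CE}_\bullet(\mathfrak{gl}^c(C))^{\mathfrak{gl}}\oplus L_*$ with $L_*:=\varprojlim_r L_*^{(r)}$. Finally, acyclicity of $L_*$: since $\varprojlim$ is only left exact one must either check a Mittag--Leffler condition on the $L_*^{(r)}$ (which holds because the transition maps are surjective, being restrictions of matrix truncations) so that $\varprojlim^1$ vanishes and homology commutes with the limit, or — more simply — observe that the contracting homotopies on the $L_*^{(r)}$ can be chosen compatibly with the transition maps and hence assemble into a contracting homotopy on $L_*$ directly.

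The main obstacle I anticipate is purely bookkeeping rather than conceptual: one must be careful that the three operations — taking $\mathfrak{gl}_r(k)$-invariants, taking homology, and taking $\varprojlim_r$ — interact well, i.e. that the invariant summand is genuinely a subcomplex compatible with all transition maps and that no $\varprojlim^1$ obstruction sneaks in. Because the transition maps in the tower $\{\mathfrak{gl}_r^c(C)\}$ are surjective, the relevant Mittag--Leffler conditions are automatic, so this obstacle is manageable; the substantive input is entirely borrowed, via linear duality in the finite-dimensional pieces, from the algebra case \eqref{iso:decompCE1}--\eqref{iso:decompCE2} already recalled above.
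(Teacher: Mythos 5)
Your proposal is correct and follows essentially the same route as the paper: the paper's proof is a one-line appeal to the dual of Loday's Proposition 10.1.7--8, with the adjoint action replaced by the co-adjoint action \eqref{eq:dualLieaction}, which is exactly the isotypic-decomposition-plus-dualized-homotopy argument you spell out. The inverse-limit and Mittag--Leffler bookkeeping you add is consistent with how the paper itself handles the passage to $\mathfrak{gl}^c(C)=\varprojlim_r\mathfrak{gl}^c_r(C)$ later, in the proof of Theorem \ref{thm:LQTforcoalg}.
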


\begin{proof}
This is a dual version
of Loday \cite[Proposition 10.1.7-8]{Loday},
where the adjoint action of 
$\mathfrak{gl}(k)$ on 
$\mathfrak{gl}(A)$ is now replaced by 
its co-adjoint action 
\eqref{eq:dualLieaction} on
$\mathfrak{gl}^c(C)$.
\end{proof}

\begin{theorem}[Loday-Quillen isomorphism for coalgebras]
\label{thm:LQTforcoalg}
Suppose $C$ 
is a coalgebra.
Then
$\mathrm{CE}_\bullet
(\mathfrak{gl}^c(C))^{\mathfrak{gl}}$ has 
a commutative and
cocommutative Hopf algebra structure,
and there exists an isomorphism
\begin{equation}
\label{iso:LodayQuillenforcoalgebras}
\Theta:
\mathrm{CC}_\bullet^{\lambda}
(C)[1]\stackrel{\cong}
\longrightarrow
\mathrm{Indec}\,\mathrm{CE}_\bullet
(\mathfrak{gl}^c
(C))^{\mathfrak{gl}(k)},
\end{equation}
which induces an isomorphism
of Hopf algebras
$\mathbf{\Lambda}^\bullet
(\mathrm{HC}_\bullet(C)[1])
\cong\mathrm{H}_\bullet(\mathfrak{gl}^c(C))$.
\end{theorem}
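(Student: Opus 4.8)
The plan is to dualize the Loday--Quillen argument for algebras (Proposition~\ref{prop:primitive} and the theorem following it) step by step, replacing every use of the adjoint $\mathfrak{gl}(k)$-action by the co-adjoint action \eqref{eq:dualLieaction}. First I would establish, in parallel to \eqref{iso:decompCE2}, a decomposition
$$
\mathrm{CE}_\bullet(\mathfrak{gl}^c(C))^{\mathfrak{gl}} \;\cong\; \prod_n \big(k[S_n]\otimes C^{\otimes n}\big)^{S_n},
$$
where the $S_n$-invariants appear (rather than coinvariants) because we are now taking a limit of finite-dimensional coalgebra-matrix pieces and dualizing; the pairing $\mathfrak{gl}^c_r(C)=\mathfrak{gl}_r(A)^*$ from Example~\ref{ex:MatrixcoLie} makes this dual to \eqref{iso:decompCE2} term-by-term in $r$, and one passes to the (inverse) limit. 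The Hopf algebra structure on $\mathrm{CE}_\bullet(\mathfrak{gl}^c(C))^{\mathfrak{gl}}$ is the one dual to the Hopf structure of Proposition~\ref{prop:primitive}; commutativity and cocommutativity are preserved under dualization, so this is automatic once the identification is in place.

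Next I would identify the indecomposables. Dual to the statement that the \emph{primitives} of $\bigoplus_n(k[S_n]\otimes A^{\otimes n})_{S_n}$ are $\bigoplus_n(k[U_n]\otimes A^{\otimes n})_{S_n}$, the \emph{indecomposables} of the dual Hopf algebra should be the invariants supported on the cyclic conjugacy class, i.e.\ $\prod_n (k[U_n]\otimes C^{\otimes n})^{S_n}$. I would then run the chain of isomorphisms \eqref{eq:maptocycliccpx} in reverse/dual form,
$$
(k[U_n]\otimes C^{\otimes n})^{S_n}
\;\cong\;
\mathrm{Hom}_{k[S_n]}\!\big(k[S_n/\mathbb Z_n]\otimes C^{\otimes n},\,k\big)
\;\cong\;
(C^{\otimes n})^{\mathbb Z_n}
\;=\;
N(C^{\otimes n}),
$$
using $U_n\cong S_n/\mathbb Z_n$ as $S_n$-sets and the fact that the norm element $N$ identifies $\mathbb Z_n$-invariants of $C^{\otimes n}$ with the image $N(C^{\otimes n})$, which by the definition in \S\ref{Connescycliccomplex} is precisely $\mathrm{CC}_n^\lambda(C)$. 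Assembling over $n$ and inserting the degree shift $[1]$ gives the map $\Theta$ of \eqref{iso:LodayQuillenforcoalgebras}; I would check it intertwines the Chevalley--Eilenberg differential for the Lie coalgebra $\mathfrak{gl}^c(C)$ with Connes' $b$ on $\mathrm{CC}_\bullet^\lambda(C)$ by transposing the corresponding computation for $\mathrm{Tr}\circ\theta$ in the algebra case (equivalently, dualizing the cotrace built from the coproducts $\Delta(E_{ij}^a)=\sum E_{ik}^{a'}\otimes E_{kj}^{a''}$).

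Finally, from the isomorphism of complexes $\Theta$ on indecomposables, together with the decomposition lemma (the $L_*$ summand being acyclic) and the structure theorem for commutative--cocommutative Hopf algebras in characteristic zero (Milnor--Moore/Cartier: a graded-commutative cocommutative Hopf algebra over a characteristic-zero field is the symmetric algebra on its primitives, dually the symmetric coalgebra on its indecomposables), I would conclude $\mathrm H_\bullet(\mathfrak{gl}^c(C))\cong\mathbf{\Lambda}^\bullet(\mathrm{HC}_\bullet(C)[1])$ as Hopf algebras, passing from chain level to homology since taking homology commutes with $\mathbf{\Lambda}^\bullet$ over a field of characteristic zero. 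The main obstacle I anticipate is the bookkeeping around \emph{invariants versus coinvariants} and the associated (inverse rather than direct) limits: in the coalgebra setting $\mathfrak{gl}^c(C)=\varprojlim \mathfrak{gl}^c_r(C)$, so the Hopf algebra is a completed/product object, and one must check that $\Theta$ lands in the correct (co)completed symmetric algebra and that the Milnor--Moore-type structure theorem still applies there. Tracking the Koszul signs through the co-Jacobi identity for $\delta=\Delta-\Delta^{op}$, and verifying that the cyclic permutation sign built into $t$ on $C^{\otimes n+1}$ in \S\ref{Connescycliccomplex} matches the one produced by the $\mathbb Z_n$-action in the displayed chain of isomorphisms, is the second delicate point, but it is the transpose of an already-known computation and so should go through.
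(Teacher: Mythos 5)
Your proposal follows essentially the same route as the paper: dualize the Loday--Quillen argument, replacing the adjoint action by the co-adjoint one, identify $\mathrm{CE}_\bullet(\mathfrak{gl}^c(C))^{\mathfrak{gl}}$ with $\bigoplus_n(k[S_n]\otimes C^{\otimes n})^{S_n}$, locate the indecomposables on the conjugacy class of the long cycle, match them with $N(C^{\otimes n})=\mathrm{CC}_{n-1}^\lambda(C)$, and pass to the inverse limit over $r$ (which the paper justifies via degreewise stabilization for $r>n$ and Mittag--Leffler). The paper carries out the steps you label ``dual to the algebra case'' explicitly rather than by pure transposition --- in particular it proves that $\mathfrak{gl}_r(k)$-invariants coincide with $\mathrm{GL}_r(k)$-invariants, invokes the second fundamental theorem of invariant theory directly on the $\mathfrak{gl}_r(k)^{\otimes n}$ factor (so no finite-dimensionality of $C$ is needed, unlike a literal use of $\mathfrak{gl}^c_r(C)=\mathfrak{gl}_r(A)^*$), and verifies the compatibility of $\Theta$ with the differentials by a direct computation with the matrix units $E_{ij}^c$ --- but these are elaborations of, not departures from, your plan.
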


We start the proof with the following lemma.

\begin{lemma}\label{fixedpart}
Let $r$ and $n$ be two integers.
\begin{enumerate}[label = \rm{(}\arabic*),noitemsep,leftmargin=*,nosep]
    \item 
The dual of adjoint action 
\begin{align}
&\begin{array}{ccl}
\GL_r(k) & \longrightarrow & \Aut(\gl_r(k)^{\otimes n});\\[1mm]
g & \longmapsto & \big((f_i)_{i=1,\cdots,n}\mapsto ((g^{-1})^{T}\cdot f_i\cdot g^T))_{i=1,\cdots,n}\big)
\end{array}\label{GLaction}\\\intertext{induces }
&\begin{array}{ccl}\gl_r(k) & \longrightarrow & \End(\gl_r(k)^{\otimes n});\\[1mm] 
h & \longmapsto & \big((f_i)_{i=1,\cdots,n}\mapsto \sum_{i}(f_1,\cdots,f_i\cdot h^{T} - h^{T}\cdot f_i,\cdots,f_n)\big).\end{array}
\label{glaction}\end{align}
    \item
Assume that $k$ has characteristic $0$ and $r > 2.$ 
With the action in \text{\rm{(1)}}, we have
\[(\gl_r(k)^{\otimes n})^{\gl_r(k)} = (\gl_r(k)^{\otimes n})^{\GL_r(k)}.\]
\end{enumerate}
\end{lemma}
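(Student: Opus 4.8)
\textbf{Proof proposal for Lemma~\ref{fixedpart}.}

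The plan is to handle the two parts separately, with part (1) being a routine differentiation computation and part (2) being the real content. For part (1), I would start from the $\GL_r(k)$-action in \eqref{GLaction} and differentiate at the identity: writing $g = \exp(th)$ for $h\in\gl_r(k)$ and expanding to first order in $t$, one has $(g^{-1})^T = 1 - th^T + O(t^2)$ and $g^T = 1 + th^T + O(t^2)$, so that $(g^{-1})^T\cdot f_i\cdot g^T = f_i + t(f_i\cdot h^T - h^T\cdot f_i) + O(t^2)$. Applying the Leibniz rule across the tensor factors then yields exactly \eqref{glaction}. This is standard and I would not belabor it.

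For part (2), the inclusion $(\gl_r(k)^{\otimes n})^{\GL_r(k)} \subseteq (\gl_r(k)^{\otimes n})^{\gl_r(k)}$ is immediate by differentiating, exactly as in part (1): a $\GL_r(k)$-fixed vector is annihilated by the infinitesimal action. The substance is the reverse inclusion, and here I would argue that $\GL_r(k)$ is (Zariski-)connected as an algebraic group over $k$, so that a $\gl_r(k)$-invariant vector in a rational (indeed finite-dimensional, polynomial) representation is automatically $\GL_r(k)$-invariant. Concretely, the action \eqref{GLaction} on $\gl_r(k)^{\otimes n}$ is a morphism of algebraic groups $\GL_r(k)\to\GL(\gl_r(k)^{\otimes n})$, hence the orbit map through any vector $v$ is algebraic; if $v$ is $\gl_r(k)$-fixed then the derivative of this orbit map vanishes identically along the connected group $\GL_r(k)$ (one checks the derivative at an arbitrary point $g$, not just the identity, using that the action is by a group homomorphism), so the orbit map is constant in characteristic zero, i.e. $v$ is $\GL_r(k)$-fixed. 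The hypotheses ``$\mathrm{char}\,k = 0$'' and ``$r>2$'' enter here: characteristic zero is what makes ``vanishing derivative $\Rightarrow$ locally constant'' valid for algebraic maps, and the restriction $r>2$ is presumably used (in the surrounding argument, or to avoid the low-rank coincidence $\SL_2$) to guarantee that the relevant invariant theory behaves uniformly; if the intended proof instead passes through $\SL_r$ and its Lie algebra $\mathfrak{sl}_r$, the condition $r > 2$ ensures $\mathfrak{sl}_r$ is simple and the center of $\gl_r$ acts trivially on $\gl_r^{\otimes n}$ precisely in the way needed.

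Alternatively — and this may be the cleaner route to write down — I would reduce to the classical invariant theory of $\GL_r$: by Schur--Weyl-type reasoning the space of $\GL_r(k)$-invariants in $\gl_r(k)^{\otimes n} \cong (\mathrm{End}\,k^r)^{\otimes n}$ is spanned by the ``trace monomials'' indexed by permutations in $S_n$ (the first fundamental theorem for $\GL_r$, valid since $\mathrm{char}\,k = 0$), and one checks directly that these same monomials span the $\gl_r(k)$-invariants, the two descriptions coinciding once $r$ is large enough that no Cayley--Hamilton relations intervene — which is exactly where $r>2$ (relative to the degrees that occur) is used. The main obstacle is making the ``$\gl$-invariant $\Rightarrow$ $\GL$-invariant'' passage airtight while correctly pinpointing the role of $r>2$; I would lean on the connectedness argument as the primary proof and mention the invariant-theoretic description as a sanity check, citing \cite[\S10.1]{Loday} for the template, since the whole lemma is the coalgebra-dual of the computation Loday performs for $\gl(A)$.
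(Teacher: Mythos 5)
Your proposal is correct, but it proves part (2) by a different mechanism than the paper. The paper's proof is deliberately elementary: for (1) it simply cites \cite[Theorem 10.23]{Milne}, and for (2) it works with the group $\Aut_e(r,k)$ of elementary automorphisms $e^h=\sum_{i\ge 0}h^i/i!$ for \emph{nilpotent} $h\in\gl_r(k)$ (so the exponential is a finite sum and no analysis or general algebraic-group theory is invoked). It identifies $\Aut_e(r,k)$ with $\SL_r(k)$ using the fact that a non-central normal subgroup of $\GL_r(k)$ contains $\SL_r(k)$ --- this is where the hypothesis $r>2$ actually enters, not in any Cayley--Hamilton/invariant-theoretic consideration (the constraint $r>n$ you allude to appears only later, in the application of Proposition \ref{secondfundthm}). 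The passage between $h$-invariance and $e^h$-invariance is then done by the explicit identity $(e^h-\id).f=(\id+\phi)h.f$ with $\phi=\sum_{i\ge1}h^i/(i+1)!$ nilpotent, so $\id+\phi$ is invertible. Your primary route instead invokes the general theorem that for a connected algebraic group in characteristic zero, Lie-algebra invariants of a finite-dimensional rational representation coincide with group invariants; this is valid (after extending scalars to $\bar k$ and using density of $k$-points, a step you leave implicit), it is shorter if one is willing to cite that theorem, and notably it does not use $r>2$ at all. Your Schur--Weyl ``sanity check'' is also consistent with how the invariants are described in the next step of the paper. The only inaccuracies are your speculations about the role of $r>2$; they do not affect the validity of your argument.
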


\begin{proof}
(1) is \cite[Theorem 10.23]{Milne}.
To show (2), consider the group $\Aut_{e}(r,k)$ of elementary automorphisms which is a subgroup of $\GL_r (k)$ consisting of elements $e^h \coloneqq \sum_{i\geq 0}h^i/i!$ for all nilpotents $h\in \gl_r(k)$.
By the Lie functor, the action of $\Aut_e(r,k)$ on $\gl_r(k)^{\otimes n}$ induces an action of $\mathrm{Lie}(\Aut_e(r,k))$ on $\End(\gl_r(k)^{\otimes n})$.
By (1), the action of $\Aut_e(r,k)$ on $\gl_r(k)^{\otimes n}$ is as in (\ref{glaction}).
Next, note that $\Aut_e(r,k)$ is a normal subgroup of $\GL_r(k)$.
As the normal subgroups of $\GL_r(k)$ not contained in the center must contains $\SL_r(k),$ we have $\Aut_e(r,k) = \SL_r(k)$. 

Finally, if $f\in \gl_{r}(k)^{\otimes n}$ is fixed by any $h$, then $f$ is also fixed by $e^{h}.$ 
Conversely assume that $f$ is fixed by all $e^{h}\in \Aut_e(r,k)$, \ie $(e^{h}-\id).f = 0,$ where $(e^{h}-\id).f$ denotes the group action on $f$.
We have $(\id+\phi)h.f = 0$ for $\phi = \sum_{i\geq 1}h^i/(i+1)!.$
As $\phi$ is nilpotent, $\id + \phi$ is invertible, the element $f$ must be fixed by $h$.
By the previous paragraph, we have the desired equation.
\end{proof}

We will need the second fundamental theorem of invariant theory (see
\cite[Theorem 9.1.3]{Loday}):

\begin{proposition}\label{secondfundthm}
Let $S_n$ act
on $V^{\otimes n}$
given by $(v_1,\ldots,v_n)\mapsto (v_{\sigma(1)},\ldots,v_{\sigma(n)})$,
for $\sigma\in S_n$.
Let $\mu$ denote the isomorphism $k[S_n] \to (\gl_r(k)^{\otimes n})^{\GL_r(k)} \cong \End(V^{\otimes n})^{\GL_r(k)}$ with $V$ the $r$ dimensional vector space.
If $\sigma = (1\cdots n),$ then
\[\mu(\sigma) = \sum_{{\bf i}}E_{i_1i_2}\otimes E_{i_2i_3}\otimes \cdots \otimes E_{i_ni_1},\]
where ${\bf i}$ denotes $n$-tuples  $(i_1,\ldots,i_n)$ and the sum extends over all $n$-tuples such that each $i_{j} = 1,\ldots,n.$ 
Here, given any basis $\{e_1,\ldots,e_r\}$ of $V$, the action of $\mu(\sigma)$ on $e_{i_1}\otimes e_{i_2}\otimes \cdots \otimes e_{i_n}$ results in $e_{i_2}\otimes \cdots \otimes e_{i_n}\otimes e_{i_1}.$ 
\end{proposition}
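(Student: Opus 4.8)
The plan is to prove Proposition~\ref{secondfundthm} by first establishing that $k[S_n]$ surjects onto $\End_{\GL_r(k)}(V^{\otimes n})$ via the natural action, and that for $r \geq n$ this map is injective, hence an isomorphism; this is the classical Schur--Weyl duality statement (the ``second fundamental theorem of invariant theory'' referred to in \cite[Theorem 9.1.3]{Loday}). The identification $(\gl_r(k)^{\otimes n})^{\GL_r(k)} \cong \End(V^{\otimes n})^{\GL_r(k)}$ comes from $\gl_r(k) = \End(V) \cong V \otimes V^*$ together with the $\GL_r(k)$-equivariant trace pairing, so I first need to spell out how an element of $\gl_r(k)^{\otimes n}$, written in the basis $\{E_{ij}\}$, corresponds to an operator on $V^{\otimes n}$; explicitly $E_{i_1 i_2} \otimes E_{i_2 i_3} \otimes \cdots \otimes E_{i_n i_1}$ corresponds to the operator sending $e_{j_1} \otimes \cdots \otimes e_{j_n}$ to the cyclic permutation pattern.

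Next I would compute $\mu(\sigma)$ for $\sigma = (1\,2\,\cdots\,n)$ directly. By definition $\mu(\sigma) \in \End(V^{\otimes n})$ acts by $e_{j_1} \otimes \cdots \otimes e_{j_n} \mapsto e_{j_{\sigma^{-1}(1)}} \otimes \cdots \otimes e_{j_{\sigma^{-1}(n)}}$; for the $n$-cycle this is the map sending $e_{j_1}\otimes e_{j_2} \otimes \cdots \otimes e_{j_n}$ to $e_{j_2} \otimes e_{j_3} \otimes \cdots \otimes e_{j_n} \otimes e_{j_1}$ (matching the sign convention stated in the proposition). Then I would expand this operator in the basis of elementary tensors of matrix units: one checks that $\sum_{\bf i} E_{i_1 i_2} \otimes E_{i_2 i_3} \otimes \cdots \otimes E_{i_n i_1}$, as an operator on $V^{\otimes n}$, acts on $e_{j_1} \otimes \cdots \otimes e_{j_n}$ by picking out the summand with $i_2 = j_1, i_3 = j_2, \ldots, i_n = j_{n-1}, i_1 = j_n$, and produces $e_{i_1} \otimes e_{i_2} \otimes \cdots \otimes e_{i_n} = e_{j_n} \otimes e_{j_1} \otimes \cdots \otimes e_{j_{n-1}}$. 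Comparing, this is exactly the cyclic shift, so the two operators agree and $\mu(\sigma)$ has the claimed form. (One should be careful about whether the cycle $\sigma$ sends the operator to the shift ``one way'' or ``the other''; reconciling this amounts to consistently using $\sigma$ versus $\sigma^{-1}$, and since $(1\cdots n)^{-1}$ is also an $n$-cycle the final formula is symmetric enough that the bookkeeping closes.)

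The main obstacle I anticipate is purely one of conventions rather than mathematical depth: making the identification $\gl_r(k) \cong \End(V) \cong \End(V)^*$ (via trace) compatible with the $S_n$-action on $V^{\otimes n}$ and with the transpose-flavored $\GL_r(k)$-action appearing in Lemma~\ref{fixedpart}, so that the index pattern $E_{i_1 i_2} \otimes \cdots \otimes E_{i_n i_1}$ comes out with the cyclic (rather than anti-cyclic) orientation and with the correct placement of the ``returning'' index $i_1$. I would handle this by fixing once and for all that a matrix unit $E_{ij}$ acts on $V$ by $e_\ell \mapsto \delta_{j\ell} e_i$, and that under $\gl_r(k)^{\otimes n} \to \End(V^{\otimes n})$ a pure tensor acts slot-by-slot; then the computation is forced and matches the statement. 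Since everything is over a field of characteristic zero and we are free to take $r$ large (the stable range $r > 2$ in Lemma~\ref{fixedpart}, and in fact the stabilized $\mathfrak{gl}$ later on, removes any injectivity worry), the remaining content is the bijectivity of $\mu$, which is the cited \cite[Theorem 9.1.3]{Loday}, so I would simply invoke it and devote the written proof to the explicit formula for $\mu(\sigma)$.
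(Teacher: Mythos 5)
The paper does not actually prove this proposition: it is quoted verbatim from Loday \cite[Theorem 9.1.3]{Loday}, so there is no internal argument to compare against. Your plan — invoke the classical first/second fundamental theorems (Schur--Weyl) for the bijectivity of $\mu$ and verify the explicit matrix-unit formula for the $n$-cycle by a direct slot-by-slot computation — is the right and essentially only way to fill this in, and the hard input is correctly deferred to the citation.

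There is, however, one concrete problem in your computation, and it is exactly at the spot you flagged. With your declared convention ($E_{ij}e_\ell=\delta_{j\ell}e_i$, pure tensors in $\gl_r(k)^{\otimes n}$ acting slot-by-slot on $V^{\otimes n}$), the operator $\sum_{\bf i}E_{i_1i_2}\otimes E_{i_2i_3}\otimes\cdots\otimes E_{i_ni_1}$ sends $e_{j_1}\otimes\cdots\otimes e_{j_n}$ to $e_{j_n}\otimes e_{j_1}\otimes\cdots\otimes e_{j_{n-1}}$ (the nonzero summand forces $i_2=j_1,\ldots,i_1=j_n$), whereas the proposition asserts the image is $e_{j_2}\otimes\cdots\otimes e_{j_n}\otimes e_{j_1}$. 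These are different operators for $n\ge 3$, and correspondingly $\mu((1\cdots n))$ and $\mu((1\cdots n)^{-1})$ are genuinely distinct elements of $(\gl_r(k)^{\otimes n})^{\GL_r(k)}$, so the remark that the formula is ``symmetric enough that the bookkeeping closes'' does not dispose of the issue. The resolution is that the $\GL_r(k)$-equivariant identification $\gl_r(k)^{\otimes n}\cong\End(V^{\otimes n})$ relevant here is the one compatible with the \emph{transposed} action $f\mapsto (g^{-1})^{T}fg^{T}$ of Lemma~\ref{fixedpart}, i.e.\ $f\mapsto f^{T}$ in each slot; under that identification $E_{i_1i_2}\otimes\cdots\otimes E_{i_ni_1}$ acts as $E_{i_2i_1}\otimes\cdots\otimes E_{i_1i_n}$, which sends $e_{i_1}\otimes\cdots\otimes e_{i_n}$ to $e_{i_2}\otimes\cdots\otimes e_{i_n}\otimes e_{i_1}$ as claimed, and is the convention needed for consistency with the map $M$ and formula \eqref{Theta*} later in the paper. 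With that single adjustment your proof is complete.
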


\begin{proof}[Proof of Theorem
\text{\rm{\ref{thm:LQTforcoalg}}}]
Assume $r>n.$
As $S_n$ is finite and $k$ has characteristic $0$, we have $V^{S_n} \cong V_{S_n}$ for any $k$-vector space $V$ with $S_n$ action. 
Hence \[(\gl^{c}(C)^{\otimes\bullet})_{S_n} \cong (\gl^{c}(C)^{\otimes\bullet})^{S_n}.\]
We see that for $r$ bigger than $n$,
\begin{align*}\begin{alignedat}{2}
\mathrm{CE}_n(\mathfrak{gl}_r^c(C))^{
\mathfrak{gl}_r(k)}&\cong \left((\gl_r^c(C)^{\otimes n})^{S_n}\right)^{
\mathfrak{gl}_r(k)}\\
&\cong \left((
\mathfrak{gl}_r(k)^{\otimes n}
\otimes C^{\otimes n})^{S_n}\right)^{
\mathfrak{gl}_r(k)}\\
&\cong \left((
\mathfrak{gl}_r(k)^{\otimes n}
\otimes C^{\otimes n})^{
\mathfrak{gl}_r(k)}\right)^{S_n}\\
&\cong \left((
\mathfrak{gl}_r(k)^{\otimes n})^{\GL_r(k)}
\otimes C^{\otimes n}\right)^{S_n} && \,\,\text{(by Lemma~\ref{fixedpart})}\\
&\cong (k[S_n]\otimes C^{\otimes n})^{S_n} && \,\,\text{(by Proposition~\ref{secondfundthm})}.
\end{alignedat}
\end{align*}
Put $N_\bullet = \bigoplus_{n\geq 0}(k[S_n]\otimes C^{\otimes n})^{S_n}.$ 
Let $\Theta^{*}$ denote the isomorphism $N_\bullet \to \CE_\bullet(\gl_r^{c}(C))^{
\mathfrak{gl}_r(k)}$.
Note $\CE_\bullet(\gl^{c}(C))^{\gl(k)} \cong (\CE^{\bullet}(\gl(A))_{\gl(k)})^*$ and $((k[S_n]\otimes C^{\otimes n})^{S_n} \cong (k[S_n]\otimes A^{\otimes n})_{S_n})^*$ for $A$ being the dual algbera of $C$.
The map $\Theta^{*}$ is the dual of the isomorphism (denoted $\Theta$) in (\ref{iso:decompCE2}). 
Moreover, a Hopf algebra structure is constructed on $L_{\bullet} \coloneqq \bigoplus_{n\geq 0}(k[S_n]\otimes A^{\otimes n})^{S_n}$ by \cite[10.2.12-10.2.17]{Loday}.
Hence, there is an induced Hopf algebra structure on $N_\bullet$. 
We are to describe the product in the Hopf algebra structure of $N_{\bullet}$ explicitly.
Notice that for any $n<r$ if $\{\sigma \otimes (c_{i})_{i=1,\cdots,n}\}$ is a basis of $k[S_n]\otimes C^{\otimes n},$ then the set 
\[\big\{\overline{\sigma\otimes (c_i)}\big\}\text{ with }\overline{\sigma\otimes (c_i)}\coloneqq\sum_{\hat{\sigma}\in S_n} \hat{\sigma}.(\sigma \otimes (c_{i})_{i=1,\ldots,n})\] is a basis of $N_n.$

\begin{lemma}\label{multiplication}
The product of $N_\bullet$ is explicitly 
\begin{eqnarray*}
m&:&\overline{\sigma\otimes (b_i)_{i=1,\ldots,m}}\otimes \overline{\varsigma\otimes (c_{i})_{i=1,\ldots,n}} \mapsto \overline{\sigma\varsigma'\otimes (b_{i})_{i=1,\ldots,m}\otimes (c_j)_{j=1,\ldots,n}}\in L_{m+n},
\end{eqnarray*}
where $\varsigma'$ satisfies $\varsigma'(i) = i$ if $i\leq m$ and $\varsigma'(i)=m+\varsigma(i-m)$ if $i>m.$
\end{lemma}
\begin{proof}
Let $\Delta$ denote the coproduct of $L_\bullet$, and $m_k$ denote multiplication of the field $k$.
For any $f,g\in N_{\bullet}$ and $a\in L_{\bullet},$ we need to check
\begin{align}
m(f\otimes g)(a) = m_{k}\big((f \otimes g)\Delta(a)\big).\label{multco}\end{align}
We are to work out $m(\overline{\sigma\otimes (b_i)_{i=1,\ldots,m}}\otimes \overline{\varsigma\otimes (c_{j})_{j=1,\ldots,n}})$ using (\ref{multco}) and show the obtained result is exactly the one in Lemma~\ref{multiplication}.
We may assume that $a$ is monomial, \ie having only a single summand.
Due to the coproduct in $L_{\bullet}$, if $m_k((f\otimes g)\Delta(a))$ is nontrivial, then $a$ must has the form $\sigma\varsigma' \otimes (b_{i}^*)\otimes (c_{j}^{*})$. 
Indeed, in $L_{m+n},$ we have $\sigma\varsigma' \otimes (b_{i}^*)\otimes (c_{j}^{*}) = \delta\sigma\varsigma'\delta^{-1} \otimes \delta((b_{i}^*)\otimes (c_{j}^{*}))$ for any $\delta\in S_{m+n}$. 
This implies that $m(f\otimes g)$ has the form as in the lemma.
\end{proof}

Let $U_n$ denotes the set of conjugacy classes of $(1,\ldots,n)$ in $S_n$.
By Lemma~\ref{multiplication}, the set of indecomposables are
$\bigoplus_{n\geq 0}(k[U_n]\otimes C^{\otimes n})^{S_{n}} = \bigoplus_{n\geq 0} (C^{\otimes n})^{\mathbb Z_n}$,
which is exactly the cyclic
complex $\mathrm{CC}_\bullet^\lambda(C)[1]$.
We need to show that the map $\Theta^*$ preserves Hopf algebra structures and differentials.
As $\Theta$ preserves the Hopf algebra structures of $\CE_{\bullet}(\gl(A))_{\gl(k)}$ and $L_\bullet$, its dual $\Theta^{*}$ preserves the dual of the Hopf algebra structures.
To check that the map $\Theta^{*}$ preserves the differential, we first work out the image of $\overline{\sigma\otimes (c_i)}\in N_n$ under $\Theta^{*}$.

The isomorphism $\mu$ in Proposition~\ref{secondfundthm} induces \[
\begin{array}{cccl}
M: & (k[S_n]\otimes C^{\otimes n})^{S_n} & \longrightarrow & \left((
\mathfrak{gl}_r(k)^{\otimes n})^{\GL_r(k)}
\otimes C^{\otimes n}\right)^{S_n}\\[1mm]
&\overline{\sigma\otimes (c_i)} & \longmapsto & \sum_{\hat{\sigma}\in S_n}\sum_{\bf{i}}\sgn(\hat{\sigma})\cdot E_{\hat{\sigma}\sigma\hat{\sigma}^{-1},{\bf i}}\otimes (c_{\hat{\sigma}(1)},c_{\hat{\sigma}(2)},\ldots,c_{\hat{\sigma}(n)}),\end{array}\]
where we write
\begin{eqnarray*}
E_{\hat{\sigma}\sigma\hat{\sigma}^{-1},{\bf i}} & = & E_{i_{1}i_{\hat{\sigma}\sigma\hat{\sigma}^{-1}(1)}}\otimes E_{i_{2}i_{\hat{\sigma}\sigma\hat{\sigma}^{-1}(2)}}\otimes \cdots \otimes E_{i_{n}i_{\hat{\sigma}\sigma\hat{\sigma}^{-1}(n)}}
\end{eqnarray*}
The sign $\sgn(\hat{\sigma})$ is the Koszul sign obtained from permuting elements in $C^{\otimes n}$ using $\hat{\sigma}$.
We may rewrite this map as
\begin{eqnarray*}
\overline{\sigma \otimes (c_i)} & \mapsto & \sum_{\hat{\sigma}\in S_n}\sum_{{\bf i}} \sgn(\hat{\sigma})\cdot E_{i_{\hat{\sigma}(1)}i_{\hat{\sigma}\sigma(1)}}\otimes E_{i_{\hat{\sigma}(2)}i_{\hat{\sigma}\sigma(2)}} \otimes \cdots \otimes E_{i_{\hat{\sigma}(n)}i_{\hat{\sigma}\sigma(n)}} \\ 
& & \hspace{7.6cm} \otimes (c_{\hat{\sigma}(1)},c_{\hat{\sigma}(2)},\ldots,c_{\hat{\sigma}(n)}).
\end{eqnarray*}
Hence we have
\begin{eqnarray}
    \overline{\sigma\otimes(a_i)} & \mapsto & \sum_{{\bf i}}E_{i_1i_{\sigma(1)}}^{c_1}\wedge E_{i_2i_{\sigma(2)}}^{c_2} \wedge \cdots \wedge E_{i_ni_{\sigma(n)}}^{c_n}\in \mathrm{CE}_n(\mathfrak{gl}_r^c(C))^{
\mathfrak{gl}_r(k)}.\label{Theta*}\end{eqnarray}
Overall, we have
\[\begin{array}{ccl}
\CC_\bullet^{\lambda}(C)[1] &\longrightarrow& \CE_{\bullet}(\gl_r^{c}(C)^{\gl(k)});\\[1mm] [c_i]_{i=1,\ldots,n} &\longmapsto &\sum_{{\bf i}}E_{i_1i_{2}}^{c_1}\wedge E_{i_2i_{3}}^{c_2} \wedge \cdots \wedge E_{i_ni_{1}}^{c_n},\end{array}\]
where $[c_i]_{i=1,\ldots,n}\in \CC_{n-1}(C)[1]$ and a square bracket is used here to denote $N((c_i)_{i=1,\ldots,n})$ with $N$ in (\ref{def:totalcomplex}). 
It remains to show that this map preserves the differential of $\CC_\bullet^{\lambda}(C)[1].$
We need to check $\Theta^{*}\circ b = b' \circ \Theta^{*}.$
As for the left side, by the commutativity of (\ref{def:totalcomplex}), we have 
\begin{align}
b([c_i]_{i=1,\ldots,n}) = N(b'(c_1\otimes[c_2,\ldots,c_n]))\label{bandbprime}\end{align} 
and hence
\begin{eqnarray*}
& & \Theta^{*}(b([c_1,c_2,
\cdots, c_n])) \\
& = & \Theta^{*}\bigg(\sum_{j=1}^{n}
\sum_{(c_j)}(-1)^{|c_1|+
\cdots+|c_{j-1}|+|c_{j}'|-(j-1)-1}[c_{1},\ldots,c_{j}',c_{j}'',
\ldots,c_{n}]\bigg) \\
& = & \sum_{{\bf i_{*}}}\sum_{j=1}^{n}\sum_{(c_j)}
(-1)^{|c_1|+\cdots+|c_{j-1}|+
|c_{j}'|-j}E_{i_1i_2}^{c_1}\wedge
E_{i_2i_3}^{c_2}
\wedge
\ldots\wedge E_{i_ji_*}^{c_{j}'}\wedge E_{i_*i_{j+1}}^{c_{j}''}\wedge\ldots\wedge E_{i_ni_1}^{c_{n}},
\end{eqnarray*}
where ${\bf i_*}$ denotes $(n+1)$-tuple $(i_1,\ldots,i_n,i_*)$ with $i_j,i_* = 1,\ldots,r.$ 
We have 
As for $b'\circ \Theta^{*}$, we have
\begin{eqnarray*}
& & d\bigg(\sum_{{\bf i}}E_{i_1i_{2}}^{c_1}\wedge E_{i_2i_{3}}^{c_2} \wedge \cdots \wedge E_{i_ni_{1}}^{c_n}\bigg)\\
&=& \sum_{{\bf i}}\sum_{j=1}^{n}\sum_{k=1}^{r}\sum_{(c_j)}(-1)^{|c_{1}|+\cdots+|c_{j-1}|+|c_{j}'|-j}E_{i_1i_{2}}^{c_1}\wedge
E_{i_2i_3}^{c_2}\wedge \cdots \wedge E_{i_jk}^{c_j'}\wedge E_{ki_{j+1}}^{c_j''} \wedge \cdots \wedge E_{i_ni_{1}}^{c_n}.
\end{eqnarray*}
By a straightforward comparison, the desired equation holds.

Since $\mathfrak{gl}_{r+1}^c(C)\to
\mathfrak{gl}_{r}^c(C)$ is surjective,
and so is the induced
homomorphism on the chain complexes.
From this we
get that
$\mathrm{CE}_\bullet(\mathfrak{gl}_{r+1}^c(C))
\to 
\mathrm{CE}_\bullet(\mathfrak{gl}_{r}^c(C))$
satisfies the Mittag-Leffler condition,
\eqref{iso:LodayQuillenforcoalgebras}
is a quasi-isomorphism
of chain complexes by taking
the inverse limit.
Thus we also get the isomorphism
of Hopf algebras
$\mathbf{\Lambda}^\bullet
(\mathrm{HC}_\bullet(C)[1])
\cong\mathrm{H}_\bullet(\mathfrak{gl}^c(C))$.
\end{proof}

\subsection{Koszul duality}

The following proposition gives
an isomorphism between the homologies
of $\mathfrak{gl}(A)$ and
$\mathfrak{gl}^c(A^{\ac})$, which
generalizes Proposition
\ref{prop:qisinKoszulCY}.

\begin{proposition}\label{prop:qismatrix}
Let $A$ be Koszul algebra.
Denote by $A^!$ and $A^{\ac}$ its
Koszul dual algebra and coalgebra
respectively. Then we have
commutative and cocommutative
Hopf algebra quasi-isomorphisms
$$
\xymatrix{
&\mathrm{CE}_\bullet(
\mathfrak{gl}(\Omega(A^{\ac}))
)_{\mathfrak{gl}(k)}\ar[rd]^{\simeq}&\\
\mathrm{CE}_\bullet(
\mathfrak{gl}(A^{\ac})
)^{\mathfrak{gl}(k)}\ar[ru]^{\simeq}
&&
\mathrm{CE}_\bullet(
\mathfrak{gl}(A)
)_{\mathfrak{gl}(k)}
}$$
\end{proposition}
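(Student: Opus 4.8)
The plan is to lift Proposition~\ref{prop:qisinKoszulCY}, which is the analogous statement for Connes cyclic complexes, to the level of Chevalley--Eilenberg complexes by means of the chain-level Loday-Quillen-Tsygan isomorphisms of \S\ref{sect:LQT}. The guiding point is that each of the three Hopf algebras appearing here is, over a field of characteristic zero, freely (co)generated by its space of primitives, and that space is precisely the corresponding Connes cyclic complex.

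First I would record the chain-level forms of Loday-Quillen-Tsygan. By Proposition~\ref{prop:primitive} together with the characteristic-zero structure theorem for graded connected commutative and cocommutative Hopf algebras, $\mathrm{CE}_\bullet(\mathfrak{gl}(A))_{\mathfrak{gl}(k)}$ is freely (co)generated by its subcomplex of primitives, which Proposition~\ref{prop:primitive} identifies with $\mathrm{CC}_\bullet^\lambda(A)[1]$; the differential, being a derivation that preserves the primitives, is then the canonical extension of the Connes differential, and the product and coproduct are the canonical ones on $\mathbf{\Lambda}^\bullet$. Hence $\mathrm{CE}_\bullet(\mathfrak{gl}(A))_{\mathfrak{gl}(k)}\cong\mathbf{\Lambda}^\bullet(\mathrm{CC}_\bullet^\lambda(A)[1])$ as DG Hopf algebras. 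Applying the same reasoning to the DG algebra $\Omega(A^{\ac})$ gives $\mathrm{CE}_\bullet(\mathfrak{gl}(\Omega(A^{\ac})))_{\mathfrak{gl}(k)}\cong\mathbf{\Lambda}^\bullet(\mathrm{CC}_\bullet^\lambda(\Omega(A^{\ac}))[1])$, and applying Theorem~\ref{thm:LQTforcoalg} --- with the inverse-limit and Mittag-Leffler argument used there --- to the coalgebra $A^{\ac}$ gives $\mathrm{CE}_\bullet(\mathfrak{gl}(A^{\ac}))^{\mathfrak{gl}(k)}\cong\mathbf{\Lambda}^\bullet(\mathrm{CC}_\bullet^\lambda(A^{\ac})[1])$.

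Next I would transport the two quasi-isomorphisms $\mathrm{CC}_\bullet^\lambda(A^{\ac})\xrightarrow{\ \simeq\ }\mathrm{CC}_\bullet^\lambda(\Omega(A^{\ac}))\xrightarrow{\ \simeq\ }\mathrm{CC}_\bullet^\lambda(A)$ of Proposition~\ref{prop:qisinKoszulCY} through the functor $\mathbf{\Lambda}^\bullet$. A chain map between spaces of primitives extends uniquely to a morphism of commutative and cocommutative DG Hopf algebras, and it stays a quasi-isomorphism because in characteristic zero $\mathbf{\Lambda}^\bullet$ preserves quasi-isomorphisms: $\mathbf{\Lambda}^n(V)=(V^{\otimes n})_{\Sigma_n}$, the $n$-fold tensor power is a quasi-isomorphism by the K\"unneth theorem, and $\Sigma_n$-coinvariants are exact over a characteristic-zero field. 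Composing with the identifications above yields the asserted zigzag. For the second arrow one can argue more directly: $p\colon\Omega(A^{\ac})\to A$ of Proposition~\ref{prop:qisofcobarresl} induces a quasi-isomorphism of DG Lie algebras $\mathfrak{gl}(p)$ which restricts to the identity on $\mathfrak{gl}(k)$, so $\mathrm{CE}_\bullet(\mathfrak{gl}(p))_{\mathfrak{gl}(k)}$ is visibly a Hopf algebra quasi-isomorphism, and by naturality of Loday-Quillen-Tsygan it coincides with the transported map.

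I expect the main obstacle to be the first arrow, $\mathrm{CE}_\bullet(\mathfrak{gl}(A^{\ac}))^{\mathfrak{gl}(k)}\to\mathrm{CE}_\bullet(\mathfrak{gl}(\Omega(A^{\ac})))_{\mathfrak{gl}(k)}$, together with the bookkeeping it rests on. Unlike the second arrow it has no underlying morphism of Lie (co)algebras --- $\mathfrak{gl}(A^{\ac})$ is built from a coalgebra while $\mathfrak{gl}(\Omega(A^{\ac}))$ is built from an algebra --- so it exists only by virtue of the freeness established in the first step. Making that freeness precise requires checking that the algebra-side construction of Proposition~\ref{prop:primitive} and the coalgebra-side construction of Theorem~\ref{thm:LQTforcoalg} (cf. Lemma~\ref{multiplication}) both produce the canonical product and coproduct on $\mathbf{\Lambda}^\bullet$, compatibly with the respective $\mathfrak{gl}(k)$-(co)invariants and with the Connes differentials, so that the three identifications are genuinely isomorphisms of DG commutative and cocommutative Hopf algebras rather than of graded vector spaces alone.
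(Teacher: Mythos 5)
Your proposal is correct and follows essentially the same route as the paper's proof: identify each of the three Hopf algebras with the free abelian DG Hopf algebra $\mathbf{\Lambda}^\bullet$ on its primitives, recognize those primitives as the Connes cyclic complexes via Proposition~\ref{prop:primitive} and Theorem~\ref{thm:LQTforcoalg}, and transport the zigzag of cyclic quasi-isomorphisms from Proposition~\ref{prop:qisinKoszulCY} through $\mathbf{\Lambda}^\bullet$. Your write-up is somewhat more explicit than the paper's (naming the Milnor--Moore-type structure theorem, spelling out why $\mathbf{\Lambda}^\bullet$ preserves quasi-isomorphisms in characteristic zero, and observing that the right arrow is simply $\mathrm{CE}_\bullet(\mathfrak{gl}(p))_{\mathfrak{gl}(k)}$), but the underlying argument and the key observation --- that the left arrow exists only by freeness since there is no underlying Lie (co)algebra map --- are the same.
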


\begin{proof}
Since $\mathrm{CE}_\bullet
(\mathfrak{gl}(\Omega(A^{\ac})))_{
\mathfrak{gl}
}$
is a commutative and cocommutative
Hopf algebra,
the indecomposables are identified
with the primitives. Thus combining
with 
Proposition
\ref{prop:qisinKoszulCY},
we have the following 
diagram:
$$
\xymatrixrowsep{0.5pc}
\xymatrix{
&\mathrm{Prim}\,\mathrm{CE}_\bullet(
\mathfrak{gl}(\Omega(A^{\ac}))
)_{\mathfrak{gl}(k)}\ar@{.>}[rd]
\ar[dd]^{\cong}_{\textup{Prop.
\ref{prop:primitive}}}&\\
\mathrm{Indec}\,\mathrm{CE}_\bullet(
\mathfrak{gl}(A^{\ac})
)^{\mathfrak{gl}(k)}\ar@{.>}[ru]
&&
\mathrm{Prim}\,\mathrm{CE}_\bullet(
\mathfrak{gl}(A)
)_{\mathfrak{gl}(k)}
\ar[dd]^{\cong}_{\textup{Prop.
\ref{prop:primitive}}}\\
&\mathrm{CC}_\bullet^\lambda
(\Omega(A^{\ac}))\ar[rd]^{\simeq}
&\\
\mathrm{CC}^\lambda_\bullet(A^{\ac})
\ar[ur]^{\simeq}\ar[uu]_{\cong}^{
\textup{Thm. \ref{thm:LQTforcoalg}}
}
&&
\mathrm{CC}^\lambda_\bullet(A),
}$$
where the map
$\mathrm{Indec}\,\mathrm{CE}_\bullet(
\mathfrak{gl}(A)
)^{\mathfrak{gl}(k)}
\to
\mathrm{Prim}\,\mathrm{CE}_\bullet(
\mathfrak{gl}(\Omega(A^{\ac}))
)_{\mathfrak{gl}(k)}
$
is given as follows:
note that the
vertical morphisms in the left
square are isomorphisms
by Theorem 
\ref{thm:LQTforcoalg}
and Proposition
\ref{prop:primitive}, and thus
by the same method
as in Proposition
\ref{prop:qisforKoszul},
we can similarly
get the quasi-isomophism.
The quasi-isomorphsm
$\mathrm{Prim}\,\mathrm{CE}_\bullet(
\mathfrak{gl}(\Omega(A^{\ac}))
)_{\mathfrak{gl}(k)}\to
\mathrm{Prim}\,\mathrm{CE}_\bullet(
\mathfrak{gl}(A)
)_{\mathfrak{gl}(k)}
$
is constructed analogously.
Taking the corresponding
Hopf algebras, we get the desired
quasi-isomorphism.
\end{proof}

Since $$\mathrm{CE}_\bullet(
\mathfrak{gl}(A)_{\mathfrak{gl}(k)}
)
\stackrel{\simeq}\hookrightarrow
\mathrm{CE}_\bullet(
\mathfrak{gl}(A)_{\mathfrak{gl}(k)}
)\oplus L_*
\cong
\mathrm{CE}_\bullet(
\mathfrak{gl}(A)
)$$
and
$$\mathrm{CE}_\bullet(
\mathfrak{gl}^c(A^{\ac})^{\mathfrak{gl}(k)}
)
\stackrel{\simeq}\hookrightarrow
\mathrm{CE}_\bullet(
\mathfrak{gl}^c(A^{\ac})^{\mathfrak{gl}(k)}
)\oplus L_*
\cong
\mathrm{CE}_\bullet(
\mathfrak{gl}^c(A^{\ac})
)$$
(both $L_*$'s are acyclic),
the above proposition then
implies the following.

\begin{theorem}
\label{cor:isoHmlgyofLiecylic}
Let $A$ be Koszul algebra.
Denote by $A^{\ac}$ its
Koszul dual coalgebra.
Then we have the following
commutative diagram of Hopf
algebras:
\begin{equation}
\label{iso:squareofLieinCY}
\begin{split}
\xymatrixcolsep{3pc}
\xymatrix{
\mathrm{H}_\bullet(
\mathfrak{gl}^c(A^{\ac}))
\ar[r]^-{\textup{Koszul}}_{\cong}
&
\mathrm{H}_\bullet(
\mathfrak{gl}
(A))\ar[d]^{\textup{LQT}}_{\cong}
\\
\mathbf{\Lambda}^\bullet
(\mathrm{HC}_\bullet(A^{\ac})[1])
\ar[u]^{\textup{LQT}}_{\cong}
\ar[r]^-{\textup{Koszul}}_{\cong}&
\mathbf{\Lambda}^\bullet
(\mathrm{HC}_\bullet(A)[1])
}
\end{split}
\end{equation}
\end{theorem}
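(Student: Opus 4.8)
The plan is to obtain the statement by combining Proposition~\ref{prop:qismatrix} with the two versions of the Loday--Quillen--Tsygan theorem and then checking that the resulting square commutes by restricting to primitive elements.

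First I would fix the four arrows. By the decomposition~\eqref{iso:decompCE1} and its coalgebra counterpart, the inclusions displayed just before the theorem are quasi-isomorphisms of Hopf algebras, so $\mathrm{H}_\bullet(\mathfrak{gl}(A))$ and $\mathrm{H}_\bullet(\mathfrak{gl}^c(A^{\ac}))$ are computed by $\mathrm{CE}_\bullet(\mathfrak{gl}(A))_{\mathfrak{gl}(k)}$ and $\mathrm{CE}_\bullet(\mathfrak{gl}^c(A^{\ac}))^{\mathfrak{gl}(k)}$ respectively. Proposition~\ref{prop:qismatrix} supplies a zig-zag of Hopf-algebra quasi-isomorphisms between these two complexes, through $\mathrm{CE}_\bullet(\mathfrak{gl}(\Omega(A^{\ac})))_{\mathfrak{gl}(k)}$; passing to homology and inverting the quasi-isomorphisms yields the upper horizontal Hopf-algebra isomorphism of \eqref{iso:squareofLieinCY} (the ``Koszul'' map). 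The right vertical map is the classical Loday--Quillen--Tsygan isomorphism, the left vertical map is the one produced by Theorem~\ref{thm:LQTforcoalg}, and the lower horizontal map is the Koszul-duality isomorphism on $\mathbf{\Lambda}^\bullet(\mathrm{HC}_\bullet(-)[1])$ induced by the cyclic-homology zig-zag of Proposition~\ref{prop:qisinKoszulCY}. So all four objects and all four morphisms are already in place, and the only thing left to prove is commutativity.

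To prove commutativity I would use that each of the four objects is a commutative and cocommutative graded Hopf algebra over a field of characteristic zero, hence is the free graded-symmetric algebra on its space of primitives, so that a Hopf-algebra morphism between two of them is determined by its restriction to primitives; it therefore suffices to check commutativity of the induced square between the primitive parts. Proposition~\ref{prop:primitive} identifies $\mathrm{Prim}\,\mathrm{H}_\bullet(\mathfrak{gl}(A))$ with $\mathrm{HC}_\bullet(A)[1]$ compatibly with the right vertical LQT map, and dually Theorem~\ref{thm:LQTforcoalg} identifies $\mathrm{Prim}\,\mathrm{H}_\bullet(\mathfrak{gl}^c(A^{\ac}))=\mathrm{Indec}\,\mathrm{H}_\bullet(\mathfrak{gl}^c(A^{\ac}))$ with $\mathrm{HC}_\bullet(A^{\ac})[1]$ compatibly with the left vertical LQT map. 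Under these identifications the top ``Koszul'' map restricts on primitives to the cyclic-homology Koszul map of Proposition~\ref{prop:qisinKoszulCY}: this is exactly what the diagram displayed inside the proof of Proposition~\ref{prop:qismatrix} records, namely that the $\mathrm{id}\otimes\iota$- and $\mathrm{CH}(p)$-type quasi-isomorphisms at the matrix-Lie level (built as in Proposition~\ref{prop:qisforKoszul}) restrict on primitives, through $\theta$ and $\mathrm{Tr}$ and through $\Theta$, to the corresponding quasi-isomorphisms of cyclic complexes. Consequently both ways around the square on primitives equal the composite of the cyclic-homology Koszul map $\mathrm{HC}_\bullet(A^{\ac})[1]\to\mathrm{HC}_\bullet(A)[1]$ with the LQT identifications, so that square commutes; extending freely to the symmetric algebras gives commutativity of \eqref{iso:squareofLieinCY}.

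The genuine work has in fact already been absorbed into Proposition~\ref{prop:qismatrix}; for the theorem itself the one step requiring care is the last compatibility above --- verifying that the concrete zig-zag of Proposition~\ref{prop:qismatrix}, when restricted to primitive elements and transported through $\mathrm{Tr}\circ\theta$ and through $\Theta$, reproduces on the nose the zig-zag of Proposition~\ref{prop:qisinKoszulCY}. This reduces to a sign-and-index bookkeeping with the maps $E^{c}_{ij}\mapsto\cdots$ of \eqref{Theta*} and the trace map, and it is the only place where something could realistically go wrong.
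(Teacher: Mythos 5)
Your proposal is correct and follows essentially the same route as the paper: the theorem is deduced from Proposition~\ref{prop:qismatrix} together with the decompositions $\mathrm{CE}_\bullet(\mathfrak{gl}(A))\cong\mathrm{CE}_\bullet(\mathfrak{gl}(A))_{\mathfrak{gl}(k)}\oplus L_*$ and its coalgebra analogue, with the commutativity built in because the matrix-level zig-zag of Proposition~\ref{prop:qismatrix} is constructed precisely by lifting the cyclic-homology zig-zag of Proposition~\ref{prop:qisinKoszulCY} through the primitive/indecomposable identifications of Proposition~\ref{prop:primitive} and Theorem~\ref{thm:LQTforcoalg}. Your explicit remark that a morphism of commutative cocommutative Hopf algebras in characteristic zero is determined by its restriction to primitives is exactly the (implicit) final step of the paper's argument.
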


\section{Algebraic 
K-theory}\label{sect:Ktheory}

In a recent paper \cite{Hennion},
Hennion proved that
the cyclic homology
of a (DG) algebra is isomorphic
to the tangent complex of its K-theory.
Prior to Hennion, Bloch \cite{Blo}
defined
a version of tangent complex to K-theory and Goodwillie \cite{Goo} continued to
show that the relative tangent complex is isomorphic
to the relative cyclic homology.
The novelty of Hennion's result
is that, with his new definition
of the tangent complex, the tangent map 
of the morphism $\mathrm{BGL}_\infty\to
\mathrm K$ is exactly the Loday-Quillen-Tsygan isomoprhism.

\subsection{K-theory of DG algebras}
\label{subsect:Kthyofalgebra}

In this subsection, we study
the tangent complex of the K-theory
for DG algebras. 
We present both Quillen's approach
and Waldhausen's approach,
which will be used later for the
comparision of the
K-theory of DG coalgebras.


Let us first recall that
the category of (non-negatively graded) 
DG algebras
is equivalent to the categogry
of simplicial algebras.

Let $A$ be a simplicial algebra over $k$.
Denote by $\mathrm M_n(A)$ the simpicial set obtained
by taking $n\times n$-matrices level-wise.
Let $\mathrm{GL}_n(A)$ be the group of invertible
matrices as the pullback
$$
\xymatrix{
\mathrm{GL}_n(A)\ar[r]\ar[d]&\mathrm M_n(A)\ar[d]\\
\mathrm{GL}_n(\pi_0A)\ar[r]&\mathrm M_n(\pi_0A)
}$$
Denote by $\mathrm{BGL}_n(A)$
its classifying space
and by $\mathrm{BGL}_{\infty}(A)$
the colimit $\mathrm{colim}_n\mathrm{BGL}_n(A)$.
Applying Quillen's +construction
to $\mathrm{BGL}_{\infty}(A)$
gives a model for the K-theory
of $A$, which is in fact an
infinite loop space. We have 
an equivalence
\begin{equation}
\label{const:KofQuillen}
\mathrm K_0\times\mathrm{BGL}_\infty(-)^{+}
\simeq \Omega^{\infty}K.
\end{equation}

\subsection{The tangent complex of K-theory}\label{subsect:Hennion}

In this subsection, we recall
Hennion's result on the tangent
complex of K-theory. More details
can be found in \cite{Hennion}.

In the following, denote by
$\mathbf{dgArt}_k$ the category of 
dg-Artinian commutative $k$-algebra
with residue field $k$,
by $\mathbf{Sp}_{\ge 0}$ 
the $\infty$-category of connective spectra,
and by $\mathbf{sSet}$
be the $\infty$-category of spaces.

Fix $A$ to be a unital simplicial $k$-algebra.
Consider the two following functors
\begin{equation}\label{functor:BGL}
\begin{split}
\begin{array}{cccl}
\overline{\mathrm{BGL}}_{\infty}
(\mathscr{A}_A)
:&\mathbf{dgArt}_k
&\longrightarrow&\mathbf{sSet}  \\
&B&\longmapsto&\mathrm{hofib}
(\mathrm{BGL}_\infty(A\otimes_k B)
\to\mathrm{BGL}_\infty(A))
\end{array}
\end{split}
\end{equation}
and
\begin{equation}\label{functor:K}
\begin{split}
\begin{array}{cccl}
\overline{\mathrm K}
(\mathscr{A}_A):&\mathbf{dgArt}_k
&\longrightarrow&\mathbf{Sp}_{\ge 0}  \\
&B&\longmapsto&\mathrm{hofib}
(\mathrm K(A\otimes_k B)
\to\mathrm K(A)).
\end{array}
\end{split}
\end{equation}
Hennion showed that
the tangent complexes
to these two functors
are quasi-isomorphic to $\mathrm{HC}_\bullet(A)$ and 
$\mathfrak{gl}_\infty(A)$
respectively,
where the Lie algebra structure
on the former is the usual matrix
Lie algebra while
on the latter is abelian 
(see \cite[Corollaries 3.1.3\&4.1.5]{Hennion}).
We next explain 
what he means by the tangent complex of
these two functors.

\subsubsection{Formal moduli problem
and its tangent complex}

Let us start with the
notion of a formal moduli problem.

\begin{definition}Let $\mathscr C$ be an $\infty$-category
with finite limits.
A functor $F:\mathbf{dgArt}_k\to\mathscr C$
is said to satisfy the condition (S)
if
$$
(S)\quad 
\left\{\begin{array}{cl}
(S1)&\mbox{The object $F(k)$ is final
in $\mathscr C$.}\\
(S2)&\mbox{For any map $A\to B
\in\mathbf{dgArt}_k$ that is injective
on $\mathrm H^0$, the induced}\\
&\mbox{morphism $F(k\times_B A)
\to F(k)\times_{F(B)}F(A)$ is an equivalence.}
\end{array}
\right.$$
\end{definition}

\begin{definition}
A {\it pre-formal moduli problem} (pre-FMP)
is a functor
$\mathbf{dgArt}_k\to\mathbf{sSets}$.
Denote by $\mathbf{PFMP}_k$ the
category of pre-FMPs.
Then A {\it formal moduli problem} 
(FMP)
is a pre-FMP $F$ that satisfies
the condition $(S)$. Denote its 
category
by $\mathbf{FMP}_k$.
\end{definition}

Let $\mathrm{i}: \mathbf{FMP}_k
\to\mathbf{PFMP}_k$ be the inclusion
functor. 
The category 
$\mathbf{PFMP}_k$
is representable, and the full
subcategory $\mathbf{FMP}_k$
is strongly reflexive. In particular,
the inclusion $\mathrm i:
\mathbf{FMP}_k\to\mathbf{PFMP}_k$
admits a left adjoint, which
is called the {\it formalization 
functor}
and is denoted by $\mathrm L$.

Given formal moduli problem, one
may define its tangent complex,
which is a functor
$\mathbb T:\mathbf{FMP}_k
\to\mathbf{dgMod}_k$
(see \cite[Definition 1.2.5]{Lurie}).
To this end, let us first recall:

\begin{definition}
Suppose $V$ is a complex over $k$.
A {\it shifted dg-Lie algebra} on $V$
is a dg-Lie algebra structure
on $V[-1]$. Denote
by $\mathbf{dgLie}_k^\Omega$
the $\infty$-category of shifted
dg-Lie algebras.
\end{definition}

Let $\mathbf{dgMod}_k$ be the
$\infty$-category of chain complexes
over $k$. Then there is 
natural forgetful functor
$\mathbf{dgLie}_k^\Omega\to
\mathbf{dgMod}_k$.
Then one of the main results
of Lurie and Pridham
obtained in \cite{Lurie,Pridham}
is the following 
(c.f. \cite[Theorem 2.1.7]{Hennion}):

\begin{theorem}[Pridham, Lurie]
\label{thm:PridhamLurie}
The tangent complex functor
$\mathbb T:\mathbf{FMP}_k
\to\mathbf{dgMod}_k$
factors through the forgetful functor
$\mathbf{dgLie}_k^\Omega\to
\mathbf{dgMod}_k$.
In other words, the tangent
complex $\mathbb TF$ of a formal
moduli problem admits a natural
shifted Lie algebra structure.
Moreover, the functor
$F\mapsto \mathbb TF$
induces an equivalence
$\mathbf{FMP}_k
\simeq\mathbf{dgLie}_k^\Omega$.
\end{theorem}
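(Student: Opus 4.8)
The plan is to prove the theorem by establishing a Koszul duality between shifted dg-Lie algebras and formal moduli problems, realized concretely by the Maurer--Cartan construction (dually, by the completed Chevalley--Eilenberg cochain functor). First I would build the candidate equivalence $\Psi\colon\mathbf{dgLie}_k^\Omega\to\mathbf{FMP}_k$. Given $V\in\mathbf{dgLie}_k^\Omega$, write $\mathfrak g=V[-1]$ for the underlying honest dg-Lie algebra; for $B\in\mathbf{dgArt}_k$ with maximal ideal $\mathfrak m_B$, the complex $\mathfrak g\otimes_k\mathfrak m_B$ is a \emph{nilpotent} dg-Lie algebra, so one may form its Maurer--Cartan space $\mathrm{MC}_\bullet(\mathfrak g\otimes_k\mathfrak m_B)$ (the Hinich--Getzler nerve), and set $\Psi(V):=\bigl(B\mapsto\mathrm{MC}_\bullet(\mathfrak g\otimes_k\mathfrak m_B)\bigr)$. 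I would then verify that $\Psi(V)$ is an FMP: condition (S1) is immediate since $\mathfrak m_k=0$; condition (S2) holds because $\mathfrak m\mapsto\mathrm{MC}_\bullet(\mathfrak g\otimes\mathfrak m)$ sends the homotopy pullbacks of nilpotent dg-Lie algebras appearing in (S2) to homotopy pullbacks of Kan complexes, a standard consequence of nilpotence together with the fact that $\mathrm{MC}_\bullet$ is the derived functor of the $0$-truncated Maurer--Cartan set.

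Next I would introduce the candidate inverse, the tangent complex, and pin down its relation to $\Psi$. For $F\in\mathbf{FMP}_k$ put $(\mathbb T F)_n:=F(k\oplus k[n])$, evaluated on the trivial square-zero extensions (these are square-zero, a product of two degree-$n$ elements landing in the absent degree $2n$); the spectrum structure comes from the pullback squares $k\oplus k[n]\simeq k\times_{k\oplus k[n+1]}k$ in $\mathbf{dgArt}_k$, to which (S2) associates equivalences $F(k\oplus k[n])\simeq\Omega F(k\oplus k[n+1])$. The key computation is $\mathbb T\circ\Psi\simeq\mathrm{id}$: when $\mathfrak m^2=0$ one has $\mathrm{MC}_\bullet(\mathfrak g\otimes\mathfrak m)\simeq|\mathfrak g\otimes\mathfrak m|$ via Dold--Kan, whose homotopy is $\mathrm H_\bullet(\mathfrak g\otimes\mathfrak m)$, and unwinding the shifts identifies $\mathbb T\Psi(V)$ with $V$. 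Granting that $\Psi$ and $\mathbb T$ are mutually inverse, every FMP $F$ equals $\Psi(V)$ for a unique $V$, so $\mathbb T F\simeq V$ carries a natural shifted dg-Lie structure and $\mathbb T$ factors through the forgetful functor $\mathbf{dgLie}_k^\Omega\to\mathbf{dgMod}_k$; both assertions of the theorem follow at once.

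It remains to prove $\Psi$ is an equivalence. For \emph{essential surjectivity} I would use that $\mathbf{FMP}_k\hookrightarrow\mathbf{PFMP}_k$ is reflective with left adjoint $\mathrm L$ and that $\mathbf{PFMP}_k$ is generated under colimits by the representables $\mathrm{Spf}(B)$; each $B\in\mathbf{dgArt}_k$ is built from the $k\oplus k[n]$ by finitely many square-zero pullbacks, $\mathrm L\,\mathrm{Spf}(k\oplus k[n])$ is $\Psi$ of a free shifted Lie algebra on one generator, and one propagates through the pullback tower using that $\Psi$ preserves the relevant limits. For \emph{full faithfulness} I would compare the mapping spaces $\mathrm{Map}(V,W)\to\mathrm{Map}(\Psi V,\Psi W)$, reduce to the case $V$ free (where both sides compute $W$ in a fixed degree), and d\'evissage over all $V$, which is generated under colimits and shifts by the free ones; conceptually this is the assertion that the completed Chevalley--Eilenberg functor $C^\bullet\colon\mathbf{dgLie}_k\to\mathbf{CAlg}_k^{\mathrm{aug}}$ is fully faithful onto its essential image of complete augmented algebras, and $\Psi=\mathrm{Spf}\circ C^\bullet$ up to the shift.

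The main obstacle is exactly this Koszul-duality equivalence at the derived, homotopy-coherent level, where two issues dominate. First, \emph{convergence}: one must know that $C^\bullet(\mathfrak g)$ is complete as a pro-Artinian algebra and that no information is lost in the passage — this is where co-nilpotence of the coalgebras involved (the standing assumption of the present paper) is essential. Second, \emph{coherence}: the Maurer--Cartan nerve must be promoted to a genuine functor of $\infty$-categories compatible with all structure, which is the technical heart of both Lurie's and Pridham's treatments. Once these are secured, identifying the resulting shifted bracket on $\mathbb T F$ intrinsically — as an operation on the shifted cotangent complex of $F$ rather than merely transported along $\Psi$ — is a further, though more formal, delicate point.
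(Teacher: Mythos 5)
This theorem is not proved in the paper at all: it is quoted verbatim as a result of Lurie and Pridham (the paper cites \cite{Lurie,Pridham} and \cite[Theorem 2.1.7]{Hennion} and moves on), so there is no in-paper argument to compare yours against. Judged on its own terms, your outline follows the standard route of Lurie's treatment --- the Maurer--Cartan nerve $B\mapsto\mathrm{MC}_\bullet(\mathfrak g\otimes\mathfrak m_B)$ as the functor $\Psi$, the tangent complex read off from the trivial square-zero extensions $k\oplus k[n]$, and Chevalley--Eilenberg Koszul duality for the equivalence. (Pridham's proof is genuinely different in technique, working with pro-Artinian simplicial rings and a model-categorical comparison rather than the $\infty$-categorical Koszul duality, though the two are equivalent.) Your reduction steps are all in the right direction: nilpotence of $\mathfrak g\otimes\mathfrak m_B$, the loop-space identification $F(k\oplus k[n])\simeq\Omega F(k\oplus k[n+1])$ from the pullback square, the abelian computation $\mathbb T\Psi\simeq\mathrm{id}$, and the d\'evissage over square-zero extensions for essential surjectivity.

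That said, as a proof this is a roadmap rather than an argument: the two points you defer to the final paragraph --- promoting $\mathrm{MC}_\bullet$ to a coherent functor of $\infty$-categories, and establishing that the (completed) Chevalley--Eilenberg functor is fully faithful at the derived level --- are not peripheral technicalities but the entire mathematical content of the theorem; everything else is formal. Two smaller points to watch. First, your justification of (S2) needs the map $A\to B$ to induce a \emph{surjection} on nilpotent dg-Lie algebras so that $\mathrm{MC}_\bullet$ of it is a Kan fibration and the strict pullback computes the homotopy pullback; the paper's phrasing of (S2) (``injective on $\mathrm H^0$'') is a garbled version of Lurie's surjectivity condition on $\pi_0$, and your argument silently relies on the correct version. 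Second, the identification $\mathrm{MC}_\bullet(\mathfrak g\otimes\mathfrak m)\simeq|\mathfrak g\otimes\mathfrak m|$ in the square-zero case hides a shift and a connective truncation (Maurer--Cartan elements sit in a fixed internal degree of $\mathfrak g\otimes\mathfrak m$), which is exactly where the passage from $\mathfrak g$ to the \emph{shifted} Lie algebra $V=\mathfrak g[1]$ enters; ``unwinding the shifts'' is doing real work there and should be made explicit if this sketch were to be completed.
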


One can also define
the tangent complex functor
for $\mathbf{PFMP}_k$
by composing $\mathbb T$
with $L$ given above. More
precisely, it is given as follows:

\begin{definition}[Hennion]
The {\it tangent complex functor}
of $\mathbf{PFMP}_k$
is the functor
$\ell:=\mathbb T\circ \mathrm L:
\mathbf{PFMP}_k\to\mathbf{dgLie}_k^\Omega$.
\end{definition}

\subsubsection{Tangent complex
of BGL}

Given an associative algebra $A$
over $k$. For 
$n\in\mathbb N\cup\{\infty\}$,
the functor
$$\overline{\mathrm{BGL}}_n:
\mathbf{dgArt}_k\to\mathbf{sSet},
B\mapsto
\mathrm{hofib}(
\mathrm{BGL}_n(A\otimes_k B)
\to\mathrm{BGL}_n(A)
)$$
is a pre-FMP (recall that
when $n=\infty$,
it is the above \eqref{functor:BGL}).
Then Hennion proved the following:

\begin{proposition}
[{\cite[Corollary 4.1.5]{Hennion}}]
For any $n\in\mathbb N\cup\{\infty\}$,
the (shifted) tangent Lie algebra
$\ell(\overline{\mathrm{BGL}}_n(\mathcal A_A))$
is isomorphic to
$\mathfrak{gl}_n(A)[1]$.
\end{proposition}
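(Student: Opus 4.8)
The plan is to compute the tangent complex of $\overline{\mathrm{BGL}}_n(\mathcal A_A)$ by first passing through the formalization functor $\mathrm L$ and then identifying the resulting shifted dg-Lie algebra. The starting observation is that $\overline{\mathrm{BGL}}_n$ is the homotopy fibre of a map of classifying spaces $\mathrm{BGL}_n(A\otimes_k B)\to\mathrm{BGL}_n(A)$, so one first wants to replace classifying spaces by the groups themselves. Since $B\mapsto\mathrm{GL}_n(A\otimes_k B)\to\mathrm{GL}_n(A)$ is a map of simplicial groups with $\mathrm{GL}_n(k)$-equivariant structure and the fibre is connected in the relevant range, the homotopy fibre of $\mathrm{B}(-)$ is the classifying space of the homotopy fibre of the groups. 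Writing $\overline{\mathrm{GL}}_n(\mathcal A_A)$ for $B\mapsto\mathrm{hofib}(\mathrm{GL}_n(A\otimes_k B)\to\mathrm{GL}_n(A))$, we get an equivalence of pre-FMPs $\overline{\mathrm{BGL}}_n(\mathcal A_A)\simeq \mathrm B\,\overline{\mathrm{GL}}_n(\mathcal A_A)$, and hence, since $\mathrm L$ commutes with the relevant colimits and $\mathbb T\circ\mathrm L$ takes $\mathrm B$ to a shift, an identification $\ell(\overline{\mathrm{BGL}}_n(\mathcal A_A))\simeq \ell(\overline{\mathrm{GL}}_n(\mathcal A_A))[1]$.

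Next I would identify $\ell(\overline{\mathrm{GL}}_n(\mathcal A_A))$. The point here is that the functor $B\mapsto\mathrm{GL}_n(A\otimes_k B)$ is, up to the fibre over $\mathrm{GL}_n(A)$, the functor of points of a group object whose formalization is the formal group associated to the Lie algebra $\mathfrak{gl}_n(A)=\mathrm M_n(A)$: infinitesimally, an invertible matrix over $A\otimes_k B$ reducing to the identity over $A$ is $\mathrm{id}+m$ with $m\in\mathrm M_n(A\otimes_k\mathfrak m_B)$, and the group law is matrix multiplication, whose associated Lie bracket is the commutator. Thus the formalization of $\overline{\mathrm{GL}}_n(\mathcal A_A)$ is the formal moduli problem associated, under the Lurie–Pridham equivalence (Theorem~\ref{thm:PridhamLurie}), to the shifted dg-Lie algebra $\mathfrak{gl}_n(A)[1]$; equivalently $\ell(\overline{\mathrm{GL}}_n(\mathcal A_A))\cong\mathfrak{gl}_n(A)$ as a shifted Lie algebra. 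Combining with the previous paragraph gives $\ell(\overline{\mathrm{BGL}}_n(\mathcal A_A))\cong\mathfrak{gl}_n(A)[1]$. Finally, the case $n=\infty$ follows by passing to the filtered colimit over $n$, using that $\ell$, being $\mathbb T\circ\mathrm L$ with $\mathrm L$ a left adjoint and $\mathbb T$ colimit-preserving on the relevant subcategory, commutes with the colimit $\mathrm{colim}_n\mathrm{BGL}_n$.

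The main obstacle is the second step: making precise the claim that the formalization of the pre-FMP $\overline{\mathrm{GL}}_n(\mathcal A_A)$ is the formal group of $\mathfrak{gl}_n(A)$. One must check that $B\mapsto\mathrm{hofib}(\mathrm{GL}_n(A\otimes_k B)\to\mathrm{GL}_n(A))$ already satisfies (or nearly satisfies) the Schlessinger-type condition (S), so that its formalization is computed by restricting attention to square-zero extensions, where the matrix exponential and the Baker–Campbell–Hausdorff formula identify the group structure with that of the formal group of $\mathfrak{gl}_n(A)$; here one uses $\mathrm{char}\,k=0$. The fibre-of-$\mathrm B$ reduction in the first step also requires a small argument (a fibration sequence $\overline{\mathrm{GL}}_n\to\overline{\mathrm{BGL}}_n$ delooping), but this is standard. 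Everything else is a formal consequence of Theorem~\ref{thm:PridhamLurie} and the definition of $\ell$.
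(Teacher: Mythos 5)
The paper itself does not prove this proposition; it is quoted verbatim from Hennion's paper (cited as \cite[Corollary 4.1.5]{Hennion}) and used as a black box. So there is no proof in the paper to compare against. What can be assessed is the internal coherence of your sketch.

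Your overall strategy --- deloop to pass from $\overline{\mathrm{BGL}}_n$ to $\overline{\mathrm{GL}}_n$, identify the formalization of the latter as the formal group of $\mathrm{GL}_n(A)$, then use the Lurie--Pridham equivalence --- is a reasonable reconstruction of the shape of Hennion's argument. But there is a concrete degree error in the middle, and you paper over it with a false ``equivalently.'' You write that the formalization of $\overline{\mathrm{GL}}_n(\mathcal A_A)$ corresponds under Theorem~\ref{thm:PridhamLurie} to the shifted dg-Lie algebra $\mathfrak{gl}_n(A)[1]$, and then add ``equivalently $\ell(\overline{\mathrm{GL}}_n(\mathcal A_A))\cong \mathfrak{gl}_n(A)$ as a shifted Lie algebra.'' These two statements are not equivalent: by definition $\ell = \mathbb T\circ \mathrm L$, so if $\mathrm L(\overline{\mathrm{GL}}_n(\mathcal A_A))$ corresponds to $\mathfrak{gl}_n(A)[1]$ then $\ell(\overline{\mathrm{GL}}_n(\mathcal A_A)) = \mathfrak{gl}_n(A)[1]$, and combining with your delooping identity $\ell(\overline{\mathrm{BGL}}_n)\cong \ell(\overline{\mathrm{GL}}_n)[1]$ would yield $\mathfrak{gl}_n(A)[2]$, which is wrong. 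The correct claim you need is that $\ell(\overline{\mathrm{GL}}_n(\mathcal A_A))\cong \mathfrak{gl}_n(A)$ as a complex (so that the shift by $[1]$ gives the answer); the parenthetical ``formal moduli problem associated to $\mathfrak{gl}_n(A)[1]$'' should instead be attached to $\overline{\mathrm{BGL}}_n$, not $\overline{\mathrm{GL}}_n$.

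There is also a substantive point hidden in the delooping step that your sketch glosses over: the identity $\ell(\mathrm B F)\simeq \ell(F)[1]$ can only be a statement about underlying complexes, not about shifted Lie algebras, because the shifted Lie bracket on $\ell(\overline{\mathrm{GL}}_n)$ is necessarily abelian (it is concentrated so that the bracket must vanish for degree reasons), whereas $\ell(\overline{\mathrm{BGL}}_n)$ carries the commutator bracket of $\mathfrak{gl}_n(A)$. The nonabelian structure is \emph{created} by the delooping --- it is exactly the Lie algebra of the group object --- and you need to say where it comes from; ``$\mathbb T\circ \mathrm L$ takes $\mathrm B$ to a shift'' alone does not produce the bracket. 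This is the heart of the matter, and your proposal treats it as a formality.

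\end{document}
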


\subsubsection{Tangent complex of K}

The tangent complex of K-theory is 
more elaborate.

Let $\mathbf{FMP}_k^{\mathbf{Ab}}$
and $\mathbf{PFMP}_k^{\mathbf{Ab}}$
denote the categor of abelian group
objects in the formal 
and pre-formal moduli problem 
respectively.
Also let
$\mathbf{dgLie}_k^{\Omega,\mathbf{Ab}}$
be the abelian group
objects in $\mathbf{dgLie}_k^{\Omega}$.
Then we have the following.

\begin{proposition}
\begin{enumerate}
\item[$(1)$]
The forgetful functor
$\mathbf{dgLie}_k^{\Omega,
\mathrm{Ab}}\to\mathbf{dgMod}_k$
is an equivalence.

\item[$(2)$] The category
$\mathbf{FMP}_k^{\mathbf{Ab}}$ is equivalent
to the category $\mathbf{dgLie}_k^{\Omega,
\mathbf{Ab}}$.
\end{enumerate}
\end{proposition}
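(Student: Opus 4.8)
The plan is to prove the two claims of the proposition by a standard "abelian group objects in a monoidal/stable context become modules" argument, applied to the Lurie–Pridham equivalence $\mathbf{FMP}_k \simeq \mathbf{dgLie}_k^\Omega$ of Theorem~\ref{thm:PridhamLurie}. For part (2), the equivalence $\mathbb{T}:\mathbf{FMP}_k \xrightarrow{\simeq} \mathbf{dgLie}_k^\Omega$ is an equivalence of $\infty$-categories, and any equivalence of $\infty$-categories with finite products induces an equivalence on the respective categories of abelian group objects (commutative group objects), since the notion of abelian group object $\mathrm{Ab}(\mathscr{C})$ is definable purely in terms of finite products in $\mathscr{C}$ and is preserved by product-preserving equivalences. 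So I would first check that $\mathbb{T}$ preserves finite products — this is clear, since $\mathbb{T}$ is built as a limit (it commutes with limits, being a right adjoint composite in the relevant sense, or more directly because the tangent complex of a product FMP is the product of the tangent complexes) — and then conclude $\mathbf{FMP}_k^{\mathbf{Ab}} = \mathrm{Ab}(\mathbf{FMP}_k) \simeq \mathrm{Ab}(\mathbf{dgLie}_k^\Omega) = \mathbf{dgLie}_k^{\Omega,\mathbf{Ab}}$.

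For part (1), the claim is that $\mathbf{dgLie}_k^{\Omega,\mathrm{Ab}} \to \mathbf{dgMod}_k$ is an equivalence; that is, an abelian group object in shifted dg-Lie algebras is the same datum as a chain complex, with no extra structure and no bracket surviving. I would argue as follows. A commutative group object in $\mathbf{dgLie}_k^\Omega$ is in particular a commutative group object in the underlying $\mathbf{dgMod}_k$ (via the forgetful functor, which preserves products), hence simply a chain complex $V$; the content is that the shifted Lie structure on such a $V$ must be trivial. The multiplication map $m: V \oplus V \to V$ of the group object must be a map of shifted dg-Lie algebras; restricting to the two summands shows $m$ is "addition," and compatibility of the shifted bracket $[-,-]$ on $V[-1]$ with $m$ forces $[x+y, x+y] = [x,x] + [y,y]$ in the appropriate homotopical sense, i.e. the bracket is additive in each variable in a way that, combined with antisymmetry, forces it to be null-homotopic. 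More conceptually: $\mathbf{dgLie}_k^\Omega$ is the $\infty$-category whose objects are, after the shift, dg-Lie algebras, and a group object there corresponds under Koszul duality / the FMP picture to a pointed FMP that is also a group object, which (again by the stable = "abelian" principle, since group objects in a pointed $\infty$-category with finite limits where the looping–delooping adjunction is an equivalence are exactly spectrum objects) must be an infinite loop / spectrum object, whose tangent complex carries only its underlying complex structure. I would make this precise by invoking that abelian group objects in any $\infty$-category $\mathscr{C}$ with finite limits form a stable $\infty$-category when $\mathscr{C}$ is pointed with the suspension–loop adjunction an equivalence, and identifying the result with $\mathbf{dgMod}_k$; the bracket, being a "non-linear" (quadratic) operation, cannot be a morphism in this linearized category and therefore vanishes.

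The main obstacle I anticipate is making the vanishing of the bracket in part (1) genuinely rigorous at the $\infty$-categorical level rather than at the level of a strict model: one must argue that the homotopy-coherent structure of a commutative group object (not just a strictly commutative monoid with strict inverses) forces the shifted $L_\infty$-operations — all of the higher brackets $\ell_n$ on $V[-1]$, not merely $\ell_2$ — to be null-homotopic compatibly. The cleanest route is to avoid computing with brackets at all and instead use the abstract statement that $\mathrm{Ab}(\mathscr{C}) \simeq \mathrm{Sp}(\mathscr{C}_*)$-type equivalences hold here, together with the identification of $\mathrm{Sp}$ of pointed FMPs (equivalently, of the stabilization of $\mathbf{dgLie}_k^\Omega$) with $\mathbf{dgMod}_k$, so that the forgetful functor becomes the canonical equivalence. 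This reduces everything to citing the stability formalism and the Lurie–Pridham theorem, which is presumably the intended level of rigor.
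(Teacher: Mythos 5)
The paper does not actually prove this proposition: it disposes of it in one sentence, citing Hennion's Proposition 2.2.2 for the identification involving $\mathbf{FMP}_k^{\mathbf{Ab}}$ and deducing the rest from the Lurie--Pridham equivalence $\mathbf{FMP}_k\simeq\mathbf{dgLie}_k^{\Omega}$. So your proposal supplies more argument than the source does. Your treatment of part (2) is correct and is exactly the formal half: an equivalence of $\infty$-categories (automatically preserving all finite products) restricts to an equivalence on abelian group objects, so $\mathbf{FMP}_k^{\mathbf{Ab}}\simeq\mathbf{dgLie}_k^{\Omega,\mathbf{Ab}}$ is an immediate corollary of Theorem \ref{thm:PridhamLurie}; your worry about whether $\mathbb{T}$ preserves products is moot since it is an equivalence. (The paper's attribution is slightly inverted relative to yours --- it credits (2) to Hennion and calls (1) the corollary --- but the logical content is the same: one of the two equivalences is formal, the other carries the substance, and you have correctly located the substance in (1).)

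The one place your sketch needs shoring up is precisely the step you flag in part (1). The strict computation with the addition map (bilinearity of the bracket plus additivity forces $[x,y]=0$) is a valid heuristic but, as you say, does not control the higher coherences of a homotopy-coherent commutative group structure, nor the higher $L_\infty$-operations. Your proposed fix --- identify $\mathrm{Ab}(\mathbf{dgLie}_k^{\Omega})$ with the stabilization and use that the stabilization of $\mathcal{O}$-algebras in a stable $k$-linear category is the underlying module category --- is the right idea, but the identification $\mathrm{Ab}(\mathscr{C})\simeq\mathrm{Sp}(\mathscr{C}_*)$ is \emph{not} a general formality: grouplike $\mathbb{E}_\infty$-objects in spaces are connective spectra, not spectra, so in general abelian group objects and spectrum objects differ. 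In the present $\mathbb{Q}$-linear setting the statement does hold, but that is exactly the content of Hennion's Proposition 2.2.2 (equivalently, that every abelian group object in $\mathbf{dgLie}_k$ is a trivial Lie algebra on its underlying complex, with the forgetful and trivial-Lie-algebra functors inverse to one another). So the missing step is not wrong, but it must be discharged by a precise citation (Hennion, or Lurie's treatment of stabilization of operadic algebras) rather than by the phrase ``presumably the intended level of rigor''; doing so puts your argument on the same footing as the paper's own, which is a citation to the identical fact.
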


The above 
(2) is proved in
\cite[Proposition 2.2.2]{Hennion}
and then the above
(1) is a direct corollary
of Pridham and Lurie's result 
(Theorem \ref{thm:PridhamLurie}).

Let us denote by 
$\mathrm i_{\mathrm{Ab}}:
\mathbf{FMP}_k^{\mathbf{Ab}}\to
\mathbf{PFMP}_k^{\mathbf{Ab}}$ 
the inclusion functor, and by
$\mathrm L^{\mathrm{Ab}}$ its left adjoint.

\begin{definition}Let
$\mathbb T^{\mathrm{Ab}}:
\mathbf{FMP}_k^{\mathbf{Ab}}\stackrel{\sim}
\longrightarrow
\mathbf{dgLie}_k^{\Omega,\mathrm{Ab}}
\stackrel{\simeq}\longrightarrow
\mathbf{dgMod}_k$ be
the equivalence of the above proposition.
Let $\ell^{\mathrm{Ab}}:=\mathbb 
T^{\mathrm{Ab}}\circ 
\mathrm{L}^{\mathrm{Ab}}:
\mathbf{PFMP}_k^{\mathbf{Ab}}
\to\mathbf{dgMod}_k$.
\end{definition}

Now let us move back to K-theory.
Fix an associative algebra $A$
over $k$. Consider the functor
$$\overline{\mathrm{K}}:
\mathbf{dgArt}_k\to
\mathbf{Sp}_{\ge 0}$$
given by \eqref{functor:K}.
The tangent complex
$\mathbb T_{\mathrm{K}(A),0}:=
\ell^{\mathrm{Ab}}
(\overline{\mathrm K}
(\mathscr A_A))$
is called the tangent complex
to the
K-theory of $A$.
Hennion then showed the following.

\begin{proposition}
[{\cite[Corollary 3.1.3]{Hennion}}]
The tangent complex
$\mathbb T_{\mathrm K(A),0}$
is quasi-isomorphic to 
$\mathrm{CC}_\bullet(A)[1]$.
\end{proposition}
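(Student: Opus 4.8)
The statement is, in essence, a reformulation in the language of formal moduli problems of Goodwillie's theorem on the relative $K$-theory of nilpotent extensions, so the plan is to unwind the definition of $\mathbb{T}_{\mathrm K(A),0}=\ell^{\mathrm{Ab}}(\overline{\mathrm K}(\mathscr A_A))$ and then quote Goodwillie. First I would record that, by \eqref{functor:K}, $\overline{\mathrm K}(\mathscr A_A)$ is the relative $K$-theory functor $B\mapsto\mathrm{hofib}(\mathrm K(A\otimes_kB)\to\mathrm K(A))$; that each $B\in\mathbf{dgArt}_k$ is a (pro-)nilpotent extension of its residue field $k$; and hence that $A\otimes_kB\to A$ is a nilpotent extension of $A$. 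Next I would spell out what $\ell^{\mathrm{Ab}}$ extracts from such a spectrum-valued functor: since $\ell^{\mathrm{Ab}}=\mathbb T^{\mathrm{Ab}}\circ\mathrm L^{\mathrm{Ab}}$, the complex $\ell^{\mathrm{Ab}}(\overline{\mathrm K}(\mathscr A_A))$ is the underlying object of the abelian formal moduli problem obtained by formalizing $\overline{\mathrm K}(\mathscr A_A)$, which concretely is its linearization (Goodwillie derivative at the basepoint) as a functor of $B$.

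Then I would invoke Goodwillie's theorem \cite{Goo}: the relative $K$-theory spectrum of the nilpotent extension $A\otimes_kB\to A$ is naturally equivalent to the shift by one of its relative cyclic homology. Passing to linearizations in $B$, the right-hand side becomes linear in the augmentation ideal $\mathfrak m_B$ with coefficient the cyclic complex of $A$ itself, i.e. to first order in $\mathfrak m_B$ the relative cyclic complex of $A\otimes_kB$ over $A$ is $\mathrm{CC}_\bullet(A)\otimes_k(\mathfrak m_B/\mathfrak m_B^2)$. Consequently the Goodwillie derivative of $\overline{\mathrm K}(\mathscr A_A)$, and therefore $\mathbb T_{\mathrm K(A),0}$, is quasi-isomorphic to $\mathrm{CC}_\bullet(A)[1]$. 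Verifying that $\ell^{\mathrm{Ab}}$, as defined through the Lurie--Pridham machinery (Theorem~\ref{thm:PridhamLurie} together with the equivalence $\mathbf{FMP}_k^{\mathbf{Ab}}\simeq\mathbf{dgMod}_k$), really does compute this classical linearization is where the bookkeeping lies: one uses that $\mathrm L^{\mathrm{Ab}}$ is left adjoint to the inclusion of abelian FMPs into abelian pre-FMPs, that on abelian objects the tangent-complex functor is an equivalence, and that both are compatible with the forgetful functors to $\mathbf{dgMod}_k$, so that $\mathbb T^{\mathrm{Ab}}\circ\mathrm L^{\mathrm{Ab}}$ agrees with ``take the linear part at $k$''.

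As a cross-check, and to keep contact with the Loday--Quillen--Tsygan map which is the real point of this section, I would run the argument through $\mathrm{BGL}$ instead: by \eqref{const:KofQuillen} and the group-completion theorem, $\overline{\mathrm K}(\mathscr A_A)$ is the connective-spectrum object built by group completion from $\overline{\mathrm{BGL}}_\infty(\mathscr A_A)$, whose shifted tangent Lie algebra was identified just above with $\mathfrak{gl}(A)[1]$. On tangent complexes the group completion corresponds to the operation sending the Lie algebra $\mathfrak{gl}(A)$ to the cyclic complex that occurs, via Proposition~\ref{prop:primitive} and the Loday--Quillen--Tsygan theorem recalled above, as the primitive (equivalently, indecomposable) part of $\mathrm H_\bullet(\mathfrak{gl}(A))\cong\mathbf{\Lambda}^\bullet(\mathrm{HC}_\bullet(A)[1])$, namely $\mathrm{CC}_\bullet(A)[1]$; moreover the resulting tangent map $\mathfrak{gl}(A)[1]=\ell(\overline{\mathrm{BGL}}_\infty)\to\mathbb T_{\mathrm K(A),0}$ is precisely $\mathrm{Tr}\circ\theta$. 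The two computations agree, as they must.

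I expect the genuine obstacle to be neither of the two inputs — Goodwillie's theorem and the $\mathrm{BGL}$ computation are taken as given — but the compatibility glue of the second paragraph: showing that the formal-moduli-theoretic functor $\ell^{\mathrm{Ab}}$ coincides with the classical Goodwillie derivative, and that under this identification the tangent of $\mathrm{BGL}_\infty\to\mathrm K$ is the Loday--Quillen--Tsygan morphism on the nose rather than merely a quasi-isomorphic map. This is exactly the content developed in \cite[\S3]{Hennion}, and once it is in place the proposition follows.
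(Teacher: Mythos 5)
The paper does not prove this statement at all: it is imported verbatim as \cite[Corollary 3.1.3]{Hennion}, so there is no internal argument to compare yours against. Your outline is nonetheless a faithful reconstruction of the strategy in the cited source: identify $\ell^{\mathrm{Ab}}(\overline{\mathrm K}(\mathscr A_A))$ with the linearization of the relative $K$-theory functor $B\mapsto\mathrm{hofib}(\mathrm K(A\otimes_k B)\to\mathrm K(A))$ over the Artinian direction, observe that $A\otimes_k B\to A$ is a nilpotent extension for $B\in\mathbf{dgArt}_k$, apply Goodwillie's theorem to replace relative $K$-theory by shifted relative cyclic homology, and then compute the part linear in $\mathfrak m_B$. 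Two cautions. First, your ``cross-check'' through $\mathrm{BGL}_\infty$ is not independent: the assertion that group completion corresponds on tangent complexes to passing to primitives, and that the induced map is $\mathrm{Tr}\circ\theta$, is exactly the content of Theorem \ref{thm:Hennion} (Hennion's Theorem 4.2.1), which logically sits downstream of the present proposition; so it cannot serve as evidence for it without circularity. Second, the step ``to first order in $\mathfrak m_B$ the relative cyclic complex of $A\otimes_k B$ over $A$ is $\mathrm{CC}_\bullet(A)\otimes_k(\mathfrak m_B/\mathfrak m_B^2)$'' does require an argument (a filtration of $\mathrm{CC}_\bullet(A\otimes_k B)$ by $\mathfrak m_B$-adic weight and the observation that cyclic symmetrization collapses the weight-one piece onto a single copy of $\mathrm{CC}_\bullet(A)\otimes\mathfrak m_B$); you correctly flag this, together with the identification of $\ell^{\mathrm{Ab}}$ with the classical linearization, as where the real work lies.
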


\subsubsection{Connecting
BGL with K}

In the following, denote
by $\theta:
\mathbf{dgMod}_k\simeq
\mathbf{dgLie}_k^{\Omega,Ab}
\to\mathbf{dgLie}_k^\Omega$
the functor which
forgets the abelian group structure.
Hennion showed the following:

\begin{theorem}[{\cite[Theorem
4.2.1]{Hennion}}]
\label{thm:Hennion}
The natural map
$\overline{\mathrm{BGL}}_\infty(\mathscr A_A)
\to
\Omega^\infty\overline{\mathrm K}(\mathscr A_A)$
induces a morphism
on the tangent Lie algebras
$$T:\mathfrak{gl}_\infty(A)[1]\to
\theta(\mathrm{CC}_\bullet(A)[1]).$$
Moreover,
$T$ is homotopic to the
Loday-Quillen-Tsygam map trace map
$\mathrm{CE}_\bullet
(\mathfrak{gl}_\infty(A))
\to \mathrm{CC}_\bullet(A)[1]$.
\end{theorem}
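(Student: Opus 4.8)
Following \cite[\S4.2]{Hennion}, the plan has two parts: first produce $T$ by functoriality, then identify it with the trace map by transporting everything through the Pridham--Lurie equivalence and appealing to the classical Loday--Quillen--Tsygan theorem. The first part is formal. Applying the tangent-complex functor $\ell=\mathbb T\circ\mathrm L$ to the natural transformation of pre-FMPs $\overline{\mathrm{BGL}}_\infty(\mathscr{A}_A)\to\Omega^\infty\overline{\mathrm K}(\mathscr{A}_A)$ yields a morphism of shifted dg-Lie algebras $\ell(\overline{\mathrm{BGL}}_\infty(\mathscr{A}_A))\to\ell(\Omega^\infty\overline{\mathrm K}(\mathscr{A}_A))$; since $\Omega^\infty\overline{\mathrm K}(\mathscr{A}_A)$ underlies the object $\overline{\mathrm K}(\mathscr{A}_A)$ of $\mathbf{PFMP}_k^{\mathbf{Ab}}$, its formalization is again abelian, so that $\ell(\Omega^\infty\overline{\mathrm K}(\mathscr{A}_A))\simeq\theta(\ell^{\mathrm{Ab}}(\overline{\mathrm K}(\mathscr{A}_A)))$. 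By the two propositions recalled just above, the source and target are $\mathfrak{gl}_\infty(A)[1]$ (with commutator bracket) and $\theta(\mathrm{CC}_\bullet(A)[1])$ (abelian), which is the claimed $T$.

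For the second part, recall that under $\mathbf{FMP}_k\simeq\mathbf{dgLie}_k^\Omega$ the FMP attached to a shifted dg-Lie algebra $\mathfrak g$ has Chevalley--Eilenberg cochains $\mathrm{CE}^\bullet(\mathfrak g[-1])$ as derived functions, and dually its derived homology is computed by the Chevalley--Eilenberg chains. Applying the latter to $T$ gives a morphism $\mathrm{CE}_\bullet(\mathfrak{gl}_\infty(A))\to\mathbf{\Lambda}^\bullet(\mathrm{CC}_\bullet(A)[1])$, which is a map of commutative and cocommutative Hopf algebras because the target pre-FMP is group-like (it is a spectrum). By Proposition~\ref{prop:primitive} the primitives on the source are $\mathrm{CC}_\bullet^\lambda(A)$ and those on the target are $\mathrm{CC}_\bullet(A)[1]$, so $T$ amounts to a natural chain quasi-isomorphism $\mathrm{CC}_\bullet^\lambda(A)\to\mathrm{CC}_\bullet(A)[1]$, and it remains to check that this is $\mathrm{Tr}\circ\theta$ up to homotopy.

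To pin down this normalization one unwinds the chain-level models. The natural map $\overline{\mathrm{BGL}}_\infty\to\Omega^\infty\overline{\mathrm K}$ factors through the plus construction $\mathrm{BGL}_\infty\to\mathrm{BGL}_\infty^{+}$, and Hennion's identifications refine Goodwillie's theorem \cite{Goo} relating relative K-theory to relative cyclic homology; tracing this refinement through the formalization functor and the Pridham--Lurie equivalence shows that $T$ is modelled, on Chevalley--Eilenberg chains, by the very cyclic comparison map that proves the classical Loday--Quillen--Tsygan theorem, namely $\mathrm{Tr}\circ\theta$ (\cite[\S10.2]{Loday}). This is where the work lies: the main obstacle is the careful comparison of the $\infty$-categorical constructions --- formalization, abelian group objects in $\mathbf{PFMP}_k$, the functor $\Omega^\infty$, and the Pridham--Lurie equivalence --- with the explicit chain-level models for $\mathrm{CE}_\bullet$, $\mathrm{CC}_\bullet$ and the trace map, which occupies \cite[\S\S3--4]{Hennion}. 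A more hands-on alternative would be to verify the identification first for $A=k$ with $\mathrm{GL}_n$, $n$ finite, in low homological degrees, and then conclude by naturality in $A$ together with stabilization in $n$.
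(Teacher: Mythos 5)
The paper does not prove this statement at all: it is cited directly as \cite[Theorem 4.2.1]{Hennion}, with no argument given. So you are attempting to reconstruct Hennion's proof from scratch, and the standard of comparison is whether your sketch actually establishes the result.

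Your first paragraph (producing $T$ by applying $\ell=\mathbb T\circ\mathrm L$ to the natural transformation, observing that the target stays abelian through formalization, and invoking the two propositions recalled just before the theorem to identify source and target) is correct and matches the formal part of Hennion's argument. Where your proposal has a genuine gap is the second part. You correctly observe that the real content is the identification of the resulting map with $\mathrm{Tr}\circ\theta$, but then you write ``This is where the work lies'' and gesture toward unwinding the formalization functor, the Pridham--Lurie equivalence, $\Omega^\infty$, and Goodwillie's theorem. That is a description of what needs to be done, not a proof of it; in particular the sentence ``tracing this refinement through the formalization functor \dots\ shows that $T$ is modelled \dots\ by the very cyclic comparison map'' simply asserts the conclusion. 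You have not supplied the chain-level model for the tangent morphism nor the commutativity of the relevant square, which is exactly what occupies the technical heart of \cite[\S\S3--4]{Hennion}.

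The ``more hands-on alternative'' you propose at the end would not close the gap either: checking the identification for $A=k$ in low homological degrees and then appealing to naturality in $A$ and stabilization in $n$ does not determine a homotopy class of chain map. Two natural transformations between the functors $A\mapsto\mathrm{CE}_\bullet(\mathfrak{gl}_\infty(A))$ and $A\mapsto\mathrm{CC}_\bullet(A)[1]$ can agree on $A=k$ and in a range of degrees and still fail to be homotopic; one would need a rigidity statement (for instance, that the space of such natural transformations inducing a fixed map on $\mathrm{H}_1$ is contractible) before naturality and stabilization could carry the day, and no such statement is in evidence. As written, the proposal is a plausible roadmap of Hennion's argument rather than a proof, and the critical identification is left open.
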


Now let us go back
to the Koszul algebra
case. We have the following.

\begin{corollary}
Let $A$ be a Koszul algebra.
Denote by
$A^{\ac}$ its Koszul dual coalgebra.
We have the following commutative
diagram:
$$
\xymatrixcolsep{1.5pc}
\xymatrixrowsep{1pc}
\xymatrix{
&\mathrm{CE}_\bullet(\mathfrak{gl}_\infty(\Omega(A^{\ac})))\ar[rr]^{\simeq}
\ar'[d]^-{\mathrm{LQT}}[dd]
&&
\mathrm{CE}_\bullet(\mathfrak{gl}_\infty(A))\ar[dd]^{\mathrm{LQT}}\\
\mathrm{BGL}_\infty(\Omega(A^{\ac}))
\ar[rr]^{\simeq\quad\quad}\ar[dd]
\ar@{~>}[ru]
&&\mathrm{BGL}_\infty(A) 
\ar@{~>}[ru]\ar[dd]
&\\
&\mathrm{CC}_\bullet(\Omega(A^{\ac}))[1]
\ar'[r]^-{\simeq}[rr]
&&
\mathrm{CC}_\bullet(A)[1]\\
\mathrm{K}(\Omega(A^{\ac}))\ar[rr]^{\simeq}
\ar@{~>}[ru]
&&
\mathrm{K}(A)
\ar@{~>}[ru]
&
}
$$
where the horizotal
maps are all quasi-equivalences/isomorphisms,
and the slant maps
(the curved arrows) are the tangent
functors.
\end{corollary}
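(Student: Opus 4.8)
The plan is to exhibit the cube as a paste of two squares connected by Hennion's tangent functor, with all four vertices made comparable by the quasi-isomorphism $p:\Omega(A^{\ac})\to A$ of Proposition~\ref{prop:qisofcobarresl}. The two squares are: the ``geometric'' square with corners $\mathrm{BGL}_\infty(\Omega(A^{\ac}))$, $\mathrm{BGL}_\infty(A)$, $\mathrm K(\Omega(A^{\ac}))$, $\mathrm K(A)$; and the ``tangent'' square with corners $\mathrm{CE}_\bullet(\mathfrak{gl}_\infty(\Omega(A^{\ac})))$, $\mathrm{CE}_\bullet(\mathfrak{gl}_\infty(A))$, $\mathrm{CC}_\bullet(\Omega(A^{\ac}))[1]$, $\mathrm{CC}_\bullet(A)[1]$.

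First I would check that every horizontal arrow is an equivalence. Since $p$ is a quasi-isomorphism of connective DG algebras, $\mathfrak{gl}_\infty(p)$ is a quasi-isomorphism of DG Lie algebras, and the induced map $\mathrm{CE}_\bullet(\mathfrak{gl}_\infty(\Omega(A^{\ac})))\to\mathrm{CE}_\bullet(\mathfrak{gl}_\infty(A))$ is then a quasi-isomorphism by the same easy spectral sequence argument used in the proof of Proposition~\ref{prop:qisforKoszul} and in Proposition~\ref{prop:qismatrix}. Likewise $\mathrm{CH}(p)$ gives the quasi-isomorphism $\mathrm{CC}_\bullet(\Omega(A^{\ac}))[1]\simeq\mathrm{CC}_\bullet(A)[1]$ exactly as in \eqref{qis:GJP}. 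For the bottom row I invoke the standard invariance of $\mathrm{BGL}_\infty(-)$ and $\mathrm K(-)$ under quasi-isomorphisms of connective DG algebras (cf.\ \S\ref{sect:Ktheory}); applied to $p$ this yields $\mathrm{BGL}_\infty(\Omega(A^{\ac}))\simeq\mathrm{BGL}_\infty(A)$ and $\mathrm K(\Omega(A^{\ac}))\simeq\mathrm K(A)$. Here one uses that $\Omega(A^{\ac})$ is connective, which holds since it is a cofibrant resolution of the degree-zero algebra $A$, so that Hennion's results apply to it verbatim.

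Next I would verify that the two squares commute. The geometric square commutes because the natural transformation $\mathrm{BGL}_\infty(-)\to\Omega^\infty\mathrm K(-)$ of \eqref{const:KofQuillen} is natural in the DG algebra and $p$ is a morphism of DG algebras. The tangent square commutes because the Loday--Quillen--Tsygan trace $\mathrm{Tr}\circ\theta$ is manifestly functorial in the algebra (being built from $\theta$ and the matrix trace $\mathrm{Tr}$); this is in fact the instance of Theorem~\ref{cor:isoHmlgyofLiecylic} attached to the ``Koszul'' horizontal arrows. The two vertical (slant) faces of the cube, one over $R=A$ and one over $R=\Omega(A^{\ac})$ -- linking $\mathrm{BGL}_\infty(R)$ to $\mathrm{CE}_\bullet(\mathfrak{gl}_\infty(R))$ and $\mathrm K(R)$ to $\mathrm{CC}_\bullet(R)[1]$ through Hennion's tangent constructions $\ell$ and $\ell^{\mathrm{Ab}}$, and identifying the resulting vertical map with the classical LQT map -- are precisely Theorem~\ref{thm:Hennion}.

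Finally, to glue these into a commuting cube I would observe that Hennion's constructions are functorial in the DG algebra: a morphism $R\to R'$ induces, via $B\mapsto (R\otimes_k B\to R'\otimes_k B)$, morphisms of pre-FMPs $\overline{\mathrm{BGL}}_\infty(\mathscr A_R)\to\overline{\mathrm{BGL}}_\infty(\mathscr A_{R'})$ and $\overline{\mathrm K}(\mathscr A_R)\to\overline{\mathrm K}(\mathscr A_{R'})$, and applying the functors $\ell$, $\ell^{\mathrm{Ab}}$ and the comparison of Theorem~\ref{thm:Hennion} sends these to $\mathfrak{gl}_\infty(p)[1]$, $\mathrm{CC}_\bullet(p)[1]$, i.e.\ to the horizontal maps of the cube. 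The main obstacle is exactly this point: one must know that Hennion's identifications $\ell(\overline{\mathrm{BGL}}_\infty)\simeq\mathfrak{gl}_\infty(-)[1]$, $\ell^{\mathrm{Ab}}(\overline{\mathrm K})\simeq\mathrm{CC}_\bullet(-)[1]$ and the homotopy $T\simeq\mathrm{LQT}$ are natural transformations of functors on connective DG algebras, not merely statements for a fixed algebra; granting this -- which follows since formalization and the tangent-complex functor are themselves functorial -- the cube commutes by pasting the two squares along the slant faces.
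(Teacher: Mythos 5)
Your proposal is correct and follows essentially the same route as the paper: decompose the cube into its six faces, get the front/geometric and back/tangent squares by naturality of $\mathrm{BGL}\to\mathrm K$ and of the LQT trace, get the left and right faces from Hennion's Theorem~\ref{thm:Hennion}, and get the top and bottom faces from the functoriality of the tangent-complex construction. You fill in some details the paper treats tersely (in particular that the horizontal arrows are equivalences and that $\Omega(A^{\ac})$ is connective so Hennion's framework applies), but the underlying argument is the same.
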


\begin{proof}
First, both the front and back
squares commute,
since $\Omega(A^{\ac})\to A$
is a quasi-isomorphism.
Second, the top and bottom
squares commute,
by the functoriality of
the tangent map.
Finally, the left and right
squares commute, due to 
Hennion's Theorem
\ref{thm:Hennion}.
\end{proof}

It would also
be interesting
to define a version of $\mathrm{BGL}$
and K-theory
for DG coalgebras, as well as their
tangent complexes. In particular,
the diagram
\eqref{qis:Koszuldualityforalg}
in Proposition \ref{prop:qisinKoszulCY}
can be interpreted as 
the quasi-isomorphisms 
of the following
equivalences of K-theories
$$
\xymatrixcolsep{3pc}
\xymatrixrowsep{0.5pc}
\xymatrix{
&\mathrm{K}
(\Omega(A^{\ac}))\ar[rd]^{\simeq}
&\\
\mathrm K(A^{\ac})
\ar[ur]^{\simeq}
&&
\mathrm K(A),
}
$$
by taking the tangent complexes.
In other words,
the following diagram commutes:
$$
\xymatrixcolsep{3pc}
\xymatrixrowsep{0.5pc}
\xymatrix{
&\mathrm{K}
(\Omega(A^{\ac}))\ar[rd]^{\simeq}
\ar@{~>}[dd]
&\\
\mathrm K(A^{\ac})\ar@{~>}[dd]
\ar[ur]^{\simeq}
&&
\mathrm K(A)\ar@{~>}[dd]\\
&\mathrm{CC}_\bullet^\lambda
(\Omega(A^{\ac}))\ar[rd]^{\simeq}
&\\
\mathrm{CC}^\lambda_\bullet(A^{\ac})
\ar[ur]^{\simeq}
&&
\mathrm{CC}^\lambda_\bullet(A).
}
$$
We hope to
address the problem in the future.

\section{Deformation of 
the Chevelley-Eilenberg complex}
\label{sect:deformation}

From now on we study the
deformation quantization problem of the Loday-Quillen-Tsygan
isomorphism for Koszul Calabi-Yau algebras.
In this section, we deformation
the abelian bracket and cobracket
on the primitive elements
of $\mathrm{CE}_\bullet(\mathfrak{gl}(A))$,
or equivalently, by
Theorem \ref{cor:isoHmlgyofLiecylic},
on 
$\mathrm{CC}_\bullet(A^{\ac})[1]$.

In the rest of the paper, for a cyclic chain of the form $N(c_0,c_1,\cdots, c_n)[1]\in\mathrm{CC}_\bullet^{\lambda}(A^{\ac})[1]$,
we usually write it as $[c_0,c_1,\cdots, c_n]$.

\subsection{Lie bialgebra structure on the cyclic complex}\label{sect:constofLiebialg}

In this subsection,
we first recall the Lie bialgebra
structure on 
$\mathrm{CC}_\bullet(A^{\ac})[1]$.
This is inspired by String Topology
(c.f. \cite{ChSu2}),
and has also appeared in \cite{CEG}.

Recall that a graded Lie bialgebra
structure on a graded $k$-module
$L$ if $L$ is equipped with
a graded Lie bracket $\{-,-\}: L\times L
\to L$ and a graded Lie cobracket
$\nu: L\to L\otimes L$
satisfying the following
cocycle condition
$$\nu\circ\{a,b\}=\sum_{(a)}a'\otimes
\{a'',b\}
+\sum_{(b)}(-1)^{|a|\cdot|b'|}b'\otimes\{a,b''\},$$
for all $a, b\in L$.
Here we write $\nu(a)=\sum_{(a)}a'\otimes a''$,
for any $a\in L$.
A Lie bialgebra is said to be involutive if $\{-,-\}\circ \nu$ vanishes identically.
In the definition, if 
the bracket
and cobracket
both have a degree, say $m$
and $n$ respectively,
then we say $L$
is a Lie bialgebra of
degree $(m,n)$.

Now suppose $A^{\ac}$ is a co-Frobenius 
coalgebra. We next introduce the following 
two operators 
\begin{align*}
\begin{array}{ccrcl}
\{-,-\}&\hspace{-0.2cm}:&\mathrm{CC}_\bullet
^{\lambda}
(A^{\ac})[1]
\times \mathrm{CC}_\bullet
^{\lambda}
(A^{\ac})[1]
 \longrightarrow  \mathrm{CC}_\bullet
^{\lambda}
(A^{\ac})[1],\\[1mm]
\delta&\hspace{-0.2cm}:&\mathrm{CC}_\bullet
^{\lambda}
(A^{\ac})[1] 
\longrightarrow  \mathrm{CC}_\bullet
^{\lambda}
(A^{\ac})[1]
\otimes \mathrm{CC}_\bullet
^{\lambda}
(A^{\ac})[1],\end{array}
\end{align*}
given by the following formulas:
for $\alpha=[a_1,\cdots,a_n]$, 
$\beta=[b_1,\cdots,b_m]$,
\begin{eqnarray}
\{\alpha,\beta\}&:=&\sum_{i,j}
(-1)^{\nu}
\langle a_i, b_j\rangle
[b_{1},\cdots, b_{j-1}, a_{i+1},\cdots,
a_n,a_1,\cdots, a_{i-1}, b_{j+1},\cdots,
b_m],\label{eq:bracket}\\
\delta(\alpha)&:=&\sum_{i<j}\langle a_i,
a_j\rangle
\big((-1)^{\eta}\cdot
[a_{i+1},\cdots, 
a_{j-1}]\otimes[a_{j+1},\cdots,a_n,a_1,
\cdots,a_{i-1}]\nonumber\\
&&\quad\quad\quad\quad\quad
-(-1)^{\zeta}\cdot[
a_{j+1},\cdots,a_n,a_1,\cdots,
a_{i-1}]\otimes[a_{i+1},\cdots, 
a_{j-1}]\big),\label{eq:cobracket}
\end{eqnarray}
where $(-1)^{\nu}$,
$(-1)^{\eta}$ and $(-1)^{\zeta}$
are the signs following the Koszul
sign rule (see the paragraph after
Definition \ref{def:Hochschild}).
We then have the following result from 
\cite[Theorem~9]{CEG}, which is inspired by 
the necklace Lie bialgebra 
on the cyclic paths of doubled quivers (\cite{Schedler}):  

\begin{proposition}
[{See \cite[Theorem 9]{CEG}}]
\label{prop:Liebialg}
Let $A^{\ac}$ be
a co-Frobenius coalgebra.
Then 
$\mathrm{CC}_\bullet(A^{\ac})[1]$
together with  
$\{-,-\}$ and $\delta$ given by 
\eqref{eq:bracket} and 
\eqref{eq:cobracket} is an involutative DG 
Lie bialgebra of degree $(2-n, 2-n)$.
\end{proposition}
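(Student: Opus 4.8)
The statement is, up to notation, \cite[Theorem~9]{CEG}, and the plan is to verify the defining axioms of an involutive DG Lie bialgebra one by one by a direct combinatorial argument, reading an element $[a_1,\dots,a_n]$ of $\mathrm{CC}_\bullet^\lambda(A^{\ac})[1]$ as a cyclic word (a ``necklace'') in letters from $A^{\ac}$. In this picture $\{\alpha,\beta\}$ is the sum over all ways of gluing the necklaces $\alpha$ and $\beta$ at a pair of letters $a_i,b_j$ contracted by the Frobenius pairing $\langle-,-\rangle$, and $\delta(\alpha)$ is the sum over all ways of cutting the single necklace $\alpha$ at an ordered pair of contracted letters $a_i,a_j$. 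The degree claim $(2-n,2-n)$ is then a bookkeeping exercise: each gluing or cutting removes two letters whose internal degrees must sum to $n$ because $\langle-,-\rangle$ is the pairing of the degree-$n$ Frobenius algebra $A^{!}$, and the two discarded bar/cobar suspensions account for the remaining shift.

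First I would dispatch the graded antisymmetry of $\{-,-\}$ and the graded co-antisymmetry of $\delta$; both are immediate from the symmetry of $\langle-,-\rangle$ and the cyclic invariance of the chosen representatives, once one checks that the Koszul signs $\nu,\eta,\zeta$ in \eqref{eq:bracket}--\eqref{eq:cobracket} transform correctly. The heart of the matter is the Jacobi identity for $\{-,-\}$ and, dually, the co-Jacobi identity for $\delta$. For Jacobi one expands $\{\{\alpha,\beta\},\gamma\}$ together with its two cyclic rotations; every term involves two successive gluings, hence an ordered pair of contraction sites, and I would classify the terms according to whether the two sites lie in the same original necklace or in different ones. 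Within each class the terms cancel in pairs; the only genuinely delicate point is matching the accumulated Koszul signs across a cancelling pair. The co-Jacobi identity is the mirror computation for two successive cuts.

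For the compatibility (cocycle) condition I would expand the left-hand side $\delta\{\alpha,\beta\}$ --- glue, then cut --- and compare it with $\sum_{(\alpha)}\alpha'\otimes\{\alpha'',\beta\}+\sum_{(\beta)}(-1)^{|\alpha|\cdot|\beta'|}\beta'\otimes\{\alpha,\beta''\}$ --- cut, then glue. Sorting the terms on the left by whether the cut falls entirely on the $\alpha$-part of the glued necklace, entirely on the $\beta$-part, or across the gluing site, the first two families reproduce the right-hand side and the ``across'' terms cancel among themselves. Involutivity $\{-,-\}\circ\delta=0$ is proved in the same spirit: cutting $\alpha$ at $(a_i,a_j)$ and re-gluing the two resulting pieces at $(a_k,a_l)$ gives, term by term, the negative of the contribution obtained by cutting at $(a_k,a_l)$ and re-gluing at $(a_i,a_j)$, so the total vanishes. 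Finally, the DG condition --- that the Hochschild differential $b$ of Definition~\ref{def:Hochschild}(2) is a derivation of $\{-,-\}$ and a coderivation of $\delta$ --- is checked term by term; the one input beyond sign bookkeeping is the compatibility of $\langle-,-\rangle$ with the coproduct of $A^{\ac}$ dual to the associativity \eqref{FrobA} of the Frobenius form (equivalently \eqref{eq:coproduct}), which is exactly what lets the contraction commute past $b$ up to the terms it is supposed to produce.

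The main obstacle throughout is sign management in the Jacobi, co-Jacobi and involutivity identities: the underlying cancellations are ``topologically obvious'' once one draws the necklaces as circles and the gluings and cuttings as the elementary surgeries familiar from string topology \cite{ChSu2} and from Schedler's necklace Lie bialgebra \cite{Schedler}, but aligning the explicit Koszul signs of \eqref{eq:bracket}--\eqref{eq:cobracket} with those pictures is where the real work sits. An alternative, which I would keep in reserve, is to extract the whole structure at once from a chain-level involutive Lie bialgebra on a suitable cyclic/cobar model in the style of \cite{ChSu2}, trading the combinatorial sign chase for more homotopical machinery.
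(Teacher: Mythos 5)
Your proposal follows essentially the same route as the paper: a direct combinatorial verification of the Lie bialgebra axioms, compatibility, and involutivity (which the paper declares straightforward and defers to \cite[Theorem 9]{CEG}), and you correctly isolate the same key point the paper emphasizes, namely that compatibility with the boundary map $b$ rests on the co-Frobenius identity \eqref{eq:coproduct} derived from the Frobenius associativity \eqref{FrobA}. The only difference is that you spell out the necklace-surgery cancellation scheme in more detail than the paper's sketch, which is consistent with and not divergent from the cited argument.
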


\begin{proof}[Sketch of proof]
It is straightforward to check Lie bialgebra structure, their compatibility and involutivity.
The key point here is to check that they are compatible with the boundary map, which holds due to the co-Frobenius coalgebra structure on $A^{\ac}$ (see \eqref{eq:coproduct} in \S\ref{subsubsect:PD}).
\end{proof}

\subsection{The co-Poisson 
bialgebra structure}

Since
$\mathrm{CE}_\bullet(
\mathfrak{gl}^c(A^{\ac})
)^{\mathfrak{gl}(k)}$
is freely generated by
$\mathrm{CC}_\bullet(A^{\ac})[1]$,
by Proposition
\ref{prop:Liebialg},
the bracket
$\{-,-\}$
extends to 
$\mathrm{CE}_\bullet(
\mathfrak{gl}^c(A^{\ac})
)^{\mathfrak{gl}(k)}$
by derivation, making
it be a Poisson algebra;
similarly,
the cobracket $\delta$ 
extends to
$\mathrm{CE}_\bullet(
\mathfrak{gl}^c(A^{\ac})
)^{\mathfrak{gl}(k)}$
by co-derivation,
making it be a {\it co-Poisson coalgebra}.
Recall that a co-Poisson coalgebra
is a vector space $V$ together
with a cocommutative co-product
$\Delta: V\to V\otimes V$
and a Lie co-bracket
$\delta: V\to V\otimes V$
such that
$$(\id\otimes\Delta)\circ\delta
=(\delta\otimes \id+(\mathrm{Perm}\otimes \id)
\circ(\id\otimes \delta))\circ\Delta.$$
We next combine
these two structures together.
Let us first recall the following
notion, due to Turaev \cite{Turaev}:

\begin{definition}
\label{def:coPoissonbialg}

A {\it co-Poisson $k$-bialgebra}
is a $k$-module $A$ equipped with the 
structure of a bialgebra
and a co-Poisson coalgebra 
with the same co-multiplication $\Delta:
A\to A\otimes A$
such that
$\Delta$
and the Lie cobracket $\nu: A\to
A\otimes A$ satisfies the identity
\begin{equation}\label{eq:coPoisson}
\nu(ab)=\nu(a)\Delta(b)+\Delta(a)\nu(b), 
\end{equation}
for all $a, b\in A$.
\end{definition}

The following 
is proved by Turaev in \cite[Theorem 7.4]{Turaev}:

\begin{theorem}
[Turaev]\label{thm:coPoissonbialg}
Let $(\mathfrak{g},\{-,-\},\delta)$ 
be a Lie bialgebra.
Consider the tensor algebra
generated by $\mathfrak{g}$
over $k[h,\hbar]$, where $h$ and $\hbar$ are formal parameters.
Let $V_{h\hbar}(\mathfrak{g})$
be the quotient
algebra of this tensor algebra
by the two-sided ideal
generated by
$a\otimes b-b\otimes a
-h\{a,b\}$, which is a
bialgebra.
Let $\nu=\hbar\cdot\delta:
\mathfrak g\to 
(\mathfrak g\otimes\mathfrak g)[h,\hbar]
\subset
V_{h\hbar}(\mathfrak g)\otimes_{k[h,\hbar]}
V_{h\hbar}(\mathfrak g)$
and
extend it to
a Lie cobracket on $V_{h\hbar}(\mathfrak{g})$.
Then $V_{h\hbar}(\mathfrak{g})$
is a co-Poisson bialgebra.
\end{theorem}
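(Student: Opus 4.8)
The plan is to follow Turaev's argument, taking care of the graded signs relevant to our setting, in three moves: first build the coproduct, then build the cobracket as a derivation, and finally check the remaining coalgebra-type axioms by propagating them from the generators $\mathfrak g$ to all of $V_{h\hbar}(\mathfrak g)$.

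First I would equip $V_{h\hbar}(\mathfrak g)$ with a coproduct. Declare every $a\in\mathfrak g$ to be primitive, $\Delta(a)=a\otimes 1+1\otimes a$, and extend $\Delta$ to the tensor algebra over $k[h,\hbar]$ as an algebra homomorphism. The only point to check is that $\Delta$ kills the defining ideal, and a short computation gives $\Delta\big(a\otimes b-b\otimes a-h\{a,b\}\big)=\big(a\otimes b-b\otimes a-h\{a,b\}\big)\otimes 1+1\otimes\big(a\otimes b-b\otimes a-h\{a,b\}\big)$, since $\{a,b\}$ is again primitive; thus the relation element is primitive, the ideal is a coideal, and $\Delta$ descends and makes $V_{h\hbar}(\mathfrak g)$ a cocommutative $k[h,\hbar]$-bialgebra (in fact a Hopf algebra, the antipode being forced by primitivity). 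This is the bialgebra structure referred to in the statement.

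Next I would construct $\nu$. By the universal property of the free algebra there is a unique $k[h,\hbar]$-linear derivation $\tilde\nu$ from $T_{k[h,\hbar]}(\mathfrak g)$ into $T_{k[h,\hbar]}(\mathfrak g)\otimes T_{k[h,\hbar]}(\mathfrak g)$, regarded as a bimodule via $x\cdot(p\otimes q)=\Delta(x)(p\otimes q)$ and $(p\otimes q)\cdot x=(p\otimes q)\Delta(x)$, extending $\hbar\delta$ on $\mathfrak g$; concretely $\tilde\nu(xy)=\tilde\nu(x)\Delta(y)+\Delta(x)\tilde\nu(y)$. Applying $\tilde\nu$ to $a\otimes b-b\otimes a-h\{a,b\}$ and rewriting commutators in $V_{h\hbar}(\mathfrak g)$ via $xy-yx=h\{x,y\}$, one finds that $\tilde\nu$ annihilates the relation element precisely when $\delta$ and $\{-,-\}$ satisfy the Lie-bialgebra cocycle identity $\delta\{a,b\}=\sum_{(a)}a'\otimes\{a'',b\}+\sum_{(b)}\pm\,b'\otimes\{a,b''\}$ (using the graded antisymmetry of $\delta$ to fold a four-term expression into the two-term cocycle). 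Since $\mathfrak g$ is a Lie bialgebra this holds, so $\tilde\nu$ descends to a derivation $\nu\colon V_{h\hbar}(\mathfrak g)\to V_{h\hbar}(\mathfrak g)\otimes V_{h\hbar}(\mathfrak g)$, and by construction $\nu$ is the derivation extension of $\hbar\delta$, which is exactly the co-Poisson compatibility \eqref{eq:coPoisson}. It then remains to verify that $\nu$ is a Lie cobracket compatible with $\Delta$: graded coantisymmetry of $\nu$, the co-Jacobi identity, and the co-Poisson coalgebra identity $(\id\otimes\Delta)\circ\nu=(\nu\otimes\id+(\mathrm{Perm}\otimes\id)\circ(\id\otimes\nu))\circ\Delta$. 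All three hold on the generators (coantisymmetry and co-Jacobi because $\delta$ is a Lie cobracket; the co-Poisson identity by a one-line check using primitivity of the generators), and for each of them the set of elements of $V_{h\hbar}(\mathfrak g)$ on which it holds is closed under multiplication — a formal consequence of $\Delta$ being an algebra map, $\nu$ being a derivation into $V_{h\hbar}(\mathfrak g)\otimes V_{h\hbar}(\mathfrak g)$, and cocommutativity of $\Delta$ — so each identity propagates from $\mathfrak g$ to all of $V_{h\hbar}(\mathfrak g)$ by induction on word length, giving the theorem.

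The main obstacle is this last propagation, and within it the co-Jacobi identity: running the coderivation extension through a product of generators produces a sum whose cancellation relies simultaneously on co-Jacobi for $\delta$ and on the cocycle identity, and in the graded situation relevant to us — where $\mathfrak g=\mathrm{CC}_\bullet(A^{\ac})[1]$ carries a nonzero internal degree by Proposition~\ref{prop:Liebialg} — the Koszul signs must be tracked carefully. Organizing the verification around the ``closed under products'' statements is what keeps this bookkeeping finite; everything else is routine.
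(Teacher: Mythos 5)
Your proposal is correct and its central step --- checking that the coderivation extension of $\hbar\delta$ annihilates the relation $a\otimes b-b\otimes a-h\{a,b\}$, which reduces exactly to the Lie-bialgebra cocycle identity after rewriting commutators as $h\{-,-\}$ --- is the same computation the paper carries out. You are somewhat more thorough than the paper, which only presents this well-definedness check and leaves the verification of the remaining coalgebra and co-Poisson axioms (primitivity of the relation element, coantisymmetry, co-Jacobi, and their propagation to products) implicit with a reference to Turaev.
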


Turaev \cite{Turaev} considered the
tensor algebra generated
by $\mathfrak{g}$ over
$k[h]$ (without $\hbar$),
and let $V_{h}(\mathfrak g)$
be its quotient algebra
by the same ideal as in the above
theorem. He then 
let $\nu$ be $\delta$.
Here we added an extra
formal parameter $\hbar$, since
in \S\ref{sect:quantization},
when we quantize
the Lie bialgebra,
in both
the Lie bracket 
and Lie cobracket directions,
we have non-trivial higher
order terms.
Since the theorem
is a little different
from Turaev's, we
give a proof
for completeness.

\begin{proof}
We only need to check that
\eqref{eq:coPoisson}
holds for generators 
$a, b\in\mathfrak
{g}$.
For such $a, b$,
suppose
$$\delta(a)=\sum_{(a)}
\tilde a'\otimes\tilde a''
\quad\mbox{
and}\quad
\delta(b)=\sum_{(b)}
\tilde b'\otimes\tilde b''.$$
We also have $\Delta(a)=a\otimes 1+1\otimes a$ and $\Delta(b)=b\otimes 1+1\otimes b$,
and thus
\begin{align*}
\nu(ab)-\nu(ba)&
=(\nu(a)\Delta(b)+\Delta(a)\nu(b))-
(\nu(b)\Delta(a)+\Delta(b)\nu(a))\\
&=\left(\Big(\hbar
\sum_{(a)}
\tilde a'\otimes \tilde a''
\Big)(b\otimes 1+1\otimes b)+
(a\otimes 1+1\otimes a)
\Big(\hbar
\sum_{(b)}
\tilde b'\otimes \tilde b''
\Big)
\right)
\\
&\quad-
\left(\Big(
\hbar\sum_{(b)}
\tilde b'\otimes \tilde b''
\Big)(a\otimes 1+1\otimes a)+
(b\otimes 1+1\otimes b)
\Big(\hbar
\sum_{(a)}
\tilde a'\otimes \tilde a''
\Big)
\right)\\
&=
\hbar
\sum_{(a)}\Big((\tilde a' b-b\tilde a')\otimes
\tilde a''+
\tilde a'\otimes(
\tilde a''b-b\tilde a'')
\Big)\\
&\quad+
\hbar
\sum_{(b)}\Big((a\tilde b'-\tilde b'a)\otimes
\tilde b''+
\tilde b'\otimes(
a\tilde b''-\tilde b''a)
\Big)\\
&=h\hbar
\sum_{(a)}\Big(\{\tilde a',b\}\otimes
\tilde a''
+\tilde a'\otimes\{\tilde a'',b\}
+h\hbar
\sum_{(b)}\Big(\{a,\tilde b'\}\otimes
\tilde b''
+\tilde b'\otimes\{a,\tilde b''\}\Big)\\
&=h\hbar
\cdot
\delta\circ\{a,b\}\quad\quad\mbox{(by
the cycle condition)}\\
&
=\nu(h\{a,b\}).
\end{align*}
This means $\nu$ is 
well-defined,
and the theorem follows.
\end{proof}

The above theorem also holds
for graded Lie bialgebras.
Combine the above theorem 
with Proposition
\ref{prop:Liebialg}, we have
the following.

\begin{proposition}
\label{prop:Poissonbialg}
Let $A^{\ac}$ be a (DG) co-Frobenius coalgebra.
Then with the Lie bracket
and co-bracket 
induced from
\eqref{eq:bracket} and 
\eqref{eq:cobracket}
respectively,
$V_{h\hbar}
(\mathrm{CC}_\bullet(A^{\ac})[1])$
is a co-Poisson bialgebra.
\end{proposition}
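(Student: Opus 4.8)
The plan is to derive this as a direct application of Turaev's Theorem~\ref{thm:coPoissonbialg}, in the graded version mentioned right after that theorem, to the Lie bialgebra furnished by Proposition~\ref{prop:Liebialg}. Recall that Proposition~\ref{prop:Liebialg} equips $\mathrm{CC}_\bullet(A^{\ac})[1]$ with the structure of a (DG) Lie bialgebra of degree $(2-n,2-n)$, with bracket $\{-,-\}$ and cobracket $\delta$ given by \eqref{eq:bracket} and \eqref{eq:cobracket}; in particular $\{-,-\}$ and $\delta$ satisfy the cocycle compatibility identity, which is the only hypothesis that Turaev's construction requires. The involutivity asserted in Proposition~\ref{prop:Liebialg} plays no role here; it becomes relevant only for the quantization in \S\ref{sect:quantization}.

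First I would form the tensor algebra of $\mathrm{CC}_\bullet(A^{\ac})[1]$ over $k[h,\hbar]$, placing the formal parameters $h$ and $\hbar$ in degree $n-2$ so that the relations $a\otimes b-b\otimes a-h\{a,b\}$ are homogeneous, and pass to the quotient bialgebra $V_{h\hbar}(\mathrm{CC}_\bullet(A^{\ac})[1])$ by the two-sided ideal they generate. Setting $\nu:=\hbar\cdot\delta$ on generators and extending it by coderivation, Theorem~\ref{thm:coPoissonbialg}---whose proof is reproduced just above and adapts to the graded setting by inserting the appropriate Koszul signs---shows that $\nu$ is a well-defined Lie cobracket and that the co-Poisson identity \eqref{eq:coPoisson} holds, so that $V_{h\hbar}(\mathrm{CC}_\bullet(A^{\ac})[1])$ is a co-Poisson bialgebra.

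Since the statement concerns a differential graded object, it remains to check that the Hochschild differential $b$ is compatible with this structure. The map $b$ extends uniquely to a derivation of the tensor algebra, and it descends to $V_{h\hbar}$ because $b$ is a derivation for the bracket $\{-,-\}$---precisely the compatibility of $b$ with $\{-,-\}$ established in Proposition~\ref{prop:Liebialg}---so $b$ preserves the ideal $\langle a\otimes b-b\otimes a-h\{a,b\}\rangle$. Dually, $b$ is a coderivation for the coproduct $\Delta$ of $V_{h\hbar}$ (for which the generators are primitive) and for $\delta$, hence for $\nu=\hbar\delta$. Therefore all the co-Poisson bialgebra axioms are $b$-stable and $V_{h\hbar}(\mathrm{CC}_\bullet(A^{\ac})[1])$ is a DG co-Poisson bialgebra.

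I expect no conceptual obstacle: the real content sits already in Proposition~\ref{prop:Liebialg} (the Lie bialgebra on cyclic chains, whose proof rests on the co-Frobenius identity \eqref{eq:coproduct}) and in Turaev's theorem. The only point requiring care is the bookkeeping of gradings and Koszul signs---in particular fixing $|h|=|\hbar|=n-2$ so that the defining relations and the assignment $\nu=\hbar\delta$ are homogeneous, and checking that the signs in Turaev's verification of \eqref{eq:coPoisson} match those produced by the formulas \eqref{eq:bracket} and \eqref{eq:cobracket}.
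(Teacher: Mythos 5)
Your proposal matches the paper's argument: the paper obtains Proposition~\ref{prop:Poissonbialg} precisely by combining the graded version of Turaev's Theorem~\ref{thm:coPoissonbialg} with the Lie bialgebra structure of Proposition~\ref{prop:Liebialg}. Your additional remarks on the degrees of $h,\hbar$ and the compatibility of the differential $b$ are correct elaborations of points the paper leaves implicit.
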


\subsection{Koszul duality
and the Loday-Quillen-Tsygan
isomorphism}

Now let us go back to the 
Calabi-Yau algebra case.
By Proposition 
\ref{prop:Poissonbialg},
the algebras in the
commutative diagram
\eqref{iso:squareofLieinCY}
in 
Theorem \ref{cor:isoHmlgyofLiecylic}
can all be deformed 
into a co-Poisson coalgebra.
Let us denote by
$
\mathrm{DH}_\bullet(\mathfrak{gl}^c(A^{\ac}))
$
and
$
\mathrm{DH}_\bullet(\mathfrak{gl}(A))
$
the induced deformation of 
$
\mathrm{H}_\bullet(\mathfrak{gl}^c(A^{\ac}))
$
and
$
\mathrm{H}_\bullet(\mathfrak{gl}(A))
$ 
respectively (here ``D" means
``deformed"). We thus get
the following.

\begin{theorem}
\label{cor:isoHmlgyofLiecylicdeformed}
Let $A$ be a Koszul Calabi-Yau
algebra.
Denote by $A^{\ac}$ its
Koszul dual
co-Frobenius coalgebra.
Then the following
\begin{equation}
\label{iso:squareofLieinCYdeformed}
\begin{split}
\xymatrixcolsep{4pc}
\xymatrix{
\mathrm{DH}_\bullet(
\mathfrak{gl}^c(A^{\ac}))
\ar[r]^-{\textup{Koszul}}_{\cong}
&
\mathrm{DH}_\bullet(
\mathfrak{gl}
(A))\ar[d]^{\textup{LQT}}_{\cong}
\\
V_{h\hbar}
(\mathrm{HC}_\bullet(A^{\ac})[1])
\ar[u]^{\textup{LQT}}_{\cong}
\ar[r]^-{\textup{Koszul}}_{\cong}&
V_{h\hbar}
(\mathrm{HC}_\bullet(A)[1])
}
\end{split}
\end{equation}
is a 
commutative diagram
of isomorphisms of
co-Poisson bialgebras,
which deforms the commutative
diagram
\eqref{iso:squareofLieinCY}.
\end{theorem}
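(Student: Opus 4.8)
The plan is to deform only the bottom-left corner of the commutative diagram~\eqref{iso:squareofLieinCY}, namely the abelian Hopf algebra $\mathbf{\Lambda}^\bullet(\mathrm{HC}_\bullet(A^{\ac})[1])$, by means of the DG Lie bialgebra structure of Proposition~\ref{prop:Liebialg} together with Turaev's construction (Theorem~\ref{thm:coPoissonbialg} and Proposition~\ref{prop:Poissonbialg}), and then to transport this deformation along the three remaining isomorphisms of the square. Since all four maps in~\eqref{iso:squareofLieinCY} are isomorphisms and every construction below is functorial in the underlying Lie bialgebra, the resulting square~\eqref{iso:squareofLieinCYdeformed} will automatically commute and, upon setting $h=\hbar=0$, reduce to~\eqref{iso:squareofLieinCY}; thus the real content is (i) to produce the deformation on the chain level, and (ii) to check that it descends to homology.

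First I would recall from Theorem~\ref{thm:LQTforcoalg} that $\mathrm{CE}_\bullet(\mathfrak{gl}^c(A^{\ac}))^{\mathfrak{gl}(k)}$ is, via $\Theta^{*}$, the free graded-commutative algebra on $\mathrm{CC}_\bullet^\lambda(A^{\ac})[1]$, with differential the derivation extending the Connes differential $b$, and that by Proposition~\ref{prop:Liebialg} the bracket $\{-,-\}$ and cobracket $\delta$ of~\eqref{eq:bracket}--\eqref{eq:cobracket} are chain maps on the generators. I would then extend $\{-,-\}$ by the Leibniz rule to a Poisson bracket and $\delta$ by co-Leibniz to a co-Poisson cobracket, check that these extensions remain chain maps, so that the Turaev relation $a\otimes b-b\otimes a-h\{a,b\}$ is a cycle and $b$ induces a DG algebra differential on the quotient; this is exactly the DG co-Poisson bialgebra $V_{h\hbar}(\mathrm{CC}_\bullet^\lambda(A^{\ac})[1])$ of Proposition~\ref{prop:Poissonbialg}.

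Next I would pass to homology. Filtering $V_{h\hbar}(\mathrm{CC}_\bullet^\lambda(A^{\ac})[1])$ by monomial length, the associated graded is the free graded-commutative algebra on $\mathrm{CC}_\bullet^\lambda(A^{\ac})[1]$ tensored with $k[h,\hbar]$; the differential preserves the filtration and the PBW symmetrization map identifies the underlying complex with $\mathbf{\Lambda}^\bullet(\mathrm{CC}_\bullet^\lambda(A^{\ac})[1])\otimes k[h,\hbar]$. Since $\mathrm{char}\,k=0$, homology commutes with graded-symmetric powers and with $-\otimes k[h,\hbar]$, whence
\[
\mathrm{H}_\bullet\bigl(V_{h\hbar}(\mathrm{CC}_\bullet^\lambda(A^{\ac})[1])\bigr)\;\cong\;\mathbf{\Lambda}^\bullet\bigl(\mathrm{HC}_\bullet(A^{\ac})[1]\bigr)\otimes k[h,\hbar]\;=\;V_{h\hbar}\bigl(\mathrm{HC}_\bullet(A^{\ac})[1]\bigr),
\]
and a direct check shows that the induced co-Poisson bialgebra structure is the Turaev construction applied to the DG Lie bialgebra $\mathrm{HC}_\bullet(A^{\ac})[1]$. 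Finally I would transport: the Koszul-duality isomorphism $\mathrm{HC}_\bullet(A^{\ac})[1]\cong\mathrm{HC}_\bullet(A)[1]$ of Proposition~\ref{prop:qisinKoszulCY}, being an isomorphism of graded vector spaces, carries the Lie bialgebra structure and hence all of $V_{h\hbar}(-)$, giving $V_{h\hbar}(\mathrm{HC}_\bullet(A)[1])$; and the Loday--Quillen--Tsygan isomorphisms of Theorem~\ref{cor:isoHmlgyofLiecylic} transport the co-Poisson bialgebra structures to $\mathrm{H}_\bullet(\mathfrak{gl}^c(A^{\ac}))$ and $\mathrm{H}_\bullet(\mathfrak{gl}(A))$, which are then named $\mathrm{DH}_\bullet(\mathfrak{gl}^c(A^{\ac}))$ and $\mathrm{DH}_\bullet(\mathfrak{gl}(A))$. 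Commutativity of~\eqref{iso:squareofLieinCYdeformed}, and the fact that it deforms~\eqref{iso:squareofLieinCY}, are then immediate from the corresponding statements for~\eqref{iso:squareofLieinCY}.

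I expect the only real obstacle to be the routine-but-delicate verifications underlying (i) and (ii): that the derivation extension of a DG Lie bracket and the co-derivation extension of a DG Lie cobracket are still chain maps (the signs are exactly those dictated by the Koszul rule of Definition~\ref{def:Hochschild}), that Turaev's Theorem~\ref{thm:coPoissonbialg} holds verbatim in the graded differential setting, and that the PBW filtration makes $V_{h\hbar}(-)$ sufficiently flat over $k[h,\hbar]$ for the deformation to pass to homology. None of these is conceptually difficult, but each requires careful bookkeeping of signs.
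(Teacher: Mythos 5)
Your proposal is correct and follows essentially the same route as the paper: deform the one corner $\mathbf{\Lambda}^\bullet(\mathrm{HC}_\bullet(A^{\ac})[1])$ using the Lie bialgebra of Proposition~\ref{prop:Liebialg} and Turaev's construction (Theorem~\ref{thm:coPoissonbialg} / Proposition~\ref{prop:Poissonbialg}), then transport along the isomorphisms of Theorem~\ref{cor:isoHmlgyofLiecylic}. The paper treats the step from the chain-level $V_{h\hbar}(\mathrm{CC}_\bullet^\lambda(A^{\ac})[1])$ to the homology-level $V_{h\hbar}(\mathrm{HC}_\bullet(A^{\ac})[1])$ as immediate, whereas you supply the PBW-filtration and char-$0$ flatness argument; that is extra detail rather than a different method.
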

\section{Quantization}
\label{sect:quantization}

In this section, we 
continue to study
the commutative diagram
\eqref{iso:squareofLieinCYdeformed},
and construct
a quantization 
of $V_{h\hbar}
(\mathrm{HC}_\bullet(A^{\ac})[1])$,
which then gives
a quantization
of the deformed Loday-Quillen-Tsygan
isomorphism.
As we have said before,
there is an overlap with
\cite{CEG}; since some details
in loc. cit. is omitted,
we here give a rather detailed
proof for completeness.

In what follows, by quantization of
$V_{h\hbar}
(\mathrm{HC}_\bullet(A^{\ac})[1])$,
we mean a deformation of 
its product
in the direction
of the Poisson bracket
and a deformation 
of its coproduct in
the direct of the
co-Poisson co-bracket,
with some compatibilities.
It is equivalent to 
the quantization
of the Lie bialgebra
on $\mathrm{HC}_\bullet(A^{\ac})[1]$
in the following sense.

\begin{definition}\label{d31}
Suppose $(\mathfrak g, \{-,-\},\delta)$ is a Lie bialgebra.
A {\it quantization} of $\mathfrak g$ is a Hopf algebra $A$, flat over
$k[h,\hbar]$, such that
\begin{enumerate}
\item[$(1)$]
$A/(h\cdot A+\hbar\cdot A)
\cong \Lambda^\bullet(\mathfrak g)$
as $k$-vector spaces;

\item[$(2)$] for any lifting $\tilde x,\tilde y\in A$ of $x, y\in\mathfrak g$ respectively,
$$\frac{\tilde x\tilde y-
\tilde y\tilde x}{h} = \{x,y\}
\mod \{h,\hbar\},
\quad
\frac{\Delta(\tilde x)-\Delta^{op}(\tilde x)}{\hbar}=\delta (x)\mod\{h,\hbar\}.
$$
\end{enumerate}
\end{definition}

This definition can be generalized
to DG Lie bialgebras
of degree $(m, n)$,
if we assign the gradings of $h$
and $\hbar$ to be $-m$ and $-n$
respectively.
The main result of this section is 
the following.

\begin{theorem}\label{theoremsec6}
Let $A^{\ac}$ be a co-Frobenius 
coalgebra. Then there is a
DG Hopf algebra $\mathfrak{A}$, 
constructed in \text{\rm{\S\ref{subsect:construction}}},
that quantizes the $V_{h\hbar}(\mathrm{HC}_\bullet(A^{\ac})[1])$, or equivalently, 
quantizes the Lie bialgebra $\mathrm{CC}_\bullet^{\lambda}(A^{\ac})[1]$.
\end{theorem}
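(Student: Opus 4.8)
The plan is to construct $\mathfrak{A}$ by an explicit combinatorial recipe of ``surgery on necklaces'', in the spirit of Schedler's quantization of the necklace Lie bialgebra, and then to check the Hopf-algebra axioms, the DG compatibility and the conditions of Definition~\ref{d31} order by order in $h$ and $\hbar$. As a graded $k[h,\hbar]$-module, $\mathfrak{A}$ is the free module on a homogeneous basis of $\mathbf{\Lambda}^\bullet(\mathrm{CC}_\bullet^\lambda(A^{\ac})[1])$, whose basis elements I think of as finite unordered collections of cyclic words $[a_1,\dots,a_n]$ with letters in $A^{\ac}$ (``multi-necklaces''). This makes $\mathfrak{A}$ manifestly flat — in fact free — over $k[h,\hbar]$, and $\mathfrak{A}/(h,\hbar)\cong\mathbf{\Lambda}^\bullet(\mathrm{CC}_\bullet^\lambda(A^{\ac})[1])$, which is condition~$(1)$ of Definition~\ref{d31} and fixes the classical limit of the Hopf structure.

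Next I would define the two structure maps. For the product $\mu(W_1\otimes W_2)$ of two multi-necklaces, one sums over all ways to match some letters of $W_1$ with an equal number of letters of $W_2$; each matching contributes $h^{\#\mathrm{pairs}}$ times the product of the Frobenius pairings $\langle-,-\rangle$ over the matched pairs, times the multi-necklace obtained by cutting each word open at each matched letter and regluing the arcs across the matching, with the Koszul sign of the reordering; the empty matching yields $W_1\sqcup W_2$. Dually, $\Delta(W)$ is a sum over all ways to distribute the words of $W$ between two sides together with any set of ``self-cuts'', each self-cut opening one word at a pair of its letters $a_i,a_j$ into two arcs split between the two sides, weighted by $\hbar^{\#\mathrm{cuts}}$, the product of the pairings $\langle a_i,a_j\rangle$, and the Koszul sign; the cut-free terms give the deconcatenation coproduct. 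The counit is evaluation on the empty multi-necklace, and the antipode is built by the standard recursion, which terminates since the total number of letters gives a bialgebra filtration. By construction, on single necklaces $x,y$ the $h$-linear part of the commutator is the bracket~\eqref{eq:bracket} and the $\hbar$-linear part of $\Delta-\Delta^{op}$ is the cobracket~\eqref{eq:cobracket}, so condition~$(2)$ holds.

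It then remains to verify associativity, coassociativity and the bialgebra compatibility. Associativity of $\mu$ follows because both iterated products compute the single sum over matchings among three multi-necklaces $W_1,W_2,W_3$, and coassociativity is dual. The crux is showing $\Delta$ is an algebra homomorphism, \ie that performing the cross-surgeries defining $\mu$ and then the self-cuts defining $\Delta$ gives, term by term and with matching signs, the same answer as first applying $\Delta$ to both factors and then multiplying the pieces. This is the quantum lift of the cocycle identity underlying Proposition~\ref{prop:Liebialg}: I would prove it by a finite case analysis according to how the chosen cross-surgeries and self-cuts interleave along the necklaces, its $(h^1,\hbar^0)$- and $(h^0,\hbar^1)$-parts recovering precisely that identity. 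I expect this compatibility, and the accompanying bookkeeping of Koszul signs in the graded setting, to be the main obstacle — purely combinatorial but intricate, which is the sense in which the construction is ``technical''.

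Finally, I would extend the Connes boundary $b$ of $\mathrm{CC}_\bullet^\lambda(A^{\ac})[1]$ to $\mathfrak{A}$ as a derivation of $\mu$ and a coderivation of $\Delta$, and check that it remains compatible with the deformed product and coproduct; exactly as in the proof of Proposition~\ref{prop:Liebialg}, this uses only the co-Frobenius identity~\eqref{eq:coproduct} for $A^{\ac}$. Counting powers of $h$ and $\hbar$ then shows that $(\mathfrak{A},b,\mu,\Delta)$ is a flat DG Hopf algebra over $k[h,\hbar]$ with semiclassical limit the co-Poisson bialgebra $V_{h\hbar}(\mathrm{HC}_\bullet(A^{\ac})[1])$ of Proposition~\ref{prop:Poissonbialg}, which is the claimed quantization; via the diagram of Theorem~\ref{cor:isoHmlgyofLiecylicdeformed} it simultaneously quantizes the deformed Loday--Quillen--Tsygan isomorphism.
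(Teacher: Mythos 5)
Your overall architecture (multi-necklaces, surgery product, cut coproduct, PBW-freeness, order-by-order verification) is the right genre, but the weighting scheme you assign to the surgeries is wrong, and this is not a cosmetic defect: it breaks the bialgebra compatibility you correctly identify as the crux. You weight a product term by $h^{\#\mathrm{pairs}}$ and a coproduct term by $\hbar^{\#\mathrm{cuts}}$. In the correct construction the exponent of $h$ versus $\hbar$ attached to a given collection of pairings is governed not by whether each pairing is ``cross'' or ``self'' relative to the original factors, but by how the surgeries change the number of connected components: after a first cross-surgery merges two necklaces, a second pairing between the same two factors is a \emph{self}-surgery of the merged necklace and must carry $\hbar$, not $h$ (compare \eqref{f36} and \eqref{f37}, and Lemma~\ref{integerpower}, where the exponents are $h^{(\#I+2N)/4}\hbar^{(\#I-2N)/4}$ with $N$ the component-count defect; Example~\ref{e31}(3) exhibits a coproduct term with coefficient $h\hbar$, which your rule would weight by $\hbar^{2}$). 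With your uniform weights the identity $\Delta(W_1W_2)=\Delta(W_1)\Delta(W_2)$ fails already at order two: a cross-surgery of $\mu$ followed by a self-cut of $\Delta$ that separates letters originating from $W_1$ and $W_2$ produces a term of weight $h\hbar\,\langle a_i,b_j\rangle\langle a_{i'},b_{j'}\rangle$ on the left, whereas every term of $\Delta(W_1)\Delta(W_2)$ carrying two such mixed pairings has weight $h^{2}$; since $h$ and $\hbar$ are independent parameters these cannot match. So the object you define is not a bialgebra, and no finite case analysis of interleavings will rescue it.

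The missing idea is precisely the height (ordering) device of \S\ref{subsect:construction}: one decorates each letter with a distinct integer height, takes the product to be the honest symmetric product with the second factor's heights shifted above the first's (so associativity is trivial), and imposes the relations \eqref{f38} in which each elementary transposition of adjacent heights costs $h$ or $\hbar$ according to whether the two letters sit in different or the same necklace \emph{at that stage of the rewriting}. All surgery terms then arise from normal-ordering, with the correct mixed $h^{a}\hbar^{b}$ coefficients, and the coproduct is forced to be the $m$-coloring formula \eqref{f33}. This also shifts the burden of flatness: your $\mathfrak{A}$ is free by fiat but not a Hopf algebra, whereas the paper's $\mathfrak{A}$ is a quotient whose freeness over $k[h,\hbar]$ is a genuine PBW-type statement (Proposition~\ref{p31}, proved via the Diamond Lemma and the map $\theta$ of Lemma~\ref{l32}); condition $(1)$ of Definition~\ref{d31} cannot simply be declared. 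Your verification of condition $(2)$ and the treatment of the differential are fine at linear order and would survive the correction.
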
 

The rest of this section is as follows.
In \S\ref{subsect:construction}
we construct $\mathfrak A$ in the
above theorem;
in \S\ref{subsect:Hopf}
we show $\mathfrak{A}$
thus constructed is a DG Hopf algebra;
in \S\ref{subsect:quantization}
we prove the above theorem and in \S\ref{subsect:summary} 
we extend this quantization to the corresponding Lie algebra homologies.

\subsection{Construction of the Hopf
algebra}\label{subsect:construction}

Let $\mu$ be an indeterminant with degree $0$ and $\DFA H$ the tensor product $\DFA \otimes k[\mu,\mu^{-1}].$
For an element $a\otimes \mu^{u} \in \DFA \otimes k[\mu,\mu^{-1}],$ we denote it by $(a,u),$ where $u$ will be named as the height.
Put
\begin{align}\SLH[h,\hbar] \coloneqq \Sym^{\bullet}(\CC_{\bullet}^{\lambda}(\DFA H)[1])\otimes k[h,\hbar],\label{f32}\end{align}
where $\Sym^{\bullet}$ means the symmetric power, $h$ and $\hbar$ both have degree $n-2$.
Consider the $k[h,\hbar]$-submodule $\widetilde{\SLH}$ spanned by elements \begin{align}[(a_{11},h_{11}),\ldots,(a_{1p_1},h_{1p_1})]\bullet \cdots \bullet [(a_{k1},h_{k1}),\ldots,(a_{kp_k},h_{kp_k})] \text{ for }k,p_1,\ldots,p_k\in \mathbb{Z}_{\geq 0}\label{f31}\end{align} such that $h_{ij} \neq h_{i'j'}$ if $i\neq i'$ or $j\neq j'$, where $\bullet$ denotes the product in the symmetric algebra $\widetilde{\SLH}$.
We may replace each $h_{ij}$ in (\ref{f31}) with an integer $h_{ij}'$.
The new element is said to be equivalent to the original one if $h_{ij}<h_{i'j'} \Leftrightarrow h_{ij}'<h_{i'j'}'$ for all $i,j,i',j'.$
Let $\widetilde{\quan}$ be the quotient of $\widetilde{\SLH}$ by this equivalent relation.

Let $\widetilde{\quanq}$ be the $k[h,\hbar]$-ideal
of $\widetilde{\quan}$ generated by \begin{align}X-X_{i,j,i',j'}'-X_{i,j,i',j'}''\label{f38}\end{align} satisfying the
following conditions: 
\begin{itemize}[noitemsep,leftmargin=*,nosep]
    \item[$-$] $X$ has the form of the element in (\ref{f31}). We may assume that it equals the element in (\ref{f31}); 
    \item[$-$] The inequality $h_{ij}<h_{i'j'}$ holds and there exist no $i'',j''$ s.t. $h_{ij}<h_{i''j''}<h_{i'j'}$.
    The element $X_{i,j,i',j'}'$ is obtained from $X$ by only interchanging $h_{ij}$ with $h_{i'j'}$.
    \item[$-$]
    For the case $i \neq i',$ the element $X_{i,j,i',j'}''$ is obtained from $X$ by only replacing two components $[(a_{i1},h_{i1}),\ldots,(a_{ip_i},h_{i})]$ and $[(a_{i'1},h_{i'1}),\ldots,(a_{i'p_i'},h_{i'p_i'})]$ with the component
    \begin{align}\begin{alignedat}{2}
    \pm \langle a_{ij},a_{i'j'}\rangle \cdot h \cdot [&(a_{i(j+1)},h_{i(j+1)}),\ldots,(a_{i(j-1)},h_{i(j-1)}),\\
    &(a_{i'(j'+1)},h_{i'(j'+1)}),\ldots,(a_{i'(j'-1)},h_{i'(j'-1)})].\end{alignedat}\label{f36}\end{align}
    \item[$-$] For the case $i = i'$,
    the element $X_{i,j,i',j'}''$ is obtained from $X$ by only replacing the component $[(a_{i1},h_{i1}),\ldots,(a_{ip_i},h_{i})]$ with the component
    \begin{align}\begin{alignedat}{2}
    \pm \langle a_{ij},a_{ij'}\rangle \cdot \hbar \;\cdot\; &[(a_{i(j+1)},h_{i(j+1)}),\ldots,(a_{i(j'-1)},h_{i(j'-1)})]\bullet\\
    &[(a_{i(j'+1)},h_{i(j'+1)}),\ldots,(a_{i(j-1)},h_{i(j-1)})].    
    \end{alignedat}\label{f37}\end{align}
\end{itemize}
Note that the 
degrees of $X,$ $X_{i,j,i',j'}'$, and $X_{i,j,i',j'}''$ are the same.
Put
\begin{equation}\label{defofA}
\quan \coloneqq \widetilde{\quan}/\widetilde{\quanq}.
\end{equation}
Our goal is to define 
a DG Hopf algebra structure on $\quan$, which
quantizes the DG Lie biaglebra 
$\CC_{\bullet}^{\lambda}(\DFA)[1]$.

\subsubsection{The
differential algebra 
structure}

There is a differential and an algebra structure on $\quan$:
\begin{itemize}[leftmargin=*,nosep,noitemsep]
\item [$-$]
For an element $X$ of the form in 
(\ref{f31}), define the
differential $b$ on $\widetilde{\quan}$ by putting (\cf \cite{CEG}) 
\begin{align}\begin{split}
b(X)\coloneqq \sum_{\text{all }i,j}\sum_{(a_{ij})}\pm &[(a_{11},\widehat{h}_{11}),\ldots,(a_{1p_1},\widehat{h}_{1p_1})]\bullet \cdots \\ 
\bullet & [(a_{i1},\widehat{h}_{i1}),\ldots,(a_{ij}',h_{ij}),(a_{ij}'',h_{ij}+1),\ldots,(a_{ip_i},\widehat{h}_{ip_i})]\bullet \cdots,\end{split}\label{f63}\end{align}
where 
$\widehat{h}_{i'j'} = h_{i'j'}$ if $h_{i'j'} \leq h_{ij},$ and $= h_{i'j'}+1$ otherwise.
The equality $b^{2} = 0$ follows from that of $\CC_{\bullet}(A^{\ac})[1]$.

\item[$-$]
Let $X$ and $X'$ be two elements as in (\ref{f31}).
For components $h_{ij}$ of $X$ and $h_{i'j'}'$ of $X'$, put \begin{align}
C \coloneqq 1+\max_{i,j,i',j'}\{h_{ij} - h_{i'j'}'\}.\label{f310}\end{align}
Let $X''$ be the element obtained from $X'$ by replacing $h_{i'j'}'$ with $h_{i'j'}'+C$ for all $i',j'$.
Define the product of $X$ and $X'$ to be $X\bullet X''$.
The natural embedding $k \hookrightarrow \SLH[h,\hbar]$ of zeroth symmetric power induces an embedding $k[h,\hbar]\to \quan,$ which is set to be the unit.
\end{itemize}

\subsubsection{Construction
of the coproduct}\label{sec:constcoprod}

Next, we define a coalgebra structure on $\quan$.
For an element $X$ of the form in (\ref{f31}) and an integer $m\geq 2$, define the $m$-fold coproduct $\Delta_{m}(X)$.
Put $P \coloneqq \{(i,j)\mid \{1\leq i\leq k,1\leq j\leq p_i\}$.
Set $(i,j) + (0,1) = (i,j+1)$ and $(i,j) = (i,j - p_{i})$ for $j \geq p_{i} + 1$.
To define the coproduct, we first introduce the following:

\begin{definition}\label{d32}
Let $X$ be the element in (\ref{f31}).
An \emph{$m$-coloring of $X$} is a triple $(I,\phi,c)$,
where $I$ is a subset of $P,$ $\phi$ is a map $I\to I$, and $c$ is a map 
\[P \longrightarrow \{1,\ldots,m\};\quad (i,j)\longmapsto c(i,j)\] satisfying the following
conditions:
\begin{itemize}[leftmargin=*,nosep,noitemsep]
    \item [$-$] The cardinal of $I$ is even. The map $\phi$ has no fixed point and satisfies $\phi^2 = \id.$
    
    \item [$-$] The map $c$ is not necessarily surjective but satisfies:
    \begin{enumerate}[label = \rm{(}\arabic*),noitemsep,leftmargin=*,nosep]
        \item $c(i,j) = c(i,j+1)$ for $(i,j)\in P\setminus I$;
        \item for any $(i,j)\in I$,
        \begin{enumerate}[ noitemsep,leftmargin=*,nosep]
        \item $c(i,j) = c(\phi(i,j)+(0,1))$ and $c(\phi(i,j)) = c((i,j)+(0,1));$
        \item $c(i,j)\neq c(\phi(i,j))$ (hence $c(i,j) \neq c(i,j+1)$);
        \item $c(i,j) > c(\phi(i,j))$ if and only if $h_{ij} > h_{\phi(i,j)}$. Here and thereafter, set $h_{(i',j')} = h_{i',j'} = h_{i'j'}$ for $(i',j') = \phi(i,j)$.     
        \end{enumerate}
    \end{enumerate}
\end{itemize}
\end{definition}

For an $m$-coloring $(I,\phi,c)$ of $X$, define \begin{align}f: P\longrightarrow P;\quad (i,j)\longmapsto \begin{cases}(i,j+1) & (i,j)\notin I;\\
\phi(i,j)+(0,1) & \text{otherwise}
\end{cases}\label{f34}\end{align}
and $g:P\setminus I \to P\setminus I;$ $(i,j) \mapsto f^{r}(i,j)$, where $r$ is the minimal number such that $f^{r}(i,j) \in P\setminus I$.
Let $\mathcal{Q}$ be the set of orbits under iterations of $g$.
The map $c$ induces 
\[\overline{c}: \mathcal{Q} \longrightarrow \{1,\ldots,m\};\quad Q \longmapsto c(i,j)\text{ for one }(i,j)\in Q.\]
For an orbit $Q = \{(i,j),g(i,j),\ldots,(i',j')\}$ in $\mathcal{Q}$ with $g(i',j') = (i,j)$, put $X_{Q} \coloneqq [(a_{ij},h_{ij}),\allowbreak (a_{g(i,j)},h_{g(i,j)}), \allowbreak\ldots \allowbreak,\allowbreak
(a_{i'j'},\allowbreak h_{i'j'})].$
For each $1\leq i\leq m$, put
\[X_{(I,\phi,c)}^{(i)} \coloneqq \begin{cases} 
X_{Q_{i_1}} \bullet X_{Q_{i_2}} \bullet \cdots & \overline{c}^{-1}(i) = \{Q_{i_1},Q_{i_2},\ldots\}\\
1& \text{otherwise.}
\end{cases}\]
For each $m$-coloring, put
\[\epsilon_{(I,\phi,c)} = \prod \langle a_{ij},a_{\phi(i,j)}\rangle,\]
where the product extends over pairs $(i,j)\in I$ such that $c(i,j)<c(\phi(i,j)).$
Let $\mathcal{P}$ be the set of orbits under iterations of $f$ and $l$ the cardinal of $\mathcal{P}$. 
For $k$ the number of components in $X$, put $N \coloneqq k-l.$
Put 
\begin{align*}h^{(I,\phi,c)} \coloneqq h^{(\# I + 2N)/4}
\text{ and }\hbar^{(I,\phi,c)} \coloneqq \hbar^{(\#I - 2N)/4}.\end{align*}
In Lemma~\ref{integerpower},
we shall show that the 
powers of $h$ and $\hbar$ are integers.
Finally, the $m$-fold coproduct of $X$ is defined by
\begin{align}
\Delta_{m}(X) \coloneqq \sum_{(I,\phi,c)} X_{(I,\phi,c)} \coloneqq \sum_{(I,\phi,c)}\epsilon_{(I,\phi,c)} \cdot h^{(I,\phi,c)} \cdot \hbar^{(I,\phi,c)}\cdot X_{(I,\phi,c)}^{(1)}\otimes \cdots \otimes X_{(I,\phi,c)}^{(m)},\label{f33}
\end{align}
where the sum extends over all $m$-colorings of $X$. 
We give 
in Example~\ref{e31} some
more detials of
this $m$-fold coproduct.

\begin{lemma}[{c.f. \cite[Section~9]{Turaev}}]\label{integerpower}
Let $(I,\phi,c)$ be an $m$-coloring of $X$ and $N$ the integer above.
Then we have:
\begin{enumerate}[label = \rm{(}\arabic*),noitemsep,leftmargin=*,nosep]
    \item
$2$ divides $(\# I)/2 + N$ and $(\# I)/2 - N.$ 
    \item
The degree of $\epsilon_{(I,\phi,c)} \cdot h^{(I,\phi,c)} \cdot \hbar^{(I,\phi,c)}\cdot X_{(I,\phi,c)}^{(1)}\otimes \cdots \otimes X_{(I,\phi,c)}^{(m)}$ is equal to that of $X$.
\end{enumerate}
\end{lemma}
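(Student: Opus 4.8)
The plan is to follow Turaev's argument in \cite[Section~9]{Turaev}, recast combinatorially as a cycle count for permutations. First I would check that the map $f$ of \eqref{f34} is a bijection of the finite set $P$ (injectivity is an immediate case analysis of its two clauses), and then factor it as $f=f_0\circ\phi'$, where $f_0\colon(i,j)\mapsto(i,j+1)$ is the ``no-surgery'' permutation whose cycles are exactly the $k$ cyclic words occurring in $X$, so that $\mathrm{cyc}(f_0)=k$, and $\phi'$ is the fixed-point-free involution of $P$ that restricts to $\phi$ on $I$ and to the identity on $P\setminus I$. Since $\phi$ pairs $I$ into $\#I/2$ two-element blocks, $\phi'$ is a product of $\#I/2$ disjoint transpositions.

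To prove (1): multiplying any permutation by a transposition changes its number of cycles by exactly $\pm1$, hence flips the parity of that number. Applying the $\#I/2$ transpositions of $\phi'$ to $f_0$ one at a time therefore gives $l=\#\mathcal P\equiv k+\#I/2\pmod 2$, whence $N=k-l\equiv\#I/2\pmod 2$; writing $\#I=2a$, both $(\#I\pm 2N)/4=(a\pm N)/2$ are then integers, which is (1). The same ``one transposition at a time'' estimate bounds $|k-l|\le\#I/2$ (the case $k\le\#I/2$ being covered by $l\ge1$), i.e.\ $|N|\le\#I/2$, so in fact $0\le\#I\pm 2N\le 2\#I$; hence the exponents of $h$ and $\hbar$ in $h^{(I,\phi,c)}\hbar^{(I,\phi,c)}$ are non-negative integers and $X_{(I,\phi,c)}$ is a well-defined element of $\mathfrak A^{\otimes m}$.

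To prove (2), I would do a degree count, using that a $q$-letter cyclic word $[(b_1,\ast),\dots,(b_q,\ast)]$ has internal degree $\sum_{t}|b_t|-q$ (the heights and $\mu$ are of degree $0$; this is the normalization under which the operations \eqref{eq:bracket} and \eqref{eq:cobracket} have degree $2-n$), so that $\deg X=\sum_{(i,j)\in P}|a_{ij}|-\#P$. On any nonzero summand of $\Delta_m(X)$ the scalar $\epsilon_{(I,\phi,c)}=\prod\langle a_{ij},a_{\phi(i,j)}\rangle$ is nonzero, and since the Frobenius pairing has degree $n$ this forces $|a_{ij}|+|a_{\phi(i,j)}|=n$ on each of the $\#I/2$ pairs; thus the letters indexed by $I$ carry total degree $n\cdot\#I/2$. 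The letters appearing in the output words $X_Q$ are exactly those indexed by $P\setminus I$, of which there are $\#P-\#I$, and --- regardless of how these words are distributed among the $m$ tensor slots --- each output cyclic word contributes only (sum of its letter degrees) minus (its number of letters); summing gives
\[
\deg\!\big(X_{(I,\phi,c)}^{(1)}\otimes\cdots\otimes X_{(I,\phi,c)}^{(m)}\big)=\Big(\sum_{(i,j)\in P}|a_{ij}|-\tfrac{n\,\#I}{2}\Big)-(\#P-\#I).
\]
Adding $\deg\epsilon_{(I,\phi,c)}=0$ and $\deg\!\big(h^{(I,\phi,c)}\hbar^{(I,\phi,c)}\big)=(n-2)\big(\tfrac{\#I+2N}{4}+\tfrac{\#I-2N}{4}\big)=(n-2)\tfrac{\#I}{2}$, the dependence on $N$ cancels between $h$ and $\hbar$, and $-\tfrac{n\,\#I}{2}+\#I+(n-2)\tfrac{\#I}{2}=0$, leaving $\deg X_{(I,\phi,c)}=\sum_{(i,j)\in P}|a_{ij}|-\#P=\deg X$.

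I expect the main obstacle to be conceptual rather than computational: one must recognize that part (1) is precisely what makes the $h,\hbar$-exponents integral, and that in part (2) it is the \emph{antisymmetric} split of $\#I/2$ as $\tfrac14(\#I\pm2N)$ between $h$ and $\hbar$ that lets the number of output cyclic words (equivalently, the integer $N$) drop out of the degree. Both are the combinatorial shadow of an Euler-characteristic identity for the ribbon graph underlying the coproduct, exactly as in Turaev; the product-of-transpositions computation above is just the efficient way to extract it, and everything else is routine bookkeeping.
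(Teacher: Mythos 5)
Your proposal is correct. Part (2) is essentially the paper's own argument (the $N$-dependence cancels between $h^{(I,\phi,c)}$ and $\hbar^{(I,\phi,c)}$, and each Frobenius pairing contributes $2-n$ against the $(n-2)$ of one power of $h$ or $\hbar$); your version just makes the normalization $\deg[(b_1,\ast),\dots,(b_q,\ast)]=\sum_t|b_t|-q$ explicit. For part (1), however, you take a genuinely different and cleaner route. The paper proves $N\equiv \#I/2 \pmod 2$ by an induction over the pairs of $I$: it classifies each surgery as a splitting ($\phi(i,j)$ in the same component, contributing to $N^+$) or a merging (different components, contributing to $N^-$), and tracks case by case how $N^{\pm}$ change as pairs are processed, arriving at $N\equiv N^- - N^+\pmod 2$ with $\#I/2=N^++N^-$. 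Your factorization $f=f_0\circ\phi'$, with $f_0$ the product of the $k$ component cycles and $\phi'$ a product of $\#I/2$ disjoint transpositions, reduces the whole induction to the standard fact that right-multiplication by a transposition changes the cycle count of a permutation by exactly $\pm1$, giving $l\equiv k+\#I/2\pmod 2$ in one line. Beyond being shorter and easier to verify, your argument yields the extra bound $|N|\le \#I/2$, hence non-negativity of the exponents $(\#I\pm 2N)/4$ — a point needed for $h^{(I,\phi,c)}\hbar^{(I,\phi,c)}$ to lie in $k[h,\hbar]$ that the paper's proof does not address.
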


\begin{proof}
Admit (1) for a moment. 
The degree of $h^{(I,\phi,c)} \cdot \hbar^{(I,\phi,c)}$ equals $(\# I)/2\cdot(n-2)$. Notice that $(\# I)/2$ equals the number of symmetric pairings in $\epsilon_{(I,\phi,c)}$.
As each symmetric pairing results in a summand
$2-n$ of the degree (\cf the degree of the Lie bialgebra in Proposition~\ref{prop:Liebialg}), (2) follows.

We show (1) by
induction on the pairs in $I$. 
Put $(i',j') \coloneqq \phi(i,j)\in I$ throughout this proof.
Let $N^+$ (resp. $N^-$) be the integer so that $2N^+$ (resp. $2N^-$) equals the number of pairs $(i,j)\in I$ satisfying $i' = i$ (resp. $i' \neq i$).
Hence $(\# I)/2 = N^+ + N^-$ holds.

We are going to show that \begin{align}N = N^{-} - N^{+} \mod 2,\label{f61}\end{align} which suffices to imply (1).
Fix $(i,j) \in I$ and put $I^{(0)} \coloneqq \{(i,j),(i',j')\}.$
For $(I_{0},\phi|_{I_0})$ and $f_0:P\to P$ defined using $\phi|_{I_0}$, we consider the components corresponding to the orbits under the iteration of $f_0$:
\begin{enumerate}[label = \rm{1.}\arabic*),noitemsep,leftmargin=*,nosep]
    \item ($i' = i$) Two components $[\widehat{(a_{ij},h_{ij})},(a_{i(j+1)},h_{i(j+1)}),\ldots]\bullet[\widehat{(a_{ij'},h_{ij'})},\ldots]$ are obtained from the component $[(a_{i1},h_{i1}),\ldots]$ in $X$, where $\widehat{(a_{ij},h_{ij})}$ indicates that $(a_{ij},h_{ij})$ is deleted from the component.
    \item ($i'\neq i$) One component $[\widehat{(a_{ij},h_{ij})},\ldots,\widehat{(a_{i'j'},h_{i'j'})},\ldots]$ is obtained from two components $[(a_{i1},h_{i1}),\ldots]\bullet [(a_{i'1},h_{i'1}),\ldots]$ in $X$.    
\end{enumerate}
Let $X_{0}$ denote the element obtained from 1.1) or 1.2).
For the case 1.1), an $m$-coloring $(I^{(0)},\phi^{(0)},c^{(0)})$ exists  
and hence $+1$ of $N^{+}$ contributes to $+1$ for the number of components and hence leads to $-1$ of $N$.  
This is compatible with the desired equality (\ref{f61}), \ie, the number $N$ for the $m$-coloring $(I^{(0)},\phi^{(0)},c^{(0)})$ of $X$ is exactly $1.$
No $m$-coloring $(I^{(0)},\phi^{(0)},c^{(0)})$ exists for the case 1.2).

Put $I_{0} \coloneqq I\setminus I^{(0)}.$
If $I_0$ is nonempty, then fix arbitrary $(i_1,j_1)\in I_{0}$ and $(i_1',j_1') \coloneqq \phi(i_1,j_1).$
\begin{itemize}[noitemsep,leftmargin=*,nosep]
    \item [$-$]
Given 1.1) and 
$i_1 = i_1' = i = i'$, it may happen 
that $(i_1,j_1)$ and $(i_1',j_1')$ are in different components of $X_0$ (\ie Example~\ref{e31}~(1)).
Accordingly, we put $N_{1}^{+} \coloneqq N^{+} - x$ and $N_{1}^{-} \coloneqq N^{-} + x$, 
where $2x$ equals the number of all pairs $(\ihat,\jhat)$ and $(\ihat',\jhat') \coloneqq \phi(\ihat,\jhat)$ such that $\ihat = \ihat'$ but $(\ihat,\jhat)$ and $(\ihat',\jhat')$ are in different components in $X_{0}$.

\item [$-$]
Given 1.2) and $i_1 = i \neq i' = i_1'$, 
then $(i_1,j_1)$ and $(i_1',j_1')$ are in the same component of $X_0$ (cf. Example~\ref{e31}~(3)).
Accordingly, we put $N_{1}^{+} \coloneqq N^{+} + y$ and $N_{1}^{-} \coloneqq N^{-} - y$, 
where $2y$ equals the number of all pairs $(\ihat,\jhat)$ and $(\ihat',\jhat') \coloneqq \phi(\ihat,\jhat)$ such that $\ihat \neq \ihat'$ but $(\ihat,\jhat)$ and $(\ihat',\jhat')$ are in same component in $X_{0}$.
\end{itemize}
Note that (\ref{f61}) holds if and only if $N = N_1^- - N_1^+ \mod 2$ holds.

Put $I^{(1)}\coloneqq \{(i_1,j_1),(i_1',j_1')\}$. 
Consider $(I^{(1)},\phi|_{I^{(1)}})$ for $X_{0}$ and $I_{0}$ as above and let $I^{(1)}$ play a role similar to $I^{(0)}$.
Similar to cases 1.1) and 1.2), we have cases 2.1) and 2.2) for the obtained element $X_1$:
\begin{enumerate}[label = \rm{2.}\arabic*),noitemsep,leftmargin=*,nosep]
    \item 
($(i_1,j_1),(i_1',j_1')$ in the same component of $X_{0}$) 
If 1.1) happens, one may extend the $m$-coloring $c^{(0)}$ of $X_{0}$ to an $m$-coloring of $X_{1}$.
We notice that even if 1.2) happens, there might be an $m$-coloring of $X_1$ under certain conditions (\eg Example~\ref{e31}~(2)).
When such an $m$-coloring exists, $-1$ of $N^-$ contributes $+1$ of $N'$ (in 1.2)) and then $+1$ of $N_1^+$ contributes $-1$ of $N'$, which are compatible with the desired equality.

\item 
($(i_1,j_1),(i_1',j_1')$ in different components of $X_{0}$) 
No $m$-coloring of $X_1$ exists (\eg the second case of Example~\ref{e31}~(1)).
\end{enumerate}
Note that for the case where no $m$-coloring exists, one may still regard $\pm 1$ in $N_1^+$ (resp. $N_1^-$) contributes to $\mp 1$ (resp. $\pm 1$) of $N'$.
These are compatible with the desired equality.
Moreover, one may set $I^{(0)} \coloneqq \{(i_1,j_1),(i_1',j_1')\}$ and $I^{(1)} \coloneqq \{(i,j),(i',j')\}$ instead.
Then by a similar argument, we have similar case-by-case results, and moreover the same $X_{1}$ will be obtained. Here, all results are compatible with the desired equality (\ref{f61}). 

One may similarly obtain $(i_2,j_2)\in I_1 \coloneqq I\setminus I^{(1)}$, $I^{(2)} \coloneqq \{(i_2,j_2),\phi(i_2,j_2)\}$, and $N_{2}^{\pm}$. 
There are similar case-by-case results.
In general, we may apply induction with hypothesis to be that $\pm 1$ in $N_l^+$ (resp. $N_l^-$) induced by $(I^{(l)},\phi_{I^{(l)}})$ contributes to $\mp 1$ (resp. $\pm 1$) of $N'$ for $l\geq 1$.
Moreover, the desired equality holds regardless of the choice of $I^{(0)},I^{(1)},\ldots,I^{(\frac{\# I}{2})}$.
\end{proof}

We show the DG coalgebra structure on $\quan$ is well-defined (c.f. \cite[3.4]{Schedler}).

\begin{proposition}\label{mfoldcoprod}
Let $\Delta_m$ be given by 
\eqref{f33}, and let $\Delta:=
\Delta_2:\quan\to \quan\otimes \quan$.
Then $(\quan,\Delta)$
is a DG coalgebra such that
$\Delta_m=(id\otimes
\cdots\otimes \Delta_2)\circ\cdots
\circ\Delta:
\quan\to \quan^{\otimes n}$.
\end{proposition}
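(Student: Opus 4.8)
The proof follows the combinatorial strategy of Turaev \cite{Turaev} and Schedler \cite{Schedler}, transported to the cyclic (Connes) setting of $\CC_\bullet^\lambda(\DFA)[1]$. There are four things to establish: that $\Delta_m$, as defined by \eqref{f33} on elements of the form \eqref{f31}, descends to a well-defined map on $\quan$; that $\Delta:=\Delta_2$ admits a counit; that it is coassociative, with its iterates reproducing all the $\Delta_m$; and that it is a chain map for the differential $b$ of \eqref{f63}. Degrees cause no trouble: by Lemma~\ref{integerpower} the exponents of $h$ and $\hbar$ occurring in \eqref{f33} are nonnegative integers and every summand of $\Delta_m(X)$ has the same degree as $X$, so $\Delta_m$ is a degree-zero $k[h,\hbar]$-linear map.

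First I would check well-definedness. Insensitivity to the height-equivalence relation is immediate, since an $m$-coloring $(I,\phi,c)$ of $X$ uses the heights only through the order comparisons $h_{ij}$ versus $h_{\phi(i,j)}$ imposed in Definition~\ref{d32}, and the output words $X^{(i)}_{(I,\phi,c)}$ carry the heights of $X$ in their original relative order; hence an order-preserving relabeling of heights carries colorings to colorings and outputs to outputs. Compatibility with the ideal $\widetilde{\quanq}$ is the substantive point: given a generator $X-X'_{i,j,i',j'}-X''_{i,j,i',j'}$ of \eqref{f38}, one partitions the $m$-colorings of $X$ according to whether the distinguished pair $\{(i,j),(i',j')\}$ belongs to $I$. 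Colorings with this pair outside $I$ are matched bijectively with the colorings of $X'_{i,j,i',j'}$, because once $h_{ij}$ and $h_{i'j'}$ are consecutive the only order comparison altered by swapping them is precisely the one involving that (now absent) pair; colorings with the pair inside $I$ are matched with the colorings of $X''_{i,j,i',j'}$, the symmetric pairing $\langle a_{ij},a_{i'j'}\rangle$ together with one factor of $h$ (when $i\neq i'$) or $\hbar$ (when $i=i'$) reproducing exactly the extra weight in \eqref{f36}--\eqref{f37}, and the exponent bookkeeping being supplied by Lemma~\ref{integerpower}. It follows that $\Delta_m$ maps $\widetilde{\quanq}$ into $\sum_{t=0}^{m-1}\widetilde{\quan}^{\otimes t}\otimes\widetilde{\quanq}\otimes\widetilde{\quan}^{\otimes(m-1-t)}$, hence descends to $\quan$.

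Next, the coalgebra axioms. For the counit I would take $\varepsilon\colon\quan\to k[h,\hbar]$ to be the projection onto the zeroth symmetric power, annihilating every nonempty product of cyclic words. In a $2$-coloring of $X$, the only term in which the first (respectively second) tensor factor equals the unit $1$ is the one with $I=\emptyset$ and $c$ the constant map $2$ (respectively $1$), which contributes $X$ with coefficient $1$; hence $(\varepsilon\otimes\id)\circ\Delta=\id=(\id\otimes\varepsilon)\circ\Delta$. For coassociativity it suffices to prove the iterate identity $\Delta_m=(\id^{\otimes m-2}\otimes\Delta)\circ(\id^{\otimes m-3}\otimes\Delta)\circ\cdots\circ\Delta$, of which coassociativity is the case $m=3$ (both $(\Delta\otimes\id)\circ\Delta$ and $(\id\otimes\Delta)\circ\Delta$ equal $\Delta_3$). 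The heart of the matter is a bijection between $m$-colorings of $X$ and nested sequences of $2$-colorings, obtained by first $2$-coloring $X$ and then $2$-coloring each resulting output word, $m-1$ times in all; under this bijection the sign $\epsilon_{(I,\phi,c)}$, the monomial $h^{(I,\phi,c)}\hbar^{(I,\phi,c)}$, and the distribution of cyclic words into the $m$ tensor slots all factor correctly. This is exactly what the auxiliary maps $f$, $g$ and the orbit sets $\mathcal P$, $\mathcal Q$ of \S\ref{sec:constcoprod} are engineered to make transparent, and it is where the argument of \cite{Turaev,Schedler} is used most directly.

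Finally, the chain-map property $\Delta\circ b=(b\otimes\id+\id\otimes b)\circ\Delta$, with the Koszul sign on the second summand. Since $b$ acts on an element of the form \eqref{f31} by splitting one letter $a_{ij}$ through the reduced coproduct of $\DFA$ and raising by one the heights lying above the split point, I would compare the effect of this splitting before and after a choice of $m$-coloring. The comparison reduces to two facts: that the co-Frobenius identity \eqref{eq:coproduct} makes the pairing $\langle-,-\rangle$ compatible with the coproduct of $\DFA$, so that a coloring pairing $\langle a_{ij},a_{kl}\rangle$ behaves well when either letter is split; and that the height shift $\widehat h$ in \eqref{f63} is compatible with the order comparisons used in Definition~\ref{d32}. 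This is essentially the computation behind $b^2=0$ on $\CC_\bullet^\lambda(\DFA)[1]$, now spread over the $m$ tensor slots. I expect the main obstacle to be the combinatorial bijection of the third paragraph together with the matching used for compatibility with $\widetilde{\quanq}$: both require simultaneous exact control of the sign $\epsilon_{(I,\phi,c)}$ and of the integer exponents of $h$ and $\hbar$, which is precisely where the interaction between the Connes cyclic structure and the height bookkeeping is most delicate.
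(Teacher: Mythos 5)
Your overall strategy for the central step is the same as the paper's: reduce well-definedness to the identity $\Delta_m(X)-\Delta_m(X'_{i,j,i',j'})-\Delta_m(X''_{i,j,i',j'})=0$ in $\quan^{\otimes m}$ by matching $m$-colorings of $X$ with those of $X'$ and $X''$, and defer coassociativity, the counit and the compatibility with $b$ to the Turaev--Schedler graph bookkeeping (which is indeed where the paper sends them, in \S\ref{subsect:Hopf}). However, the matching you describe --- ``pair outside $I$ corresponds to $X'$, pair inside $I$ corresponds to $X''$'' --- is not the correct dichotomy, and the identity does not follow from it.

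Two concrete failures. First, a coloring with both $(i,j),(i',j')\in I$ but $\phi(i,j)\neq(i',j')$ is still a coloring of $X'$: swapping two adjacent heights changes no order comparison except the one between those two entries, which $\phi$ does not see, so condition (3) of Definition~\ref{d32} survives and $X_{(I,\phi,c)}=X'_{(I,\phi,c)}$; such colorings must cancel against $X'$, not against $X''$. Conversely, the colorings of $X'$ in which $\phi$ does pair $(i,j)$ with $(i',j')$ are not colorings of $X$ at all (the inequality in condition (3) flips under the height swap) and they too have to be absorbed by $X''$. Second, and more seriously, when both entries of the pair lie outside $I$ and receive the same color, the contributions $X_{(I,\phi,c)}$ and $X'_{(I,\phi,c)}$ are \emph{not} equal in $\widetilde{\quan}^{\otimes m}$: the letters $a_{ij}$ and $a_{i'j'}$ land in the same tensor slot with their heights still adjacent, so their difference is itself a generator of $\widetilde{\quanq}$ sitting inside that slot, and it is exactly this difference that is matched by the coloring of $X''$ obtained by restricting $c$ to $P\setminus\{(i,j),(i',j')\}$. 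So a single coloring with the pair outside $I$ can contribute to both $\Delta_m(X')$ and $\Delta_m(X'')$, contradicting your bijection claim. The correct argument needs the finer case analysis (exactly one of the pair in $I$; both outside with distinct colors; both outside with equal colors; both inside but not $\phi$-paired; both inside and $\phi$-paired), together with the reverse count verifying that each coloring of $X''$ is accounted for exactly once. Your remarks on degrees, the counit and the iteration $\Delta_m=(\id\otimes\cdots\otimes\Delta_2)\circ\cdots\circ\Delta_2$ are consistent with the paper, but they remain sketches at the same level of detail as the source you cite.
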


\begin{proof}
For $X$ in (\ref{f31}), and certain pairs $(i,j)$ and $(i',j')$ defining $\widetilde{\quanq}$, put $G \coloneqq X - X' - X'' \coloneqq X - X_{i,j,i',j'}' - X_{i,j,i',j'}''$.
We are to show $\Delta_{m}(G) = \Delta_{m}(X) - \Delta_{m}(X') - \Delta_{m}(X'') = 0$ in $\quan$.
At a high level, the proof is to find the
correspondence between $m$-colorings of $X,$ $X'$, $X''$.
Regarding each $m$-coloring $(I,\phi,c)$ of $X$, we have situations below 
(similarly for $m$-colorings of $X'$): 
\begin{enumerate}[label = \arabic*),noitemsep,leftmargin=*,nosep]
    \item Exactly one of $(i,j),(i',j')$ belongs to $I$: Now, we have $X_{(I,\phi,c)} = X_{(I,\phi,c)}'$ in $\widetilde{\quan}$ because there exists no pair $(i'',j'')$ such that $h_{i''j''}$ lies strictly between $h_{ij}$ and $h_{i',j'}.$ 
    \item Both $(i,j),(i',j') \notin I$: Now, the $3$-tuple $(I,\phi,c)$ is also an $m$-coloring of $X'$.
    \begin{enumerate}[label = 2.\arabic*),noitemsep,leftmargin=*,nosep] 
    \item Case $c(i,j) \neq c(i',j')$: Now $a_{ij}$ and $a_{i'j'}$ belong to different tensor-components in $X_{(I,\phi,c)}$. We have
    \begin{align*} X_{(I,\phi,c)} & =  \cdots \otimes [(a_{ij},h_{ij}),\ldots]\bullet \cdots \otimes \cdots \otimes [(a_{i'j'},h_{i'j'}),\ldots]\bullet \cdots \otimes \cdots\text{ and}\\
    X_{(I,\phi,c)}' & = \cdots \otimes [(a_{ij},h_{i'j'}),\ldots]\bullet \cdots \otimes \cdots \otimes [(a_{i'j'},h_{ij}),\ldots]\bullet \cdots \otimes \cdots\end{align*}
    As $[(a_{ij},h_{ij}),\ldots] = [(a_{ij},h_{i'j'}),\ldots]$ and $[(a_{ij},h_{ij}),\ldots] = [(a_{i'j'},h_{ij}),\ldots]$ in $\widetilde{\quan}$, we have $X_{(I,\phi,c)} = X_{(I,\phi,c)}'$.
    \item Otherwise: We have $(I,\phi,c|_{P''})$ is an $m$-coloring of $X''$. Here $P''\coloneqq P\setminus \{(i,j),(i',j')\}$.
    One may straightforwardly check that $X_{(I,\phi,c)} - X_{(I,\phi,c)}' - X_{(I,\phi,c|_{P''})}''$ belongs to $\widetilde{\quanq}$.
    \end{enumerate}
    \item Both $(i,j),(i',j') \in I$: \begin{enumerate}[label = 3.\arabic*),noitemsep,leftmargin=*,nosep] \item If $\phi(i,j) \neq (i',j')$, then $X_{(I,\phi,c)} = X_{(I,\phi,c)}'$ holds.
    \item Otherwise, $(I,\phi,c)$ is not an $m$-coloring of $X'$. 
    As for $X''$, with $I''=I\setminus \{(i,j),(i',j')\}$, $\phi'' = \phi|_{I''}$, and $c'' = c|_{P''},$ the $3$-tuple $(I'', \phi'',c'')$ is an $m$-coloring of $X''$.
    We may check that $X_{(I,\phi,c)} - X_{(I,\phi,c)}' - X_{(I,\phi,c|_{P''})}''$ belongs to $\widetilde{\quanq}$.
    \end{enumerate}
\end{enumerate}

Conversely, consider each $m$-coloring $(I'',\phi'',c'')$ of $X''$ and equations
\begin{align}(i',j') = (i,j+1)\text{ and }(i,j) = (i',j'+1).\label{f62}\end{align}
First, assume none of the above equations hold. 
Set $c$ to be $c(i,j) \coloneqq c''(i',j'+1)$, $c(i',j') \coloneqq c''(i,j+1)$, and $c|_{P''} \coloneqq c''$.
Next, without loss of
generality,
consider the case where $(i',j') = (i,j+1)$ in (\ref{f62}) holds.
Now 
\[X'' = \pm \langle a_{ij},a_{i'j'}\rangle \cdot \hbar \cdot \big(\cdots \bullet[\widehat{(a_{ij},h_{ij})},\cdots]\bullet [\widehat{(a_{i'j'},h_{i'j'})}]\bullet\cdots \big).\]
Set $c''$ to be $c(i,j) \coloneqq c''(i,j+2)$, $c(i',j') \coloneqq c''(\widehat{i',j'})$, and $c|_{P''}\coloneqq c''$.
If $c(i,j) = c(i',j')$, we recover the case 2.2) above with $I \coloneqq I''$ and $\phi \coloneqq \phi''.$
Otherwise, with $I \coloneqq I'' \cup \{(i,j),(i',j')\}$, $\phi(i,j) \coloneqq (i',j')$, and $\phi|_{I''} \coloneqq \phi''$, we recover the case 3.2). 

For each $m$-coloring of $X$, we have associated to it at least one $m$-coloring of $X'$ or $X''$.
For each $m$-coloring of $X''$, there is at most one $m$-coloring of $X$ and that of $X'$ associated.
Moreover, in each case, we have the equation $X_{(I,\phi,c)} = X_{(I,\phi,c)}'$ or $X_{(I,\phi,c)} = X_{(I,\phi,c)}' + X_{(I,\phi,c)''}'',$ where $(I,\phi,c)'' = (I,\phi,c|_{P''})$ in the case 2.2) or $(I,\phi,c)'' = (I'',\phi'',c'')$ in the case 3.2).
\end{proof}

\subsection{Proof of the DG Hopf algebra 
structure}
\label{subsect:Hopf}

The goal of this subsection
is to show the following.

\begin{proposition}
Let $(\quan,\bullet,\Delta)$ be as above.
Then $\quan$ is a DG Hopf algebra.
\end{proposition}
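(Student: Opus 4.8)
The plan is to verify the Hopf algebra axioms one structure map at a time, in the order: (i) the differential $b$ is compatible with the product $\bullet$ and the coproduct $\Delta$; (ii) $(\mathfrak{A},\bullet)$ is a DG associative algebra with unit $k[h,\hbar]\hookrightarrow\mathfrak{A}$; (iii) $(\mathfrak{A},\Delta)$ is a DG coassociative coalgebra — this is already done in Proposition \ref{mfoldcoprod}, where the counit $\varepsilon$ sends a monomial to its $h,\hbar$-scalar part when no component is present and to $0$ otherwise; (iv) $\Delta$ and $\varepsilon$ are algebra maps, i.e. $\Delta(X\bullet X')=\Delta(X)\bullet\Delta(X')$ and $\varepsilon(X\bullet X')=\varepsilon(X)\varepsilon(X')$; (v) existence of an antipode. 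Most of these reduce to direct, if tedious, checks on monomials of the form \eqref{f31}, so the proof should mostly cite Proposition \ref{mfoldcoprod} and Lemma \ref{integerpower} and then concentrate firepower on the one genuinely nontrivial point, which is (iv).

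For (i): the compatibility $b\circ\bullet = \bullet\circ(b\otimes\id+\id\otimes b)$ (with the appropriate Koszul sign) is immediate from formula \eqref{f63}, because $b$ acts component-wise and the height-shifting rule $\widehat h_{i'j'}$ is local to the component being differentiated; the product merely re-indexes heights by the constant $C$ of \eqref{f310}, which commutes with that local rule. For $b\circ\Delta=\Delta\circ b$ one argues that differentiating a factor $a_{ij}$ via its coproduct $\sum a_{ij}'\otimes a_{ij}''$ commutes with the combinatorial data $(I,\phi,c)$ of a coloring: a coloring of $X$ restricts to a coloring of $b(X)$'s summands and vice versa, because the pairing $\langle-,-\rangle$ entering $\epsilon_{(I,\phi,c)}$ is a chain map on $A^{\ac}$ (co-Frobenius) — this is the same mechanism that makes $b^2=0$ on $\mathrm{CC}_\bullet^\lambda(A^{\ac})[1]$ and was invoked for Proposition \ref{prop:Liebialg}. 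For (ii), associativity of $\bullet$ is clear since after the height-translations the product is just the symmetric-algebra product on $\widetilde{\mathfrak A}$, and this descends to $\mathfrak A=\widetilde{\mathfrak A}/\widetilde{\mathfrak B}$ because $\widetilde{\mathfrak B}$ is by construction a $k[h,\hbar]$-ideal of $\widetilde{\mathfrak A}$. The unit axiom is trivial.

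The heart of the argument is (iv), compatibility of $\Delta$ with $\bullet$. Given two monomials $X,X'$ as in \eqref{f31}, separated in height (all heights of $X$ strictly below all heights of $X'$ after the shift by $C$), one must match $m$-colorings of $X\bullet X'$ with pairs consisting of an $m$-coloring of $X$ and one of $X'$. The crucial observation is the constraint (2)(b)(iii) in Definition \ref{d32}: for a pair $(i,j)\in I$ one has $c(i,j)>c(\phi(i,j))$ iff $h_{ij}>h_{\phi(i,j)}$. Since in $X\bullet X'$ every height of the $X$-part lies below every height of the $X'$-part, no pair of $I$ can straddle the two halves — such a pair would force $a\in X$, $b\in X'$ with $\langle a,b\rangle$ nonzero contributing a factor of $h$, but the $b$-variable has strictly larger height so the color inequality is forced in one direction only, and one checks this is incompatible with the orbit structure of $f$ unless the pair is internal to $X$ or to $X'$. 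Hence $I=I_X\sqcup I_{X'}$, $\phi$ splits, and the color map $c$ splits accordingly; the orbit sets $\mathcal P,\mathcal Q$ and the exponents $(\#I+2N)/4$, $(\#I-2N)/4$ are additive over the two halves, so $h^{(I,\phi,c)}\hbar^{(I,\phi,c)}=h^{(I_X,\dots)}\hbar^{(I_X,\dots)}\cdot h^{(I_{X'},\dots)}\hbar^{(I_{X'},\dots)}$ and $\epsilon_{(I,\phi,c)}=\epsilon_{(I_X,\dots)}\cdot\epsilon_{(I_{X'},\dots)}$. Summing gives $\Delta(X\bullet X')=\Delta(X)\bullet\Delta(X')$. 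I expect this height-separation/orbit-splitting lemma to be the main obstacle: one has to rule out "mixed" colorings carefully and track signs, exactly as in \cite[\S3.4]{Schedler} and \cite[\S9]{Turaev}, and then verify that the identification is well-defined modulo $\widetilde{\mathfrak B}$. Finally, for (v): $\mathfrak A$ is a connected graded (by the symmetric-power length, with $h,\hbar$ in the base) bialgebra over $k[h,\hbar]$, so the antipode exists and is unique by the standard recursion $S(X)=-X-\sum S(X_{(1)})\bullet X_{(2)}$ over the reduced coproduct, which terminates because the reduced coproduct strictly lowers length; compatibility with $b$ is then automatic. This completes the verification that $(\mathfrak A,\bullet,\Delta,\varepsilon,S,b)$ is a DG Hopf algebra.
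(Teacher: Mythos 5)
Your proposal is correct and follows essentially the same route as the paper: associativity from the symmetric product after height translation, coassociativity via $\Delta_3$ (Proposition on the $m$-fold coproduct), the bialgebra identity reduced to the same key observation that no coloring of $X\bullet X'$ can pair an index of $X$ with one of $X'$ because of the height separation (deferring the combinatorics to Schedler), connectedness of the graded bialgebra to produce the antipode, and compatibility of $b$ with the relations defining $\mathfrak{A}$. The only difference is one of emphasis — you stress the Leibniz/co-Leibniz rules for $b$ while the paper stresses that $b$ preserves the ideal $\widetilde{\mathfrak{B}}$ — but both checks are routine and each account implicitly contains the other.
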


\noindent The proof consists of several steps.

(1)(Associativity and coassociativity) 
For the associativity, we need to show $(X\bullet Y) \bullet Z = X \bullet (Y\bullet Z)$ for $X,Y,Z$ of the form (\ref{f31}). 
    Regardless of all heights, this equality straightforwardly comes from the associativity of symmetric product.
    By (\ref{f310}), the order of the heights in $(X\bullet Y) \bullet Z$ is the same as that in $X \bullet (Y\bullet Z).$
    The associativity hence follows.

    As for the coassociativity, similarly to \cite[3.7]{Schedler} by considering a graph with only one vertex, we can show that 
    \begin{align*}
        (\Delta_{2}\otimes 1)\circ \Delta_2(X) = \Delta_3(X) = (1\otimes \Delta_2)\circ \Delta_2(X) \text{ for $X$ of the form (\ref{f31}).}
    \end{align*}
    The coassociativity follows immediately.

(2) (Bialgebra identity) We need to show $\Delta_2 (X\bullet Y) = \Delta_2 (X) \bullet \Delta_2 (Y)$ for $X,Y$ of the form (\ref{f31}).
The proof is similar to \cite[3.8]{Schedler}.
The key point is that there is no $2$-coloring $(I,\phi,c)$ of $X\bullet Y$ such that $\phi(i,j) = (i',j')$ for some $(i,j)\in P_X$ and $(i',j')\in P_Y$, where $P_X$ and $P_Y$ denote respectively the set of pairs $(i,j)$ in $X$ and $Y$. 
    
(3) (Connectivity) As elements in $\CC_\bullet^{\lambda} (A^{\ac}H)[1]$ have negative degree, the degree zero part of $\SLH[h,\hbar] = \Lambda^{\bullet} (\CC_\bullet^{\lambda} (A^{\ac}H)[1])\otimes k[h,\hbar]$ and hence that of $\quan$ are equal to $k$.
The bialgebra $\quan$ is connected and hence a Hopf algebra.

(4) (Compatibility with the differential) The Hopf algebra is moreover a DG Hopf algebra with the differential (\ref{f63}).
Then similarly to \cite[Lemma~14]{CEG}, we can show $b(X-X'-X'')\in \widetilde{B}$ for $X-X'-X''$ in (\ref{f38}).

\subsection{$\quan$ is a quantization of
$\mathrm{CC}_\bullet
^{\lambda}
(A^{\ac})[1]$}
\label{subsect:quantization}

In this subsection, 
we show Theorem~\ref{theoremsec6}, \ie $\quan$
is a quantization of
$\mathrm{CC}_\bullet
^{\lambda}
(A^{\ac})[1]$ in the sense of Definition~\text{\rm{\ref{d31}}}.
We first check the condition (2) in Definition~\ref{d31}.

\begin{lemma}
For the Hopf algebra $\quan$, 
$x,y\in \CC_{\bullet}^{\lambda}(A^{\ac})[1]$ 
and their lifting $\tilde x,\tilde y\in \quan$, we have 
\[\frac{\tilde x \tilde y - \tilde y \tilde x}{h} = \{x,y\} \mod (h,\hbar)\text{ and }\frac{\Delta(\tilde x) - \Delta^{op}(\tilde x)}{\hbar} = \delta(x) \mod (h,\hbar).\]
\begin{proof}
Put 
\begin{align*}
\tilde x = [(a_{1},h_{1}),\ldots,(a_{p},h_{p})] \text{ and }\tilde y = [(a_{1}',h_{1}'),\ldots,(a_{q}',h_{q}')].
\end{align*}
We prove the left equation.
Because of (\ref{f37}) and modulo $\hbar$, the relation (\ref{f38}) in $\quan$ writes $X = X_{i,j,i,j'}'$.
This implies that we only need to show the equation for those $\tilde x$ and $\tilde y$ such that $h_{ij}<h_{i(j+1)}$ and $h_{ij}'<h_{i(j+1)}'$ for all possible $i,j$.
Assume without
loss of generality
that $h_{p}<h_{1}'$ and $p > q$.
In this case, we have
\begin{eqnarray}\tilde x\tilde y & = & [(a_{1},h_{1}),\ldots,(a_{p-1},h_{p-1}),(a_{p},h_{1}')] \bullet [(a_{1}',h_{p}),(a_{2}',h_{2}'),\ldots,(a_{q}',h_{q}')] \nonumber \\
& & \pm \langle a_{p},a_{1}'\rangle \cdot h \cdot [\cdots]\quad\quad \text{ (here $[\cdots]$ follows (\ref{f36}))} \nonumber \\
& = & [(a_{1},h_{1}),\ldots,(a_{p-1},h_{p-1}),(a_{p},h_{2}')] \bullet [(a_{1}',h_{p}),(a_{2}',h_{1}'),\ldots,(a_{q}',h_{q}')] \nonumber \\
& & + \sum_{i=1,2} \pm \langle a_{p},a_{i}'\rangle \cdot h \cdot [\cdots] \nonumber \\
& = & [(a_{1},h_{1}),\ldots,(a_{p-1},h_{p-1}),(a_{p},h_{q}')] \bullet [(a_{1}',h_{p}),(a_{2}',h_{1}'),\ldots,(a_{q}',h_{q-1}')] \label{quantizeqn}\\
& & + \sum_{i=1,\ldots,q} \pm \langle a_{p},a_{i}'\rangle \cdot h \cdot [\cdots] \nonumber \\
& = &[(a_{1},h_{q+1}),\ldots,(a_{p-q},h_{p}),(a_{p-q+1},h_{1}'),\ldots,(a_{p},h_{q}')] \nonumber \\
& & \bullet [(a_{1}',h_{1}),\ldots,(a_{q}',h_{q})] + h \cdot \sum_{\substack{j=1,\ldots,p\\i=1,\ldots,q}}\pm \langle a_{(p+1)-j},a_{i}'\rangle \cdot [\cdots]. \nonumber 
\end{eqnarray}
Here, to get the last equation above, we follow (\ref{quantizeqn}) to get $(a_{p-1},h_{q-1}'),$ $(a_{p-2},h_{q-2}'),$ $\ldots,$ $(a_{1},h_{q+1})$ step by step.    

Due to the assumption, we have 
\[[(a_{1},h_{q+1}),\ldots,(a_{p},h_{q}')]\bullet [(a_{1}',h_{1}),\ldots,(a_{q}',h_{q})] = \tilde y \tilde x\text{ in }\widetilde{\quan}\] and hence the left equation is proved.

We next show the right equation.
By (\ref{f38}) and (\ref{f37}), we have for any $1\leq i\leq j\leq n$
\[\tilde{x} = \tilde{x}' \pm \langle a_{i},a_{j} \rangle \cdot \hbar \cdot X''\]
given no $h_{j'}$ lies strictly between $h_{i}$ and $h_{j}$, where $\tilde{x}'$ and $X''$ follows (\ref{f37}).
We have 
\[\Delta(\hbar \cdot X'') = 1 \otimes \hbar \cdot X'' + \hbar \cdot X'' \otimes 1 \mod (h,h\hbar,\hbar^2).\]
Hence $\Delta(\hbar \cdot X'') - \Delta^{op}(\hbar \cdot X'') = 0 \mod (h,h\hbar,\hbar^2).$

By the previous paragraph, when considering $\Delta(\tilde x) - \Delta^{op}\tilde x) \mod (h,h\hbar,\hbar^2)$, we may assume that $h_{i} < h_{j}$ if $i < j.$
For such $\tilde x,$ we have
\begin{align*}
\Delta(\tilde x) & = 1\otimes \tilde x + \tilde x\otimes 1 + \hbar\cdot\sum_{i<j} \pm \langle a_{i},a_{j}\rangle [\cdots]\otimes[\cdots] \mod (h,h\hbar,\hbar^2),\end{align*}
where $[\cdots]\otimes[\cdots] = [(a_{i+1},h_{i+1}),\ldots,(a_{j-1},h_{j-1})]\otimes [(a_{j+1},h_{j+1}),\ldots,(a_{i-1},h_{i-1})].$
The desired equality thus follows.
\end{proof}
\end{lemma}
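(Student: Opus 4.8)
The plan is to verify the two first-order identities directly, on representatives and after a normalisation. Both asserted identities are linear in their arguments, so it suffices to treat single cyclic words $x=[a_1,\ldots,a_p]$ and $y=[a_1',\ldots,a_q']$ in $\CC_\bullet^\lambda(A^{\ac})[1]$, lifted to $\tilde x=[(a_1,h_1),\ldots,(a_p,h_p)]$ and $\tilde y=[(a_1',h_1'),\ldots,(a_q',h_q')]$ in $\quan$. For the bracket identity the choice of heights is immaterial modulo $\hbar$: the same-component correction \eqref{f37} inside the defining relation \eqref{f38} carries a factor $\hbar$, so modulo $\hbar$ that relation reads $X=X'_{i,j,i,j'}$ and one may freely reorder heights within a bracket; we may therefore take our representatives \emph{monotone}, $h_1<\cdots<h_p$ and $h_1'<\cdots<h_q'$, and — after a harmless relabelling — arrange $h_p<h_1'$.

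For the bracket identity I would then expand $\tilde x\tilde y$ using the product rule of \S\ref{subsect:construction} and \emph{slide} the block of $\tilde y$ past the last entry of $\tilde x$ by adjacent-height transpositions, one at a time. Each transposition invokes \eqref{f38}: it either replaces a pair of brackets by an equivalent pair with relabelled heights (hence an equal element of $\widetilde{\quan}$), or produces a term $\pm\langle a_{(p+1)-j},a_i'\rangle\cdot h\cdot[\cdots]$ whose bracket is the concatenation in \eqref{f36}. Carrying out all $pq$ transpositions needed to reach the representative of $\tilde y\tilde x$ gives $\tilde x\tilde y=\tilde y\tilde x+h\sum_{i,j}\pm\langle a_{(p+1)-j},a_i'\rangle[\cdots]$; dividing by $h$ and reducing modulo $(h,\hbar)$ leaves exactly the cyclic Lie bracket \eqref{eq:bracket}, once one checks that $[\cdots]$ is the glued cyclic word $[b_1,\ldots,b_{j-1},a_{i+1},\ldots,a_n,a_1,\ldots,a_{i-1},b_{j+1},\ldots,b_m]$ and that the accumulated Koszul signs match $(-1)^\nu$.

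For the cobracket identity I would first observe that each generator of $\widetilde{\quanq}$ of same-component type contributes a term $\hbar\cdot X''$, and that $\Delta(\hbar X'')=\hbar X''\otimes 1+1\otimes\hbar X''$ modulo $(h,h\hbar,\hbar^2)$, whence $\Delta(\hbar X'')-\Delta^{op}(\hbar X'')\equiv 0$ there; this is what permits normalising $\tilde x$ to monotone form when computing $(\Delta-\Delta^{op})(\tilde x)$ modulo $(h,h\hbar,\hbar^2)$. For monotone $\tilde x$ I would then enumerate the $2$-colorings $(I,\phi,c)$ in \eqref{f33}. The empty coloring gives $\tilde x\otimes1+1\otimes\tilde x$, which is symmetric and cancels in $\Delta-\Delta^{op}$; for $I\neq\varnothing$, Lemma \ref{integerpower}(1) makes the exponents $(\#I+2N)/4$ of $h$ and $(\#I-2N)/4$ of $\hbar$ non-negative integers whose sum is $\#I/2\geq1$, so every coloring with $\#I\geq4$ contributes a monomial of total $(h,\hbar)$-degree at least $2$ and dies modulo $(h,h\hbar,\hbar^2)$. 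The surviving colorings have $\#I=2$; since $\tilde x$ is a single bracket, $f$ then has two orbits, so $N=-1$ and the coloring carries exactly one factor $\hbar$. Using condition $(2)(c)$ of Definition \ref{d32} together with monotonicity to fix the colors, these colorings are indexed by $1\leq i<j\leq n$ and contribute $\hbar\sum_{i<j}\pm\langle a_i,a_j\rangle\,[a_{i+1},\ldots,a_{j-1}]\otimes[a_{j+1},\ldots,a_{i-1}]$; subtracting $\Delta^{op}$ antisymmetrises this into \eqref{eq:cobracket}, and dividing by $\hbar$ and reducing modulo $(h,\hbar)$ yields $\delta(x)$.

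The genuine work is the sign and combinatorics bookkeeping: in the product case, an induction on the number of height-transpositions showing that exactly the $pq$ stated correction terms occur with the correct gluings and Koszul signs; in the coproduct case, the verification — powered by the non-negativity of the $h$- and $\hbar$-exponents in Lemma \ref{integerpower}(1) — that no coloring with $\#I\geq4$ survives, together with matching $\epsilon_{(I,\phi,c)}$ and the Koszul signs against $(-1)^\eta$ and $(-1)^\zeta$ in \eqref{eq:cobracket}. Once representatives are made monotone there is no conceptual obstacle left, only careful accounting; the differential plays no role in these first-order identities.
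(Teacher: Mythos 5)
Your proposal follows the same route as the paper. Both proofs normalize to monotone height assignments by observing that the same-component relation \eqref{f37} carries a factor of $\hbar$ (so heights within a bracket may be reordered modulo $\hbar$), then compute $\tilde x\tilde y - \tilde y\tilde x$ by walking the heights across the two factors via adjacent-height transpositions, each producing one $h$-correction through \eqref{f36}; and both treat the cobracket by noting that $\Delta(\hbar X'')-\Delta^{op}(\hbar X'')\equiv 0 \bmod (h,h\hbar,\hbar^2)$ to reduce to monotone $\tilde x$ and then reading off the $\#I=2$ colorings in \eqref{f33} (your explicit invocation of Lemma~\ref{integerpower}(1) to kill $\#I\geq 4$ is a small added detail that the paper leaves implicit).
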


Let $L$ denote $\CC_{\bullet}^{\lambda}(A^{\ac})[1]$ and $\mathrm{SL}$ denote the symmetric algebra $\Lambda^{\bullet}(L)$.
By Definition~\ref{d31}, we need to check the isomorphism 
$\quan/(h\cdot \quan +\hbar \cdot \quan) \cong \mathrm{SL}.$
This can be implied by the isomorphism $\quan \cong \mathrm{SL}[h,\hbar]$, which will be proved as a corollary of Proposition~\ref{p31}. 
This proposition is an analogue to the Poincar\'e-Birkhoff-Witt theorem.

Let $B_{A^{\ac}}$ denote a basis of $A^{\ac}.$
Consider the set 
\begin{align*}
B_{\quan} & \coloneqq \left\{
X_{1}\bullet X_{2} \bullet \cdots \bullet X_{k} \,\left|\,X_{i} =[(a_{i1},1),\cdots,(a_{ip_{i}},p_{i})],
a_i\in B_{A^{\ac}}
\right.\right\}\subset \SLH[h,\hbar].
\end{align*}

\begin{proposition}\label{p31}
The set $B_{\quan}$ is a basis of $\quan$ over $k[h,\hbar]$.
\end{proposition}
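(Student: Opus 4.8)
The plan is to prove Proposition~\ref{p31} as a Poincar\'e--Birkhoff--Witt type statement, in two steps: that $B_{\quan}$ spans $\quan$ over $k[h,\hbar]$, and that it is linearly independent over $k[h,\hbar]$. First a bookkeeping remark: an element $X_1\bullet\cdots\bullet X_k\in B_{\quan}$ is to be read as the product computed inside $\quan$, so by the product rule (\ref{f310}) it is the class of an element of the form (\ref{f31}) in which the heights of the $s$-th factor form a consecutive block lying above all heights of the $(s-1)$-st factor and increasing along the cyclic word; after fixing a total order on cyclic words over $B_{A^{\ac}}$ and ordering the factors accordingly (with the usual parity restrictions, since $\mathrm{SL}=\Lambda^\bullet(\CC_\bullet^\lambda(A^{\ac})[1])$ is graded-symmetric), $B_{\quan}$ is in bijection with the monomial basis of $\mathrm{SL}$. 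Consequently, once Proposition~\ref{p31} is known, the isomorphism $\quan\cong\mathrm{SL}[h,\hbar]$ announced after its statement, and hence the flatness of $\quan$ over $k[h,\hbar]$, are immediate.

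For the spanning part, I would filter $\quan$ by letting $F_p\quan$ be the $k[h,\hbar]$-span of the classes of elements (\ref{f31}) with at most $p$ letters in total, and observe that this is an exhaustive, multiplicative filtration: in each defining relation $X=X'_{i,j,i',j'}+X''_{i,j,i',j'}$ the term $X'_{i,j,i',j'}$ has the same number of letters as $X$, while $X''_{i,j,i',j'}$, given by (\ref{f36}) or (\ref{f37}), has exactly two fewer. Starting from an arbitrary element (\ref{f31}), the height-equivalence lets me assume the heights are integers, and then I repeatedly apply the relations to pairs of positions whose heights are adjacent in the height order, moving the configuration toward the normal form described above. The argument is a double induction: an outer induction on the number of letters, which disposes of every correction term $X''$ by the inductive hypothesis, and an inner induction on the number of ``height inversions'' relative to the target order, each elementary move removing one inversion modulo $F_{p-2}\quan$. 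This shows each $F_p\quan$, and hence $\quan$, is spanned over $k[h,\hbar]$ by $B_{\quan}$.

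Linear independence is where the real work lies, and my plan is to upgrade the reduction procedure above into a well-defined $k[h,\hbar]$-linear normal-form map $\nu\colon\quan\to\mathrm{SL}[h,\hbar]$ that restricts to the identity on $B_{\quan}$; linear independence of $B_{\quan}$ in $\quan$ then follows formally. Orienting the relations $X=X'+X''$ toward the normal form, the resulting rewriting system terminates (it strictly decreases the pair (number of letters, number of inversions) in a well-founded order), so by a diamond-lemma argument (Newman's lemma) it suffices to check \emph{local confluence}: whenever two such rules, or the height-equivalence, apply to the same element, the two resulting one-step reductions can be completed to a common value. The overlaps to be resolved are of Yang--Baxter type---two elementary moves sharing a position, or three mutually height-adjacent positions---and their resolution comes down precisely to the Jacobi identity for $\{-,-\}$, the co-Jacobi identity for $\delta$, their compatibility and the involutivity recorded in Proposition~\ref{prop:Liebialg}, together with the bookkeeping of the powers of $h$ and $\hbar$ governed by the integer $N$ of Lemma~\ref{integerpower}. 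Structurally this is the same verification already carried out for the coproduct in the proof of Proposition~\ref{mfoldcoprod}, and in the classical (non-graded) setting it is the content of Turaev~\cite[\S9]{Turaev} and Schedler~\cite[\S3]{Schedler}; I would follow those sources, the only additional point being to propagate the graded (Koszul) signs coming from the degrees of the elements of $A^{\ac}$ through the overlap diagrams. The main obstacle is thus exactly this confluence check: it is elementary but combinatorially heavy, and the chief danger is a sign or an $h/\hbar$-exponent discrepancy in one of the overlap cases. Granting it, $\nu$ is well defined, it kills $\widetilde{\quanq}$ (apply the oriented rule to $X$ first), $\nu|_{B_{\quan}}=\mathrm{id}$, and therefore $B_{\quan}$ is a free $k[h,\hbar]$-basis of $\quan$.
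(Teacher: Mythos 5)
Your proposal is correct and follows essentially the same PBW/diamond-lemma strategy as the paper: the paper's Lemma~\ref{l32} is exactly a normal-form map $\theta:\widetilde{\quan}'\to\mathrm{SL}[h,\hbar]$ built by the double induction you describe (on the letter count $|X|$ and then on the inversion number $d_{(X,P_X,\psi_X)}$), and its well-definedness is checked on precisely the disjoint ("four-element") and shared-position ("three-element") overlaps you identify, after which $\overline{\theta}$ restricted to $B_{\quan}$ yields linear independence. The one point I would push back on is the remark that the confluence check "comes down precisely to the Jacobi identity, co-Jacobi identity, their compatibility and involutivity": the paper's verification in Lemma~\ref{l32} is a direct manipulation of the relations $X = X'_{i,j,i',j'} + X''_{i,j,i',j'}$ together with Lemma~\ref{integerpower}'s bookkeeping of the $h,\hbar$-exponents, not an explicit appeal to Proposition~\ref{prop:Liebialg}; the Lie bialgebra axioms are of course implicitly encoded in those relations, but if you want to cite them as the mechanism of confluence you would need to spell out how the accumulated correction terms along the two paths of the braid overlap assemble into the Jacobi/cocycle/involutivity identities, rather than treat it as known — this is exactly where, as you rightly flag, a sign or $h/\hbar$-exponent discrepancy could hide. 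A minor presentational difference: for the spanning part the paper quotes Schedler's Lemma~\ref{l31} (a basis for the lift $\quan'$, which surjects onto $\quan$), whereas you give a self-contained filtration-and-inversion-count induction; both are fine and the latter is arguably more elementary.
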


For its proof, we use a result \cite[Lemma~4.4]{Schedler} of the Diamond lemma.
Recall Schedler's definition of lifting.
\begin{enumerate}[label = \rm{(L.}\arabic*),noitemsep,leftmargin=*,nosep]
    \item 
Let $\mathrm{Map}(\mathbb{N}\times \mathbb{N},\mathbb{N})$ denote the set of all maps $\mathbb{N}\times \mathbb{N} \to \mathbb{N}$ and $P(\mathbb{N}\times \mathbb{N})$ the power set of $\mathbb{N}\times \mathbb{N}$.
Let $\widetilde{\quan}'$ be the $k[h,\hbar]$-submodule of \[\widetilde{\quan}\times P(\mathbb{N}\times \mathbb{N}) \times \mathrm{Map}(\mathbb{N}\times \mathbb{N},\mathbb{N})\]
spanned by $(X,P_{X},\psi_{X})$ that satisfy: (1) $X$ has the form (\ref{f31}); (2) $P_{X} = \{(i,j)\mid 1\leq i\leq k,1\leq j\leq p_i\}$; (3) $\psi_{X}:P_{X} \to \{1,\ldots,|X|\};$ $(i,j) \mapsto h_{ij},$ where $|X|$ denotes the total number of entries $X$.
Different pairs $(X,P_X,\phi_X)$ are defined for different representatives of $X$, \eg if $Y = X_{1}'\bullet X_2 \cdots \bullet X_{k}$ with $X_{1}'= [(a_{1j},h_{1j}),\ldots,(a_{1(j-1)},h_{1(j-1)})]$, we have $Y = X$ in $\widetilde{\quan}$, $P_{Y} = P_{X}$ but $\psi_{Y}(1,1) = h_{1j}.$
    \item \label{l2}
Let $\quan' = \widetilde{\quan}'/\widetilde{\quanq}'.$
Here, to define $\widetilde{\quanq}'$ as a $k[h,\hbar]$-submodule of $\widetilde{\quan}'$, consider $X' = X_{i,j,i',j'}'$ and $X'' = X_{i,j,i',j'}''$ for all $X$ as in (\ref{f36}) or (\ref{f37}).
For each $X$, let $P_{X'} = P_{X}$ and $P_{X''} = P_{X}\setminus \{(i,j),(i',j')\}.$ 
Let $\psi_{X'} \coloneqq \psi_{X}$ except for putting $\psi_{X'}(i,j) = \psi_{X}(i',j')$ and $\psi_{X'}(i',j') = \psi_{X}(i,j).$ 
Finally, by taking the restriction of $\psi_{X}$ on $P_{X''}$ and then and shift down the heights without
changing the order, we obtain $\psi_{X''}$.
Let $\widetilde{\quanq}'$ be the submodule of $\widetilde{\quan}'$ generated by elements $(X,P_X,\psi_X) - (X',P_X',\psi_{X'}) -(X'',P_X'',\psi_{X''})$ with $h_{i'j'} = h_{ij} \pm 1$ in $X$. 
\end{enumerate}

Using the Diamond lemma, Schedler proved
\begin{lemma}[\text{\cite[Lemma~4.4]{Schedler}}]\label{l31}
Consider the
triples 
$(X,P_{X},\psi_{X})$ such that $X\in B_{\quan},$ $P_{X}$ has the lexicographic order, and $\psi_{X}$ satisfies $\psi_{X}(i,j) < \psi_{X}(i',j')$ if and only if $(i,j)<(i',j').$ 
Then the triples 
$(X,P_{X},\psi_{X})$ for all $X\in B_{\quan}$ form a basis of the free $k[h,\hbar]$-module $\quan'.$ 
\end{lemma}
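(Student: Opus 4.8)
Since Lemma~\ref{l31} is \cite[Lemma 4.4]{Schedler} in the present, more general, setting, the plan is to verify that Schedler's proof --- an application of the Diamond lemma --- carries over once the symplectic pairing on his doubled quiver is replaced by the bilinear pairing $\langle-,-\rangle$ of the co-Frobenius coalgebra $A^{\ac}$, and his single deformation parameter by the pair $(h,\hbar)$. The only inputs that the argument uses are that $\CC_{\bullet}^{\lambda}(\DFA H)[1]$ has a basis consisting of cyclic words in $B_{A^{\ac}}$ with heights attached, and that $\langle-,-\rangle$ is bilinear; neither the precise nature of $A^{\ac}$ nor the presence of a second formal parameter enters further, so I expect no genuinely new ingredient to be required.

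Concretely, I would set up on the free $k[h,\hbar]$-module $\widetilde{\quan}'$ the reduction system whose rules are the generators of $\widetilde{\quanq}'$: a triple $(X,P_X,\psi_X)$ admits a reduction at any pair of entries whose heights differ by $1$ and which is \emph{lexicographically inverted} --- the lex-later entry carrying the smaller height --- rewriting $X$ as $X'+X''$, where $X'$ interchanges the two heights and $X''$ is the corresponding contraction term of \eqref{f36} (entries in different components) or \eqref{f37} (entries in the same component). First I would check termination: the pair $(|X|,\mathrm{inv}(X))$ --- total number of entries, then number of lexicographic inversions of $\psi_X$ --- lies in the well-ordered set $\mathbb N\times\mathbb N$ and strictly decreases under every reduction, since $X'$ fixes $|X|$ while dropping one inversion and $X''$ strictly decreases $|X|$. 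The irreducible triples are then exactly those appearing in Lemma~\ref{l31}: heights assigned to the entries in increasing lexicographic order of $(i,j)$, with $X\in B_{\quan}$.

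The heart of the matter, and the step I expect to be the main obstacle, is confluence. Disjoint adjacent transpositions commute on the nose; the essential ambiguity is the overlap where two lexicographically inverted adjacent pairs share an entry, i.e.\ consecutive heights $m,m+1,m+2$ admitting the two reductions ``swap $(m,m+1)$'' and ``swap $(m+1,m+2)$''. Reducing in either order and continuing must reach a common normal form; after cancelling the transposition terms against each other (an $S_3$ braid-type identity on the height labels) one is left comparing the contraction terms, which, having strictly fewer entries, fall under the inductive hypothesis on $|X|$ --- so the check reduces to a finite list of sign identities among the outputs of \eqref{f36} and \eqref{f37}, including the cross-terms between the ``merge two components'' case ($i\neq i'$, weighted by $h$) and the ``split one component'' case ($i=i'$, weighted by $\hbar$). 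Exactly this bookkeeping is already carried out in the proof of Lemma~\ref{integerpower}. Granting confluence, the Diamond lemma gives that the classes of the irreducible triples form a $k[h,\hbar]$-basis of $\quan'$, which is the assertion; feeding this through the forgetful maps $\quan'\to\quan$ (which drops $(P_X,\psi_X)$) and then $(a,u)\mapsto a$ then yields Proposition~\ref{p31} and, in turn, the identification $\quan\cong\mathrm{SL}[h,\hbar]$.
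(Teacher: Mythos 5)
Your proposal is correct and follows essentially the same route as the paper, which does not prove Lemma~\ref{l31} itself but simply cites Schedler's Diamond-lemma argument: your reduction system (orienting the generators of $\widetilde{\quanq}'$ toward lexicographically ordered heights), the termination order $(|X|,\mathrm{inv}(\psi_X))$ on $\mathbb N\times\mathbb N$, and the identification of the only nontrivial ambiguities as overlaps at consecutive heights are exactly the intended adaptation. One small correction: the confluence bookkeeping you appeal to is not carried out in Lemma~\ref{integerpower} (which concerns the integrality of the powers of $h$ and $\hbar$ in the coproduct), but is precisely the three- and four-element overlap computation appearing in the proof of Lemma~\ref{l32}, i.e.\ the analogue of Schedler's Equation (4.3).
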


Note that each element in $B_{\quan}$ is an equivalence class under the cyclic action. 
In the lemma, we regard that $X\in B_{\quan}$ varies from all possible representatives in equivalence classes.
An immediate result of Lemma~\ref{l31} is that elements in $B_{\quan}$ generate $\quan$.

To show Proposition~\ref{p31}, it suffices to show that elements in $B_{\quan}$ are $k[h,\hbar]$-linearly independent. 
Our goal is to obtain a $k[h,\hbar]$-linear map $\quan\to \mathrm{SL}[h,\hbar]$ so that the images of elements in $B_{\quan}$ are linearly independent.
For this, we show
\begin{lemma}[\text{cf. \cite[Lemma~4.4]{Schedler}, \cite[Lemma~9.5]{Car}}]\label{l32}
There is a $k[h,\hbar]$-linear map $\theta:\widetilde{\quan}'\to \mathrm{SL}[h,\hbar]$ such that for all generators $(X,P_{X},\psi_X)$ of $\widetilde{\quan}'$, we have:
\begin{enumerate}[label = \rm{(}\arabic*),noitemsep,leftmargin=*,nosep]
    \item $\theta(X,P_X,\psi_X) = [a_{11},\ldots,a_{1p_{1}}]\bullet \cdots \bullet [a_{k1},\ldots,a_{kp_{k}}]$ for $(X,P_X,\psi_X)$ in Lemma~\text{\rm{\ref{l31}}}; 
    \item $\theta(X,P_X,\psi_X) = \theta((X',P_X',\psi_{X'})+(X'',P_X'',\psi_{X''}))$ for triples as in \ref{l2}. 
    \item The map $\theta$ factors through $\widetilde{\quan},$ \ie the image of $(X,P_X,\psi_X)$ only depends on $X$.
\end{enumerate}
\end{lemma}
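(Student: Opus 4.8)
The plan is to obtain $\theta$ by composing the normal-form operator furnished by Lemma~\ref{l31} with a ``forget the heights'' map into $\mathrm{SL}[h,\hbar]$. Since a $k[h,\hbar]$-linear map out of $\widetilde{\quan}'$ may be prescribed arbitrarily on the generating triples $(X,P_X,\psi_X)$, the whole content of the lemma is to make one coherent prescription that simultaneously meets (1), (2) and (3). First I would observe that the relators described in \ref{l2}, which generate $\widetilde{\quanq}'$, form a terminating reduction system on $\widetilde{\quan}'$ which, by Lemma~\ref{l31}, is confluent with irreducible elements exactly the normal-form triples of that lemma. This produces a $k[h,\hbar]$-linear idempotent $\mathrm{nf}\colon\widetilde{\quan}'\to\widetilde{\quan}'$ onto the span of the normal-form triples, with $\mathrm{nf}(g)=\mathrm{nf}(g')$ whenever $g-g'\in\widetilde{\quanq}'$.

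Next I would define the auxiliary $k[h,\hbar]$-linear map $\sigma$ on the normal-form basis: for $X=[(a_{11},1),\ldots,(a_{1p_1},p_1)]\bullet\cdots\bullet[(a_{k1},1),\ldots,(a_{kp_k},p_k)]$ as in \eqref{f31}, put
\[
\sigma(X,P_X,\psi_X):=[a_{11},\ldots,a_{1p_1}]\bullet\cdots\bullet[a_{k1},\ldots,a_{kp_k}]\in\mathrm{SL}[h,\hbar],
\]
where each bracket on the right is now the cyclic chain in $\CC_\bullet^\lambda(\DFA)[1]$ obtained by dropping the height entries, and $\bullet$ is the product of the symmetric algebra; then set $\theta:=\sigma\circ\mathrm{nf}$. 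Note that $\sigma$ is automatically insensitive to cyclically rotating a single component inside a normal-form triple, because in $\CC_\bullet^\lambda(\DFA)[1]$ a cyclic chain is identified with its rotations. Condition (1) is then immediate, since normal-form triples are $\mathrm{nf}$-fixed and $\sigma$ produces exactly the stated expression; condition (2) holds because the relator in \ref{l2} lies in $\widetilde{\quanq}'$, so $\mathrm{nf}(X)=\mathrm{nf}(X'+X'')$ and hence $\theta(X)=\theta(X')+\theta(X'')$.

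The substantive point is (3): that $\theta(X,P_X,\psi_X)$ depends only on the underlying element $X\in\widetilde{\quan}$, i.e.\ that $\theta$ annihilates the kernel of the forgetful surjection $\widetilde{\quan}'\to\widetilde{\quan}$. Re-ordering of the heights is already absorbed into $\psi_X$ and re-ordering of the components into the commutativity of $\bullet$, so the real task is invariance under the cyclic rotation $t$ of one component. Here I expect to follow Schedler's argument (\cite[Lemma~4.4]{Schedler}; cf.\ \cite[Lemma~9.5]{Car}): rotating a component of $X$ induces a height-compatible bijection between the reduction sequences out of $(X,P_X,\psi_X)$ and out of the rotated triple, matching each applied relator with the corresponding one and each contraction term \eqref{f36}/\eqref{f37} with a cyclic rotation of a contraction term --- the Koszul sign of $t$ on $\CC_\bullet^\lambda$ being exactly the sign $\pm$ appearing in those formulas --- so that $\mathrm{nf}$ of the rotated triple differs from $\mathrm{nf}(X)$ only by rotations of components, which $\sigma$ absorbs. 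A convenient way to organize this bookkeeping is by induction on the total degree in $h$ and $\hbar$: modulo $(h,\hbar)$ both images reduce to the class of $X$ in $\mathrm{SL}$, and the degree-one correction produced by a single bracket (resp.\ cobracket) contraction visibly permutes the summands of \eqref{eq:bracket} (resp.\ \eqref{eq:cobracket}) under rotation. This cyclic-equivariance bookkeeping is where essentially all the work lies; the remaining steps are formal.

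Finally, once this lemma is available, Proposition~\ref{p31} follows at once: using that the ideal $\widetilde{\quanq}$ is the image of the submodule $\widetilde{\quanq}'$ under $\widetilde{\quan}'\to\widetilde{\quan}$, the map $\theta$ descends to a $k[h,\hbar]$-linear $\overline\theta\colon\quan\to\mathrm{SL}[h,\hbar]$, and by (1) it sends the elements of $B_{\quan}$ to linearly independent monomials of $\mathrm{SL}[h,\hbar]$; hence $B_{\quan}$, which already spans $\quan$ by Lemma~\ref{l31}, is a $k[h,\hbar]$-basis.
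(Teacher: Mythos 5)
Your construction $\theta=\sigma\circ\mathrm{nf}$ is architecturally different from the paper's. The paper never forms a normal-form projector: it defines $\theta$ directly by a double induction on $(|X|,d_{(X,P_X,\psi_X)})$, declaring $\theta$ to be ``drop the heights'' on the stratum $d=0$ and $\theta(X)=\theta(X')+\theta(X'')$ on higher strata, and then spends its effort resolving the ambiguity when two different adjacent inversions are available (the four-element overlap, handled as in Schedler's equation (4.3), and the genuinely new three-element overlap, handled by an explicit chain of identities); the statement that the value depends only on $X$ is then carried along as part of the induction hypothesis. Your route instead imports Lemma~\ref{l31} to get the splitting $\widetilde{\quan}'=\mathrm{span}(\text{normal forms})\oplus\widetilde{\quanq}'$, which makes (1) and (2) immediate and eliminates the overlap analysis entirely --- that is a real simplification --- but it concentrates all of the content of the lemma into (3).

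And it is in (3) that there is a gap. Your key assertion, that $\mathrm{nf}$ of a rotated triple ``differs from $\mathrm{nf}(X)$ only by rotations of components,'' cannot be correct as stated: the span of normal-form triples is not closed under rotating a component, and, more seriously, the two normal forms need not even have matching sets of terms. Take a single component whose heights increase from the chosen starting entry: that triple is already a normal form, so $\mathrm{nf}$ fixes it; a different rotation of the \emph{same} element of $\widetilde{\quan}$ has inversions, and reducing it produces, besides the re-sorted component, the contraction terms \eqref{f37}. So $\sigma(\mathrm{nf}(X))$ and $\sigma(\mathrm{nf}(Y))$ differ a priori by a sum of $\hbar$-multiples of contracted monomials, and the identity you need is that this sum vanishes (or telescopes) in $\mathrm{SL}[h,\hbar]$ after applying $\sigma$ and the cyclic identification in $\CC^\lambda_\bullet(\DFA)[1]$. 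That cancellation --- pairing the contraction terms produced along one reduction path with those produced along the other, with matching Koszul signs --- is precisely the computation that has to be done, and neither your ``bijection between reduction sequences'' nor the induction on $(h,\hbar)$-degree supplies it: the degree-one terms in $\hbar$ do not ``visibly permute'' between the two sides, since one side may have none at all. To repair the argument you would need to prove directly that $\sigma\circ\mathrm{nf}$ is invariant under a single elementary rotation of one component (inserting the first entry after the last), by exhibiting the explicit matching of relators applied on either side; this is the analogue of the paper's overlap computation and cannot be omitted.
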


\begin{proof}[Proof of Lemma~\text{\rm{\ref{l32}}}]
Let $\widetilde{\quan}^{\prime\,n}$ be the $k[h,\hbar]$-submodule generated by $(X,P_X,\psi_X)$ such that $X$ is of the form (\ref{f31}) and $|X| = \sum_{i} p_i \leq n$.
Let $d_{(X,P_X,\psi_X)}$ denote the integer such that the number of quadruples $(i,j,i',j')$ such that $(i,j) < (i',j')$ in lexicographic order but $\psi_{X}(i,j) > \psi_X(i',j')$.
Consider the $k[h,\hbar]$-submodule $\widetilde{\quan}^{\prime\,n,d}$ generated by $(X,P_X,\psi_X)$ with ($|X|\leq n-1$) or ($|X|\leq n$ and the associated integer $d_{(X,P_X,\psi_X)} \leq d$). For instance, $(X,P_X,\psi_X)\in \widetilde{\quan}^{\prime\,n,0}$ with $|X| = n$ implies that $\psi_X(i,j) < \psi_X(i',j')$ if and only if $(i,j) < (i',j')$ in lexicographic order.

We define $\theta$ and prove that it factors through $\widetilde{\quan}$ by induction on $n$ and $d$.
On $\widetilde{\quan}^{\prime\,0,0}$, this map is trivial and factors through $\widetilde{\quan}.$
For a positive integer $n$, assume $\theta: \widetilde{\quan}^{\prime\,i} \to \mathrm{SL}[h,\hbar]$ has been defined and factors through $\widetilde{\quan}$.
Put 
\[\theta:\widetilde{\quan}^{\prime\,n,0} \to \mathrm{SL}[h,\hbar];\quad (X,P_X,\psi_X) \mapsto [a_{11},\ldots,a_{1p_1}]\bullet \cdots\bullet [a_{k1},\ldots,a_{kp_k}].\]
Here, the image is obtained from $X$ by omitting heights 
and hence only depends on $X$.
Moreover, (1) holds.

Next, assume $\widetilde{\quan}^{\prime\,n,d} \to \mathrm{SL}[h,\hbar]$ has already defined and factors through $\widetilde{\quan}.$ 
We define $\widetilde{\quan}^{\prime\,n,d+1} \to \mathrm{SL}[h,\hbar]$.
For $(X,P_X,\psi_X)$ with $d_{(X,P_X,\psi_X)} = d+1$, there exist two pairs $(i,j),(i',j')$ such that $\psi_X(i,j) = \psi_X(i',j')+1.$
Define 
\[\theta:\widetilde{\quan}^{\prime\,n,d} \to \mathrm{SL}[h,\hbar];\quad \theta(X,P_X,\psi_X) = \theta(X',P_{X'},\psi_{X'}) + \theta(X'',P_{X''},\psi_{X''}),\]
where the triples are as in \ref{l2}.
By hypothesis, $\theta(X',P_{X'},\psi_{X'})$ and $\theta(X'',P_{X''},\psi_{X''})$ only depend on $X'$ and $X''$. 
Hence if $\theta$ is well defined, then $\theta(X,P_{X},\psi_{X})$ only depends on $X$, which implies the lemma.

We need to check that the extension of $\theta$ to $\widetilde{\quan}^{\prime n,d+1}$ is well defined.
This is because there might be pairs $(i'',j'')<(i''',j''')$ satisfying $\psi_X(i'',j'') = \psi_X(i''',j''')+1$ and one can define $\theta$ using $(i'',j'')$ and $(i''',j''')$ instead. 
Note that the set $\{(i,j),(i',j'),(i'',j''),(i''',j''')\}$ may contain three or four elements.
Consider the four elements case.
We write $\theta(X')$ instead of $\theta(X',P_{X'},\psi_{X'})$ as the value $\theta(X',P_{X'},\psi_{X'})$ only depends on $X'$. 
We have
\begin{eqnarray}
\theta(X') + \theta(X'') 
 = \theta((X^{\prime})_{i'',j'',i''',j'''}^{\prime} + (X^{\prime})_{i'',j'',i''',j'''}^{\prime\prime}+(X^{\prime\prime})_{i'',j'',i''',j'''}^{\prime}+(X^{\prime\prime})_{i'',j'',i''',j'''}^{\prime\prime}),\label{f39}
\end{eqnarray}
We need to check this equals $\theta(X_{i'',j'',i''',j'''}') + \theta(X_{i'',j'',i''',j'''}'').$ 
This is straightforward (\cf \cite[Equation (4.3)]{Schedler}).

It remains to check the three elements case. 
Put $(i''',j''') = (i,j)$.
Suppose $(i,j)<(i',j')<(i'',j'')$ and we may regard $(\psi_{X}(i,j),\allowbreak\psi_{X}(i',j'),\allowbreak\psi_{X}(i'',j'')) = (3,2,1).$
There are two ways to define $\theta(X,P_{X},\psi_{X})$ and the equivalence is proved by the two equations below:   
\begin{eqnarray*}
    \theta(X,P_{X},\psi_{X}) & = & \theta(X_{i',j',i'',j''}' + X_{i',j',i'',j''}'')\\
    & = & \theta((X_{i',j',i'',j''}')_{i,j,i'',j''}') + (X_{i',j',i'',j''}')_{i,j,i'',j''}'') + X_{i',j',i'',j''}'')\\
    & = & \theta((X_{i',j',i'',j''}')_{i,j,i'',j''}' + X_{i,j,i'',j''}'' + X_{i',j',i'',j''}'')\\
    & = & \theta\left(\begin{aligned}&((X_{i',j',i'',j''}^{\prime})_{i,j,i'',j''}^{\prime})_{i,j,i',j'}^{\prime} + ((X_{i',j',i'',j''}^{\prime})_{i,j,i'',j''}^{\prime})_{i,j,i',j'}^{\prime\prime}\\
    & + X_{i,j,i'',j''}^{\prime\prime} + X_{i',j',i'',j''}^{\prime\prime}\end{aligned}\right)\\
    & = & \theta\left(\begin{aligned}&((X_{i',j',i'',j''}^{\prime})_{i,j,i'',j''}^{\prime})_{i,j,i',j'}^{\prime} + X_{i,j,i',j'}^{\prime\prime}\\
    & + X_{i,j,i'',j''}^{\prime\prime} + X_{i',j',i'',j''}^{\prime\prime}\end{aligned}\right)
\end{eqnarray*}
and similarly
\[
\theta(X,P_{X},\psi_{X})  =  \theta\left(\begin{aligned}&((X_{i,j,i',j'}^{\prime})_{i,j,i'',j''}^{\prime})_{i',j',i'',j''}^{\prime} + X_{i',j',i'',j''}^{\prime\prime}\\
    & + X_{i,j,i'',j''}^{\prime\prime} + X_{i,j,i',j'}^{\prime\prime}\end{aligned}\right).\qedhere
\]
\end{proof}

\begin{proof}[Proof of Proposition~\text{\rm{\ref{p31}}}]
Before Lemma~\ref{l32}, we have seen that $B_{\quan}$ is a generating set of $\quan$.
We need to show that elements in $B_{\quan}$ are $k[h,\hbar]$-linearly independent in $\quan.$
We have a $k[h,\hbar]$-linear map $\theta: \widetilde{\quan}\to \mathrm{SL}[h,\hbar]$ due to Lemma~\ref{l32}.
Moreover, by Lemma~\ref{l32}~(1) and (2), we know that $\theta$ induces $\overline{\theta}:\quan = \widetilde{\quan}/\widetilde{\quanq}\to \mathrm{SL}[h,\hbar]$ which is also $k[h,\hbar]$-linear.
Because the image of elements in $B_{\quan}$ under $\overline{\theta}$ is $k[h,\hbar]$-linearly independent, these elements are $k[h,\hbar]$-linearly independent in $\quan$, as desired.
\end{proof}

We next check the condition (1) in 
Definition~\ref{d31}: 

\begin{lemma}
There is an isomorphism 
$\mathrm{SL} \overset{\cong}{\to} \quan/[h,\hbar]\quan$.
\end{lemma}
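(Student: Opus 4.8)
The plan is to obtain this as an immediate consequence of Proposition~\ref{p31} (the Poincar\'e--Birkhoff--Witt type result). As was already remarked just before that proposition, it suffices to upgrade Proposition~\ref{p31} to a $k[h,\hbar]$-module isomorphism $\quan\cong\mathrm{SL}[h,\hbar]$ and then reduce modulo the ideal $(h,\hbar)$.

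First I would promote the map $\overline{\theta}\colon\quan\to\mathrm{SL}[h,\hbar]$ constructed in the proof of Proposition~\ref{p31} (induced by the map $\theta$ of Lemma~\ref{l32}, which descends through $\widetilde{\quan}$ by Lemma~\ref{l32}(3) and through $\widetilde{\quanq}$ by Lemma~\ref{l32}(1)--(2)) to an isomorphism. By Lemma~\ref{l32}(1), $\overline{\theta}$ acts on a basis element $X_{1}\bullet\cdots\bullet X_{k}\in B_{\quan}$, where $X_{i}=[(a_{i1},1),\ldots,(a_{ip_{i}},p_{i})]$ with $a_{ij}\in B_{A^{\ac}}$, simply by forgetting all heights:
\[
X_{1}\bullet\cdots\bullet X_{k}\ \longmapsto\ [a_{11},\ldots,a_{1p_{1}}]\bullet\cdots\bullet[a_{k1},\ldots,a_{kp_{k}}].
\]
As the $a_{ij}$ range over $B_{A^{\ac}}$, the right-hand sides are exactly the standard monomials of $\mathrm{SL}=\Sym^{\bullet}(L)$ in the cyclic-word basis of $L=\CC_{\bullet}^{\lambda}(A^{\ac})[1]$ induced by $B_{A^{\ac}}$; hence $\overline{\theta}$ carries the $k[h,\hbar]$-basis $B_{\quan}$ of $\quan$ (Proposition~\ref{p31}) bijectively onto a $k[h,\hbar]$-basis of $\mathrm{SL}[h,\hbar]$, so $\overline{\theta}$ is a $k[h,\hbar]$-module isomorphism (in fact an algebra isomorphism, since modulo heights the product of $\quan$ is the symmetric product).

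Then I would finish by base change along $k[h,\hbar]\to k[h,\hbar]/(h,\hbar)=k$: tensoring $\overline{\theta}$ with $k$ over $k[h,\hbar]$ yields the $k$-linear isomorphism
\[
\quan/[h,\hbar]\quan\ =\ \quan/(h\quan+\hbar\quan)\ \xrightarrow{\ \cong\ }\ \mathrm{SL}[h,\hbar]/(h,\hbar)\,\mathrm{SL}[h,\hbar]\ \cong\ \mathrm{SL},
\]
which is the asserted isomorphism; equivalently its inverse sends $[a_{11},\ldots,a_{1p_{1}}]\bullet\cdots\bullet[a_{k1},\ldots,a_{kp_{k}}]$ to the class of $X_{1}\bullet\cdots\bullet X_{k}$. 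I do not expect a genuine obstacle once Proposition~\ref{p31} is in hand; the only point that needs care is the bookkeeping verification that ``forgetting heights'' really is a bijection between $B_{\quan}$ and the monomial basis of $\Sym^{\bullet}\!\big(\CC_{\bullet}^{\lambda}(A^{\ac})[1]\big)$ --- i.e.\ that the normalized heights $1,2,\ldots,p_{i}$ and the cyclic-equivalence relations on the two sides match up --- which is precisely what Lemmas~\ref{l31} and~\ref{l32} were arranged to guarantee.
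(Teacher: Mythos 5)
Your proposal is correct and relies on exactly the same key input as the paper, namely Proposition~\ref{p31} that $B_{\quan}$ is a free $k[h,\hbar]$-basis of $\quan$. The paper builds the map in the other direction, $\mathrm{SL}\to\quan/[h,\hbar]\quan$, via the universal property of symmetric algebras and then invokes Proposition~\ref{p31} for bijectivity, whereas you recycle $\overline{\theta}$ from the PBW proof to get $\quan\cong\mathrm{SL}[h,\hbar]$ and reduce modulo $(h,\hbar)$; these are mutually inverse descriptions of the same isomorphism.
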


\begin{proof}
For a basis $B_{A^{\ac}}$ of $A^{\ac}$, put 
\[B_{\mathrm{SL}} \coloneqq \left\{x_{1}\bullet x_{2} \bullet \cdots \bullet x_{k} \,\left|\,x_{i} =[a_{i1},\ldots,a_{ip_{i}}],a_{ij}\in B_{A^{\ac}}\right.\right\} \subset \mathrm{SL}.\]
This set is a basis of $\mathrm{SL}.$
We are find a map which maps this basis to $B_{\quan}$.
Regarding $\quan/[h,\hbar]\quan$, due to the relations defined by (\ref{f38}), the heights of the elements in $\quan/[h,\hbar]\quan$ do not matter, \ie we have $X = X_{i,j,i',j'}'$ for any $X\in \quan/[h,\hbar]\quan$ and $X_{i,j,i',j'}'$ as in (\ref{f38}).
By the universal property of symmetric algebras, the natural embedding $L\to \quan/[h,\hbar]\quan;\,\,[a_{1},\ldots,a_{p}]\mapsto [(a_1,1),\ldots,(a_{p},p)]$ induces a morphism of Hopf algebras
\[\mathrm{SL} \to \quan/[h,\hbar]\quan;\quad x_{1}\bullet \cdots \bullet x_{k} \mapsto X_{1} \bullet \cdot \cdots \bullet X_{k}\]
for $x_{i} = [a_{i1},\ldots,a_{ip_{i}}]$ and $X_{i} = [(a_{i1},1),\ldots,(a_{ip_{i}},p_{i})]$ as in $B_{\mathrm{SL}}$ and $B_{\quan}$.
As $B_{\quan}$ is a basis of $\quan$ over $k[h,\hbar]$, this map is an isomorphism. 
\end{proof}

\subsection{Summary of the above
results}
\label{subsect:summary}

Let us summarize the above results:
we constructed 
in \S\ref{subsect:construction}
a vector space
$\quan$ by \eqref{defofA}, 
then in \S\ref{subsect:Hopf}
we showed that $\quan$ is a DG Hopf
algebra, and finally
in \S\ref{subsect:quantization}
we proved that
such an $\quan$ quantizes
the Lie bialgebra structure
on $\mathrm{CC}^\lambda_\bullet(A^{\ac})$.
Combining them together, we
obtain:

\begin{theorem}
\label{thm:deformationquantization}
Suppose $A^!$ is a Frobenius algebra
and $A^{\ac}$ is its linear dual 
co-Frobenius coalgebra.
Then the Lie algebra cohomology of
$\mathfrak{gl}(A^!)$,
or equivalently,
the Lie algebra homology
of the Lie coalgebra
$\mathfrak{gl}(A^!)$,
admits a deformation quantization,
where the Lie bracket and cobracket
on the primitive elements are 
given by
\eqref{eq:bracket} and 
\eqref{eq:cobracket}
respectively. Moreover, via
the Loday-Quillen-Tsygan map,
such a Lie bialgebra is isomorphic 
to the one on
the cyclic cohomology of $A^!$,
or dually, the cyclic homology of
$A^{\ac}$.
\end{theorem}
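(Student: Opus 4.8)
The plan is to assemble the statement from ingredients already in place: the Loday--Quillen--Tsygan isomorphism for coalgebras (Theorem~\ref{thm:LQTforcoalg}), the Lie bialgebra of Proposition~\ref{prop:Liebialg}, and the explicit DG Hopf algebra $\quan$ constructed in \S\ref{subsect:construction}--\S\ref{subsect:quantization}. First I would record the linear-duality dictionary. Since $A^{!}$ is a finite-dimensional Frobenius algebra with linear dual coalgebra $A^{\ac}$, Example~\ref{ex:MatrixcoLie} identifies, for every $r$, the Lie coalgebra $\mathfrak{gl}_r^c(A^{\ac})$ with the $k$-linear dual of the matrix Lie algebra $\mathfrak{gl}_r(A^{!})$; dualizing the Chevalley--Eilenberg complexes termwise then identifies $\mathrm{CE}_\bullet(\mathfrak{gl}_r^c(A^{\ac}))$ with the Chevalley--Eilenberg cochain complex $\mathrm{CE}^\bullet(\mathfrak{gl}_r(A^{!}))$. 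Passing to the limit over $r$ along the surjections $\mathfrak{gl}_{r+1}^c(A^{\ac})\twoheadrightarrow\mathfrak{gl}_r^c(A^{\ac})$, which satisfy the Mittag--Leffler condition exactly as in the proof of Theorem~\ref{thm:LQTforcoalg}, yields $\mathrm{H}_\bullet(\mathfrak{gl}^c(A^{\ac}))\cong\mathrm{H}^\bullet(\mathfrak{gl}(A^{!}))$, i.e.\ the Lie algebra cohomology in question, which is also by definition the homology of the Lie coalgebra dual to $\mathfrak{gl}(A^{!})$. Theorem~\ref{thm:LQTforcoalg} then supplies an isomorphism of commutative and cocommutative Hopf algebras $\mathbf{\Lambda}^\bullet(\mathrm{HC}_\bullet(A^{\ac})[1])\cong\mathrm{H}_\bullet(\mathfrak{gl}^c(A^{\ac}))$ whose primitive part is $\mathrm{HC}_\bullet(A^{\ac})[1]$, computed from the cyclic complex $\mathrm{CC}_\bullet^\lambda(A^{\ac})[1]$.

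Next, since $A^{!}$ is Frobenius its dual $A^{\ac}$ is co-Frobenius, so Proposition~\ref{prop:Liebialg} equips $\mathrm{CC}_\bullet^\lambda(A^{\ac})[1]$ with the DG Lie bialgebra structure whose bracket and cobracket are \eqref{eq:bracket} and \eqref{eq:cobracket}; transporting this along the isomorphism of the previous step puts exactly these operations on the primitives of $\mathrm{H}^\bullet(\mathfrak{gl}(A^{!}))$, since an isomorphism of Hopf algebras restricts to an isomorphism of primitive spaces. Now Theorem~\ref{theoremsec6} produces the DG Hopf algebra $\quan$, flat over $k[h,\hbar]$, and the lemmas of \S\ref{subsect:quantization} together with the basis result Proposition~\ref{p31} give an isomorphism $\quan/(h\quan+\hbar\quan)\cong\mathbf{\Lambda}^\bullet(\mathrm{CC}_\bullet^\lambda(A^{\ac})[1])$ recovering the bracket and cobracket to first order in $h$ and $\hbar$. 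Pulling $\quan$ back along the Loday--Quillen--Tsygan isomorphism therefore exhibits it as a deformation quantization, in the sense of Definition~\ref{d31}, of the Hopf algebra $\mathrm{H}^\bullet(\mathfrak{gl}(A^{!}))=\mathrm{H}_\bullet(\mathfrak{gl}^c(A^{\ac}))$, with Lie bialgebra on the primitives given by \eqref{eq:bracket}--\eqref{eq:cobracket}; this is the first claim.

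For the last assertion I would invoke the canonical identification $\mathrm{CC}_\bullet^\lambda(A^{\ac})\cong\mathrm{CC}^\bullet_\lambda(A^{!})$, valid because $A^{!}$ is finite-dimensional with linear dual coalgebra $A^{\ac}$ (the discussion following Proposition~\ref{prop:qisinKoszulCY}); under it \eqref{eq:bracket}--\eqref{eq:cobracket} become the dual bracket and cobracket on the cyclic cochain complex of $A^{!}$, so the Lie bialgebra underlying the quantization is precisely the one on the cyclic cohomology $\mathrm{HC}^\bullet(A^{!})$, as stated. The only substantive work is bookkeeping: checking that the termwise dualization and inverse limit of the first paragraph carry the Hopf algebra structure of Theorem~\ref{thm:LQTforcoalg} to the expected one on $\mathrm{H}^\bullet(\mathfrak{gl}(A^{!}))$ compatibly with the shifted gradings and Koszul signs (with $h$ and $\hbar$ placed in degree $n-2$), and verifying that the passage from the chain-level DG quantization $\quan$ to homology behaves well. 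I expect the sign- and degree-tracking around the shift $[1]$ and around the dualization to be the most delicate point, but no new idea beyond those already developed in \S\ref{sect:LQT} and \S\ref{sect:quantization} is required.
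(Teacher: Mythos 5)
Your proposal is correct and follows essentially the same route as the paper: the theorem is stated in \S\ref{subsect:summary} precisely as an assembly of Theorem~\ref{thm:LQTforcoalg}, Proposition~\ref{prop:Liebialg}, and the construction and verification of $\quan$ in \S\ref{subsect:construction}--\S\ref{subsect:quantization} (via Theorem~\ref{theoremsec6}), together with the linear-duality identifications $\mathrm{CE}_\bullet(\mathfrak{gl}^c(A^{\ac}))^{\mathfrak{gl}(k)}\cong(\mathrm{CE}^\bullet(\mathfrak{gl}(A^!))_{\mathfrak{gl}(k)})^*$ and $\mathrm{CC}_\bullet^\lambda(A^{\ac})\cong\mathrm{CC}^\bullet_\lambda(A^!)$ that you spell out. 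Your account is, if anything, slightly more explicit about the dualization and inverse-limit step than the paper's own summary, but the argument is the same.
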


In the light of Theorem 
\ref{cor:isoHmlgyofLiecylicdeformed},
we see that
those objects in Theorem
\ref{cor:isoHmlgyofLiecylic}
can not only be deformed,
but also be quantized.
Let us
denote by
$\mathrm{QH}(
\mathfrak{gl}^c(A^{\ac}))
)$
and
$\mathrm{QH}(
\mathfrak{gl}(A))
)$
the quantizations
of
$\mathrm{DH}(
\mathfrak{gl}^c(A^{\ac}))
)$
and
$\mathrm{DH}(
\mathfrak{gl}(A))
)$
in \eqref{iso:squareofLieinCYdeformed} 
respectively (here ``Q"
means ``quantized").
Also, denote by
$\mathfrak{A}(A^{\ac})$
and
$\mathfrak{A}(A)$
the quantizations
of
$V_{h\hbar}(\mathrm{HC}_\bullet(A^{\ac})[1])$
and
$V_{h\hbar}
(\mathrm{HC}_\bullet(A)[1])$
respectively.
Then
we get the following:

\begin{theorem}\label{cor:quantization}
Suppose $A$ is a Koszul Calabi-Yau algebra
of dimension $n$.
Denote by $A^{\ac}$ its Koszul
dual coalgebra.
Then
$$
\xymatrixcolsep{4pc}
\xymatrix{
\mathrm{QH}_\bullet(
\mathfrak{gl}^c(A^{\ac}))
\ar[r]^-{\textup{Koszul}}_{\cong}
&
\mathrm{QH}_\bullet(
\mathfrak{gl}
(A))\ar[d]^{\textup{LQT}}_{\cong}
\\
\mathfrak{A}(A^{\ac})
\ar[u]^{\textup{LQT}}_{\cong}
\ar[r]^-{\textup{Koszul}}_{\cong}&
\mathfrak{A}(A)
}
$$
is a commutative diagram of isomorphisms of
Hopf algebras,
which quantizes the commutative diagram
\eqref{iso:squareofLieinCYdeformed} in
Theorem \text{\rm{\ref{cor:isoHmlgyofLiecylicdeformed}}}.
\end{theorem}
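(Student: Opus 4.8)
The plan is to assemble the statement from the DG Hopf algebra $\quan$ constructed in \S\ref{subsect:construction}--\S\ref{subsect:quantization} together with the commutative square of co-Poisson bialgebra isomorphisms of Theorem~\ref{cor:isoHmlgyofLiecylicdeformed}. First I would set $\mathfrak A(A^{\ac}):=\mathrm H_\bullet(\quan,b)$, the $k[h,\hbar]$-Hopf algebra obtained by passing to $b$-homology. Since $\quan$ is free over $k[h,\hbar]$ by Proposition~\ref{p31}, this homology is again $k[h,\hbar]$-flat; its reduction modulo $(h,\hbar)$ is $\mathbf{\Lambda}^\bullet(\mathrm{HC}_\bullet(A^{\ac})[1])$, and by Theorem~\ref{theoremsec6} together with the commutator and coproduct-defect computations of \S\ref{subsect:quantization} the induced first-order data recover the bracket \eqref{eq:bracket} and the cobracket \eqref{eq:cobracket} on $\mathrm{HC}_\bullet(A^{\ac})[1]$. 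Thus $\mathfrak A(A^{\ac})$ is a quantization of $V_{h\hbar}(\mathrm{HC}_\bullet(A^{\ac})[1])$ in the sense recalled at the beginning of \S\ref{sect:quantization}; equivalently, since $V_{h\hbar}(\mathfrak g)$ is generated by $\mathfrak g$, a quantization of the Lie bialgebra $\mathrm{HC}_\bullet(A^{\ac})[1]$ in the sense of Definition~\ref{d31}.

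Next I would transport this quantization along the three isomorphisms constituting the square \eqref{iso:squareofLieinCYdeformed}. The coalgebra Loday--Quillen--Tsygan isomorphism (Theorem~\ref{thm:LQTforcoalg} and Proposition~\ref{prop:qismatrix}, deformed as in Theorem~\ref{cor:isoHmlgyofLiecylicdeformed}) identifies $V_{h\hbar}(\mathrm{HC}_\bullet(A^{\ac})[1])$ with $\mathrm{DH}_\bullet(\mathfrak{gl}^c(A^{\ac}))$, and I would define $\mathrm{QH}_\bullet(\mathfrak{gl}^c(A^{\ac}))$ to be the image of $\mathfrak A(A^{\ac})$ under it. Since $A$ itself is not co-Frobenius, $\mathfrak A(A)$ cannot be obtained directly from the construction of \S\ref{subsect:construction}; instead I would define it by transporting $\mathfrak A(A^{\ac})$ along the Koszul-duality isomorphism $V_{h\hbar}(\mathrm{HC}_\bullet(A^{\ac})[1])\cong V_{h\hbar}(\mathrm{HC}_\bullet(A)[1])$, and $\mathrm{QH}_\bullet(\mathfrak{gl}(A))$ by transporting along the composite $\textup{LQT}\circ\textup{Koszul}$ (equivalently $\textup{Koszul}\circ\textup{LQT}$, the two routes agreeing by Theorem~\ref{cor:isoHmlgyofLiecylic}). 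Because each of these maps intertwines the multiplication, comultiplication, Poisson bracket and co-Poisson cobracket---which is exactly the content of Theorems~\ref{cor:isoHmlgyofLiecylic} and \ref{cor:isoHmlgyofLiecylicdeformed}---the $k[h,\hbar]$-flatness, the reduction modulo $(h,\hbar)$, and the two identities of Definition~\ref{d31} are all preserved, so each transported object is again a quantization of the corresponding corner of \eqref{iso:squareofLieinCYdeformed}. Finally, the four comparison arrows of the quantized square lift those of \eqref{iso:squareofLieinCYdeformed} by construction; since \eqref{iso:squareofLieinCYdeformed} commutes and transport is functorial, the quantized square commutes and all its arrows are isomorphisms of Hopf algebras over $k[h,\hbar]$.

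The main obstacle, and the only step genuinely beyond bookkeeping, is the identification of the first-order data for $\mathfrak A(A^{\ac})$ itself. One must verify that $\mathrm H_\bullet(\quan,b)/(h,\hbar)$ is genuinely $\mathbf{\Lambda}^\bullet(\mathrm{HC}_\bullet(A^{\ac})[1])$ rather than $\mathbf{\Lambda}^\bullet$ of some chain-level object, which uses the $k[h,\hbar]$-freeness from Proposition~\ref{p31} together with a flat base-change (universal coefficients) argument to commute $\mathrm H_\bullet(-)$ past the quotient by $(h,\hbar)$ and past $\mathbf{\Lambda}^\bullet$; and one must check that the explicit identities of \S\ref{subsect:quantization} computing $\frac{\tilde x\tilde y-\tilde y\tilde x}{h}$ and $\frac{\Delta(\tilde x)-\Delta^{op}(\tilde x)}{\hbar}$ descend to $b$-homology, which is automatic since they already hold at chain level modulo $(h,\hbar)$ and $b$ is a derivation for the product and a coderivation for the coproduct of $\quan$. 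A secondary subtlety will be well-definedness of the transported objects: the Koszul and LQT comparisons are a priori only quasi-isomorphisms at chain level, so I would carry out all transport at the level of homology, where Theorem~\ref{cor:isoHmlgyofLiecylicdeformed} already supplies honest co-Poisson bialgebra isomorphisms, thereby removing any dependence on the choice of chain-level representatives.
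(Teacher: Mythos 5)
Your proposal takes essentially the same route as the paper: the paper's proof of this theorem is really just the definitional paragraph preceding it, which introduces $\mathrm{QH}_\bullet(\cdot)$ and $\mathfrak{A}(\cdot)$ as the objects obtained by transporting the quantization $\quan$ of Theorem~\ref{theoremsec6} along the isomorphisms of the deformed square \eqref{iso:squareofLieinCYdeformed}, so that commutativity and the Hopf‑algebra isomorphisms come for free. Your transport argument, and your remark that $\mathfrak{A}(A)$ cannot be built directly because $A$ is not co‑Frobenius, match this.

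There is, however, a genuine gap in one supporting claim you make. You assert that ``since $\quan$ is free over $k[h,\hbar]$ by Proposition~\ref{p31}, this homology is again $k[h,\hbar]$-flat,'' and invoke a ``flat base-change (universal coefficients) argument'' to identify $\mathrm H_\bullet(\quan,b)/(h,\hbar)$ with $\mathbf{\Lambda}^\bullet(\mathrm{HC}_\bullet(A^{\ac})[1])$. Neither step is automatic: freeness of a DG module over $k[h,\hbar]$ does not imply flatness of its homology (already $k[h]\xrightarrow{\,h\,}k[h]$ is a counterexample), and since $k[h,\hbar]$ has global dimension $2$, universal coefficients only produces a K\"unneth spectral sequence with possibly nonzero $\mathrm{Tor}_1,\mathrm{Tor}_2$ terms, not an isomorphism between $\mathrm H_\bullet(\quan)\otimes_{k[h,\hbar]}k$ and $\mathrm H_\bullet(\quan\otimes_{k[h,\hbar]}k)$. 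To make the argument airtight you should either stay at the chain level and take $\mathfrak{A}(A^{\ac})$ to be the DG Hopf algebra $\quan$ itself, as Theorem~\ref{theoremsec6} does (this is evidently the paper's implicit convention), or else separately establish the vanishing of the relevant $\mathrm{Tor}$'s before descending to homology. Aside from this, your reconstruction is faithful to the paper's intent and in fact spells out the transport more carefully than the paper does.
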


In other words, we get a quantization
of the Loday-Quillen-Tsygan isomorphism.
Combining it with Hennion's tangent map,
such a quantization also
gives a quantization of the 
tangent map induced from the homomorphism
$\mathrm{BGL}(A)\to\mathrm{K}(A)$.
\section{Examples and applications}
\label{sect:application}

In this last section, we give a couple of examples of Calabi-Yau algebras/categories that come from algebra and geometry. In all these examples, we first show the existence of the Calabi-Yau property on these geometric objects, which is highly nontrivial in general, and then show that the results in the previous sections (especially the deformation and quantization) apply to these examples.

\subsection{Preprojective algebras}

Let $Q$ be a quiver with vertex set $Q_0$ and arrow set $Q_1$.
If $a$ is an arrow, then write $h(a)$ and $t(a)$ as the head and tail of $a$ respectively.
Let $\overline Q$ be the double of $Q$, which is obtained from $Q$ by adjoining an arrow $a^*$ with reverse orientation for each arrow $a\in Q_1$.
In what follows, we write $(a^*)^*=a$ and also $\varepsilon(a)=1$ if $a\in Q_1$ and $=-1$ otherwise. For any edge $a$, we write
$h(a)$ and $t(a)$ to be head and tail of $a$
respectively.
Fix a {\it weight} $\lambda\in k^{Q_0}$.
The corresponding {\it deformed preprojective algebra} is the quotient algebra
\begin{equation}
\Pi^\lambda(Q):=
k\overline{Q}/(\rho_\lambda),
\quad\mbox{where}\quad
\rho_\lambda=\sum_{a\in
\overline Q_1}
\varepsilon(a)aa^*
-\sum_{i\in Q_0}\lambda_i e_i.
\end{equation}
Here $k\overline Q$ is the path algebra of $\overline Q$, and the grading is given by the length of the path.

Deformed preprojective algebras were introduced by Crawley-Boevey and Holland in \cite{CBH}. In the definition, if $\lambda=0$, then $\Pi^0(Q)$ is the  {\it preprojective algebra} studied by Bernstein, Gelfand, and Ponomarev.
It is now well-known that if $Q$ is of non-Dynkin type, then the deformed preprojective algebras $\Pi^\lambda(Q)$ are Calabi-Yau of dimension 2 (see \cite{CBK}), and in particular $\Pi^0(Q)$ is Koszul Calabi-Yau.

\begin{proposition}
[{\cite{Mar,Bock}}]
If $Q$ is a non-Dynkin type quiver, then $\Pi^0(Q)$ is a Koszul algebra, and its Koszul dual algebra, denoted by $A(Q)$, is the graded algebra $k\overline Q$ modulo the following relations: 
\begin{enumerate}[label = \rm{(}\arabic*),noitemsep,leftmargin=*,nosep]
\item $aa'=0$ if $a'\ne a^*$;

\item $\varepsilon(a) a a^*=\varepsilon(b)b b^*$ if $t(a)=t(b)$. 
\end{enumerate}
Moreover, $\Pi^0(Q)$ is Calabi-Yau of dimension $2$.
\end{proposition}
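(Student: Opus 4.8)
The plan is to prove the three assertions separately, importing the one genuinely non-formal input from \cite{Mar,Bock,CBK} and only sketching the rest.

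\emph{Step 1: the quadratic dual.} I would view $\Pi^0(Q)$ as the quadratic algebra $T_S(\overline V)/(R)$ over the semisimple ring $S:=k^{Q_0}$, where $\overline V=k\overline Q_1$ is the arrow bimodule and $R\subseteq \overline V\otimes_S\overline V$ is spanned by the vertex components $\rho_i=\sum_{t(a)=i}\varepsilon(a)\,aa^*$ of the preprojective relation. Pairing a length-two monomial against each $\rho_i$ (using $\varepsilon(a)^2=1$ to tidy signs) shows that the orthogonal complement $R^{\perp}\subseteq\overline V^{*}\otimes_S\overline V^{*}$ is spanned by the ``non-return'' paths $aa'$ with $a'\neq a^*$ together with the differences $\varepsilon(a)\,aa^*-\varepsilon(b)\,bb^*$ for $t(a)=t(b)$; identifying $\overline V^{*}$ with $\overline V$ via the arrow basis, this is exactly the quadratic presentation of $A(Q)$, so $A(Q)$ is the quadratic-dual algebra of $\Pi^0(Q)$. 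I would then check that $A(Q)$ lives only in degrees $0,1,2$: by relation (1) a surviving degree-three monomial must have the form $aa^*a$, and since $Q$ is non-Dynkin its underlying graph has no isolated edge, so some endpoint $i$ of the edge $\{a,a^*\}$ has valency $\ge 2$; picking a second half-edge $b\neq a$ at $i$ and rewriting $aa^*a$ through relation (2) produces a factor $b^*a$ (or $ab$) that vanishes by (1). Hence $A(Q)_{\ge 3}=0$, $A(Q)_1\cong\overline V$, and $A(Q)_2\cong S^{*}$ is one-dimensional at each vertex, spanned by the common class of the return paths.

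\emph{Step 2: Koszulity.} Because $A(Q)$ dies in degrees $\ge 3$, the ($S$-relative) Koszul complex \eqref{eq:Koszulcomplex} of $\Pi:=\Pi^0(Q)$ has length two,
$$0\longrightarrow \Pi\otimes_S (A(Q)_2)^{*}\longrightarrow \Pi\otimes_S(A(Q)_1)^{*}\longrightarrow \Pi\otimes_S(A(Q)_0)^{*}\longrightarrow S\longrightarrow 0,$$
and, after identifying $(A(Q)_1)^{*}\cong\overline V$, it is precisely the complex obtained by applying $-\otimes_\Pi S$ to the bimodule complex
$$0\longrightarrow \Pi\otimes_S\Pi\longrightarrow \Pi\otimes_S\overline V\otimes_S\Pi\longrightarrow \Pi\otimes_S\Pi\longrightarrow \Pi\longrightarrow 0,$$
whose right-hand map is multiplication and whose left-hand map is the one built from the relation $\rho$ (its noncommutative differential). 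So it suffices to show this bimodule complex is exact, i.e. is a projective $\Pi$-bimodule resolution of $\Pi$; granting that, the Koszul complex is acyclic, $\Pi^0(Q)$ is Koszul, and in particular $A(Q)\cong\Pi^0(Q)^!$. The exactness is the single non-formal point and is exactly where ``non-Dynkin'' is essential: it fails for Dynkin $Q$ (there $\Pi^0(Q)$ is finite-dimensional, self-injective, of infinite global dimension), while for non-Dynkin $Q$ one can derive it by comparing the matrix-valued Hilbert series of the four terms — these close up precisely when $Q$ is not Dynkin — or simply quote \cite{Mar,Bock}.

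\emph{Step 3: Calabi--Yau of dimension $2$.} Using the resolution $P_\bullet$ from Step~2, its terms are finitely generated projective $\Pi$-bimodules, so $\Pi$ is homologically smooth; and the antisymmetric non-degenerate pairing on $\overline V$ with $\langle a,a^*\rangle=\varepsilon(a)$, together with the relation $\rho$, identifies $P_\bullet$ with its bimodule dual $\mathrm{Hom}_{\Pi\otimes\Pi^{op}}(P_\bullet,\Pi\otimes\Pi)$ up to the shift $[2]$. Hence $\mathrm{RHom}_{\Pi\otimes\Pi^{op}}(\Pi,\Pi\otimes\Pi)\cong\Pi[2]$, which is the condition of Definition~\ref{def:CYalg} with $n=2$ (this also recovers the cited fact \cite{CBK}). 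As a consistency check, Proposition~\ref{prop:PDofCY} then returns the statement that $A(Q)=\Pi^0(Q)^!$ is graded Frobenius of degree $2$, matching $A(Q)_2\cong A(Q)_0^{*}$ and $A(Q)_1\cong A(Q)_1^{*}$ from Step~1. The main obstacle throughout is the exactness of $P_\bullet$; the quadratic-dual computation, the vanishing $A(Q)_{\ge 3}=0$, and the sign-chase in the self-duality of $P_\bullet$ are all routine.
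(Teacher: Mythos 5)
The paper does not prove this proposition at all---it is quoted directly from \cite{Mar,Bock} with no argument---so there is no ``paper proof'' to compare against; your proposal is a reasonable reconstruction of the standard argument from those references. Your Step 1 (the computation of $R^{\perp}$ over $S=k^{Q_0}$ and the rewriting argument showing $A(Q)_{\ge 3}=0$, using that a non-Dynkin connected graph has no isolated edge) is correct, and you rightly isolate the single non-formal input, namely the acyclicity of the length-three bimodule complex, which is exactly what the cited references establish and what fails in the Dynkin case; Step 3 is the standard self-duality of that resolution under the symplectic pairing on $\overline V$, as in \cite{CBK}. The only point where you are loose is the parenthetical claim that exactness can be ``derived by comparing the matrix-valued Hilbert series'': the Hilbert-series identity $H_{\Pi}(t)\,(1-Ct+t^{2})=1$ is a consequence of Koszulity, not a proof of it, and the actual argument in the literature first establishes exactness of the complex everywhere except possibly at the left-hand end and only then uses the non-Dynkin positivity of $(1-Ct+t^{2})^{-1}$ to kill the remaining kernel. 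Since you also offer to simply quote \cite{Mar,Bock} for this step---which is precisely what the paper does---this imprecision does not affect the correctness of the proposal.
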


We can dually give the Koszul dual coalgebra of $\Pi^0(Q)$, which we denoted by $A^{\ac}(Q)$, as follows: We first formally add to each vertex $e_i$ of $\overline Q$ an empty circle, say $o_i$, with degree $2$. 
Denote the set of $o_i$'s by $Q_2$. 
Then $A^{\ac}(Q)$ is the coalgebra spanned by $Q_0, \bar Q_1, Q_2$ with coproduct given by \begin{align*} &\Delta(e_i)=e_i\otimes e_i,\quad \Delta(a_i)=t(a_i)\otimes a_i +a_i\otimes h(a_i),\\ &\Delta(o_i)= o_i\otimes e_i+\sum_{a\in\overline Q_1:t(a)=e_i} a\otimes a^*+e_i\otimes o_i, \end{align*} where $e_i\in Q_0, a_i\in\overline Q_1,o_i\in Q_2$ respectively. 
Here, we have to notice that both $\Pi^0(Q)$ and $A^{\ac}(Q)$ are over the semi-simple ring $\oplus_{e\in Q_0}k e$. 
The main results of the paper remain valid. 

\begin{corollary}
For the preprojective algebra
$\Pi^0(Q)$, 
$$\mathrm{H}_\bullet
(\mathfrak{gl}(\Pi^0(Q)))
\cong
\mathbf{\Lambda}(\mathrm{HC}_\bullet(
A^{\ac}(Q)
)[1])\quad\mbox{and}\quad
\mathbb T_{\mathrm K(\Pi^0(Q)),0}\cong
\mathrm{CC}_\bullet(A^{\ac}(Q)).$$
The isomorphisms
can be deformed to be
isomorphisms of co-Poisson
bialgebras, where
the corresponding Lie bialgebra
comes 
from the Calabi-Yau property of
$\Pi^0(Q)$. Moreover, there
is a quantization of these 
isomorphisms.
\end{corollary}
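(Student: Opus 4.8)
The plan is to derive this corollary as a direct application of Theorem~\ref{mainthm}, its refinements Theorems~\ref{cor:isoHmlgyofLiecylic}, \ref{cor:isoHmlgyofLiecylicdeformed} and \ref{cor:quantization}, and the K-theoretic discussion of \S\ref{sect:Ktheory}, once two points have been settled: that $\Pi^0(Q)$ is a Koszul Calabi-Yau algebra with Koszul dual coalgebra $A^{\ac}(Q)$, and that all of the constructions in \S\ref{sect:Koszul}--\S\ref{sect:quantization} go through over the semisimple ground ring $R=\bigoplus_{i\in Q_0}ke_i$ in place of $k$.

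First I would recall, from the results quoted just above, that for $Q$ of non-Dynkin type the algebra $\Pi^0(Q)$ is Koszul over $R$ with Koszul dual algebra $A(Q)$, and is Calabi-Yau of dimension $2$; dualizing the presentation of $A(Q)$ gives exactly the coalgebra $A^{\ac}(Q)$ written above. By Proposition~\ref{prop:PDofCY}, $A(Q)$ is then a graded Frobenius algebra of degree $2$, so that $A^{\ac}(Q)$ is a graded co-Frobenius coalgebra in the sense of \S\ref{subsubsect:PD}: the non-degenerate symmetric pairing pairs $o_i$ with $e_i$ and $a$ with $\varepsilon(a)\,a^{*}$ for each $a\in\overline{Q}_1$. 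This pairing is precisely the incarnation of the non-commutative Poincar\'e duality of $\Pi^0(Q)$, and it is what drives all the deformation and quantization below.

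Next I would check that the ground ring causes no trouble. The only place in \S\ref{sect:LQT}--\S\ref{sect:quantization} where the field structure of $k$ is genuinely used is the invariant-theory input behind the Loday-Quillen-Tsygan isomorphism (Lemma~\ref{fixedpart}, Proposition~\ref{secondfundthm}, and hence Theorem~\ref{thm:LQTforcoalg}). Since $R$ is a finite product of copies of $k$, the relevant matrix (co)algebras $\mathfrak{gl}(\Pi^0(Q))$ and $\mathfrak{gl}^{c}(A^{\ac}(Q))$ are colimits of matrices indexed by finitely generated $Q_0$-graded $R$-modules, the pertinent general linear groups split as products $\prod_{i\in Q_0}\GL_{r_i}(k)$, and the first and second fundamental theorems of invariant theory apply componentwise; alternatively one may simply invoke Morita invariance of cyclic homology, of algebraic K-theory, and of $\mathrm{H}_\bullet(\mathfrak{gl})$ to reduce to the case over $k$. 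With this in hand, Theorem~\ref{cor:isoHmlgyofLiecylic} together with Proposition~\ref{prop:qisinKoszulCY} gives $\mathrm{H}_\bullet(\mathfrak{gl}(\Pi^0(Q)))\cong\mathbf{\Lambda}(\mathrm{HC}_\bullet(A^{\ac}(Q))[1])$, while the results recalled in \S\ref{sect:Ktheory} (Hennion's tangent complex, plus its Koszul-dual comparison) give $\mathbb{T}_{\mathrm{K}(\Pi^0(Q)),0}\cong\mathrm{CC}_\bullet(A^{\ac}(Q))$.

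Finally, because $\Pi^0(Q)$ is Koszul Calabi-Yau of dimension $2$, Proposition~\ref{prop:Liebialg} equips $\mathrm{CC}_\bullet(A^{\ac}(Q))[1]$ with an involutive DG Lie bialgebra structure of degree $(0,0)$, whose bracket and cobracket are the operations \eqref{eq:bracket}--\eqref{eq:cobracket} built from the co-Frobenius pairing above; for preprojective algebras these are exactly the necklace Lie bialgebra operations of Schedler on cyclic words in the doubled quiver $\overline{Q}$, so the input Lie bialgebra really does come from the Calabi-Yau property of $\Pi^0(Q)$. Theorem~\ref{cor:isoHmlgyofLiecylicdeformed} then deforms the two isomorphisms above into isomorphisms of co-Poisson bialgebras involving $V_{h\hbar}(\mathrm{HC}_\bullet(A^{\ac}(Q))[1])$, and Theorem~\ref{cor:quantization} produces the quantization $\mathfrak{A}(A^{\ac}(Q))\cong\mathrm{QH}_\bullet(\mathfrak{gl}(\Pi^0(Q)))$. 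The only genuine obstacle in all of this is the bookkeeping over the non-connected ring $R$ -- setting up the matrix Lie (co)algebras, the trace maps, and the Koszul-dual mixed complexes correctly -- after which the corollary is a direct citation of the theorems established in the body of the paper.
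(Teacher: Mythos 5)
Your proposal is correct and takes essentially the same approach as the paper, which presents this corollary as a direct application of Theorems~\ref{cor:isoHmlgyofLiecylic}, \ref{cor:isoHmlgyofLiecylicdeformed} and \ref{cor:quantization} once the cited proposition establishes that $\Pi^0(Q)$ is Koszul Calabi-Yau of dimension $2$ with Koszul dual $A(Q)$, together with the one-line remark that the constructions remain valid over the semisimple ring $R=\bigoplus_{i\in Q_0}ke_i$. Your additional explanations---identifying the co-Frobenius pairing on $A^{\ac}(Q)$ explicitly, justifying the passage to the ground ring $R$ via componentwise invariant theory or Morita invariance, and noting that the resulting degree-$(0,0)$ Lie bialgebra coincides with Schedler's necklace Lie bialgebra---fill in details the paper leaves implicit but do not constitute a different route.
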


\subsection{Fukaya categories}

Our next example will be the Fukaya category of some symplectic manifolds. There has also been some interest in the K-theory of Fukaya categories; see, for example, \cite{Mathoverflow} for some relevant discussions.

Suppose $X$ is a symplectic manifold. Intuitively, the Fukaya category of $X$ is an $A_\infty$-category whose objects are the Lagrangian submanifolds and whose morphisms are the (Lagrangian) Floer complexes.
If two Lagrangian submanifolds are transversal, the one may take the Floer complexes to be the linear span of their intersection points; if they are not transversal, then one must perturb the submanifolds by a Hamiltonian flow so that they become transversal and then take the linear span of their intersection points.
Fukaya first noticed that the ``category" thus defined is not a category, but an $A_\infty$-category; that is, a category up to homotopy. First observed by Fukaya and later developed by Seidel, Abouzaid, and many others, Fukaya categories have been a central subject in studying symplectic topology and mathematical physics.
We refer the interested reader to the recent long paper of Ganatra \cite{Gan} for an up-to-date status on this topic.

According to the types of Lagrangians submanifolds, there are two types of Fukaya categories, one is the usual Fukaya category, the other is called the wrapped Fukaya category. The following is taken from \cite[Theorems 1.12 \& 1.16]{Gan}: 

\begin{proposition}
\label{prop:Fukaya}
Let $X$ be a symplectic manifold admitting additional conditions (see \text{\rm{\cite[Assumption 3.10]{Gan}}} for more details). Then the Fukaya category and wrapped Fukaya category are well-defined for $X$: 

\begin{enumerate}[label = \rm{(}\arabic*),noitemsep,leftmargin=*,nosep]
\item 
The Fukaya category $\mathscr F(X)$ of $X$ has objects consisting of compact Lagrangian manifolds satisfying suitable hypotheses (such as being monotone), and the morphisms are the Floer cohain complexes. In this case, $\mathscr F(X)$ is a proper Calabi-Yau $A_\infty$-category. 

\item If $X$ is non-compact, then the wrapped Fukaya category $\mathscr W(X)$ of $X$ is well-defined, whose objects consist of exact Lagrangian submanifolds with some restrictions at infinity, and whose morphisms are again the Floer cochain complexes. In this case, $\mathscr W(X)$ is a smooth Calabi-Yau $A_\infty$-category.
\end{enumerate}
\end{proposition}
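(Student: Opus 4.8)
The plan is to deduce both parts from \cite[Theorems 1.12 \& 1.16]{Gan}, translating Ganatra's statements into the language of the proper and smooth Calabi-Yau $A_\infty$-categories recalled in \S\ref{subsect:gen}. I will take for granted that, under the stated hypotheses on $X$, the $A_\infty$-categories $\mathscr F(X)$ and $\mathscr W(X)$ are well-defined and homologically unital (the work of Fukaya--Oh--Ohta--Ono, Seidel, Abouzaid, and others); one also notes that the coefficient field may be taken of characteristic zero, e.g.\ $\mathbb Q$ or $\mathbb C$ after specializing the Novikov parameter in the monotone case, as our conventions require.

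For part (1), first I would observe that $\mathscr F(X)$ is proper: for monotone Lagrangians $L_0,L_1$ the Floer cochain complex $\mathrm{Hom}_{\mathscr F(X)}(L_0,L_1)$ is, after a transversality-achieving Hamiltonian perturbation, finitely generated, hence has finite-dimensional cohomology. The strong proper $n$-Calabi-Yau structure, with $n=\dim_{\mathbb R}X$, then comes from Ganatra's cyclic open--closed map: it furnishes a chain map $\widetilde{\mathrm{tr}}\colon\mathrm{CC}_n(\mathscr F(X))\to k$ --- obtained by projecting a suitable open--closed map to (equivariant) quantum cohomology and pairing against the fundamental class --- whose restriction $\mathrm{tr}$ to Hochschild chains induces, via $m_2$ and the unit inclusion, the Poincar\'e pairing $\mathrm H^\bullet(\mathrm{Hom}(X,Y))\otimes\mathrm H^{n-\bullet}(\mathrm{Hom}(Y,X))\to k$ on Floer cohomology; the latter is perfect by the standard count of pseudo-holomorphic polygons. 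This is precisely the strong proper Calabi-Yau condition of \S\ref{subsect:gen}.

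For part (2), the nontrivial point is homological smoothness of $\mathscr W(X)$, i.e.\ perfectness of the diagonal bimodule $\mathscr W(X)_\Delta$; here I would invoke the generation results underlying \cite{Gan} --- Abouzaid's generation criterion together with a resolution of the diagonal by Yoneda bimodules of generating Lagrangians --- which is itself a substantial theorem rather than a routine check. Granting smoothness, Ganatra's ``smooth'' open--closed map provides a Hochschild cycle $\omega\in\mathrm{CH}_n(\mathscr W(X))$ whose cap product realizes $\mathrm{HH}^\bullet(\mathscr W(X))\cong\mathrm{HH}_{n-\bullet}(\mathscr W(X))$, together with a lift $\widetilde\omega\in\mathrm{CC}_n^-(\mathscr W(X))$ with $\omega=\iota(\widetilde\omega)$; this is exactly a strong smooth Calabi-Yau structure. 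With this, $\mathscr F(X)$ and $\mathscr W(X)$ fall under the framework of \S\ref{subsect:gen}, so that the deformation and quantization of Theorems \ref{cor:isoHmlgyofLiecylicdeformed} and \ref{cor:quantization} apply, with the Lie bialgebra induced by this Floer-theoretic Poincar\'e duality. The main obstacle, were one to seek a self-contained proof rather than a citation, is exactly the package in part (2): the generation statement for $\mathscr W(X)$ together with the construction of the smooth open--closed map, both of which we are content here to import from \cite{Gan}; part (1), by contrast, reduces to an elementary finiteness statement plus the cyclic open--closed map.
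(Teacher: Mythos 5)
Your proposal is correct and follows exactly the same route as the paper, which gives no independent proof of this proposition but simply states it as ``taken from \cite[Theorems 1.12 \& 1.16]{Gan}.'' Your unpacking of what those theorems say (finite-dimensionality of Floer cohomology for properness, the cyclic open--closed map for the proper CY structure, generation plus resolution of the diagonal for smoothness, and the smooth open--closed map for the smooth CY structure) is accurate and faithful to Ganatra's framework, so there is nothing to flag beyond noting that the paper itself omits all of this detail and treats the result purely as a citation.
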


In general, Fukaya categories are complicated to compute.
The following example is due to Fukaya, Seidel, and Smith (\cite{FSS}) and Abouzaid (\cite{Abo1}).

\begin{example}[Fukaya category of cotangent
bundles]Suppose $N$ is 
a simply-connected compact manifold, which
is also spin.
Let $T^*N$ be the cotangent bundle of $N$,
equipped with the canonical symplectic 
structure. 
Then $\mathscr F(T^*N)$ 
is quasi-isomorphic to the singular cochain complex $C^\bullet(N)$
(see \cite{FSS}),
while the subcategory of $\mathscr W(T^*N)$ generated by a cotangent fiber, denoted by $\mathscr W(T_p^*N)$, is quasi-isomorphic to $C_\bullet(\Omega N)$, the singular chain complex of the base loop space of $N$
(see \cite{Abo1}). 

The well-known result of Adams
says that
$C_\bullet(\Omega N)$ is quasi-isomorphic to the cobar construction $\Omega(C_\bullet(N))$, and hence is Koszul dual to the singular chain complex $C_\bullet(N)$; 
therefore $\mathscr W(T_p^*N)$ is Koszul dual to $C_\bullet(N)$ in the sense of Holstein and Lazarev (see \cite{HolLaz} or 
\S\ref{subsect:gen}). 
We may also say that $\mathscr W(T_p^*N)$ and $\mathscr F(T^*N)$ are Koszul dual as Calabi-Yau $A_\infty$ categories, since $\mathscr F(T^*N)$ is quasi-isomorphic to $C^\bullet(N)$. 
\end{example}

Such a Koszul duality pattern also occurs in some other cases; see, for example,
\cite{EtLe,Li}.

\begin{corollary}
Suppose $X$ is an open symplectic manifold such that $\mathscr{F}(X)$ and $\mathscr{W}(X)$ are Koszul dual. Then the tangent complex to the K-theory of $\mathscr{W}(X)$ has a Lie bialgebra structure, which is isomorphic to the one on the cyclic cohomology of $\mathscr{F}(X)$. Moreover, such a Lie bialgebra can be quantized.
\end{corollary}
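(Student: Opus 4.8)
The plan is to deduce this corollary by assembling the structural results established earlier in the paper and specializing them to the Fukaya-theoretic setting. First I would observe that, by hypothesis, $\mathscr W(X)$ is Koszul dual (in the sense of Holstein--Lazarev, \S\ref{subsect:gen}) to $\mathscr F(X)$; that is, the Koszul dual coalgebra of $\mathscr W(X)$ is quasi-isomorphic to the bar construction $\mathrm B\mathscr F(X)$, or equivalently $\mathscr F(X)$ is (a model for) the cyclic cohomology dual of $\mathscr W(X)^{\ac}$. By Proposition~\ref{prop:Fukaya}, $\mathscr F(X)$ is a proper Calabi-Yau $A_\infty$-category and $\mathscr W(X)$ is a smooth Calabi-Yau $A_\infty$-category; as noted in the remark ``Why Calabi-Yau?'' and in \S\ref{subsect:gen}, being a proper Calabi-Yau $A_\infty$-category is precisely the categorical generalization of being a co-Frobenius coalgebra (a non-degenerate pairing on the morphism complexes), so $\mathscr F(X)$ plays the role of ``$A^{\ac}$'' and $\mathscr W(X)$ the role of ``$A$'' in the Koszul Calabi-Yau picture.

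Next I would invoke the generalized Koszul-duality comparison: the proposition following Definition of Koszul duality for categories (which generalizes Proposition~\ref{prop:qisinKoszulCY}) gives a quasi-isomorphism $\mathrm{CC}_\bullet^\lambda(\mathscr F(X))\simeq \mathrm{CC}_\bullet^\lambda(\mathscr W(X))$ on the level of cyclic complexes, and hence on tangent complexes of K-theory via Hennion's theorem (Theorem~\ref{thm:Hennion} and the corollary at the end of \S\ref{sect:Ktheory}). This identifies $\mathbb T_{\mathrm K(\mathscr W(X)),0}$ with $\mathrm{CC}_\bullet(\mathscr F(X))[1]$. Then, applying Proposition~\ref{prop:Liebialg} to the co-Frobenius-type structure on $\mathscr F(X)$ — whose Lie bracket and cobracket are given by the cyclic formulas \eqref{eq:bracket} and \eqref{eq:cobracket}, now built from the non-degenerate Calabi-Yau pairing on the Hom-complexes of $\mathscr F(X)$ — endows $\mathrm{HC}_\bullet(\mathscr F(X))[1]$, hence (via the identification above) the tangent complex to $\mathrm K(\mathscr W(X))$, with a DG Lie bialgebra structure. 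Passing through the Loday-Quillen-Tsygan isomorphism and its coalgebra version (Theorem~\ref{cor:isoHmlgyofLiecylic}) gives the comparison with $\mathrm H_\bullet(\mathfrak{gl}(\mathscr W(X)))$, and the identification with the Lie bialgebra on the cyclic \emph{cohomology} of $\mathscr F(X)$ is just the linear-dual statement, exactly as in Theorem~\ref{thm:deformationquantization}.

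Finally, for the quantization claim I would apply Theorem~\ref{theoremsec6} (equivalently Theorem~\ref{cor:quantization}) with $A^{\ac}$ replaced by $\mathscr F(X)$: the combinatorial Hopf algebra $\mathfrak A$ of \S\ref{subsect:construction} is built purely from the cyclic complex and the non-degenerate pairing, so it quantizes the Lie bialgebra on $\mathrm{CC}_\bullet^\lambda(\mathscr F(X))[1]$, and transports along the Koszul and LQT isomorphisms to a quantization of the tangent Lie bialgebra of $\mathrm K(\mathscr W(X))$. The main obstacle — and the only genuinely non-formal point — is verifying that all the earlier machinery, which is stated for DG algebras/coalgebras, genuinely extends to the $A_\infty$-categorical setting: one must check that the cyclic complex of a proper Calabi-Yau $A_\infty$-category carries the bracket/cobracket of Proposition~\ref{prop:Liebialg} with the higher operations $m_n$ incorporated, that the quantization construction of \S\ref{sect:quantization} is insensitive to these higher operations (it should be, since it only manipulates cyclic words and the pairing), and that the generalized Koszul duality of Holstein--Lazarev is compatible with the cyclic and Calabi-Yau structures. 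This is addressed in \S\ref{subsect:gen} and the present section, so the corollary follows once those compatibilities are granted; everything else is a direct citation of the results above.
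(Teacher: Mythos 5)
Your proposal is correct and follows essentially the same route the paper intends: the corollary in \S\ref{sect:application} is stated without proof precisely because it is meant as a direct specialization of Theorem~\ref{cor:isoHmlgyofLiecylicdeformed}, Theorem~\ref{cor:quantization}, the categorical Koszul-duality proposition following Holstein--Lazarev, and Hennion's Theorem~\ref{thm:Hennion}, with $\mathscr W(X)$ in the role of the smooth (Koszul Calabi-Yau) side and $\mathscr F(X)$ in the role of the proper/Frobenius side, exactly as you assemble them. The one small imprecision — saying $\mathscr F(X)$ plays the role of $A^{\ac}$ rather than of $A^!$ (its linear dual) — is harmless since the Lie bialgebra on $\mathrm{CC}_\bullet^\lambda(A^{\ac})[1]$ is canonically identified with the one on the cyclic cohomology of $A^!$, and you correctly flag the genuine remaining point (extension of the DG machinery to the $A_\infty$-categorical setting) which the paper itself only addresses at the level of \S\ref{subsect:gen}.
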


\begin{remark}[Nontriviality]
The above example of Abouzaid says that the Fukaya category of the zero section in a cotangent bundle is equivalent to the singular cochain complex of that manifold, and hence their cyclic cohomology groups are isomorphic. 
Note that the cyclic cohomology of the singular cochain complex is isomorphic to the $S^1$-equivariant homology of the free loop space of the manifold, provided it is simply-connected. 
In string topology (see \cite{ChSu,ChSu2}), Chas and Sullivan first constructed the Lie bialgebra structure on the $S^1$-equivariant homology of the free loop spaces, which is a higher dimensional generalization of the Lie bialgebra of Goldman (\cite{Gol}) and Turaev (\cite{Turaev}) on the loops on a Riemann surface. 
According to Turaev, the quantization of the Lie bialgebra on the loops is isomorphic to the skein algebra of knots in the three-dimensional space, which is the surface times the unit interval. Thus, as a corollary, the Lie bialgebra and its quantization of the Fukaya category may be viewed as a categorical generalization of Turaev and Chas-Sullivan.
\end{remark}

\appendix
\section{An example of the
iterated coproduct}

The $m$-fold coproduct
$\Delta_m$
given in \S\ref{sec:constcoprod}
is quite complicated. In 
this Appendix, we explain
its construction
via a
concrete example.

\begin{example}\label{e31}
Suppose $I = \{(i,j),(i',j'), (\ihat,\jhat), (\ihat',\jhat')\}$ with $i = \ihat$, $(i',j') = \phi(i,j)$, and $(\ihat',\jhat')=\phi(\ihat,\jhat)$. 
There are three possibilities for $I$:

\begin{enumerate}[label = \rm{(}\arabic*),noitemsep,leftmargin=*,nosep]
    \item
(the case $i' = i$ and $\ihat' = i$) First suppose $\jhat < j < j' < \jhat'.$
If $m \geq 3$, $h_{i\jhat} < h_{i\jhat'}$ and $h_{ij}<h_{ij'}$, then an $m$-coloring $(I,\phi,c)$ exists. 
The obtained summand $X_{(I,\phi,c)}$ in the $m$-fold coproduct (\ref{f33}) writes 
\begin{align*}
\langle a_{ij},a_{ij'}\rangle \langle a_{i\jhat},a_{i\jhat'}\rangle \hbar^{2} \;\cdot\;  &[(a_{i1},h_{i1}),\ldots,\widehat{(a_{i\jhat},h_{i\jhat})},(a_{i(\jhat'+1)},h_{i(\jhat'+1)}),\ldots,(a_{ip_{i}},h_{ip_{i}})]\otimes\\
&[(a_{i(\jhat+1)},h_{i(\jhat+1)}),\ldots,\widehat{(a_{ij},h_{ij})},(a_{i(j'+1)},h_{i(j'+1)}),\ldots,\widehat{(a_{i\jhat'},h_{i\jhat'})}]\otimes \\
&[(a_{i(j+1)},h_{i(j+1)}),\ldots,\widehat{(a_{ij'},h_{ij'})}],
\end{align*}
where $\widehat{(a_{i\jhat},h_{i\jhat})}$ indicates that $(a_{i\jhat},h_{i\jhat})$ is deleted from the component.
We remark that there exists no $2$-coloring for the components.
If $h_{i\jhat} > h_{i\jhat'}$, then the second term from the left in the above tensor product should be placed before the first.
As for the remaining cases, it suffices to consider the case $\jhat < j < \jhat' < j'$.
The obtained component 
\begin{align*}
&[(a_{i1},h_{i1}),\ldots,\widehat{(a_{i\jhat},h_{i\jhat})},(a_{i(\jhat'+1)},h_{i(\jhat'+1)}),\ldots,\\
&\;\widehat{(a_{ij'},h_{ij'})},(a_{i(j+1)},h_{i(j+1)}),\ldots,\widehat{(a_{i\jhat'},h_{i\jhat'})},(a_{i(\jhat+1)},h_{i(\jhat+1)}),\ldots,\\
&\;\widehat{(a_{ij},h_{ij})},(a_{i(j'+1)},h_{i(j'+1)}),\ldots,(a_{ip_{i}},h_{ip_{i}})].
\end{align*}
There exists no color function for this component because all the pairs map to the same color but $c(i,\jhat) \neq c(i,\jhat')$ is required.
Hence there is no such component in the $m$-fold coproduct (\ref{f33}).
Following the spirit of [Section~9]{Turaev}, we briefly draw a graph representing these components, where components are treated as ``knots'', and $\jhat,j$ stand for $(i,\jhat),(i,j)$ respectively.
\begin{center}
\begin{tikzpicture}[>=stealth, node distance=1.5cm,decoration={markings,mark=at position 0.5 with {\arrow{>}}}]
    \node (1) at (-3.75,0) {1};
    \node (jhat1) at (-2.25,0) {$\jhat$};
    \node (j) at (-.75,0) {$j$};
    \node (jprime) at (.75,0) {$j'$};
    \node (jhat2) at (2.25,0) {$\jhat'$};
    \node (p1) at (3.75,0) {$p_i$};

    \draw[->, color=red] (-3.75,0) -- (-2.5,0);
    \draw[->, color=green] (jhat1) -- (j);
    \draw[->, color=blue] (j) -- (jprime);
    \draw[->, color=green] (jprime) -- (jhat2);
    \draw[-, color=red,postaction={decorate}] (2.5,0) -- (3.75,0);
    
    \draw[-, densely dashed, bend left=30, color=red,postaction={decorate}] (3.75,0) -| (3.75,.6) -| (-3.75,0); 
    \draw[->, densely dashed, bend right=30, color=red,] (-2.5,0) -| (-2.5,-.4) -| (2.5,0);
    \draw[->, densely dashed, bend left=30, color=green,] (2,0) -| (2,-.3) -| (-2,0);
    \draw[->, densely dashed, bend left=30, color=green,] (-1,0) -| (-1,.4) -| (1,0);
    \draw[->, densely dashed, bend right=30, color=blue,] (.5,0) -| (.5,.3) -| (-.5,0);
\end{tikzpicture}

\vspace{0.3cm}
\begin{tikzpicture}[>=stealth, node distance=1.5cm,decoration={markings,mark=at position 0.5 with {\arrow{>}}}]
    \node (1) at (-3.75,0) {1};
    \node (2) at (-2.25,0) {$\jhat$};
    \node (3) at (-.75,0) {$j$};
    \node (4) at (.75,0) {$\jhat'$};
    \node (5) at (2.25,0) {$j'$};
    \node (p1) at (3.75,0) {$p_i$};

    \draw[->] (-3.75,0) -- (-2.5,0);
    \draw[->] (2) -- (3);
    \draw[->] (3) -- (4);
    \draw[->] (4) -- (5);
    \draw[-,postaction={decorate}] (2.5,0) -- (3.75,0);
    
    \draw[-, densely dashed, bend left=30,postaction={decorate}] (3.75,0) -| (3.75,.6) -| (-3.75,0); 
    \draw[->, densely dashed, bend right=30] (-2.5,0) -| (-2.5,.4) -| (1,0);
    \draw[->, densely dashed, bend left=30] (.5,0) -| (.5,.3) -| (-2,0);
    \draw[->, densely dashed, bend left=30] (-1,0) -| (-1,-.4) -| (2.5,0);
    \draw[->, densely dashed, bend right=30] (2,0) -| (2,-.3) -| (-.5,0);
\end{tikzpicture}\end{center}

    \item 
(the case $i' = i$ and $\ihat' \neq i$) In this case, we have $c(i,\jhat) \neq c(i,\jhat')$ due to $h_{i\jhat} \neq h_{i\jhat'}$.
However in $X_{(I,\phi,c)}$, we have component $[\widehat{(a_{i\jhat},h_{i\jhat})},\ldots,\widehat{(a_{i\jhat'},h_{i\jhat'})},\ldots],$ which requires $c(i,\jhat) = c(i,\jhat'),$ a contradiction. 
    \item
(the case $\ihat' = i' \neq i$) 
Suppose $j<\jhat$ and $\jhat' < j'.$
If $h_{ij}<h_{i'j'}$ and $h_{i\jhat}>h_{i'\jhat'},$ then an $m$-coloring $(I,\phi,c)$ for $m\geq 2$ exists so that the pairs in the left (resp. right) component of the tensor maps to $1$ (resp. $2$).
The obtained summand $X_{(I,\phi,c)}$ in the $m$-fold coproduct (\ref{f33}) writes 
\begin{align*}\langle a_{ij},a_{ij'}\rangle \langle a_{i\jhat},a_{i\jhat'}\rangle h\hbar \;\cdot\;&[(a_{i1},h_{i1}),\ldots,\widehat{(a_{ij},h_{ij})},(a_{i'(j'+1)},h_{i'(j'+1)}),\ldots,\\
&\;(a_{i'p_{i'}},h_{i'p_{i'}}),(a_{i'1},h_{i'1}),\ldots,\\
&\;\widehat{(a_{i'\jhat'},h_{i'\jhat'})},(a_{i(\jhat+1)},h_{i(\jhat+1)}),\ldots,(a_{ip_{i}},h_{ip_{i}})]\otimes\\
&[(a_{i(j+1)},h_{i(j+1)}),\ldots,\widehat{(a_{i\jhat},h_{i\jhat})},(a_{i'(\jhat'+1)},h_{i'(\jhat'+1)}),\ldots,\widehat{(a_{i'j'},h_{i'j'})}].
\end{align*}
Next, suppose $j<\jhat$ and $j'<\jhat'$.
If $h_{ij}<h_{i'j'}$ and $h_{i\jhat}>h_{i'\jhat'},$ then an $m$-coloring $(I,\phi,c)$ for $m\geq 2$ exists.
The obtained summand $X_{(I,\phi,c)}$ in the $m$-fold coproduct (\ref{f33}) writes 
\begin{align*}\langle a_{ij},a_{ij'}\rangle \langle a_{i\jhat},a_{i\jhat'}\rangle h\hbar \;\cdot\;&[(a_{i1},h_{i1}),\ldots,\widehat{(a_{ij},h_{ij})},(a_{i'(j'+1)},h_{i'(j'+1)}),\ldots,\\
&\;\widehat{(a_{i'\jhat'},h_{i'\jhat'})},(a_{i(\jhat+1)},h_{i(\jhat+1)}),\ldots,(a_{ip_{i}},h_{ip_{i}})]\otimes\\
&[(a_{i(j+1)},h_{i(j+1)}),\ldots,\widehat{(a_{i\jhat},h_{i\jhat})},(a_{i'(\jhat'+1)},h_{i'(\jhat'+1)}),\ldots,\\
&\;(a_{i'p_{i'}},h_{i'p_{i'}}),(a_{i'1},h_{i'1}),\ldots,\widehat{(a_{i'j'},h_{i'j'})}].
\end{align*}
As in the case (1), we also draw a graph representing these components.
\begin{center}
\begin{tikzpicture}[>=stealth, node distance=1.5cm,decoration={markings,mark=at position 0.5 with {\arrow{>}}}]
    \node (1) at (-5.25,0) {1};
    \node (2) at (-3.75,0) {$j$};
    \node (3) at (-2.25,0) {$\jhat$};
    \node (pi) at (-.75,0) {$p_{i}$};
    \node (1p) at (.75,0) {$1$};
    \node (2p) at (2.25,0) {$\jhat'$};
    \node (3p) at (3.75,0) {$j'$};
    \node (pip) at (5.25,0) {$p_{i'}$};
    \node at (0,0) {$\otimes$};
    
    \draw[-, color=red,postaction={decorate}] (-5.25,0) -- (-4,0);
    \draw[->, color=blue] (2) -- (3);
    \draw[-, color=red,postaction={decorate}] (-2,0) -- (-.75,0);
    \draw[-, color=red,postaction={decorate}] (.75,0) -- (2,0);
    \draw[->, color=blue] (2p) -- (3p);
    \draw[-, color=red,postaction={decorate}] (4,0) -- (5.25,0);
    
    \draw[-, densely dashed, bend left=30, color=red, postaction={decorate}] (-1,0) -| (-0.75,.5) -| (-5.25,0); 
    \draw[->, densely dashed, bend left=30, color=red] (-4,0) -| (-4,-.7) -| (4,0); 
    \draw[-, densely dashed, bend left=30, color=red, postaction={decorate}] (5.25,0) -| (5.25,.5) -| (0.75,0);
    \draw[->, densely dashed, bend left=30, color=red] (2,0) -| (2,-.3) -| (-2,0);
    \draw[->, densely dashed, bend right=30, color=blue] (3.5,0) -| (3.5,-.6) -| (-3.5,0);
    \draw[->, densely dashed, bend right=30, color=blue] (-2.5,0) -| (-2.5,-.4) -| (2.5,0);
\end{tikzpicture}

\vspace{0.3cm}
\begin{tikzpicture}[>=stealth, node distance=1.5cm,decoration={markings,mark=at position 0.5 with {\arrow{>}}}]
    \node (1) at (-5.25,0) {1};
    \node (2) at (-3.75,0) {$j$};
    \node (3) at (-2.25,0) {$\jhat$};
    \node (pi) at (-.75,0) {$p_{i}$};
    \node (1p) at (.75,0) {$1$};
    \node (2p) at (2.25,0) {$j'$};
    \node (3p) at (3.75,0) {$\jhat'$};
    \node (pip) at (5.25,0) {$p_{i'}$};
    \node at (0,0) {$\otimes$};
    
    \draw[-, color=red, postaction={decorate}] (-5.25,0) -- (-4,0);
    \draw[->, color=blue] (2) -- (3);
    \draw[-, color=red, postaction={decorate}] (-2,0) -- (-.75,0);
    \draw[-, color=blue, postaction={decorate}] (.75,0) -- (2,0);
    \draw[->, color=red] (2p) -- (3p);
    \draw[-, color=blue, postaction={decorate}] (4,0) -- (5.25,0);
    
    \draw[-, densely dashed, bend left=30, color=red, postaction={decorate}] (-.75,0) -| (-.75,.7) -| (-5.25,0);
    \draw[->, densely dashed, bend left=30, color=blue, postaction={decorate}] (5.25,0) -| (5.25,.7) -| (.75,0);
    \draw[->, densely dashed, bend left=30, color=red] (-4,0) -| (-4,.4) -| (2.5,0);
    \draw[->, densely dashed, bend left=30, color=red] (3.5,0) -| (3.5,-.3) -| (-2,0);
    \draw[->, densely dashed, bend left=30, color=red] (3.5,0) -| (3.5,-.3) -| (-2,0);
    \draw[->, densely dashed, bend left=30, color=blue] (2,0) -| (2,.3) -| (-3.5,0);
    \draw[-, densely dashed, bend left=30, color=blue, postaction={decorate}] (-2.5,0) -| (-2.5,-.4) -| (4,0);
    
\end{tikzpicture}\end{center}
\end{enumerate}
\end{example}

\end{document}